\renewcommand{\epsilon}{\varepsilon}
\DeclareMathOperator{\proj}{proj}
\DeclareMathOperator{\graph}{graph}
\DeclareMathOperator{\relint}{rel\, int}
\def\R{\mathbb{R}}
\def\N{\mathbb{N}}
\def\Z{\mathbb{Z}}
\def\d{\delta}
\def\a{\alpha}
\def\e{\epsilon}
\def\r{\rho}
\def\th{\theta}
\def\s{\sigma}
\def\o{\omega}
\def\l{\lambda}
\def\M{\mathcal{M}}
\def\ov{\overline}
\newcommand{\inner}[2]{\left\langle#1,#2\right\rangle} %inner product 
\newtheorem{theorem}{Theorem}[section]
\newtheorem{lemma}[theorem]{Lemma}
\newtheorem{proposition}[theorem]{Proposition}
\newtheorem{remark}[theorem]{Remark}
\newtheorem{corollary}[theorem]{Corollary}
\newtheorem{claim}{Claim}[theorem]
\newtheoremstyle{TheoremNum}
        {\topsep}{\topsep}              %%% space between body and thm
        {\itshape}                      %%% Thm body font
        {}                              %%% Indent amount (empty = no indent)
        {\bfseries}                     %%% Thm head font
        {.}                             %%% Punctuation after thm head
        { }                             %%% Space after thm head
        {\thmname{#1}\thmnote{ \bfseries #3}}%%% Thm head spec
    \theoremstyle{TheoremNum}
\title%[The atomic structure of ancient grain boundaries]
{The atomic structure of ancient grain boundaries}
\author{Theodora Bourni}
\author{Mat Langford}
\author{Giuseppe Tinaglia}
\address{Department of Mathematics, University of Tennessee Knoxville, Knoxville TN, 37996-1320}
\email{tbourni@utk.edu, mlangford@utk.edu}
\address{School of Mathematical and Physical Sciences, The University of Newcastle, Newcastle, NSW, Australia, 2308}
\email{mathew.langford@newcastle.edu.au}
\address{Department of Mathematics, King's College London, London, WC2R 2LS, U.K.}
\email{giuseppe.tinaglia@kcl.ac.uk}
\date{\today}
\begin{document}

\begin{abstract}
Democritus and the early atomists held that 
\begin{quotation}
\emph{the material cause of all things that exist is the coming together of atoms and void. Atoms are eternal and have many different shapes, and they can cluster together to create things that are perceivable. Differences in shape, arrangement, and position of atoms produce different phenomena.%By aggregation they provide bulky objects that we can perceive with our sight and other senses
} 
\end{quotation}

Like the atoms of Democritus, the Grim Reaper solution to curve shortening flow is eternal and indivisible --- it does not split off a line, and is itself its only ``asymptotic translator''. Confirming the heuristic described by Huisken and Sinestrari [J. Differential Geom. 101, 2 (2015), 267--287], we show that it gives rise to a great diversity of convex ancient and translating solutions to mean curvature flow, through the evolution of families of Grim hyperplanes in suitable configurations. %In the ``transition regions'', these solutions look like generalized ``flying wing'' translators. 
We construct, in all dimensions $n\ge 2$, a large family of new examples, including both symmetric and asymmetric examples, as well as many eternal examples that do not evolve by translation. The latter resolve a conjecture of White [J. Amer. Math. Soc. 16, 1 (2003), 123--138].

We also provide a detailed asymptotic analysis of convex ancient solutions in slab regions in general. Roughly speaking, we show that they decompose ``backwards in time'' into a canonical configuration of Grim hyperplanes which satisfies certain necessary conditions. An analogous decomposition holds ``forwards in time'' for eternal solutions. One consequence is a new rigidity result for translators. Another is that, in dimension two, solutions are necessarily reflection symmetric across the mid-plane of their slab.

\end{abstract}

\maketitle
\thispagestyle{empty}

\tableofcontents

\section{Introduction}

%We study families $\{\M^n_t\}_{t\in I}$ of bounded convex hypersurfaces $\M^n_t\subset\R^{n+1}$ which \emph{flow by mean curvature}:
%\[
%(\partial_tX)^\perp(x,t)=\vec H(x,t)
%\]
%for any family of local parametrization maps $X(\cdot,t):U\to \M^n_t$, where $\cdot^\perp$ indicates the projection onto the corresponding normal bundle and $\vec H(\cdot,t)$ denotes the mean curvature vector of $\M^n_t$ with respect to the parametrization. We consider families defined for all $t\in I=(-\infty,0)$, which are known as \emph{ancient solutions}. This is a non-trivial condition since the mean curvature flow is not well-posed backwards in time. Examples do exist however,
%
%The simplest examples of ancient solutions are the stationary hyperplanes $\{\R^n\times\{0\}\}_{t\in(-\infty,0)}$, the shrinking spheres $\{S^n_{\sqrt{-2nt}}\}_{t\in(-\infty,0)}$ and the intermediate shrinking cylinders $\{\R^k\times S^{n-k}_{\sqrt{-2(n-k)t}}\}_{t\in(-\infty,0)}$. 
%
%the simplest being the \emph{shrinking sphere} $\{S^n_{\sqrt{-2nt}}\}_{t\in(-\infty,0)}$. 

%Our objects of study are ancient solutions to the flow by mean curvature of hypersurfaces in Euclidean spaces. 

The mean curvature flow arose in the 1950s as a physical model for the motion of grain boundaries in annealing metals \cite{Mullins,vonNeumann} and has been widely studied by mathematicians since the early 1980s, beginning with the foundational works of Brakke \cite{Brakke} and Huisken \cite{Hu84}, the latter initiating a programme analogous to Hamilton's programme for the Ricci flow \cite{HamILTON82}. 

Our focus here is on \textsc{ancient solutions}: those solutions which have existed for an infinite amount of time in the past.  Ancient solutions arise naturally in the study of singularities of the flow \cite{HamiltonPinched,HuSi99a,HuSi99b,Wh03}, and a deep understanding of them will have profound implications for the continuation of the flow through singularities (cf. \cite{HK2,HuSi09,Naff,NguyenSurgery}). The study of \emph{convex} ancient solutions is particularly pertinent in this context, since, by work of Huisken--Sinestrari \cite{HuSi99b,HuSi99a} (see also \cite{HK1,Wh03}), ancient solutions arising from singularities in \emph{mean convex} mean curvature flow are necessarily convex.

Ancient solutions also model the ultra-violet regime in certain quantum field theories, and early research on ancient solutions was undertaken by physicists in this context \cite{Bakas2,Lukyanov1,Lukyanov2}. %explicit examples in the plane were discovered and studied 

Ancient solutions are known to exhibit rigidity phenomena resembling those of their elliptic counterparts: minimal hypersurfaces. For example, the shrinking sphere is the only ancient solution satisfying certain scale invariant geometric bounds \cite{BLT3,DHS,HaHe,HuSi15,L17,KevinLiouville,Wa11} (cf. \cite{BryanIvaki,BrIvSc,BrLo,ChMa,LambertLotaySchulze,LaLy,LynchNguyen,RisaSinestrari1,RisaSinestrari2}). Moreover, by a fundamental result of Wang \cite{Wa11}, no convex ancient solution having bounded curvature on each timeslice %\footnote{For general solutions with noncompact  timeslices, this hypothesis is required in order to apply the differential Harnack inequality.} 
sweeps out a halfspace or a wedge --- they either sweep out all of space or are confined to the region between two stationary parallel hyperplanes (cf. \cite{ChiniMollerAncient,ChiniMollerTranslators}. See also \cite{BLT4}). The shrinking sphere is an example which sweeps out all of space. A well-known example which sweeps out a slab region is the Angenent oval solution in the plane \cite{Ang92} (referred to as the ``paperclip" solution in the physics literatire).

So it is natural to ask \emph{under which conditions can (convex) ancient solutions be meaningfully classified?} The shrinking circles and the Angenent ovals are the only examples in the plane with bounded, convex timeslices \cite{DHS}. If we relax the boundedness hypothesis, then the only additional examples are the stationary lines and the Grim Reapers (referred to as ``hairpin" solutions in the physics literature) \cite{BLT3}. On the other hand, if the convexity hypothesis is relaxed, there exist a plethora of non-trivial\footnote{Of course, all minimal hypersurfaces are examples, as are all solitons whose motion is a combination of translation, negative dilation, and rotation.} examples: %the `ancient trombones' of Angenent and You (constructed by `gluing' together an essentially arbitrary family of alternately oriented Grim Reapers along their common asymptotes) include solutions with bounded, locally uniformly convex (albeit non-embedded) timeslices and solutions with embedded (albeit unbounded, locally non-convex) timeslices \cite{AngenentYou};
``ancient trombone'' solutions in the plane can be constructed by ``gluing'' together an essentially arbitrary family of alternately oriented Grim Reapers along their common asymptotes \cite{AngenentYou,JohnMan} and every unstable mode of a compact, entropy unstable shrinker gives rise to a non-trivial ancient solution \cite{ChMa}. In \cite{BLM}, an explicit construction of a non-trivial ``ancient doughnut'' is given and in \cite{MramorPayne} a large class of explicit non-trivial examples evolving out of unstable minimal hypersurfaces are constructed.

In higher dimensions, a family of examples with bounded, convex timeslices which sweep out all of space (\textsc{ancient ovaloids}) have been constructed \cite{Ang12,HaHe,Wa11,Wh00}. In a series of remarkable recent papers, Angenent--Daskalopoulos--\v Se\v sum \cite{ADS1,ADS2} and Brendle--Choi \cite{BrendleChoi1,BrendleChoi2} proved that the only convex ancient solutions which are \emph{uniformly two-convex} and \emph{noncollapsing} are the shrinking sphere, the admissible shrinking cylinder, the admissible ancient ovaloid and the bowl soliton. There are also higher dimensional examples with bounded, convex timeslices which only sweep out slab regions (\textsc{ancient pancakes}) \cite{BLT1,Wa11}. We recently proved that there is only one rotationally symmetric ancient pancake, and obtained a precise asymptotic description of it \cite{BLT1}. This solution will play an important role here.  %All of the higher dimensional examples so far constructed appear to have high dimensional symmetry groups.

In the noncompact setting, much less is known: aside from products of lower dimensional examples with lines, the only convex examples known are the translating solitons. Indeed, White has conjectured that the only convex \emph{eternal} solutions are the translators \cite[Conjecture 1]{Wh03}. %(we resolve this in the negative in \S \ref{sec:eternal}). 
Recent work on the construction and classification of convex translating solutions will also play an important role here \cite{BLT2,SX} (see also \cite{BL17,Ha15,HIMW,Wa11}).

According to Huisken and Sinestrari \cite{HuSi15}, ``[h]euristic arguments suggest that in higher dimensions many more compact convex ancient solutions may be constructed by appropriately gluing together lower dimensional translating solutions for $t\to-\infty$.'' We shall verify this picture by constructing convex ancient (translating) solutions to mean curvature flow out of any (unbounded) regular polytope, and partly classifying them in terms of their backwards limits. %, in the case of solutions contained in slab regions. %The Grim hyperplanes play the role of atoms, out of which such solutions may be constructed.
%Indeed, we will develop a theory of convex ancient solutions to mean curvature flow in slab regions with the Grim hyperplanes playing a similar role to half-spaces in the theory of convex bodies.
We shall also obtain ancient (translating) solutions out of any bounded (unbounded) simplex (which dispel the common belief that convex ancient solutions should be highly symmetric) as well as eternal solutions which do not evolve by translation (resolving \cite[Conjecture 1]{Wh03} mentioned above in the negative). 

All of the solutions we construct are reflection symmetric across the midplane of their slabs. As we shall see, this symmetry turns out to be necessary, at least in dimension two.

A very similar picture has been emerging concurrently in the context of the Ricci flow (see \cite{AngenentBrendleDaskalopoulosSesum,BamlerKleiner,BrendleRicci2014,BrendleHuiskenSinestrari,BryantSO3,DHSRicci,FateevOnofriZamolodchikov,HamiltonHarnackRicci,Ni2005,Ni2010,Perelman1}) although many of the corresponding questions remain open.

Before beginning our investigation, it is worth noting that many of the usual tools, such as curvature pinching, gradient estimates, noncollapsing\footnote{In the sense of Sheng--Wang \cite{ShWa09} and Andrews \cite{An12,ALM13}, say.}, parabolic rescaling (``blow-up'') arguments, and the monotonicity formula (for Gaussian area) are either not useful, or not even available in the setting studied here. Nor do we make use of any rotational symmetry hypothesis, as in our previous papers \cite{BLT1,BLT2}; as such, we are also unable to exploit the evolution equation for enclosed area, which was of great utility in \cite{BLT1,BLT3}. Our main tools are correspondingly limited: the scalar maximum principle, the avoidance principle, the differential Harnack inequality, and curvature comparison arguments, which we exploit repeatedly. In particular, the existence (and detailed description) of the rotationally symmetric example constructed in \cite{BLT1} seems to be crucial.

\subsection{Outline of the paper} In \S \ref{sec:prelims}, we recall some preliminary results on convex sets, mean curvature flow, and Alexandrov reflection that will be needed in the sequel. In particular, we exploit the differential Harnack inequality to obtain some not so well-known, but fairly elementary, results about convex ancient solutions and translators in $\R^{n+1}$. These results lead us naturally to consider their \textsc{squash-downs}, which play a central role in our analysis. 

In \S \ref{sec:existence}, we construct ancient (translating) examples with prescribed squash-downs --- first, we obtain a solution out of any bounded (unbounded) regular polytope %--- and show that they have the `correct' asymptotic translators at $t=-\infty$ 
(Theorems \ref{thm:bounded polygonal pancakes} and \ref{thm:unbounded polygonal pancakes}). We then show that the symmetry of the squash-down is not necessary, at least for polytopes with the minimal number of faces (Theorems \ref{thm:irregular polygonal pancakes bounded} and \ref{thm:irregular polygonal pancakes unbounded}). We then study the asymptotic translators of these solutions (Proposition \ref{prop:asymptotics}). We find that they are of the correct width and that their squash-downs are related to the squash-down of their ``parent'' solution in the obvious way. When $n=2$, this gives a canonical decomposition into asymptotic translators. In the bounded case, this affirms the heuristic described by Huisken and Sinestrari \cite{HuSi15}. In the unbounded case, we find that the same heuristic holds for translators. We note that the only non-entire ancient solution with bounded convex timeslices known previously was the rotationally symmetric example \cite{BLT1,Wa11}, and the only non-entire convex translating solutions known previously were rotationally symmetric with respect to the $(n-1)$-dimensional subspace parallel to the slab and orthogonal to the translation direction \cite{BLT2,SX}. %(cf. \cite{HIMW}, where uniqueness is proved when $n=2$, and an $(n-1)$-parameter family of \emph{mean convex} examples which are reflection symmetric about each of the $n$ coordinate hyperplanes are constructed. These examples are believed to be convex, but this has so far eluded proof).

%It is worth noting that, by construction, \emph{these examples generically admit no symmetries beyond reflection symmetry across the mid-plane of the slab}. This is in stark contrast to the class of known examples, and counters the traditional expectation that ancient and translating solutions should be highly symmetric (cf. \cite[Remark 1.6]{Ha15}, \cite[Conjecture 2]{Wh03}).

In \S \ref{sec:squash-down}, we prove that %the sphere $\{0\}\times S^{n-1}$ of radius 1 is an insphere for the squash-down of any ancient pancake which sweeps out $(-\frac{\pi}{2},\frac{\pi}{2})\times \R^n$. (We conjecture that the squash-down actually \emph{circumscribes} $\{0\}\times S^{n-1}$).
the squash-down of a solution which sweeps out a slab $(-\frac{\pi}{2},\frac{\pi}{2})\times \R^{n}$ of width $\pi$ necessarily \emph{circumscribes} $\{0\}\times S^{n-1}$ (Theorem \ref{thm:circumscribed}). %It remains unclear whether or not the squash-downs are necessarily \emph{regular} polytopes (or their limits, as is the case for the rotational examples \cite{BLT1,BLT2}).{\color{blue} remove this}.
 %In particular, this means that the main hypothesis in the existence theorem of \S \ref{sec:existence} (Theorem \ref{thm:polygonal pancakes}) is necessary.

In \S \ref{sec:forward asymptotics}, we show that the \emph{forward} squash-down of every convex eternal solution which sweeps out a slab is the ``exscribed body'' determined by its (backward) squash-down (Theorem \ref{thm:squash-up}). As a consequence, we obtain a new rigidity result for translators: every convex ancient solution in a slab (with bounded curvature on each timeslice) whose squash-down is a cone necessarily moves by translation (Corollary \ref{cor:wedge implies translation}). 

We also obtain, in Sections \ref{sec:squash-down} and \ref{sec:forward asymptotics} structure results for the backward and forward asymptotic translators (Propositions \ref{prop:converging sequences} and \ref{prop:converging sequences nc}) which, in particular, are used in Section \ref{sec:reflection symmetry} to prove reflection symmetry in the two dimensional case.

In \S \ref{sec:eternal}, we construct eternal solutions with squash-down equal to the circumscribed truncation of any regular, circumscribed cone. In particular, these examples do not evolve by translation, and hence provide counterexamples to \cite[Conjecture 1]{Wh03}. Roughly speaking, these solutions arise from a family of ``generalized flying wing'' translators with common asymptotes emerging at $t=-\infty$ and coalescing into a single translator at $t=+\infty$, preserving their total exterior dihedral angles.

We conclude, in \S \ref{sec:reflection symmetry}, with a proof that every ancient solution in $\R^3$ which is confined to a slab region is invariant under reflection across its mid-hyperplane (Theorem \ref{thm:reflection symmetry}). The proof relies on the description of the asymptotic translators obtained in \S\S\ref{sec:squash-down}-\ref{sec:forward asymptotics}.

\subsection*{Acknowledgements}
We would like to thank Ben Andrews for bringing the problem to our attention, Brett Kotschwar for suggesting the ``doubling'' method, which proved so effective in the construction and analysis of the noncompact examples in \S \ref{sec:existence} and \S \ref{sec:eternal} and Romanos Diogenes Malikiosis for useful conversations on convex geometry.

T.~Bourni and M.~Langford thank the mathematical research institute MATRIX in Australia where part of this research was undertaken and G.~Tinaglia thanks the Department of Mathematics at the University of Tennessee, Knoxville, for their hospitality during the academic year 2018/19.

M.~Langford acknowledges the support of an Australian Research Council DECRA fellowship.

The first paragraph of the abstract, quoted directly from \cite{Democritus}, paraphrases several of the precepts of Democritean atomism. Their connection with ancient solutions to mean curvature flow is not to be taken literally.
\section{Preliminaries}\label{sec:prelims}

\subsection{Convex hypersurfaces} A hypersurface $\M\subset \R^{n+1}$ is \textsc{convex} if it bounds a convex body. A \emph{smooth}, convex hypersurface is \textsc{locally uniformly convex} if its shape operator is everywhere positive definite. Since the shape operator of $\M$ is, up to identification of the parallel hyperplanes $T_p\M\cong T_{G(p)}S^n$, the differential of the Gauss map $G:\M\to S^n$, a convex, locally uniformly convex hypersurface can be parametrized by the inverse $X:G(\M)\to \M$ of $G$. This parametrization is known as the \textsc{Gauss map parametrization}. It is closely related to the \textsc{support function} $\sigma:S^n\to \R$ of $\M$, which we recall is defined by
\[
\sigma(z)\doteqdot \sup_{p\in \M}\left\langle p,z\right\rangle.
\]
Equivalently, $\sigma(z)$ is the distance to the origin of the boundary of the supporting halfspace with outer normal $z$ (taken to be $\infty$ if no such halfspace exists). Indeed, for a convex, locally uniformly convex hypersurface,
\[
X(z)=\sigma(z)z+\overline\nabla \sigma(z)\,,
\]
where $\overline \nabla$ is the gradient operator induced by the standard metric on the sphere $S^n$.

\subsection{Mean curvature flow of convex hypersurfaces} A family $\{\M_t\}_{t\in (\alpha,\,\omega)}$ of smooth hypersurfaces $\M_t$ of $\R^{n+1}$ \textsc{evolves by mean curvature flow} if it admits about each point a family $X:U\times I\to \R^{n+1}$ of local parametrizations $X(\cdot,t):U\to \M_t$ satisfying
\[
\partial_tX=-H\nu\,,
\]
where $\nu$ is a local choice of unit normal field and $H=\operatorname{div} \nu$ is the corresponding mean curvature. %An important property of mean curvature flow which we will make repeated use of is the \textsc{avoidance principle}, which asserts that initially disjoint proper hypersurfaces remain disjoint under mean curvature flow, so long as at least one of them is compact.

A family $\{\M_t\}_{t\in (\alpha,\,\omega)}$ of \emph{convex, locally uniformly convex} hypersurfaces $\M_t$ in $\R^{n+1}$ evolves by mean curvature flow if and only if the corresponding family of support functions $\sigma(\cdot,t):S^n\to\R$ satisfies
\begin{equation}\label{eq:MCF support}
\partial_t\sigma(z,t)=-H(z,t)
\end{equation}
for each $z\in G(\M_t)$, where $H(z,t)$ is the mean curvature at the point of $\M_t$ whose normal direction is $z$.

We will make use of both the ``standard'' and Gauss map parametrizations. No confusion should arise in conflating the two, since we denote points of $S^n$ by $z,w,\dots$, and points in the domain of a fixed parametrization by $p,q,\dots$. An important feature of the Gauss map parametrization is the observation that the differential Harnack inequality \cite{HamiltonHarnack} (which applies to all locally uniformly convex solutions $\{\M_t\}_{t\in(\alpha,\omega)}$ with bounded curvature on each timeslice $\M_t$) takes the simple form \cite{AndrewsHarnack}
\begin{equation}\label{eq:Harnack nonancient}
\partial_t\left(\sqrt{t-\alpha}\,H(z,t)\right)\ge 0\,.
\end{equation}
In particular, if $\alpha=-\infty$, then
\begin{equation}\label{eq:Harnack}
\partial_tH(z,t)\ge 0\,.
\end{equation}
Moreover, the inequality \eqref{eq:Harnack} is strict unless the solution moves purely by translation \cite{HamiltonHarnack}; that is,
\[
\M_{t+s}=\M_t+s\vec v
\]
for some $\vec v\in \R^{n+1}$, called the \textsc{bulk velocity}\footnote{We note that the bulk velocity of a convex translator may not be unique. Indeed, it is unique if and only if the translator is locally uniformly convex.} of the solution. The timeslices of a translating solution with bulk velocity $\vec v$ satisfy
\[
H(z)=-\inner{\vec v}{z}\,.
\]
Solutions to this equation are called \textsc{translators}. A simple but important observation is the fact that the Grim Reapers and stationary lines are the only  translating solutions to the mean curvature flow in the plane.

The differential Harnack inequality \eqref{eq:Harnack} is an indispensable tool in the study of ancient solutions to mean curvature flow (with bounded curvature on each timeslice). It implies, in particular, that the  limit
\begin{equation}\label{eq:H_limit}
H_\ast(z)\doteqdot \lim_{t\to-\infty}H(z,t)
\end{equation}
exists for each $z\in G_\ast$, where
\[
G_\ast\doteqdot \bigcup_{s<\omega}\bigcap_{t\le s}G(\M_t)\,.
\] 

The flow equation \eqref{eq:MCF support} and the differential Harnack inequality \eqref{eq:Harnack} imply that the support function $\sigma$ of a convex, locally uniformly convex ancient solution is locally concave with respect to $t$. In particular, this implies that
\begin{equation}\label{eq:forward backward limits}
-H(z,s)\le \frac{\sigma(z,t)-\sigma(z,s)}{t-s}\le -H(z,t)
\end{equation}
for any $t<s<\omega$ and any $z\in \cap_{\tau\in [t,s]} G(\M_\tau)$. Taking $t\to-\infty$ and then $s\to-\infty$ yields
%\[
%H_\ast(z)\le \sigma_\ast(z)\le H_\ast(z)\,.
%\]
\begin{equation}\label{H_ast=sigma_ast}
\lim_{t\to-\infty}H(z,t)\doteqdot H_\ast(z)=\sigma_\ast(z)\doteqdot \lim_{t\to-\infty}\frac{\sigma(z,t)}{-t}
\end{equation}
for $z\in G_\ast$. The function $\sigma_\ast$ coincides on $G_\ast$ with the support function of the limiting convex region
\[
\Omega_\ast\doteqdot \lim_{t\to-\infty}\frac{\Omega_t}{-t}\,,
\]
where $\Omega_t$ is the convex body bounded by $\M_t$. We extend $\sigma_\ast$ to $S^n$ so that the two functions agree everywhere. 

We refer to the degenerate convex set $\Omega_\ast$ as the \textsc{squash-down} of $\{\M_t\}_{t\in(-\infty,\,\omega)}$. If $\{\M_t\}_{t\in(-\infty,\,\omega)}$ lies in a slab, $(-\frac{\pi}{2},\frac{\pi}{2})\times \R^n$ say, then %$\sigma_\ast(\pm e_{1})=0$, where $\pm e_{1}$ are the corresponding poles. Equivalently, 
$\Omega_\ast$ lies in the hyperplane $\{0\}\times\R^n$.

By \eqref{H_ast=sigma_ast}, $\sigma_\ast(z)\ge 1$ for $z\in G_\ast\cap(\{0\}\times\R^n)$:% (and hence $\{0\}\times S^{n-1}\subset \Omega_\ast$)

\begin{proposition}\label{prop:sigma_ast ge 1}
Let $\{\M_t\}_{t\in(-\infty,\,\omega)}$ be a convex ancient solution to mean curvature flow in the slab $(-\frac{\pi}{2},\frac{\pi}{2})\times\R^{n}$. If $\sup_{\M_t}\vert \mathrm{II}\vert<\infty$ for each $t\in(-\infty,\omega)$, then
\[
\sigma_\ast(z)\ge 1
\]
for all $z\in G_\ast\cap(\{0\}\times\R^n)$, where $\sigma_\ast$ is the support function of the squash-down.
\end{proposition}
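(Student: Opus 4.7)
The identity $\sigma_\ast(z)=H_\ast(z)$ established in \eqref{H_ast=sigma_ast} reduces the claim to showing $H_\ast(z)\ge 1$ for every horizontal $z\in G_\ast$. The plan is to extract a convex asymptotic translator in the direction $z$ and then invoke a rigidity result for convex translators confined to slabs of width at most $\pi$.

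For each sufficiently negative $t$, choose $p_t\in\M_t$ with outer unit normal $z$ and set $\widetilde{\M}^{t}_s\doteqdot\M_{t+s}-p_t$. By the bounded-curvature hypothesis and standard parabolic compactness for convex MCF, along some sequence $t_j\to-\infty$ the flows $\widetilde{\M}^{t_j}_\cdot$ converge smoothly on compact subsets to a convex limit flow $\{T_s\}_{s\in\R}$, with $0\in T_0$ having outer normal $z$. Smooth convergence forces $H_{T_s}(z)=\lim_{j\to\infty}H(z,t_j+s)=H_\ast(z)$, constant in $s$; in the Gauss-map parametrization this reads $\partial_sH(z,s)\equiv 0$, and the rigidity case of \eqref{eq:Harnack} (``strict unless the solution moves purely by translation'') then forces $\{T_s\}$ to translate: $T_s=T_0+s\vec v$, with $H_{T_0}(z)=-\inner{\vec v}{z}=H_\ast(z)$ by the translator equation.

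I next pin down the geometry of $T_0$. Since $\M_t\subset(-\tfrac{\pi}{2},\tfrac{\pi}{2})\times\R^n$ and $(p_t)_1\in(-\tfrac{\pi}{2},\tfrac{\pi}{2})$, passing to a further subsequence so that $(p_{t_j})_1\to c_\infty\in[-\tfrac{\pi}{2},\tfrac{\pi}{2}]$, the limit $T_0$ lies in an $e_1$-slab of width $\pi$ centred at $-c_\infty$; requiring $T_s$ to remain in this fixed slab for all $s$ forces $\vec v\perp e_1$. Moreover $T_0$ cannot be a hyperplane: the outer normal $z$ at the origin would force $T_0=\{x:\inner{z}{x}=0\}$, and this hyperplane contains $\R e_1$ (because $z$ is horizontal) and therefore cannot lie in any $e_1$-slab of finite width.

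Thus $T_0$ is a convex, non-flat translator sitting inside an $e_1$-slab of width $w\le\pi$ with bulk velocity $\vec v\perp e_1$. The key input is the rigidity of such translators, which we plan to invoke from the authors' earlier classification of convex translators \cite{BLT2}: every non-flat convex translator inside a slab of width at most $\pi$ parallel to its direction of translation is a Grim hyperplane of width $\pi/|\vec v|$ whose Gauss image lies in $\spa(e_1,\hat v)$ with $\hat v\doteqdot\vec v/|\vec v|$. Combining $z\in G(T_0)$ horizontal with $\hat v\perp e_1$ forces $z=\pm\hat v$, and the nonnegativity $H_\ast(z)\ge 0$ selects $z=-\hat v$, so $\vec v=-H_\ast(z)\,z$ and the width-speed relation yields $\pi/H_\ast(z)=w\le\pi$, hence $H_\ast(z)\ge 1$ as claimed. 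The main obstacle is this last rigidity step, which identifies the Grim hyperplane as the essentially unique convex translator fitting in a slab of the critical width $\pi$.
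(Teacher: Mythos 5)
The setup is sound: you correctly reduce to $H_\ast(z)\ge 1$ via \eqref{H_ast=sigma_ast}, extract a convex asymptotic translator by parabolic compactness plus the rigidity case of the Harnack inequality (which is how Lemma~\ref{lem:backwards asymptotic translators} does it), correctly observe that the limit lies in an $e_1$-slab of width $\pi$ with $\vec v\perp e_1$, and correctly rule out a hyperplane. The gap is the final step. You invoke, as if from \cite{BLT2}, that \emph{every} non-flat convex translator in a slab of width $\le\pi$ parallel to its translation direction is a Grim hyperplane with $G\subset\spa(e_1,\hat v)$. That rigidity result is not in \cite{BLT2} (which constructs flying wings in slabs of width $>\pi$ and proves uniqueness only in the rotationally symmetric case), and in general dimension it is not available at this point in the paper; indeed Remark~\ref{rem:wing uniqueness} explicitly leaves the general uniqueness question for translators in slabs open, and the Grim-hyperplane rigidity that the paper does eventually establish (Corollary~\ref{cor:Grim uniqueness}) sits downstream of Theorems~\ref{thm:circumscribed} and~\ref{thm:squash-up}, both of which rely on the present proposition --- so invoking it here would be circular. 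You also need more than a speed bound: to pass from $|\vec v|\ge 1$ to $H_\ast(z)=-\inner{\vec v}{z}\ge 1$ you need $z=-\hat v$, which again requires knowing the full Gauss image of the limit.

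The paper instead avoids any classification of the asymptotic translator $\Sigma$. It compares $\Sigma$ directly against an explicit \emph{oblique Grim hyperplane} with the same bulk velocity $\vec v$ whose Gauss image contains $z$: this Grim hyperplane translates in $z$ with speed $-\inner{\vec v}{z}=H_\ast(z)$, so if $H_\ast(z)<1$ it occupies a slab (orthogonal to $e_1$) of width $\pi/H_\ast(z)>\pi$, strictly wider than the slab containing $\Sigma$. Since $\Sigma$ sits in the half-slab $\{\inner{p}{z}\le 0\}$, the Grim hyperplane can be slid parallel to the slab until it touches $\Sigma$ tangentially from outside, and the strong maximum principle for the (elliptic) translator equation with fixed $\vec v$ gives the contradiction. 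This is both more elementary and exactly calibrated to produce $H_\ast(z)\ge 1$ rather than $|\vec v|\ge 1$. You could repair your argument by replacing the cited rigidity with this barrier comparison; as written, however, the key step does not follow.
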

\begin{proof}
Suppose that $h_\ast \doteqdot \sigma_\ast(z) = H_\ast(z)$ is \emph{less than} 1 for some $z\in \{0\}\times S^{n-1}$ and let $\Sigma$ be any asymptotic translator corresponding to the normal direction $z$. Note that $\Sigma$ lies in a parallel slab of width at most $\pi$ and its bulk velocity $\vec v$ satisfyies $-\displaystyle\inner{\vec v}{z}=H_\ast(z)<1$. %In particular, $h_\ast>0$ --- else, by the strong maximum principle, $H\equiv 0$ and the translator is a stationary hyperplane transverse to the slab, which is impossible. 
On the other hand, the oblique Grim hyperplane with bulk velocity $\vec v$ which contains the normal $z$ lies in a slab of width \emph{greater than} $\pi$. Since $\Sigma$ lies in the half-slab $\{p\in(-\frac{\pi}{2},\frac{\pi}{2})\times\R^n:\inner{p}{z}\le 0\}$, the oblique Grim hyperlane may be translated parallel to the slab until it is tangent to $\Sigma$ from the outside, in contradiction with the strong maximum principle.
\end{proof}

Observe that the squash-down of the rotationally symmetric ancient pancake is $\Omega_\ast=\{0\}\times S^{n-1}$ \cite{BLT1}, while the squash-down of a Grim hyperplane is a halfspace which supports $\{0\}\times S^{n-1}$. %We will develop a theory of convex ancient solutions to mean curvature flow in slab regions with the Grim hyperplanes playing a similar role to half-spaces in the theory of convex bodies.
For more general translators, we obtain the following:

\begin{proposition}\label{prop:translator squash-down}
The squash-down of a convex translator\footnote{Of course, we really mean the squash-down of the corresponding translating solution $\{\Sigma+t\vec v\}_{t\in (-\infty,\infty)}$ to mean curvature flow.} $\Sigma$ of bulk velocity $\vec v$ is the cone
\[
\Omega_\ast=\bigcap_{z\in G(\Sigma)}\{p\in\{0\}\times \R^n:\inner{p}{z}\le -\inner{z}{\vec v}\}\,.
\]
\end{proposition}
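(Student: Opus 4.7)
The plan is to compute $\sigma_\ast$ explicitly on $G(\Sigma)$ and then reconstruct $\Omega_\ast$ from its support function. Since a translator has time-independent mean curvature $H(z,t)=-\inner{\vec v}{z}$, the identity $\sigma_\ast=H_\ast$ from \eqref{H_ast=sigma_ast} immediately gives
\[
\sigma_\ast(z)=H_\ast(z)=-\inner{\vec v}{z}\quad\text{for every }z\in G(\Sigma).
\]
Equivalently, one can verify this directly: the timeslice $\M_t=\Sigma+t\vec v$ has support function $\sigma(z,t)=\sigma_\Sigma(z)+t\inner{\vec v}{z}$, and for $z\in G(\Sigma)$ the value $\sigma_\Sigma(z)$ is finite (being attained at the Gauss preimage $X(z)$), so
\[
\sigma_\ast(z)=\lim_{t\to-\infty}\frac{\sigma_\Sigma(z)+t\inner{\vec v}{z}}{-t}=-\inner{\vec v}{z}.
\]

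Next I would recover $\Omega_\ast$ from $\sigma_\ast$. As a closed convex set, $\Omega_\ast$ equals the intersection of its supporting halfspaces $\{p:\inner{p}{z}\le\sigma_\ast(z)\}$ over all $z\in S^n$, indices with $\sigma_\ast(z)=+\infty$ contributing nothing. Directions in which $\Sigma$ extends to infinity satisfy $\sigma_\Sigma(z)=\sigma_\ast(z)=+\infty$, so the effective index set is contained in $\overline{G(\Sigma)}$. Restricting to the midplane $\{0\}\times\R^n$ (which contains $\Omega_\ast$, since $\vec v$ is parallel to the slab and the timeslices therefore remain in a fixed parallel slab) yields the claimed formula.

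The one subtlety requiring care is the possible presence of boundary directions $z\in\overline{G(\Sigma)}\setminus G(\Sigma)$ at which $\sigma_\Sigma$ is finite but unattained (as for the ``corner'' directions $(\pm1,0)\in S^1$ of the Grim Reaper), which in principle might contribute essential supporting halfspaces to $\Omega_\ast$ not appearing in the displayed formula. This is handled by a standard continuity/density argument: any such boundary halfspace arises as a limit of interior halfspaces $\{p:\inner{p}{z'}\le-\inner{\vec v}{z'}\}$ as $z'\in G(\Sigma)$ approaches $z$, and is therefore already cut out by the intersection over $G(\Sigma)$ alone.
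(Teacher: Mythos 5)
Your argument is correct and matches the paper's: both proofs rest on the identity $\sigma_\ast(z)=H_\ast(z)=-\inner{z}{\vec v}$ for $z\in G_\ast=G(\Sigma)$, obtained by combining \eqref{H_ast=sigma_ast} with the translator equation (the paper's proof is literally just that one line). You additionally spell out the reconstruction of $\Omega_\ast$ from $\sigma_\ast$ restricted to $G(\Sigma)$ — including the observation that $\sigma_\ast\equiv+\infty$ off $\overline{G(\Sigma)}$ and the continuity argument absorbing boundary directions — which the paper leaves implicit.
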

\begin{proof}
%Since
%\[
%\sigma(z,t)=-\inner{v}{z}t+\sigma(z,0)
%\]
By \eqref{H_ast=sigma_ast} and the translator equation,
\[
-\inner{z}{\vec v}=H_\ast(z)=\sigma_\ast(z)\doteqdot \sup_{p\in \Omega_\ast}\inner{z}{p}
\]
for every $z\in G_\ast=G(\Sigma)$. %It follows that $\vec v$ lies in the boundary of every halfspace which supports $\Omega_\ast$, which implies the claim.
\end{proof}

For convex \textsc{immortal} solutions $\{\M_t\}_{t\in(\alpha,\infty)}$, one can similarly study the \emph{forward} asymptotics. The limit
\[
\Omega^\ast\doteqdot \lim_{t\to\infty}\frac{1}{t}\Omega_t\,,
\]
if it exists, is called the \textsc{forward squash-down} of $\{\M_t\}_{t\in(\alpha,\infty)}$. For convex \textsc{eternal} solutions $\{\M_t\}_{t\in(-\infty,\infty)}$, we will sometimes refer to the squash-down as the \textsc{backward squash-down}. 

By the differential Harnack inequality, \eqref{eq:Harnack}, the limit
\[
H^\ast(z)\doteqdot \lim_{t\to\infty}H(z,t)
\]
exists for all $z\in G^\ast$ (although it might be infinite) on a convex eternal solution with bounded curvature on each timeslice, where
\[
G^\ast\doteqdot \bigcup_{s>-\infty}\bigcap_{t\ge s} G(\M_t)\,.
\]
By \eqref{eq:forward backward limits},
\begin{equation}\label{eq:forward sigma limit}
-H^\ast(z)=\sigma^\ast(z)\doteqdot \lim_{t\to\infty}\frac{\sigma(z,t)}{t}
\end{equation}
and $\sigma^\ast$ coincides on $G^\ast$ with the support function of $\Omega^\ast$. As for the backward squash-down, we extend $\sigma^\ast$ so that the two agree everywhere.

%Since $S^n$ is compact, $H_\ast:S^n\to\R$ is a continuous function. 
Finally, we use \eqref{eq:H_limit} and the rigidity case of the Harnack inequality to obtain that solutions converge to translators after space-time translating so that the normal at the origin at time zero is fixed (cf.  \cite[Lemma 5.2]{BLT1}).

\begin{lemma}\label{lem:backwards asymptotic translators}
Let $\{\M_{t}\}_{t\in(-\infty,\omega)}$ be a convex ancient solution to mean curvature flow satisfying $\sup_{\M_t}\vert{\operatorname{II}}\vert<\infty$ for each $t$ and whose Gauss image is constant in time. For each $z\in G_\ast$ and each sequence of normals $z_j\in G_\ast$ with $z_j\to z$,
%, each sequence of normals $z_j\in G_\ast$ with $z_j\to z$ 
there exist a sequence of times $s_j\to-\infty$, a linear subspace $E\subset \R^{n+1}\cong E^\perp\times E$, and a locally uniformly convex translating solution $\{\Sigma_t\}_{t\in(-\infty,\infty)}$ in $E^\perp$ such that $z\in G(\Sigma_t)$, the flows $\{\M_{s,t}\}_{t\in(-\infty,\,\omega-s)}$ defined by
\[
\M^j_{t}\doteqdot \M_{t+s_j}-X(z_j,s_j)
\]
converge locally uniformly in the smooth topology  to $\{\Sigma_t\times E\}_{t\in(-\infty,\infty)}$, the limit $\lim_{i\to\infty}\frac{X(z_j, s_j)}{-s_j}=:X_\ast(z)$ exists, and the bulk velocity of the limit translator, $\vec v\in E^\perp$, satisfies
\begin{equation}\label{MCoflimittr}
-\inner{\vec v}{w}=\inner{X_\ast(z)}{w}=H_\ast(w)\;\; \text{for all}\;\; w\in G(\Sigma_t)\,.
\end{equation}
\end{lemma}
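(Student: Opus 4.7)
The plan is to apply the differential Harnack inequality \eqref{eq:Harnack} to extract a subsequential limit of the translated flows, use its rigidity case to identify the limit as a translator, split off any Euclidean factor, and then verify the stated asymptotic identity.

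First, since each $\M_t$ is convex, $|\operatorname{II}|\le H$ pointwise (because $\vert\operatorname{II}\vert^2=\sum \kappa_i^2\le (\sum\kappa_i)^2=H^2$ when $\kappa_i\ge 0$), and \eqref{eq:Harnack} gives $H(w,t)\le H(w,t_0)$ for every $w\in G_\ast$ and $t\le t_0$, where $t_0<\omega$ is any fixed time. Combined with the hypothesis $\sup_{\M_{t_0}}|\operatorname{II}|<\infty$ and the continuity of $H(\cdot,t_0)$ on $G_\ast$, this yields uniform bounds on $|\operatorname{II}|$ in a Gauss-image neighborhood of $z$, backwards in time. Translating, set $\M^j_t:=\M_{t+s_j}-X(z_j,s_j)$ for a sequence $s_j\to-\infty$. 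Each $\M^j_0$ passes through the origin with outer normal $z_j\to z$, and local uniform convexity (a consequence of the constant Gauss image and the strong maximum principle) lets us parametrize $\M^j_t$ as graphs over the tangent hyperplane at the origin on a uniform neighborhood. Standard interior regularity for mean curvature flow then yields a subsequence along which $\M^j_t$ converges in $C^\infty_{\mathrm{loc}}$ on $\R^{n+1}\times\R$ to a smooth convex eternal solution $\{\Sigma'_t\}_{t\in\R}$ through the origin with normal $z$ there.

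Next, for each $w\in G(\Sigma'_0)$, smooth convergence and \eqref{eq:H_limit} give
\[
H_{\Sigma'_t}(w)=\lim_{j\to\infty}H(w,t+s_j)=H_\ast(w),
\]
which is independent of $t$. Consequently $\partial_t H\equiv 0$ on $\{\Sigma'_t\}$, and the rigidity case of \eqref{eq:Harnack} (see \cite{HamiltonHarnack}) forces $\Sigma'_t=\Sigma'_0+t\vec v$ for some $\vec v\in\R^{n+1}$ satisfying $-\inner{\vec v}{w}=H_\ast(w)$ on $G(\Sigma'_0)$. Letting $E\subset\R^{n+1}$ be the maximal subspace along which $\Sigma'_0$ is invariant under translation, convexity forces $\Sigma'_0=\Sigma\times E$ with $\Sigma\subset E^\perp$ locally uniformly convex and $\vec v\in E^\perp$; writing $\Sigma_t:=\Sigma+t\vec v\subset E^\perp$ produces the claimed product form.

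Finally, for each $w\in G(\Sigma_0)$ let $p(w)\in \Sigma_0$ denote the unique point with outer normal $w$. Smooth convergence yields $X(w,s_j)-X(z_j,s_j)\to p(w)$, so dividing by $-s_j$ gives
\[
\lim_{j\to\infty}\frac{X(w,s_j)}{-s_j}=\lim_{j\to\infty}\frac{X(z_j,s_j)}{-s_j}
\]
whenever either limit exists. Since $\inner{X(w,s_j)}{w}=\sigma(w,s_j)$, \eqref{H_ast=sigma_ast} gives $\langle X(w,s_j)/(-s_j),w\rangle\to H_\ast(w)$ for each $w\in G_\ast$, which controls the projection of $X(z_j,s_j)/(-s_j)$ onto $E^\perp$; the $E$-component is bounded because $X(z_j,s_j)/(-s_j)\in \Omega_{s_j}/(-s_j)$, which converges to the compact set $\Omega_\ast$. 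Passing to a further subsequence, $X(z_j,s_j)/(-s_j)\to X_\ast(z)$ exists, and the displayed convergence gives $\inner{X_\ast(z)}{w}=H_\ast(w)=-\inner{\vec v}{w}$ for every $w\in G(\Sigma_t)$, which is \eqref{MCoflimittr}. The main subtlety is the simultaneous treatment of the possible Euclidean factor $E$ and of the convergence of $X(z_j,s_j)/(-s_j)$: the identity \eqref{MCoflimittr} constrains only the $E^\perp$-projection of $X_\ast(z)$, consistent with the fact that support-function asymptotics alone do not pin down the $E$-component, which must instead be obtained from the squash-down.
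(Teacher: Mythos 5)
Your proof has two genuine gaps, both at points the paper treats with some care.

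The central one is the asserted identity $H_{\Sigma'_t}(w)=\lim_{j\to\infty}H(w,t+s_j)$ (equivalently, your later claim that $X(w,s_j)-X(z_j,s_j)\to p(w)$) for arbitrary $w\in G(\Sigma'_0)$. Smooth \emph{local} convergence of the translated flows does not give this. The problem is that $X(w,\cdot)$ denotes the unique point with normal \emph{exactly} $w$, and after translating by $-X(z_j,s_j)$, this point may drift off to infinity along the $E$-directions (where the eigenvalues of $\operatorname{II}$ degenerate to zero) even though $w$ lies in $G(\Sigma'_0)$. In that case $H(w,t+s_j)$ is evaluated at points escaping every compact set and local convergence controls nothing. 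The paper sidesteps this by not fixing the normal: it takes the point on the limit with normal $w$, pulls back a sequence of normals $w_j^t\to w$ with $X(w_j^t,t+s_j)-X(z_j,s_j)$ bounded, and then proves $\lim_j H(w_j^t,t+s_j)=H_\ast(w)$ via a two-sided estimate -- the lower bound from the Harnack inequality together with continuity of $H_\ast$, the upper bound by writing $\sigma(w_j^t,t+s_j)=\langle X(w_j^t,t+s_j)-X(z_j,s_j),w_j^t\rangle+\langle X(z_j,s_j),w_j^t\rangle$ and using $(-s_j)^{-1}X(z_j,s_j)\to X_\ast(z)$. This pinching argument is exactly the content of \eqref{MCoflimittr}; you cannot take it as a consequence of smooth convergence.

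The second gap is your argument for the existence of $X_\ast(z)=\lim_j(-s_j)^{-1}X(z_j,s_j)$. You appeal to compactness of $\Omega_\ast$, but $\Omega_\ast$ is compact only when $\M_t$ is compact -- the lemma is applied to noncompact solutions (e.g.\ translators), where $\Omega_\ast$ is unbounded. The paper instead argues by contradiction: if $(-s_j)^{-1}|X(z_j,s_j)|\to\infty$, comparison with a sequence $\hat z_j\to z$ realizing a boundary point of $\Omega_\ast$ would produce a half-line in $\Omega_\ast$ orthogonal to $z$ emanating from the $z$-supporting face, which is excluded by $z\in G_\ast$. The remainder of your outline (extracting a subsequential limit via Harnack and second-fundamental-form bounds, splitting off $E$, and identifying the translator via Harnack rigidity once $\partial_tH\equiv 0$ is known) matches the paper, but the rigidity step itself is premised on the $t$-independence of $H_\ast(w)$ on the limit, which is precisely the unproved claim above; the paper only establishes translation directly in the special case $z_j\equiv z$, where $X(z_j,s_j)-X(z_j,s_j)=0$ so the boundedness issue disappears, and then deduces the general case from the $w_j^t$ estimate.
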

\begin{proof}
By the differential Harnack inequality, the mean curvature of the sequence is bounded on any time interval of the form $(-\infty,T]$, $T<\infty$, for $j$ sufficiently large. Since the solution is convex, this implies a bound for the second fundamental form. Since the solutions pass through the origin at time zero, standard bootstrapping and compactness results yield a convex, eternal limit solution along some sequence of times $s_j\to-\infty$. By the splitting theorem for the second fundamental form, we can find a linear subspace $E\subset \R^{n+1}\cong E^\perp\times E$ and a locally uniformly convex eternal solution $\{\Sigma_t\}_{t\in(-\infty,\infty)}$ to mean curvature flow in $E^\perp$ such that the limit is of the form $\{\Sigma_t\times E\}_{t\in(-\infty,\infty)}$. 

We first show that $\{\Sigma_t\}_{t\in(-\infty,\infty)}$ is a translating solution in the special case\footnote{In this case, we do not require that the Gauss image is constant in time. However, we will see later (Proposition \ref{constant Gauss}) that this is always the case.} $z_j\equiv z$.
Indeed, since the solutions converge smoothly as convex bodies to the eternal limit, the support functions converge. Thus,
\[
\sigma_\infty(z,t)=\lim_{j\to\infty}(\sigma(z,t+s_j)-\sigma(z,s_j))\,,
\]
where $\s_\infty(\cdot, t)$ denotes the support function of $\Sigma_t\times E$. 
Since $\sigma$ is a concave function of $t$, the limit on the right is a linear function of $t$. In fact, by \eqref{eq:forward backward limits} it is given by $-H_\ast(z) t$. Restricting to the locally uniformly convex cross-section, we conclude that
\[
\partial_tH_\infty(z,0)=0\,,
\]
where $H_\infty(\cdot, t)$ denotes the mean curvature of $\Sigma_t$. The rigidity case of the Harnack inequality then implies that the cross-section evolves by translation (in the special  case $z_j\equiv z$).
%\texttt{Can we conclude, a posteriori, that $\overline \nabla H$ is bounded?}

Next, we show that \eqref{MCoflimittr} holds in the general case. 
Note that by construction, $\{\Sigma_t\}_{t\in(-\infty,\infty)}$ satisfies $z\in G(\Sigma_0)$ and 
\[
H(z,0)=\lim_{j\to\infty}H(z_j, s_j)\,.
\]
Let $w$ be any vector in $G(\Sigma_0)$. Then, for $|t|$ sufficiently small, $w\in G(\Sigma_t)$ and hence there is a sequence of normals $w_j^t\in G(\M_{t+s_j})=G_\ast$ such that $X(w_j^t, t+s_j) -X(z_j, s_j)$ converges to a point on $\Sigma_t$ whose normal is $w$. Moreover,
\[
H(w,t)=\lim_{j\to\infty} H(w^t_j, t+s_j)\,.
\]

We next claim that 
\begin{equation}\label{HHast}
\lim_{j\to\infty}H(w^t_j, t+s_j)= H_\ast (w)\,.
\end{equation}

First note that, by the differential Harnack inequality and the continuity of $H_\ast(\cdot)$,
\begin{equation}\label{eq:in fact 1}
\lim_{j\to\infty}H(w^t_j, t+s_j)\ge \lim_{j\to\infty}H_\ast(w^t_j)= H_\ast(w)\,.
\end{equation}
On the other hand, by \eqref{eq:forward backward limits}, we have that
\[
\lim_{j\to\infty}H(w^t_j, t+s_j)\le \lim_{j\to\infty}\frac{\s(w^t_j, t+s_j)}{-t-s_j}.
\]
We claim that 
\[
 \lim_{j\to\infty}\frac{\s(w^t_j, t+s_j)}{-t-s_j}\le \s_\ast(w)\,.
\]
Note first that for any sequence $t_j\to-\infty$
\[
\limsup_{j\to\infty}\,(-t_j)^{-1}|X(z_j,t_j)|<\infty\,. 
\]
Indeed, if not, then, after passing to a subsequence,
\begin{equation}\label{eq:limit exists}
(-t_j)^{-1}|X(z_j,t_j)|\underset{j\to\infty}{\to}\infty\,.
\end{equation}
Since $(-t)^{-1}\Omega_t\to \Omega_\ast$, for any point $X_\ast(z)$ in $\Omega_\ast$ which lies in a support hyperplane whose normal is $z$, there exists a sequence of normals $\widehat z_j\in G_\ast$ with $\widehat z_j\to z$ such that $(-t_j)^{-1}X(\widehat z_j, t_j)\to X_\ast(z)$. By convexity, this implies that $(-t_j)^{-1}(X(z_j, t_j)- X(\widehat z_j, t_j))$ converges to a half line orthogonal to $z$ which is contained in $\Omega_\ast$. However, this is impossible since $z\in G_\ast$. We conclude that, after passing to a subsequence,
\begin{equation}\label{ptwlimit}
(-s_j)^{-1}X(z_j,s_j)\underset{j\to\infty}{\to} X_\ast(z)
\end{equation}
for some point $X_\ast(z)$ in $\Omega_\ast$ which lies in support hyperplane whose normal is $z$. 

Next, we compute
\[
\begin{split}
\s(w^t_j, t+s_j)&= \langle X(w^t_j, t+s_j), w^t_j\rangle\\
&= \langle X(w^t_j, t+s_j)-X(z_j, s_j), w^t_j\rangle+\langle X(z_j, s_j), w^t_j\rangle\,.
%&= \langle X(z^t_j, t+s_j)-X(z, s_j), z_j\rangle+\langle X(z, s_j), z\rangle+\langle X(z, s_j), z_j-z\rangle\\
%&=\s(z, s_j)+\langle X(z^t_j, t+s_j)-X(z, s_j), z_j\rangle+\langle X(z, s_j), z_j-z\rangle
\end{split}
\]
Since $|X(w^t_j, t+s_j)-X(z_j, s_j)|$ is bounded uniformly in $j$, \eqref{ptwlimit} and $w^t_j\to w$ imply that
\begin{equation}\label{eq:in fact 2}
\lim_{j\to\infty}\frac{\s(w^t_j, t+s_j)}{-t-s_j}= \lim_{j\to\infty}\frac{\langle X(z_j, s_j), w^t_j\rangle}{-t-s_j}=\langle X_\ast(z),w\rangle\le\s_\ast(w)\,.
\end{equation}
Thus we obtain \eqref{HHast} and in fact (by \eqref{eq:in fact 1} and \eqref{eq:in fact 2}) also \eqref{MCoflimittr}.
\end{proof}

We refer to any translator obtained in this way as an \textsc{asymptotic translator} of $\{\M_{t}\}_{t\in(-\infty,\,\omega)}$ (corresponding to $z\in G_\ast$). 

If $\{\M_{t}\}_{t\in(-\infty,\,\infty)}$ is itself a translator, we obtain asymptotic translators at spatial infinity.

%{\color{blue} remark : $\vec v$ is unique only after splitting off any lines!}

\begin{lemma}\label{lem:backwards asymptotic translators2}
Let $\M$ be a convex translator. For each $z\in\overline{G(\M)}$ and each sequence of normals $z_j\in G(\M)$ converging to $z$, there exists a nontrivial linear subspace $E\subset \R^{n+1}\cong E^\perp\times E$, and a locally uniformly convex translator $\Sigma$ in $E^\perp$ such that the translators
\[
\M^j\doteqdot \M-X(z_j)
\]
converge locally uniformly in the smooth topology, along a subsequence, to $\Sigma\times E$. The bulk velocity of the limit, $\vec v\in E^\perp$, satisfies
\[
-\inner{\vec v}{z}=\lim_{j\to\infty}H(z_j)\,.
\]
\end{lemma}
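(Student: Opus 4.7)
The plan is to follow the strategy of Lemma \ref{lem:backwards asymptotic translators}, replacing the spacetime recentering $(x,t)\mapsto (x-X(z_j,s_j),\,t+s_j)$ by the purely spatial recentering $x\mapsto x-X(z_j)$ and using the static translator equation in place of the parabolic evolution.

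First I would produce a smooth limit. Let $\vec v_0$ denote a bulk velocity of $\M$. The translator equation $H=-\langle \vec v_0,\nu\rangle$ gives $|H|\le |\vec v_0|$, and convexity $0\le\kappa_i\le H$ then gives a uniform bound on $|\mathrm{II}|$ throughout $\M$. The translates $\M^j\doteqdot \M-X(z_j)$ are therefore convex translators with a common bulk velocity $\vec v_0$ and a common second fundamental form bound, each passing through the origin with outward unit normal $z_j\to z$. Standard bootstrapping for the quasilinear elliptic translator PDE, combined with Arzel\`a--Ascoli, then yields a subsequence converging locally uniformly in the smooth topology to a convex translator $\M_\infty$ through the origin, with normal $z$ there and bulk velocity $\vec v_0$.

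Next I would split $\M_\infty$. The rigidity case of the strong maximum principle, applied to the smallest principal curvature along the eternal mean curvature flow $\{\M_\infty+t\vec v_0\}_{t\in\R}$, implies that any zero direction of $\mathrm{II}$ on $\M_\infty$ is parallel and generates a line-splitting of $\M_\infty$; iterating, $\M_\infty=\Sigma\times E$ with $E\subset\R^{n+1}$ a linear subspace and $\Sigma$ a locally uniformly convex translator in $E^\perp$. Setting $\vec v\doteqdot \mathrm{proj}_{E^\perp}\vec v_0$, translation of $\Sigma\times E$ by $t\vec v_0$ coincides with translation by $t\vec v$ modulo the $E$-symmetry, so $\Sigma$ is a translator in $E^\perp$ with bulk velocity $\vec v\in E^\perp$. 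Since all unit normals to $\M_\infty$ lie in $E^\perp$, we have $z\in E^\perp$ and hence $\langle\vec v_0,z\rangle=\langle\vec v,z\rangle$; the translator equation at the origin therefore reads $H_\infty(z)=-\langle\vec v,z\rangle$, and smooth convergence yields $\lim_{j\to\infty}H(z_j)=H_\infty(z)=-\langle\vec v,z\rangle$ as required.

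The main obstacle is the assertion that $E$ is nontrivial. When $z\in G(\M)$, continuity of $X$ on $G(\M)$ forces $X(z_j)\to X(z)$ and the limit $\M-X(z)$ need not split, so the claim is really intended for the boundary case $z\in\overline{G(\M)}\setminus G(\M)$, in which $|X(z_j)|\to\infty$ along a subsequence. To extract $\dim E\ge 1$ in that case I would argue by contradiction: if $\M_\infty$ were locally uniformly convex in a neighborhood of the origin, then smooth convergence $\M^j\to\M_\infty$ together with local invertibility of the Gauss map around $z$ would force $G(\M)=G(\M^j)$ to contain an open neighborhood of $z$ in $S^n$, contradicting $z\notin G(\M)$. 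Hence $\mathrm{II}$ has a nontrivial kernel at the origin, and the splitting step above then yields $\dim E\ge 1$.
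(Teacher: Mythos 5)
Your proof is essentially correct and follows the same overall strategy as the paper: use the translator equation plus convexity to get a uniform second fundamental form bound, extract a smooth subsequential limit through the origin by standard bootstrapping and compactness, invoke the splitting theorem to write the limit as $\Sigma\times E$, and read off the bulk velocity from the translator equation. The one place where you depart from the paper is the verification that $E$ is nontrivial. The paper passes to a subsequence so that $X(z_j)/|X(z_j)|\to w$ and then observes that $w$ lies in the recession cone of $\Omega$ (and hence of each translated body), while the rays $[0,\,q-X(z_j)]$ sweep out the opposite ray $\R_{\le 0}\,w$ in the limit; convexity then forces the limit body to contain a full line parallel to $w$, giving $\R w\subset E$. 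You instead argue by contradiction through the Gauss map: if the second fundamental form of $\M_\infty$ were nondegenerate at the origin, smooth convergence and local invertibility of the Gauss map would force $G(\M)=G(\M^j)$ to contain a full neighbourhood of $z$, contradicting $z\notin G(\M)$. Both arguments are valid; yours is perhaps a bit more direct, while the paper's supplies the additional geometric information that the split-off line is parallel to the escape direction $w$ (which is sometimes useful). You also correctly flag that the lemma's conclusion can only hold when $z\in\overline{G(\M)}\setminus G(\M)$ (equivalently $|X(z_j)|\to\infty$); the paper handles this by declaring ``without loss of generality $|X(z_j)|\to\infty$,'' since the other case merely returns a translate of $\M$ itself, so the two treatments of this point are equivalent.
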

\begin{proof}
Let $\{z_j\}_{j\in \N}$ be a sequence of normals $z_j\in G(\M)$ which converge to $z$. Without loss of generality, $\vert X(z_j)\vert\to\infty$ (since else, after passing to a subsequence, we obtain a finite translate of $\M$ in the limit). Passing to a subsequence, we may arrange that $\lim_{j\to\infty}\frac{X(z_j)}{\vert X(z_j)\vert}$ converges to some limit direction $w$. Since the translator is convex, the translator equation implies a bound for the second fundamental form. Since each $\M^j$ passes through the origin, standard bootstrapping and compactness results, and the splitting theorem, yield a subspace $E\subset \R^{n+1}\cong E^\perp\times E$ and a locally uniformly convex translator $\Sigma\subset E^\perp$ such that $\M^j\to \Sigma\times E$ locally uniformly in the smooth topology after passing to a further subsequence. The subspace $E$ is nontrivial since $\Sigma$ splits off the line $\R w$.  The bulk velocity of the limit, $\vec v\in E^\perp$, satisfies
\[
-\inner{\vec v}{z}=H(z)=\lim_{j\to\infty}H_j(z_j)=\lim_{j\to\infty}H(z_j)\,,
\]
where $H_j$ denotes the mean curvature of $\M^j$.
\end{proof}

We refer to any translator obtained in this way as an \textsc{asymptotic translator} of $\M$ (corresponding to $z\in \overline{G(\M)}$).

\subsection{Polytopes} The intersection of finitely many closed halfspaces in a finite dimensional linear space is called a (\textsc{convex}) \textsc{polytope}\footnote{In order to avoid cumbersome repetition of the word ``convex", we follow the convention of referring to \emph{convex polytopes} simply as \emph{polytopes}. Note that we do not require polytopes to be bounded --- Many sources reserve the term ``polytope'' for the bounded case, using the term \textsc{(convex) polyhedron} for the general (unbounded) case.} \cite{Polytopes}. The \textsc{dimension} of a polytope $P$ is the dimension of its \textsc{affine hull} (the smallest affine subspace which contains $P$). An $n$-dimensional polytope is called an $n$-\textsc{polytope}. A $2$-polytope is called a \textsc{convex polygon}. Note however that, in contrast to the common notion of polygon, polytopes are allowed to be unbounded according to the definition used here. 

A subset $f$ of a polytope $P$ is called a \textsc{face} of $P$ if $f=P\cap H$ for some supporting hyperplane $H$ of $P$. The faces of $P$ are themselves polytopes. A $k$-dimensional face is called a $k$-\textsc{face}. The $0$-faces of $P$ are called \textsc{vertices}, the $1$-faces \textsc{edges} and the $(n-1)$-faces \textsc{facets}. 
%A convex polytope $P\subset \R^n$ \textsc{circumscribes a sphere} $S\subset \R^{n}$ if it is the intersection of finitely many supporting halfspaces for $S$. More generally, a
A \textsc{flag} of an $n$-polytope is a finite sequence $\{f_0,\dots, f_n\}$ of $j$-faces $f_j$ (exactly one for each $j=0,\dots,n$) such that $f_j\subset f_{j+1}$ for each $0\le j \le n-1$.

A \textsc{simplex} is a polytope with the least possible number of faces relative to its dimension after splitting off any lines. More precisely, a bounded polytope $P$ of dimension $k$ is a (\textsc{bounded}) \textsc{simplex} if it has $k+1$ vertices. An unbounded polytope is an (\textsc{unbounded}) \textsc{simplex} if it is a nontrivial cone whose link is a bounded simplex.

We say that a convex body $P$ \textsc{circumscribes} another convex body $S$ if $P$ is the intersection of a family of halfspaces which support $S$. If the linear space is normed and $S$ is the unit sphere, we simply say that $P$ is \textsc{circumscribed}. 

A polytope whose symmetry group acts transitively on its flags necessarily circumscribes a sphere. We will call a circumscribed polytope \textsc{regular} if its symmetry group acts transitively on its flags.

We denote by $\mathrm{P}^n_\ast$ the set of circumscribed polytopes in $\R^{n}$, and by $\overline{\mathrm{P}}{}^n_\ast$ the set of circumscribed convex bodies in $\R^n$. %We will usually identify $\R^n$ with the subspace $\{0\}\times \R^n$ of $\R^{n+1}$.%can be approximated by polyhedra in $\mathrm{P}^n_\ast$.

The \textsc{relative interior}, $\relint \Omega$, of a convex set $\Omega$ is its interior with respect to its affine hull. Analogously, $\relint \Omega$ of a convex set $\Omega\subset S^n$ is its interior with respect to the smallest subsphere (the intersection with $S^n$ of a linear subspace) in which it lies.

Define the \textsc{Gauss image} $G(\Omega)$ of a convex set $\Omega$ to be the set of unit outward normals to halfspaces which support $\Omega$. Note that $G(\Omega)$ is the whole sphere if $\Omega$ is bounded, whereas $G(\Omega)$ lies in a closed hemisphere if $\Omega$ is unbounded. We shall say that a circumscribed convex body $P\in \overline{\mathrm{P}}{}_\ast^n$ is \textsc{degenerate} if $G(P)$ lies in a closed hemisphere but in no open hemisphere. Else, we shall say that $P$ is \textsc{nondegenerate}. We will also say that a degenerate polytope $P$ is \textsc{fully degenerate} if $G(P)$ is a closed hemisphere (of any codimension) and \textsc{semi-degenerate} otherwise.

%Note that the squash-down of the unique rotationally symmetric ancient pancake is $\Omega_\ast=S^{n-1}\times\{0\}$ \cite{BLT1}. We conjecture that the squash-down of every ancient pancake circumscribes $\{0\}\times S^{n-1}$.

%\begin{conjecture}\label{conjecture}
%The squash-down $\Omega_\ast\subset \{0\}\times\R^n\cong \R^n$ of any ancient pancake defined in $(-\frac{\pi}{2},\frac{\pi}{2})\times\R^n$ circumscribes $\{0\}\times S^{n-1}$.
%\end{conjecture}

The \textsc{support cone} (or \textsc{tangent cone}) of a closed convex set $\Omega$ in $\R^n$ at a point $p\in\Omega$ is defined to be
\begin{equation}\label{def:tangent cone}
T_p\Omega\doteqdot \overline{\{\lambda(q-p):q\in\Omega,\lambda>0\}}\,.
\end{equation}

\begin{comment}
The \textsc{support cone} (or \textsc{tangent cone}) and the (\textsc{outer}) \textsc{normal cone} of a closed convex set $\Omega$ in $\R^n$ at a point $p\in\Omega$ are defined to be
\begin{equation}\label{def:tangent cone}
T_p\Omega\doteqdot \overline{\{\lambda(q-p):q\in\Omega,\lambda>0\}}
\end{equation}
and
\begin{equation}\label{def:normal cone}
N_p\Omega\doteqdot \{u\in\R^n:\inner{p-q}{u}\ge 0\;\;\text{for all}\;\; q\in\Omega\},
\end{equation}
respectively. We denote the unit supporting normals by
\begin{equation}\label{def:normal cone}
S_p\Omega\doteqdot N_p\Omega\cap S^n
\end{equation}
We also set
\[
T\Omega\doteqdot \bigsqcup_{p\in\Omega}T_p\Omega\,,\;\;N\Omega\doteqdot \bigsqcup_{p\in\Omega}N_p\Omega\,,\;\;\text{and}\;\;S\Omega\doteqdot \bigsqcup_{p\in\Omega}S_p\Omega\,,
\]
and, by a common abuse of notation, conflate, for example, $(p,z)\in T\Omega$ with $z\in T_p\Omega$.
\end{comment}

\subsection{The rotationally symmetric ancient pancake}\label{ssec:rotational pancake}

As it will feature frequently in the sequel, it is well to recall here some basic properties of the rotationally symmetric ancient pancake.

In \cite{BLT1} (cf. \cite{Wa11}), it was proved that, for each $n\in\N$, there exists a unique (modulo spacetime translation) rotationally symmetric ancient solution $\{\Pi_t\}_{t\in(-\infty,0)}$ to mean curvature flow which sweeps out $(-\frac{\pi}{2},\frac{\pi}{2})\times \R^n$. The one dimensional example is the well-known \textsc{Angenent oval} $\{\mathrm{A}_t\}_{t\in(-\infty,0)}$, which is given by
\[
\mathrm{A}_t\doteqdot \{(x,y)\in \R^2:\cos x=\mathrm{e}^t\cosh y\}.
\]
Alternatively, $\mathrm{A}_t$ can be described by the immersion $\theta\mapsto (x(\theta,t),y(\theta,t))$, where, setting $a(t)\doteqdot (e^{-2t}-1)^{-\frac{1}{2}}$,
\begin{subequations}
\begin{align}
x(\theta,t)\doteqdot{}& \arctan\left(\frac{\sin\theta}{\sqrt{\cos^2\theta+a^2(t)}}\right)\label{eq:Angenent oval x}\,,\\
y(\theta,t)\doteqdot{}& -t+\log\left(\frac{\sqrt{\cos^2\theta+a^2(t)}+\cos\theta}{\sqrt{1+a^2(t)}}\right).\label{eq:Angenent oval y}
\end{align}
\end{subequations}
 This parametrization is clockwise oriented, and the parameter $\theta\in \R/2\pi \Z$ corresponds to the (clockwise oriented) \textsc{turning angle} of $\mathrm{A}_t$. With respect to this convention, the unit tangent vector is given by $\tau(\theta,t)=(\cos\th, -\sin\th)$ and the outward pointing unit normal is given by $\nu(\theta,t)=(\sin\th, \cos\th)$. The curvature $\kappa(\cdot,t)$ of $\mathrm{A}_t$, with respect to $\theta$, is given by
\begin{equation}\label{eq:Angenent oval kappa}
\kappa(\theta,t)=\sqrt{\cos^2\theta+a^2(t)}\,.
\end{equation}

In higher dimensions, $\{\Pi_t\}_{t\in(-\infty,0)}$ is obtained by taking the limit as $R\to\infty$ of the old-but-not-ancient flows $\{\Pi^R_t\}_{t\in[\alpha_R,0)}$ obtained by evolving the rotation
\[
\Pi^R\doteqdot \big\{x(\theta,-R)e_1+y(\theta,-R)\phi:(\theta,\phi)\in [-\tfrac{\pi}{2},\tfrac{\pi}{2}]\times S^{n-1}\big\}
\]
of $\mathrm{A}_{-R}$ about the $x$-axis, and translating in time so that the singularity occurs (at the origin) at time zero.

The \textsc{horizontal displacement}%\footnote{That's right, we think of $e_1$ as pointing ``upwards''. Got a problem with that?}
\[
\ell(t)\doteqdot \sigma(e_2,t)
\]
of $\{\Pi_t\}_{t\in(-\infty,0)}$ satisfies
\begin{equation}\label{eq:ancient pancake radius}
\ell(t)=-t+(n-1)\log(-t)+c_n+o(1)\;\;\text{as}\;\; t\to-\infty\,,
\end{equation}
where $c_n$ is a constant which depends only on $n$. By \eqref{eq:Angenent oval y}, $c_1=\log 2$.

For each $z\in\{0\}\times S^{n-1}$, the corresponding asymptotic translator is the Grim hyperplane (of width $\pi$): 
\begin{equation}\label{eq:pancake edge asymptotics}
\{\Pi_{t+s}-X(z,s)\}_{t\in(-\infty,-s)}\underset{C^\infty_{\mathrm{loc}}}{\longrightarrow}\{\rho_z(\Gamma_t\times \R^{n-1})\}_{t\in(-\infty,\infty)}\,,
\end{equation}
as  $s\to-\infty$, where $\{\Gamma_t\}_{t\in(-\infty,\infty)}$ is the Grim Reaper and $\rho_z$ is a rotation in $\{0\}\times\R^n$ which takes $-e_{2}$ to $z$.

\subsection{Alexandrov reflection}

Alexandrov reflection will be applied in \S \ref{sec:reflection symmetry} to  prove reflection symmetry of convex ancient solutions in slab regions. We make use of ``tilted'' hyperplanes, inspired by an argument of Korevaar, Kusner and Solomon \cite{KorevaarKusnerSolomon} in the context of constant mean curvature surfaces in $\R^3$.

Given a hyperplane $\pi\subset \R^{n+1}$ with unit normal $e$, denote by $\pi ^\perp\doteqdot\{he:h\in\R\}$ the line orthogonal to $\pi$. Given $h\in \R$ and $p\in \pi$, denote by $\pi_h$ the $\pi$-parallel hyperplane displaced by $h$, by $\Pi_h^+$ and $\Pi_h^-$ the corresponding closed upper and lower halfspaces, and by $\pi_p^\perp$ the orthogonal line passing through $p$; that is,
\[
\pi_h\doteqdot \pi+he\,,\,\,\Pi_h^+\doteqdot \bigcup_{s\ge h}\pi_s\,,\,\, \Pi_h^-\doteqdot \bigcup_{s\le h}\pi_s\,,\;\;\text{and}\;\;\pi_p^\perp\doteqdot p+\pi^\perp.
\]
For any set $G\subset \R^{n+1}$
% let $G_h^+$ ($G_h^-$) be the portion of $G$ above (below) $\pi_h$ and 
we let $R_h(G)$ be its reflection through $\pi_h$:
\[
R_h(G)\doteqdot \{p+(h-r)e: p\in \pi\,,\,\,p+(h+r)e\in G\}\,.
\]

%From now on we will consider planes $\pi$ that contains a line $\ell$ in the $(x,y)$-plane that passes through the origin but they are not the $(x,y)$-plane but they are rather a bit tilted.  
Consider now a convex ancient solution $\{\M_t\}_{t\in (-\infty, \o)}$  in $(-\frac{\pi}{2},\frac{\pi}{2})\times\R^n$ and denote by $\Omega_t$ the convex body bounded by $\M_t$. For each $t\in(-\infty, \o)$ and $p\in \pi\cap\Omega_t$, we let $h_+(p,t)$ and $h_-(p,t)$ be, respectively, the largest and smallest $h\in \R$ so that $p+he\in \M_t$, and define the function
\begin{equation}\label{afct}
\a(p,t)= \frac{h_+(p,t)+h_-(p,t)}{2}\,.
\end{equation}
%We need to show that $\a=0$

%Consider any $q\in \partial \Omega$ and let $z_q=\lim_{t\to -\infty}\la P(\nu_q, t), e_3\ra$. Note that $z_q$ is the hight of the tip of the limit translator $T_{\nu_q}=\lim\M_t- P(\nu_q, t)$. We will show that $z_q$ is constant in $q$.

By the strong maximum principle, $\alpha$ is maximized on the parabolic boundary of any bounded subset of space-time.
\begin{lemma}\label{lem:wedge}
Fix any plane $\pi$. Given any open $W\subset \R^{n+1}$ and any $[t,t_0]\subset \R$ such that $W\cap\pi\cap \Omega_s$ is bounded for all $s\in[t, t_0]$,
\[
\max_{W\times [t, t_0]}\a=\max\left\{\max_{\partial W\times [t, t_0]}\a, \max_{W\times\{t\}}\a\right\}\,.
\]
\end{lemma}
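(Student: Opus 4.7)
The plan is to recognize Lemma 2.4 as a strong maximum principle for $\a$: the inequality $\a(p,s)\le h_0$ is equivalent, by reflection through $\pi_{h_0}$, to the statement that the reflected lower boundary of $\Omega_s$ lies above its upper boundary along each vertical fiber $\pi_p^\perp$. If $\a$ achieved an interior maximum strictly greater than its values on the parabolic boundary of $W \times [t, t_0]$, then two smooth mean curvature flows would meet tangentially at an interior spacetime point, and the strong maximum principle would propagate the contact backward to time $t$, producing a contradiction.

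More precisely, suppose for contradiction that $\a$ attains its maximum $h_0$ on $\overline{W}\times [t,t_0]$ at a point $(p_0, s_0)$ with $p_0\in W$ and $s_0\in (t,t_0]$, and that $h_0$ strictly exceeds both $\max_{\partial W\times [t,t_0]}\a$ and $\max_{W\times\{t\}}\a$. First I would assume the nondegenerate situation $p_0\in \mathrm{int}(\pi\cap \Omega_{s_0})$, so that the upper and lower boundaries $h_\pm(\cdot, s)$ are smooth graphs over a $\pi$-neighborhood of $p_0$ for $s$ near $s_0$. Setting $\tilde h_-\doteqdot 2h_0 - h_-$ (the reflection of the lower boundary through $\pi_{h_0}$) and using that MCF is invariant under reflection, both $h_+$ and $\tilde h_-$ solve the same quasilinear graphical mean curvature flow equation (with outward normal in the $+e$ direction). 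The condition $\a\le h_0$ becomes $\tilde h_- \ge h_+$, with equality at $(p_0, s_0)$. The difference $u\doteqdot \tilde h_- - h_+$ is nonnegative, vanishes at $(p_0, s_0)$, and satisfies a linear second-order parabolic equation (obtained by subtracting the two quasilinear graph equations and writing the coefficient differences via the mean value theorem). The strong maximum principle then forces $u\equiv 0$ on the connected component of the smooth regime containing $(p_0,s_0)$ for all $s\in [t,s_0]$, which gives $\a\equiv h_0$ on a nonempty subset of $W\times \{t\}$, contradicting the assumption.

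The main obstacle is handling the degenerate case $p_0\in\partial(\pi\cap\Omega_{s_0})$, where $h_+(p_0,s_0) = h_-(p_0,s_0) = h_0$ and the graphs have vertical tangent at $p_0$. Here the touching occurs at an \emph{equator point} of $\M_{s_0}$ whose tangent hyperplane is parallel to $e$, and the graphical formulation breaks down at $p_0$. To circumvent this, I would perturb: replacing $\pi$ by a slightly tilted hyperplane (and adjusting $h_0$ accordingly) moves the equator strictly away from the candidate maximum, recovering the nondegenerate case; continuity of $\a$ in both $(p,s)$ and in the perturbation parameter then yields the conclusion in the limit. Alternatively, one may invoke a Hopf boundary-point lemma for $u$ applied at $p_0$ as a boundary point of the interior domain of $\a$, again producing the contradiction.
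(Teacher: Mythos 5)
Your proposal follows essentially the same approach as the paper's proof: set up a contradiction by assuming an interior space-time maximum of $\a$, observe that this amounts to the reflected upper portion $R_{\a_0}(\M_s\cap\Pi^+_{\a_0})$ lying above the lower portion $\M_s\cap\Pi^-_{\a_0}$ with interior tangency at $(p_0,s_0)$, and invoke the strong maximum principle to force coincidence on the parabolic past, contradicting the strict inequality on the parabolic boundary. The paper phrases the comparison geometrically (two pieces of the flowing hypersurface) where you work out the graphical reduction explicitly (correctly noting that the reflection $\tilde h_-=2h_0-h_-$ solves the same quasilinear graph equation as $h_+$), and the paper derives its contradiction from the spatial boundary $\partial W$ while you derive it from the initial slice $\{t\}$; both are valid and equivalent uses of the same strong maximum principle conclusion.

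One observation worth making: you flag the degenerate case $p_0\in\partial(\pi\cap\Omega_{s_0})$, where the vertical fiber is tangent to $\M_{s_0}$ and the graphical formulation fails, and the paper indeed silently assumes this does not happen when it writes ``tangent at the interior point $p_0$.'' However, your two suggested fixes are not fully worked out: tilting $\pi$ changes $\a$, $h_0$, and the location of the maximum simultaneously, and it is not clear the degeneracy disappears or that the limit can be taken without circularity; the Hopf boundary-point lemma needs the domain of the reflected graph to be a fixed (or suitably varying) domain with the tangency occurring at a genuine parabolic-boundary point of a known sign-definite quantity, which is not straightforward here because the graph domain $\pi\cap\Omega_s$ is itself time-dependent and determined by the solution. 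So you have correctly identified a technical subtlety that the paper glosses over, but neither approach resolves it rigorously as stated. In the applications in \S\ref{sec:reflection symmetry}, the wedges $W_i$ and the asymptotics established there implicitly keep the maximum away from the equator, which is presumably why the paper can afford to suppress this point.
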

\begin{proof}
Assume that the lemma is not true. Then there exist $s_0\in (t, t_0]$ and $p_0\in (W\setminus \partial W)\cap \pi\cap \Omega_{s_0}$ such that
\begin{equation}\label{eq:reflection principle contra}
\a(p_0,s_0)= \max_{W\times [t, t_0]}\a>\max\left\{\max_{\partial W\times [t, t_0]}\a, \max_{W\times\{t\}}\a\right\}.
\end{equation}
%We claim that $\pi_{\a_0}$ is a plane of symmetry for $\M_s\cap W$ for each $s\in [s_0, t_0]$, where $\a_0\doteqdot \a(p_0,s_0)$. This will be a contradiction since then $\a(p,s)=\a_0$ for $p\in \partial W$ and $s\in [s_0, t_0]$, contradicting \eqref{contrad}.
Set  $\a_0\doteqdot \a(p_0,s_0)$. By the maximality of $\a_0$ we have, for all $s\in [t, t_0]$ and $p\in \pi\cap\Omega_s\cap W$,
\[
\frac{h_+(p,s)+h_-(p,s)}{2}\le \frac{h_+(p_0,s_0)+h_-(p_0,s_0)}{2}=\a_0
\]
and hence
\[
\a_0- (h_+(p, s)-\a_0)\ge h_-(p,s)
\]
with equality at $(p_0,s_0)$. That is, $R_{\a_0} (\M_s\cap \Pi^+_{\a_0})$ lies above $\M_s\cap \Pi^-_{\a_0}$ for all $s\in [t, t_0]$, while at the interior time $s_0$ they are tangent at the interior point $p_0$. The strong maximum principle now implies that they should coincide for all $s\in [t, s_0]$. That is, $\pi_{\a_0}$ is a plane of symmetry for $\M_s\cap W$ for each $s\in [t,s_0]$. But then $\a(p,s_0)=\a_0$ for $p\in \partial W$, contradicting \eqref{eq:reflection principle contra}.
\end{proof}

\section{Old-but-not-ancient solutions}\label{sec:old-but-not-ancient}

%Let $P$ be a convex polygon in the plane $\R^2\times\{0\}\subset \R^3$. Assume that the unit cirlce $S^1$ is an inscribed circle for $P$; that is, $S^1\subset P$ and $S^1$ is tangent to each edge of $P$. We will construct an ancient pancake whose canonical body is $\omega_\ast=P$.

%Let $P\in \mathrm{P}_\ast^n$ be any $n$-polytope in $\{0\}\times\R^n$ which circumscribes $\{0\}\times S^{n-1}$. %(i.e. $S^1\subset P$ and $S^1$ is tangent to each edge of $P$). 
%We will construct an ancient solution whose squash-down is $\Omega_\ast=P$.
%
Given a circumscribed polytope $P\in \mathrm{P}_\ast^n$ in $\{0\}\times \R^n\subset \R^{n+1}$, denote by $F_P^k$ the set of $k$-faces of $P$. For each facet $f\in F^{n-1}_P$, denote by $z_f$ the point of contact of $f$ with $\{0\}\times S^{n-1}$, and consider the convex region $\Omega_f$ bounded by the Grim hyperplane $\Gamma_f\doteqdot \rho_f\Gamma$, where
\[
\Gamma\doteqdot \big\{(x,y,-\log\cos x):(x,y)\in (-\tfrac{\pi}{2},\tfrac{\pi}{2})\times\R^{n-1}\big\}
\]
is the `standard' Grim hyperplane and $\rho_f$ is a rotation in $\{0\}\times \R^n$ which maps $-e_{n+1}$ to $z_f$. Given $R>0$, consider the boundary $\M^R$ of the convex region
\[
\Omega^R\doteqdot \bigcap_{f\in F^{n-1}_P}(\Omega_f+Rz_f)\,.
\]
Observe that $\lim_{R\to \infty}\frac{1}{R}\Omega^R=P$.

\begin{figure}[h]
\centering
\includegraphics[width=0.42\textwidth]{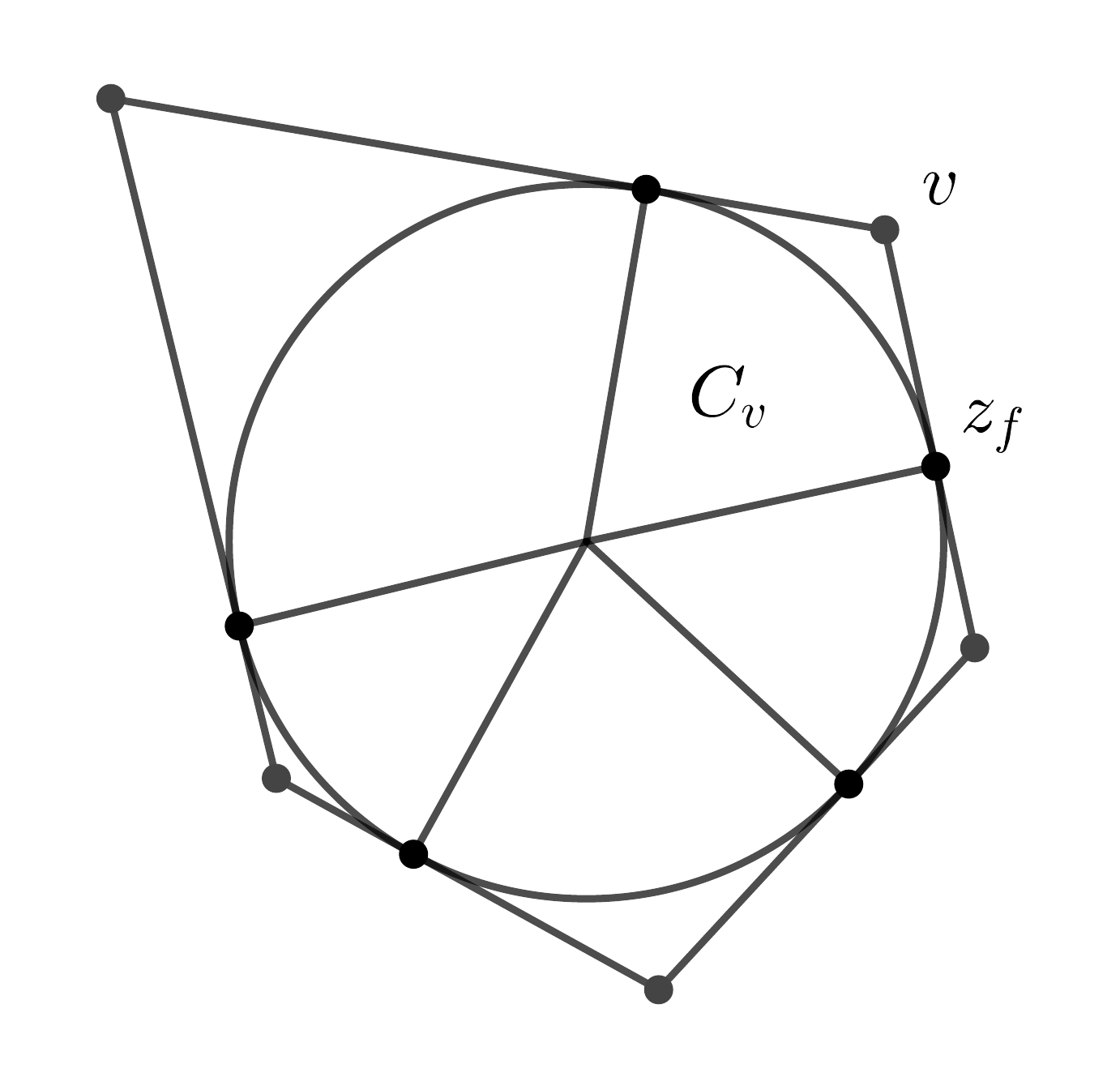} \includegraphics[width=0.54\textwidth]{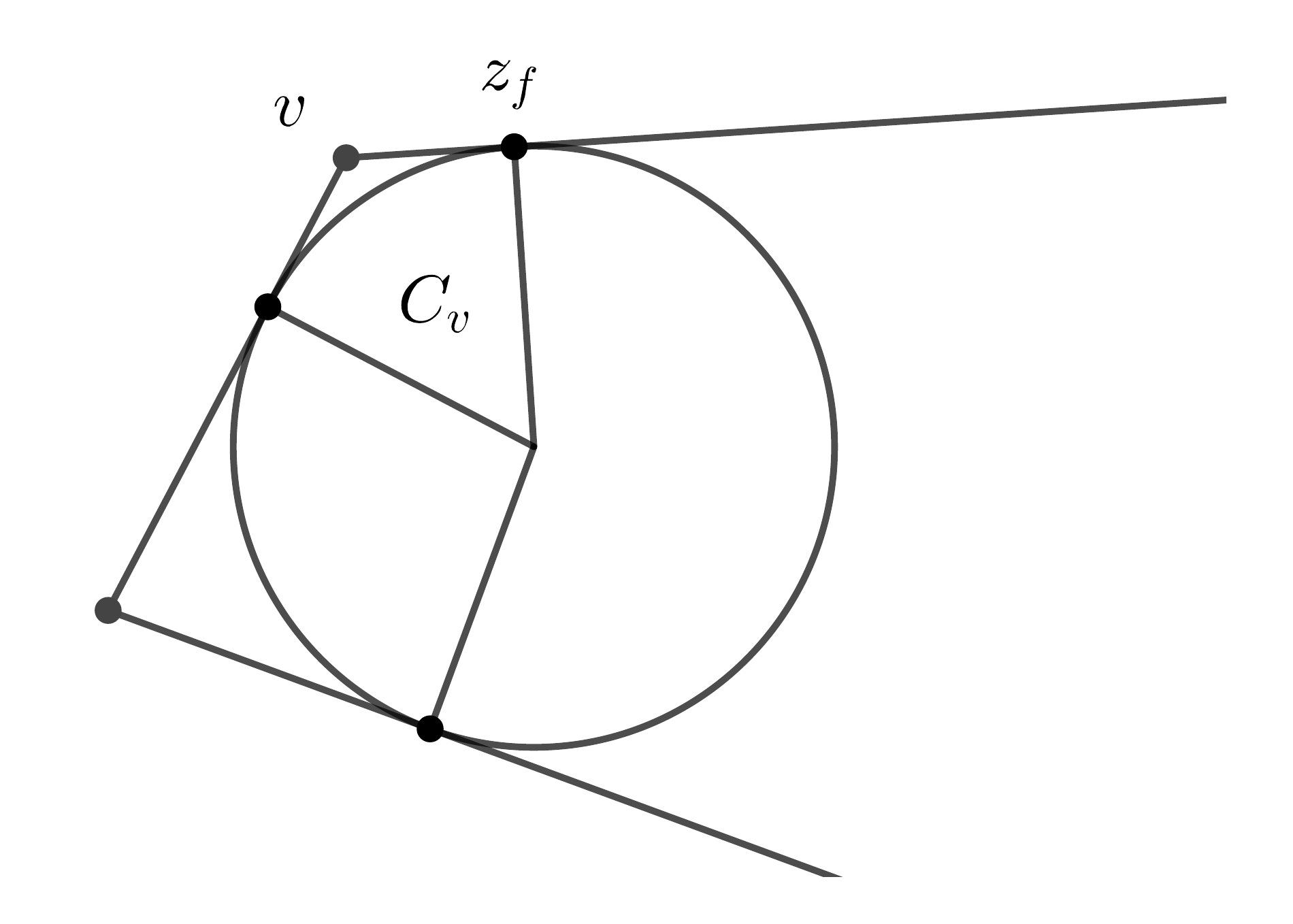}
\caption{\footnotesize Circumscribed polytopes.}\label{fig:circumscribed polygons}
\end{figure}

Denote by $\operatorname{reg}(\M^R)$ the regular set of $\M^R$.

\begin{lemma}\label{lem:H initial lower bound}
For each vertex $v\in F^0_P$,
\[
H_R(p)-\left\langle \nu_R(p),v\right\rangle\ge 0\;\;\text{for all}\;\; p\in \operatorname{reg}(\M^R)\,,
\]
where $H_R(p)$ is the mean curvature and $\nu_R(p)$ is the unit outward pointing normal to $\M^R$ at $p$.
%The area $V_R$ of the cross section $\Omega^R\cap (\R^2\times\{0\})$ satisfies
%\begin{equation}\label{eq:initial area}
%%\lim_{R\to\infty}
%\frac{V_R}{R^2}\equiv \sum_{i=1}^k\tan\theta_i\,.
%\end{equation}
\end{lemma}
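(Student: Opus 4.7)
The idea is to reduce the inequality to a direct computation on a single Grim hyperplane. First, at any regular point $p\in\operatorname{reg}(\M^R)$ the surface $\M^R$ agrees with some Grim hyperplane $\Gamma_f+Rz_f$ through $p$ to second order. Indeed, since $\Omega^R\subset\Omega_f+Rz_f$, every Grim through $p$ is a supporting hypersurface of $\Omega^R$ at $p$ with common outward normal $\nu_R(p)$; the $C^2$ envelope structure of the intersection then forces $\M^R$ to coincide locally (up to second order) with one of these Grims. In particular, we may choose a facet $f$ with $p\in \Gamma_f+Rz_f$ such that $H_R(p)$ and $\nu_R(p)$ equal the mean curvature and outward normal of $\Gamma_f+Rz_f$ at $p$.

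Next, I will exhibit these quantities using the translator equation. The standard Grim $\Gamma$ is a translator with bulk velocity $e_{n+1}$, and since $\rho_f$ fixes $e_1$ and sends $-e_{n+1}$ to $z_f$, the rotated Grim $\Gamma_f$ has bulk velocity $\rho_f(e_{n+1})=-z_f$. From the parametrisation of $\Gamma$ one reads off the Gauss image
\[
G(\Gamma_f)=\big\{\sin\theta\,e_1+\cos\theta\,z_f:\theta\in(-\tfrac{\pi}{2},\tfrac{\pi}{2})\big\},
\]
so writing $\nu_R(p)=\sin\theta\,e_1+\cos\theta\,z_f$ the translator equation gives
\[
H_R(p)=-\langle\nu_R(p),-z_f\rangle=\langle\nu_R(p),z_f\rangle=\cos\theta>0.
\]

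Finally, I close the estimate using the circumscribing property of $P$. Since $v\in F_P^0\subset\{0\}\times\R^n$, one has $\langle v,e_1\rangle=0$, so
\[
H_R(p)-\langle\nu_R(p),v\rangle=\cos\theta\,\big(1-\langle z_f,v\rangle\big).
\]
Because $P\in \mathrm{P}_\ast^n$ is circumscribed to $\{0\}\times S^{n-1}$, it lies in each supporting halfspace $\{p\in\{0\}\times\R^n:\langle p,z_f\rangle\le 1\}$, and in particular $\langle v,z_f\rangle\le 1$ for every vertex $v\in P$. Combined with $\cos\theta>0$, this gives the non-negativity. The only mildly delicate step is the first one---identifying a single active Grim realising the intrinsic geometry of $\M^R$ at $p$---but this is routine once one observes that all Grims tangent to $\M^R$ at a regular point must share its second-order data by the convex envelope comparison.
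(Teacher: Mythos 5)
Your proof is correct and follows essentially the same route as the paper's: identify the Grim hyperplane $\Gamma_f+Rz_f$ through the regular point, read off $H_R(p)=\langle\nu_R(p),z_f\rangle$ and $\nu_R(p)\in\operatorname{span}\{e_1,z_f\}$ from the translator equation, and use $\langle v,z_f\rangle\le 1$ from the circumscribing condition. The one cosmetic remark is that your opening justification via ``$C^2$ envelope structure'' is more elaborate than needed — at a regular point of the boundary of a finite intersection of closed convex bodies, the boundary locally \emph{coincides} with the boundary of one of the bodies, which immediately gives both the normal and the second fundamental form — but your conclusion is the same as the paper's ``by construction''.
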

\begin{proof}
Let $p\in\operatorname{reg}(\M^R)$ be a regular point of $\M_R$. Choose $f\in F^{n-1}_P$ so that $p\in\M^R\cap(\Gamma_f+Rz_f)$. %and let $z$ be the normal to $\M^R$ at $p$. 
Then, by construction, 
\[
H_R(p)=\inner{\nu_R(p)}{z_f}
\]
and $\nu_R(p)\in\operatorname{span}\{z_f,e_{1}\}$. Thus, for any $v\in F_P^{0}$,
\[
\inner{\nu_R(p)}{v}=\inner{\nu_R(p)}{z_f}\inner{v}{z_f}\le \inner{\nu_R(p)}{z_f}
\]
with equality if and only if $v\in F_P^{0}\cap (\Gamma_f+Rz_f)$. %Choose $\alpha\in (0,1]$ and $\beta\in(-1,1)$ so that
%\[
%\nu_R(p)=\alpha f+\beta e_1
%\] 
%
\begin{comment}
\begin{figure}[h]
\centering
\includegraphics[width=0.35\textwidth]{../Gauss_image2}
\caption{The Gauss image of $\M_R$.}\label{fig:Gauss image}
\end{figure}

\noindent The claim follows since, for any $j$, there is some $\omega\in[0,\pi-\theta_j]$ such that (see Figure \ref{fig:Gauss image})
\[
\frac{\inner{z}{e_j}}{\cos\theta_j}=\frac{\alpha\cos(\theta_j+\omega)}{\cos\theta_j}\le\alpha=\frac{\inner{z}{e_i}}{\cos\theta_i}\,.\qedhere
\]
%which yields the claim.
%The second claim follows from the scaling behaviour of the area, since $\Omega^R\cap (\R^2\times\{0\})=RP$, where $P\in \mathrm{P}_\ast^2$ is our initial polygon.
\end{comment}
\end{proof}

We now obtain ``old-but-not-ancient'' solutions $\{\M^R_t\}_{t\in[0, T_R)}$ through the evolution of the hypersurfaces $\M^R$ by mean curvature flow.\footnote{In fact, to obtain true \emph{old}-but-not-ancient solutions, we first need to translate the time-interval of existence appropriately. This needs to be done very carefully if non-trivial limits are to be obtained. We will do it later, in two different ways.}

%By the interior estimates of Ecker and Huisken \cite{EckerHuisken91}, there exists a solution $\{\M_{t}^R\}_{t\in [T_R,0)}$ to mean curvature flow which is smooth at interior times, converges in $C^{0,1}$ to $\M^R$ as $t\to T_R$, and reaches the origin at time $0$.

%********** \texttt{In the noncompact case, the solution is obtained by `doubling' the initial datum at arbitrarily large `heights'. Since arbitrarily large pancakes are barriers, the solution exists for all positive times.}

%By the splitting theorem for the second fundamental form, it is locally uniformly convex at interior times. %By Huisken's theorem and the reflection symmetry of the initial surface, it contracts to a point on $\{0\}\times\R^2$ at the final time, $t=0$.

\begin{lemma}\label{lem:old-but-not-ancient}
Given $P\in \mathrm{P}^n_\ast$ and $R>0$, there exists a maximal solution $\{\M_{t}^R\}_{t\in [0,T_R)}$ to mean curvature flow which is smooth and locally uniformly convex at interior times, converges in $C_{\mathrm{loc}}^{0,1}$ to $\M^R$ as $t\to 0$. The final time $T_R$ satisfies $0\le T_R\le R$ if $P$ is bounded or degenerate, and $T_R=\infty$ if $P$ is unbounded but nondegenerate. In all cases,
\begin{equation}\label{eq:initial time R}
%\liminf_{R\to\infty}\frac{-T_R}{R}\ge 1\,.
\liminf_{R\to\infty}\frac{T^0_R}{R}\ge 1\,,
\end{equation}
where $T^0_R\le T_R$ is the time that the solution reaches the origin.
\end{lemma}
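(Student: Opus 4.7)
The plan is to construct the flow by outer smooth approximation, then to bound the lifespan from above (when $P$ is bounded or degenerate) by avoidance against the individual Grim translators, and from below (on the time to reach the origin) by comparison with the rotationally symmetric pancake. The main obstacle will be the $T_R=\infty$ claim in the unbounded nondegenerate case, where an honest sub-barrier has to be produced inside $\Omega^R$, since the natural candidates sit either on the boundary of $\Omega^R$ or fail to lie in the vertical slab.

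First I would approximate $\M^R$ from outside by a decreasing sequence of smooth, strictly convex hypersurfaces $\M^R_\epsilon$ obtained by rounding off the edges where adjacent Grim hyperplanes meet, arranged so that $\M^R_\epsilon\to\M^R$ in $C^{0,1}_{\mathrm{loc}}$ as $\epsilon\downarrow 0$. By the classical theory each $\M^R_\epsilon$ evolves into a smooth, locally uniformly convex solution, and the avoidance principle makes these solutions monotone in $\epsilon$; passing to the limit yields a convex body $\Omega^R_t$ whose boundary $\M^R_t$ is a mean curvature flow. Smoothness and local uniform convexity at interior times follow from interior regularity for the flow from convex initial data (equivalently, from Hamilton's strong maximum principle applied to $h^i_j-\lambda g^i_j$ in the approximating sequence), and Hausdorff convergence of the approximating bodies combined with uniform Lipschitz control from convexity yields the $C^{0,1}_{\mathrm{loc}}$ convergence $\M^R_t\to\M^R$ as $t\to 0$.

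For the upper bound $T_R\le R$, I would use avoidance against the translating Grim hyperplanes: each $\Gamma_f$ is a translator of bulk velocity $-z_f$, so $\Gamma_f+Rz_f$ flows as $\Gamma_f+(R-t)z_f$, and since $\Omega^R\subset \Omega_f+Rz_f$ for each $f$, we obtain $\Omega^R_t\subset \bigcap_f(\Omega_f+(R-t)z_f)$ throughout $[0,T_R)$. At $t=R$ the right-hand side is $\bigcap_f\Omega_f$, which is contained in $\bigcap_f\{x:\langle x,z_f\rangle\le 0\}$ because the tangent cone of $\Omega_f$ at the origin is this halfspace. When $P$ is bounded the normals $\{z_f\}$ positively span $\R^n$ and this intersection is $\{0\}$; when $P$ is degenerate it lies in a proper linear subspace. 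In either case $\Omega^R_R$ has empty interior and $T_R\le R$. When $P$ is unbounded and nondegenerate, nondegeneracy delivers $u\in\{0\}\times S^{n-1}$ with $\langle z_f,u\rangle>0$ for every $f$, so the open ray $\{-su:s>0\}$ sits in the interior of every $\Omega_f$; I would use this to exhibit a translator sub-barrier of bulk velocity $-u$ (a Grim hyperplane suitably shifted and, if necessary, truncated so as to fit in $\Omega^R$ and in the vertical slab) which persists inside $\Omega^R_t$ by avoidance and therefore forces $T_R=\infty$.

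Finally, for the universal lower bound $\liminf_{R\to\infty}T^0_R/R\ge 1$, I would compare $\{\M^R_t\}$ with the rotationally symmetric pancake $\{\Pi_s\}$. The cross-section of $\Omega^R$ at height $x_0=a$ is the scaled polytope $(R+\log\cos a)\cdot P$, which contains a ball of radius $R+\log\cos a$ because $P$ circumscribes $S^{n-1}$. By \eqref{eq:ancient pancake radius} and the slab structure of the pancake (which is built from rotations of the Angenent oval, for which the corresponding computation is direct from \eqref{eq:Angenent oval x}--\eqref{eq:Angenent oval y}), the cross-section of $\Pi_{-R'}$ at the same height is a round ball of radius $R'+\log\cos a+(n-1)\log R'+O(1)$ as $R'\to\infty$. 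The $\log\cos a$ terms cancel exactly, so $\Pi_{-R'}\subset\Omega^R$ whenever $R'\le R-(n-1)\log R-O(1)$. Avoidance then places the evolving pancake inside $\Omega^R_t$ for all $t\in[0,R']$, and since the origin is the pancake's center of symmetry it remains interior to $\Omega^R_t$ until at least $t=R'$. This yields $T^0_R\ge R-(n-1)\log R-O(1)$, and the claim follows upon dividing by $R$ and letting $R\to\infty$.
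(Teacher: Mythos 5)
Your construction via outer smooth approximation, your avoidance argument against the translating $\Gamma_f + (R-t)z_f$ for the upper bound $T_R\le R$, and your pancake-enclosure argument for $\liminf T^0_R/R\ge 1$ are all sound and essentially the route the paper takes (indeed your treatment of the degenerate case for $T_R\le R$, via the observation that $\bigcap_f\{\langle x,z_f\rangle\le 0\}$ has empty relative interior, is a clean packaging of the same idea). But the step you flag yourself as the main obstacle, namely $T_R=\infty$ in the unbounded nondegenerate case, does contain a genuine gap: the Grim hyperplane with bulk velocity $-u$ splits off the full $(n-1)$-dimensional subspace $e_1^\perp\cap u^\perp$, so every timeslice of it is unbounded in those directions, and it can never be translated into the interior of $\Omega^R$ unless $P$ is itself a halfspace. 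Truncating it does not rescue the argument: a truncated Grim hyperplane is no longer a subsolution, and the avoidance principle gives you nothing unless you control the new boundary you have introduced. Any honest translator sub-barrier would have to be confined to the slab \emph{and} bounded in $e_1^\perp\cap u^\perp$, i.e.\ something like a flying-wing, and the existence of such translators for a general nondegenerate $P$ is precisely part of what the paper is setting out to prove, so invoking one here would be circular.

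There are two non-circular ways to close the gap, both within reach of what you have already set up. The paper's route is a ``doubling'' argument: choose $e$ with $-e\in\operatorname{int}G(P)$, reflect $\M^R$ across $\{\langle p,e\rangle = L\}$ to obtain a compact convex hypersurface $\M^{R,L}$, flow it to get a maximal compact solution on $[0,T_{R,L})$, show $T_{R,L}\to\infty$ as $L\to\infty$ by enclosing arbitrarily early timeslices of the rotationally symmetric pancake, and finally pass $L\to\infty$ to obtain the eternal unbounded solution. The second route you could take instead is the one you already use for the lower bound on $T^0_R$: since $\Omega^R$ is unbounded in the $-u$ direction, you can fit the time $t_r$ slice $\Pi_{t_r}-p$ of the pancake (with $t_r = -r+o(r)$) inside $\M^R$ for $p$ sufficiently far along the ray $\{-su:s>0\}$; by avoidance these remain inside $\M^R_t$ for time at least $-t_r\to\infty$, so $\Omega^R_t$ has nonempty interior for all $t$, i.e.\ $T_R=\infty$. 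Either way, it is a \emph{compact} pancake inner barrier, not a translator, that does the work.
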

\begin{proof}
When $P$ is bounded, existence of a maximal solution to mean curvature flow which is smooth at interior times and converges in $C^{0,1}$ to $\M^R$ at the initial time $t=0$ follows readily from the convexity of $\M^R$ and the interior estimates of Ecker and Huisken \cite{EckerHuisken91}. By Huisken's theorem \cite{Hu84}, the solution contracts to a point at its maximal time, $T_R$. Note that $T_R\le R$ since the initial configuration of Grim hyperplanes reach the origin at time $R$. %Since the origin is enclosed by the initial timeslice, we can arrange that it is reached at time 0 by translating in time (if necessary). %By the reflection symmetry of the initial surface, the final point lies on the plane $\{0\}\times\R^2$, so we can arrange that the solution contracts to the origin at time zero by translating in space (parallel to the slab) and in time.
%Next, note that the surface $\Pi^R$ obtained by rotating the time $t=-R+\log 2$ slice $\mathrm{A}_{-R+\log 2}$ of the Angenent oval about the $x$-axis lies inside the initial datum $\M^R$ (since $\mathrm{A}_{-R+\log 2}$ lies inside the time $-R$ slice of the Grim Reaper \cite[Lemma 2.2]{BLT1}). The analysis of the rotated Angenent ovals in \cite[\S 7]{BLT1} then yields the desired lower bound for $-\alpha_R$.
By \eqref{eq:ancient pancake radius} and \eqref{eq:pancake edge asymptotics}, we can choose $t_R=-R+o(R)$ as $R\to\infty$ so that the time $t_R$ slice $\Pi_{t_R}$ of the rotationally symmetric ancient pancake is enclosed by $\M^R$. The desired lower bound for $T^0_R$ then follows from the avoidance principle.

When $P$ is unbounded, we may assume that it does not split off a line (else, the desired solution is obtained by taking products with lines of the solution corresponding to the cross-section of $P$). We may then choose $e\in \{0\}\times S^{n-1}$ so that the Gauss image $G(P)$ of $P$ (as a convex body in $\{0\}\times \R^n$) is contained in the closed lower hemisphere $\{z\in \{0\}\times S^{n-1}:\inner{z}{e}\leq 0\}$ and $-e\in \operatorname{int}(G(P))$. Consider, for each height $L>0$, the boundary $\M^{R,L}$ of the intersection of $\Omega^{R}$ with its reflection across the plane $\{p:\inner{p}{e}=L\}$. 
%the halfspace $\{X\in \R^{n+1}:\inner{X}{e}\le L\}$. 
Since $\M^{R,L}$ is compact, the Ecker--Huisken estimates yield a maximal solution $\{\M^{R,L}_t\}_{t\in[0,T_{R,L})}$ for each $R$ and $L$ which is smooth at interior times and converges in $C^{0,1}$ to $\M^{R,L}$ at time $0$. By Huisken's theorem, $\{\M^{R,L}_t\}_{t\in[0,T_{R,L})}$ remains smooth until it contracts to a point at time $T_{R,L}$. If $G(P)$ lies in an open hemisphere, then $T_{R,L}\to\infty$ as $L\to \infty$ for all $R$ since, for $L$ sufficiently large, $\M^{R,L}$ encloses an arbitrarily early time-slice of the rotationally symmetric ancient pancake. On the other hand, if $G(P)$ does not lie in an open hemisphere, then $T_{R,L}\le R$ since $\M^R$ lies between a pair of parallel, oppositely oriented Grim hyperplanes which reach the origin, enclosing zero volume, after time $R$. The desired solution is then obtained by taking a limit as $L\to\infty$ (the lower bound for $T^0_R/R$ is obtained as in the bounded case). %{\color{blue} don't we need to say something about the existence of the limit? e.g. bounded curvature?} {\color{red} The approximating solutions are inner barriers.}

%Note that $-\alpha_R\leq R$, since each of the initial Grim Reapers is an outer barrier and reaches the origin after time $R$. 
%Next, note that the surface $\Pi^R$ obtained by rotating the time $t=-R+\log 2$ slice $\mathrm{A}_{-R+\log 2}$ of the Angenent oval about the $x$-axis lies inside the initial datum $\M^R$ (since $\mathrm{A}_{-R+\log 2}$ lies inside the time $-R$ slice of the Grim Reaper \cite[Lemma 2.2]{BLT1}). The analysis of the rotated Angenent ovals in \cite[\S 7]{BLT1} then yields the desired lower bound for $-\alpha_R$.

Local uniform convexity of $\{\M^R_t\}_{t\in(0,T_R)}$ at interior times follows from the splitting theorem for the second fundamental form.
\end{proof}

For the remainder of this section, we fix $P\in \mathrm{P}_\ast^n$ and $R>0$ and consider the corresponding old-but-not-ancient solution $\{\M^R_t\}_{t\in[0,T_R)}$.

When $P$ is bounded, the Gauss image of $\M^R_t$ is all of $S^n$ for all $t$, so the evolution equation \eqref{eq:MCF support} holds for all $z\in S^n$. In order to make full use of \eqref{eq:MCF support} in the unbounded case, we need to control the Gauss image. On the one hand, since %(since $\{\M^R_t\}_{t\in(\alpha_R,\,\omega_R)}$ is a limit of compact solutions) 
the enclosed regions $\Omega_t^R$ satisfy $\Omega^R_s\subset \Omega^R_t$ for $t<s$, %and hence (since any convex region is the intersection of its supporting half-spaces and $G(\M^R_t)$ is open unless $\M^R_t$ splits off a line) 
and $\M_t$ is locally uniformly convex unless it splits off a line, the Gauss image $G(\M_t^R)$ is nondecreasing: if $z\in G(\M_{t_0})$, $t_0>0$, then $z\in G(\M_t)$ for all $t>t_0$. We claim that $G(\M^R_t)$ is actually constant.%, at least when $P$ is nondegenerate or $G(P)$ is a hemisphere.

\begin{lemma}\label{lem:Gauss image}
%Given $P\in \mathrm{P}_\ast^n$ and $R>0$, let $\{\M^R_{t}\}_{t\in(\alpha_R,\,\omega_R)}$ be the corresponding old-but-not-ancient solution. %If $P$ is nondegenerate, or if $G(P)$ is congruent to $\{z\in \{0_{n-k}\}\times S^k:\inner{z}{e_{n+1}}\leq 0\}$ for some $k\in\{0,\dots,n\}$, then $G(\M^R_t)$ is constant for $t\in(\alpha_R,\,\omega_R)$. %If $P$ is degenerate, then $G(\M^R_t)$ remains constant for all $t\in(\alpha_R,\alpha_R+t_R)$, where $t_R=R-o(R)$ is the time taken for the the rotated time $t=R+\log 2$ slice of the Angenent oval to reach the origin under mean curvature flow. \texttt{This doesn't appear to be useful?}
The Gauss image $G(\M^R_t)$ is constant for $t\in(0,T_R)$.
\end{lemma}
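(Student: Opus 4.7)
Since the monotonicity $G(\M^R_{t_1})\subseteq G(\M^R_{t_2})$ for $0<t_1<t_2<T_R$ has already been noted, it suffices to establish the reverse inclusion $G(\M^R_{t_2})\subseteq G(\M^R_{t_1})$. The plan is to translate the question into one about the recession cones $\mathrm{rec}\,\Omega^R_t$ of the enclosed regions, and exploit the invariance of mean curvature flow under translations. For a closed convex body $\Omega$, the Gauss image and the recession cone are related by the polar duality $\overline{G(\Omega)}=(\mathrm{rec}\,\Omega)^\circ\cap S^n$, so it is equivalent to show that $\mathrm{rec}\,\Omega^R_t$ is constant on $(0,T_R)$.

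The nontrivial step is to show $\mathrm{rec}\,\Omega^R_{t_1}\subseteq\mathrm{rec}\,\Omega^R_{t_2}$. Given $v\in\mathrm{rec}\,\Omega^R_{t_1}$ and $\lambda\geq 0$, the defining property gives $\Omega^R_{t_1}+\lambda v\subseteq\Omega^R_{t_1}$. Since mean curvature flow commutes with translations, $\{\Omega^R_{t_1+s}+\lambda v\}_{s\in[0,T_R-t_1)}$ is also a smooth mean curvature flow; applying the avoidance principle to this pair (which differ by a fixed translation) yields $\Omega^R_{t_1+s}+\lambda v\subseteq\Omega^R_{t_1+s}$ for all $s\in[0,T_R-t_1)$, and specializing to $s=t_2-t_1$ gives $v\in\mathrm{rec}\,\Omega^R_{t_2}$. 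The reverse containment $\mathrm{rec}\,\Omega^R_{t_2}\subseteq\mathrm{rec}\,\Omega^R_{t_1}$ is a routine consequence of $\Omega^R_{t_2}\subseteq\Omega^R_{t_1}$ together with the standard fact that, for closed convex sets $A\subseteq B$, one has $\mathrm{rec}\,A\subseteq\mathrm{rec}\,B$ (proved by noting that for $v\in\mathrm{rec}\,A$, $q\in B$, $a\in A$ and $\lambda\geq 0$, the convex combinations $(1-\epsilon)q+\epsilon a+\lambda v$ lie in $B$ for every $\epsilon>0$ and converge to $q+\lambda v$ as $\epsilon\to 0$).

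The principal subtlety is the application of the avoidance principle in the unbounded case, since $\Omega^R_t$ may be noncompact when $P$ is unbounded. However, the two flows being compared differ only by a rigid translation, so any curvature bound on one transfers immediately to the other, and the bounded-curvature hypothesis provided by Lemma \ref{lem:old-but-not-ancient} places us in the regime where standard avoidance and comparison arguments apply without modification. Once constancy of $\mathrm{rec}\,\Omega^R_t$ is established, polar duality delivers constancy of $G(\M^R_t)$, since on a smooth locally uniformly convex hypersurface the Gauss image is the interior in $S^n$ of the polar of the recession cone.
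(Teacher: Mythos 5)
Your reformulation via recession cones and polar duality is a nice idea, and several of the individual steps are correct: the identity $G(\M^R_t)=\mathrm{int}\bigl((\mathrm{rec}\,\Omega^R_t)^\circ\bigr)\cap S^n$ holds for smooth, locally uniformly convex boundaries with pointed recession cone (and the cones here are pointed, since $P$ is reduced to split off no line); the ``easy'' direction $\mathrm{rec}\,\Omega^R_{t_2}\subseteq\mathrm{rec}\,\Omega^R_{t_1}$ from $\Omega^R_{t_2}\subseteq\Omega^R_{t_1}$ is precisely the monotonicity $G(\M^R_{t_1})\subseteq G(\M^R_{t_2})$ noted just before the lemma; and the equivalence $v\in\mathrm{rec}\,\Omega\iff\Omega+\lambda v\subseteq\Omega$ for all $\lambda\geq 0$ is standard.

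The gap is in the ``hard'' direction, namely the application of the avoidance principle to the pair $\bigl\{\partial(\Omega^R_t+\lambda v)\bigr\}$ and $\bigl\{\partial\Omega^R_t\bigr\}$. These two hypersurfaces are disjoint at time $t_1$, but their \emph{infimum} distance is zero: because $v$ is a recession direction, $\partial(\Omega^R_{t_1}+\lambda v)$ approaches $\partial\Omega^R_{t_1}$ asymptotically at spatial infinity (just as the Grim Reaper region and its translate along the axis come arbitrarily close while never touching). The standard avoidance principle for complete noncompact hypersurfaces with bounded curvature preserves a \emph{positive} lower bound on separation; with zero initial separation, the ``first touching time'' argument breaks down, since the contact can occur only in the limit at infinity and the strong maximum principle has no point at which to bite. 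Bounded curvature alone is therefore not enough, and the sentence dismissing the unbounded case as ``standard avoidance and comparison arguments apply without modification'' is precisely where the real work lies. Indeed, what you are trying to rule out is exactly that a new normal direction $z_0$ with $\langle z_0,v\rangle>0$ appears, i.e.\ that $\sigma_R(z_0,\cdot)$ passes from $+\infty$ at $t_1$ to a finite value at $t_2$ — and that phenomenon is not excluded by a naive comparison at infinity.

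The paper's own proof confronts this issue head-on, and in two different ways depending on the structure of $P$. When $P$ is unbounded and nondegenerate, it uses the family of translated rotationally symmetric pancakes $\Pi_{t_r}-p$ (for $p\in P^R_r$) as \emph{inner} barriers, which gives quantitative control on how far out the solution remains and hence on which normal directions can appear. When $P$ is semi-degenerate, no such barrier works, and instead the paper goes back to the compact doubled approximators $\M^{R,L}_t$ and integrates the non-ancient Harnack inequality to obtain $\sigma_{R,L}(z,t)\ge\sigma_{R,L}(z,0)-2tH_{R,L}(z,t)$, then sends $L\to\infty$: for $z\notin G(P)$ the initial support $\sigma_{R,L}(z,0)$ diverges while $H_{R,L}(z,t)$ stays bounded, contradicting finiteness of $\sigma_R(z,t)$. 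Either of these would be needed to repair your avoidance step; as written, the argument has a genuine hole.
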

\begin{proof}
The claim is clear in case $P$ is bounded, and follows from the monotonicity of $G(\M_t)$ in case $P$ is fully degenerate. % (since the Gauss image of an unbounded convex set necessarily lies in a closed hemisphere). 

In case $P$ is unbounded but nondegenerate, we may %We already showed that $G(\M_t)\subset G(\M_s)$ for $t<s$. To prove the reverse inclusion, we 
use the rotationally symmetric ancient pancake as an inner barrier: consider, for each $r>0$, the set
\begin{equation}\label{eq:inner barrier}
P^R_r\doteqdot \bigcap_{f\in F^{n-1}_P}\big\{p\in \{0\}\times \R^n:\inner{p}{z_f}<R-r\big\}\,.
\end{equation}

\begin{figure}[h]
\centering
\includegraphics[width=0.6\textwidth]{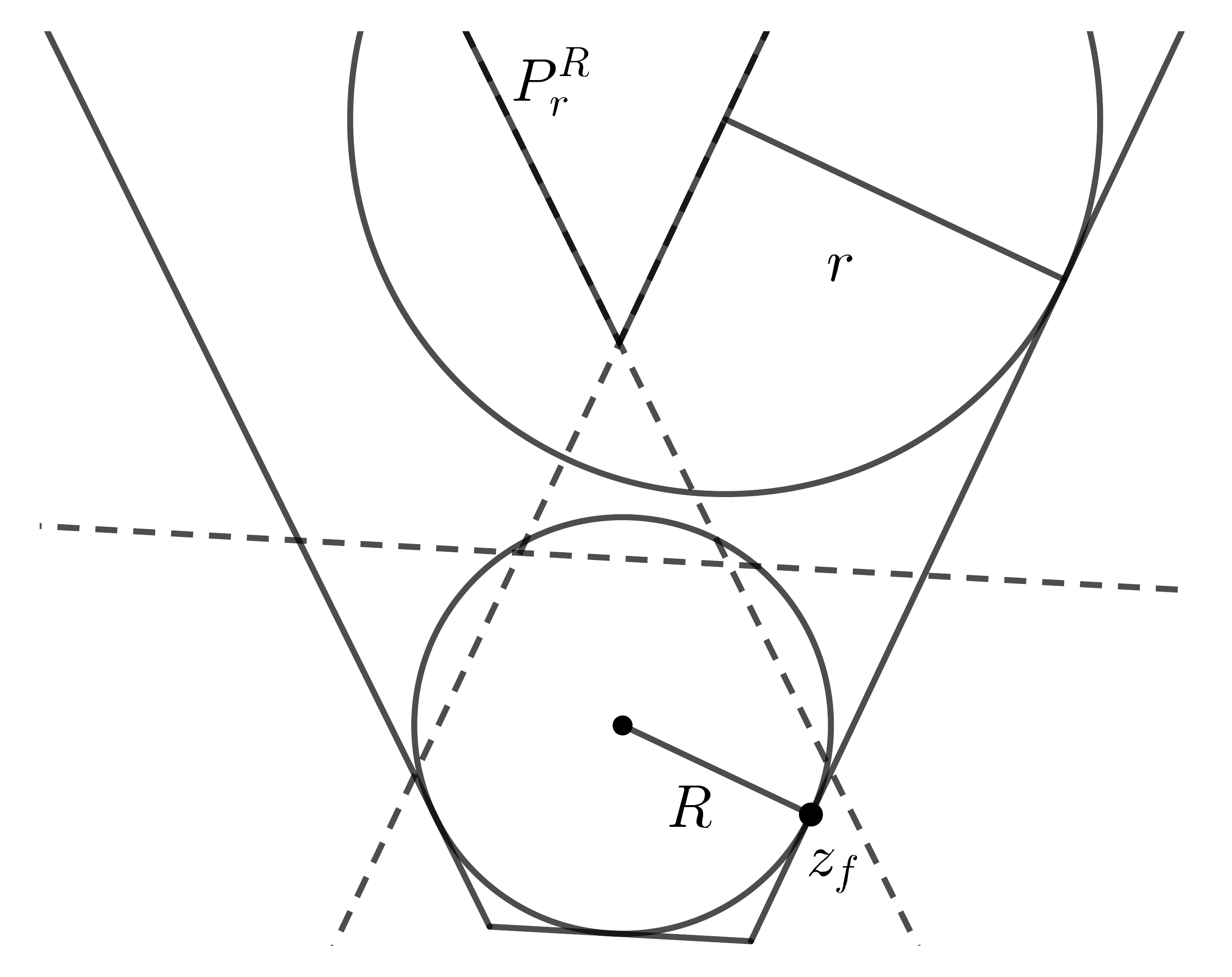}
\caption{The ``barrier set'' $P^R_r$ above the mid-section of the initial datum $\M^R$.}
\end{figure}

%Denote by $\ell(t)$ the horizontal displacement of the time $t$ slice $\Pi_t$ of the rotationally symmetric ancient pancake $\{\Pi_t\}_{t\in(-\infty,0)}$ of width $\pi$. Then $-\ell^{-1}(r)$ is the time taken for the timeslice of horizontal displacement $r$ to reach the origin. By \eqref{eq:pancake edge asymptotics}, given any $p\in P^R_r$, $\Pi_{\ell^{-1}(r-1)}-p$ lies inside of $\M^R$ for $r$ sufficiently large. It follows that $P^R_{r}$ lies inside of $\M^R_{t}$ until time $t=\alpha_R-\ell^{-1}(r-1)$. Since $\M^R_t$ lies between the two parallel hyperplanes $\{\pm\frac{\pi}{2}\}\times \R^n$ and $G(\M^R_t)$ is a convex subset of the sphere, we conclude that $G(\M^R_t)$ is constant for $t\in(\alpha_R,\alpha_R-\ell^{-1}(r-1))$. The claim follows since $\ell^{-1}(r-1)\to-\infty$ as $r\to\infty$.

By \eqref{eq:pancake edge asymptotics}, we can choose $t_r=-r+o(r)$ such that, for each $p\in P^R_r$, the translated time $t_r$ slice of the rotationally symmetric ancient pancake, $\Pi_{t_r}-p$, lies to the inside of $\M^R$ for $r$ sufficiently large. It follows that $P^R_{r}$ lies to the inside of $\M^R_{t}$ until time at least $t=-t_r$. Since $\M^R_t$ lies between the two parallel hyperplanes $\{\pm\frac{\pi}{2}\}\times \R^n$ and since $G(\M^R_t)$ is a convex subset of the sphere, we conclude that $G(\M^R_t)$ is constant for $t\in(0,-t_r)$. The claim follows since $t_r\to-\infty$ as $r\to\infty$.

In the semi-degenerate case, a different argument is required. We make use of the fact that the old-but-not-ancient solutions are limits of compact solutions: let $\{\M^{R,L}_t\}_{t\in[0,T_{R,L})}$ be the compact approximators introduced in the proof of Lemma \ref{lem:old-but-not-ancient}. Given $z\in \{0\}\times S^{n-1}$, the evolution equation \eqref{eq:MCF support} for the support function and the differential Harnack inequality \eqref{eq:Harnack nonancient} yields
\begin{align}
\sigma_{R,L}(z,t)={}&\sigma_{R,L}(z,0)-\int_{0}^{t}H_{R,L}(z,s)ds\nonumber\\
\ge{}&\sigma_{R,L}(z,0)-\int_{0}^{t}\sqrt{\frac{t}{s}}H_{R,L}(z,t)ds\nonumber\\
={}&\sigma_{R,L}(z,0)-2tH_{R,L}(z,t)\,.\label{eq:Gauss image semi-degenerate}
\end{align}
If there exists $z\in G(\M^R_t)\setminus G(P)$ for some $t>0$, then, taking $L\to\infty$ in \eqref{eq:Gauss image semi-degenerate}, we find that $\sigma_{R,L}(z,t)\to\infty$ (since $\s_{R, L}(z,0)\to \infty$ and $H_{R, L}(z,t)$ is bounded uniformly in $L$), which is impossible since $\sigma_R(z,t)$ is finite. The claim follows.
%\begin{align*}
%\sigma_{R}(z,t)\ge \sigma_{R}(z,\alpha_R)-2(t-\alpha_R)H_{R}(z,t)\,.
%\end{align*}
\end{proof}

The following lemma implies, in particular, that the ``vertices'' of the solution remain the correct distance outside of the rotationally symmetric example.

\begin{lemma}\label{lem:speed lower bound}
%Suppose that $P$ is nondegenerate or that $G(P)$ is a hemisphere. %Denote by $H_R$, $\sigma_R$ and $G_R$ the mean curvature, support function and Gauss map of the solution $\{\M_{t}^R\}_{t\in [\alpha_R,\,\omega_R)}$. 
For each vertex $v\in F^0_P$,
\begin{equation}\label{eq:lower speed bound}
H_R(z,t)-\left\langle z,v\right\rangle\ge 0\,,
\end{equation}
and hence
\begin{equation}\label{eq:sigma ratio bound}
%\ell_i^R(t)\doteqdot \sigma_R(e_i,t)\ge -\sec\theta_i t\,,\;\;\text{and}\;\; h_i^R(t)\doteqdot \sigma_R\left(f_i,t\right)\ge -t
\frac{\sigma_R(z,t)-\sigma_R(z,s)}{t-s}\le -\inner{z}{v}
\end{equation}
for all $z\in G(\M^R_t)$ and distinct $s,t\in (0,T_R)$. 
\begin{comment}
In particular,
\begin{equation}\label{eq:lower l bound}
%\ell_i^R(t)\doteqdot \sigma_R(e_i,t)\ge -\sec\theta_i t\,,\;\;\text{and}\;\; h_i^R(t)\doteqdot \sigma_R\left(f_i,t\right)\ge -t
\frac{\sigma_R(z,t)-\sigma_R(z,0)}{-t}\ge 1
\end{equation}
for all $t<0$ and $z\in G(\M^R_t)$, and
\begin{equation}\label{eq:lower l bound}
%\ell_i^R(t)\doteqdot \sigma_R(e_i,t)\ge -\sec\theta_i t\,,\;\;\text{and}\;\; h_i^R(t)\doteqdot \sigma_R\left(f_i,t\right)\ge -t
\frac{\sigma_R(v/\vert v\vert,t)-\sigma_R(v/\vert v\vert,0)}{-t}\ge \vert v\vert
\end{equation}
for all $t<0$ and $v \in F^{0}_P$.
%, and
%\begin{equation}\label{curvature bounds}
%\sup_{\R\times B^{n}_{-t\vartheta}(0)}{|\nabla H_R|}\le c(\vartheta)\left(\frac{1}{t^2}+\frac{1}{t-\alpha_R}\right)
%\end{equation}
%for each $t\in (\alpha_R,-1)$ and $\vartheta\in(0,1)$, where $c(\vartheta)$ is a constant independent of $R$.
\end{comment}
\end{lemma}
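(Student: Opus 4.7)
The plan is to first establish the pointwise bound $H_R(z,t)\ge\langle z,v\rangle$ for all $z\in G(\M^R_t)$ and $t\in(0,T_R)$, and then deduce \eqref{eq:sigma ratio bound} by integrating the support-function evolution in time.

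The natural test function is $\phi := H - \langle \nu, v\rangle$ on $\M^R_t$. Using the standard evolution identities $(\partial_t - \Delta)H = |A|^2 H$ and $(\partial_t - \Delta)\langle \nu, w\rangle = |A|^2\langle \nu, w\rangle$ for any fixed $w\in\R^{n+1}$, one computes
\[
(\partial_t - \Delta)\phi = |A|^2\phi,
\]
so that $\phi\equiv 0$ is a stationary solution of the linearization (reflecting the fact that a translator with bulk velocity $-v$ is a steady state of MCF after the corresponding reparametrization). Since $|A|^2$ is bounded on any parabolic subcylinder $\M^R_t\times[\epsilon,T]\subset\M^R_t\times(0,T_R)$, the scalar minimum principle propagates $\phi\ge 0$ forward in time on each such subcylinder.

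At $t=0$, Lemma \ref{lem:H initial lower bound} gives $\phi(\cdot,0)\ge 0$ at every regular point of $\M^R$. Near the singular set of $\M^R$ (the finite union of lower-dimensional strata where adjacent Grim pieces meet), $H(\cdot,t)\to\infty$ as $t\to 0^+$ while $|\langle\nu,v\rangle|\le|v|$ remains bounded, so $\phi(\cdot,t)\to+\infty$ uniformly on any neighborhood of the singular set. On compact subsets of $\operatorname{reg}(\M^R)$ the Ecker--Huisken interior estimates \cite{EckerHuisken91} give smooth convergence, whence $\liminf_{t\to 0^+}\inf_{\M^R_t}\phi\ge 0$. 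Combining this with the minimum principle on $[\epsilon,T]$ and sending $\epsilon\to 0^+$ yields $\phi\ge 0$ on $\M^R_t$ for all $t\in(0,T_R)$ in the bounded and fully degenerate cases, where $\M^R_t$ is compact. In the unbounded nondegenerate case I would run the same argument on the compact approximators $\M^{R,L}_t$ from the proof of Lemma \ref{lem:old-but-not-ancient}: by verifying a reflected analogue of Lemma \ref{lem:H initial lower bound} at regular points of the reflected Grim pieces (using that $R_L$ is an isometry and the reflected configuration is again circumscribed, about a reflected sphere), the minimum principle gives $\phi\ge 0$ on each $\M^{R,L}_t$, and passing to the limit $L\to\infty$ recovers the inequality on $\M^R_t$. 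The constancy of the Gauss image (Lemma \ref{lem:Gauss image}) then allows this to be translated to the Gauss parametrization: $H_R(z,t)\ge\langle z,v\rangle$ for $z\in G(\M^R_t)$.

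The difference-quotient bound \eqref{eq:sigma ratio bound} is then immediate from \eqref{eq:MCF support}: one has $\partial_t\sigma_R(z,t)=-H_R(z,t)\le-\langle z,v\rangle$ on $(0,T_R)$, and integrating from $\min\{s,t\}$ to $\max\{s,t\}$ and dividing by $t-s$ yields the bound, regardless of the sign of $t-s$. The main technical obstacle is the unbounded case, where the scalar minimum principle requires either compactness of time-slices or a growth hypothesis at infinity; the compact-approximation strategy is standard, but needs the geometry of the reflected configuration to be compatible with the vertex $v$ at the initial time, which the construction of the $\M^{R,L}$ in Lemma \ref{lem:old-but-not-ancient} supplies once $L$ is chosen large enough.
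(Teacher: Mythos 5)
Your proposal follows the same basic route as the paper: test function $\phi=H-\langle\nu,v\rangle$, the identity $(\partial_t-\Delta)\phi=|A|^2\phi$, the scalar maximum principle, the initial estimate of Lemma \ref{lem:H initial lower bound}, and compact ``doubled'' approximators $\M^{R,L}$ in the unbounded case. The handling of the $t\to 0^+$ limit near the singular strata (where $H\to\infty$) is also fine and is implicit in the paper.

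However, there is a gap in the noncompact step. You justify the initial inequality on the \emph{reflected} Grim pieces by saying the reflected configuration is ``again circumscribed, about a reflected sphere.'' Running the argument of Lemma \ref{lem:H initial lower bound} on the reflected configuration yields the bound $H(p)\ge\langle\nu(p),R_L(v)\rangle$, with $R_L(v)$ the \emph{reflected} vertex --- not $H(p)\ge\langle\nu(p),v\rangle$, which is what the maximum principle for $\phi=H-\langle\nu,v\rangle$ requires at $t=0$. These are genuinely different inequalities: with $R^{\mathrm{lin}}_L$ the linear part of the reflection, $\langle\nu(p),R_L(v)\rangle-\langle\nu(p),v\rangle=-2(\langle v,e\rangle-L)\langle\nu(p),e\rangle$, which is not zero. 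The paper's argument is different and avoids this: at a reflected regular point $p=R_L(q)$, the mean curvature is unchanged, $H(p)=H(q)$, while $\langle\nu(p),v\rangle=\langle\nu(q),v\rangle-2\langle v,e\rangle\langle\nu(q),e\rangle\le\langle\nu(q),v\rangle$, because both $\langle v,e\rangle\le 0$ (the vertex lies in the $-e$ half-space since $-e\in\operatorname{int}G(P)$) and $\langle\nu(q),e\rangle\le 0$ (the original normals lie in the lower hemisphere). Thus $\phi(p)\ge\phi(q)\ge0$ directly, with the \emph{same} $v$. That monotonicity-under-reflection observation is the missing ingredient; the ``reflected circumscription'' route would need an extra argument to convert the $R_L(v)$ bound into a bound involving $v$.
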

\begin{proof}
In the compact case, the first claim follows immediately from the maximum principle. Indeed, working in the standard parametrization we obtain, for each $v\in F^0_P$,
\[
\liminf_{t\to 0}\big(H_R(\cdot,t)-\left\langle \nu_R(\cdot,t),v\right\rangle\!\big)\ge 0
\]
(by Lemma \ref{lem:H initial lower bound}) and
\[
(\partial_t-\Delta_R-\vert{\operatorname{II}_R}\vert^2)\big(H_R-\left\langle\nu_R,v\right\rangle\!\big)=0\,,
\]
where $\nu_R$ is the outward unit normal field, ${\operatorname{II}_R}$ is the second fundamental form, and $\Delta_R$ the induced Laplacian.

In the noncompact case, we first obtain the estimate on the (compact) solutions arising from the ``doubled'' initial data $\M^{R,L}$ (see the proof of Lemma \ref{lem:old-but-not-ancient}) and then take a limit as $L\to\infty$. (Note that the initial estimates still hold on $\M^{R,L}$ since the mean curvature is unchanged under reflection, whereas the reflected normals point \emph{away} from the original vertices).
%(Note that the initial estimates still hold on $\M^{R,L}$ since the mean curvature is zero on the new boundary component, whereas the new normals point \emph{away} from the original vertices).

Since, by Lemma \ref{lem:Gauss image}, the Gauss image of $\M^R_t$ is constant in $t$, \eqref{eq:sigma ratio bound} follows by integrating the evolution equation \eqref{eq:MCF support} for $\sigma_R$ and applying the speed bound \eqref{eq:lower speed bound}.
%
%Finally, for claim \eqref{curvature bounds}, note first that, by claim \eqref{eq:lower l bound}, the portion of $\M^R_t$ enclosed by the cylinder $\R\times B^n_{-\vartheta t}(0)$ is the union of two disjoint graphs over $\{0\}\times B^n_{-\vartheta t}(0)$ both of which, by convexity and claim \eqref{eq:lower l bound}, have gradient bounded by $\frac{\pi}{-2(1-\vartheta)t}$. The result then follows from \cite[3.5 Corollary (ii)]{EckerHuisken91}.
\end{proof}

Observe that Lemma \ref{lem:speed lower bound} implies in particular that, for $t<t_0$,
\begin{equation}\label{eq:lower sigma bound}
\frac{\sigma_R(z,t)-\sigma_R(z,t_0)}{t_0-t}\ge \max_{v\in F^0_P}\inner{z}{v}=\sigma[P](z)
\end{equation}
for all $z\in G(\M^R_t)$, where $\sigma[P]$ denotes the support function of $P$.

In order to obtain a (subsequential) limit as $R\to\infty$, we need a uniform (time-dependent) lower bound for the inradius. We achieve this in two steps (Lemmas \ref{lem:minimum speed estimate} and \ref{lem:width_est}). %A crude estimate for the ``width'' will suffice for this purpose. 
The first step is a bound for the speed away from the ``edge'' region.

\begin{lemma}\label{lem:minimum speed estimate}
For every $h\in(0,1)$ there exist $C_h<\infty$ and $r_h<\infty$ (depending only on $n$ and $h$) such that
\[
\min_{\M^R_t\cap (\R\times B_{C_h}^n(p))}H_R\le C_h\mathrm{e}^{-h^2r}
\]
for all $p\in P_t^r$ and $r\ge r_h$, where
\[
P_t^r\doteqdot \bigcap_{z\in G(\M^R_t)}\{p\in\{0\}\times\R^n:\inner{p}{z}\le \sigma(z,t)-r\}\,.
\]
\end{lemma}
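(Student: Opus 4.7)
The plan is to use the scaled rotationally symmetric ancient pancake as a spatial inner barrier. Fix $h\in(0,1)$, and let $\Pi^h_\tau$ denote the hypersurface obtained by rotating the scaled Angenent oval $h^{-1}\mathrm{A}_{h^2\tau}$ about the $e_1$-axis. Then $\{\Pi^h_\tau\}_{\tau<0}$ is an ancient mean curvature flow in the slab $(-\tfrac{\pi}{2h},\tfrac{\pi}{2h})\times\R^n$, strictly wider than the slab containing $\M^R_t$. Rescaling \eqref{eq:ancient pancake radius} yields $\ell_h(\tau)\doteqdot h^{-1}\ell(h^2\tau)=-h\tau+O_h(\log|\tau|)$ as $\tau\to-\infty$, while \eqref{eq:Angenent oval kappa} gives the cross-sectional curvature $h\sqrt{\cos^2\theta+a_h^2(\tau)}$ with $a_h(\tau)\doteqdot(e^{-2h^2\tau}-1)^{-1/2}$; the mean curvature of $\Pi^h_\tau$ is then $H_{\Pi^h_\tau}(\theta,\tau)=h\sqrt{\cos^2\theta+a_h^2(\tau)}+(n-1)\cos\theta/y_h(\theta,\tau)$ in the cross-sectional parametrization.

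Given $p\in P_t^r$, I will consider the family $\{\Pi^h_\tau+p\}$ and continuously decrease $\tau$ from $0$. Combining the defining bound $\inner{p}{z}\le \sigma_R(z,t)-r$ with the observation that, in every direction, the support function of $\Pi^h_\tau$ is bounded above by $\ell_h(\tau)$ for $\tau$ sufficiently negative, the translated pancake (intersected with the slab) remains strictly inside $\Omega^R_t$ throughout the range $\tau\in[-r,0)$, once $r\ge r_h$ is chosen large enough depending only on $h$ and $n$ so that $\ell_h(-r)\le r$ (absorbing the logarithmic correction against the factor $1-h$). By continuity, the barrier first touches $\M^R_t$ at some time $\tau_0\le-r$, at an interior point $q$ of the slab, where the outward normals coincide: $z=\sin\theta_0\,e_1+\cos\theta_0\,\phi$ for some $\theta_0\in(-\pi/2,\pi/2)$ and $\phi\in\{0\}\times S^{n-1}$; the maximum principle then yields $H_R(q,t)\le H_{\Pi^h_{\tau_0}}(\theta_0,\tau_0)$.

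Since $P\supset S^{n-1}$, the support function of $P$ satisfies $\sigma[P](\phi)\ge 1$, so there is a vertex $v\in F^0_P$ with $\inner{\phi}{v}\ge 1$. Lemma \ref{lem:speed lower bound} then forces $\cos\theta_0\le\inner{z}{v}\le H_R(q,t)\le H_{\Pi^h_{\tau_0}}(\theta_0,\tau_0)$. Substituting the explicit formula for the right-hand side and treating separately the cases $y_h(\theta_0,\tau_0)\gtrsim_h 1$ and $y_h(\theta_0,\tau_0)\lesssim_h 1$ (the latter forcing $\cos\theta_0\lesssim a_h(\tau_0)$ directly via the explicit formula for $y_h$), I would derive $\cos\theta_0\le C_h e^{h^2\tau_0}$ and consequently $H_R(q,t)\le C_h' e^{h^2\tau_0}\le C_h' e^{-h^2 r}$. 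The same smallness of $\cos\theta_0$ bounds $y_h(\theta_0,\tau_0)$ by a constant depending only on $h$ and $n$, placing $q$ in $\R\times B^n_{C_h''}(p)$. The main obstacle will be the careful support-function bookkeeping needed to verify strict containment of the translated pancake inside $\Omega^R_t$ for all $\tau\in[-r,0)$: the Gauss image $G(\M^R_t)$ may include oblique directions with nontrivial $e_1$-component arising from the Grim pieces of $\M^R$, and the compatibility of the $r$-deep condition with the pancake's support function must be verified uniformly in those directions.
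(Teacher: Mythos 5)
Your proposal is correct in substance and rests on exactly the same ingredients as the paper's proof: the scaled rotationally symmetric pancake $\Pi^h$ as a barrier, Lemma \ref{lem:speed lower bound} to produce the lower bound $\cos\theta_0\le H_R$ at the touching point, and the explicit formula \eqref{eq:Angenent oval kappa} to close the inequality. The one genuine difference is in how you locate the contact point. You use the time parameter $\tau$ of the pancake family as the continuation parameter, decreasing it from $0$ until first contact, and so you must verify that $\Pi^h_\tau+p$ is contained in $\Omega^R_t$ for the entire range $\tau\in[-r,0)$ before a first contact at some $\tau_0\le -r$ can be inferred. This is what forces you into the ``support-function bookkeeping'' you flag at the end. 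That obstacle does in fact resolve favourably --- the support function of the rotationally symmetric $\Pi^h_\tau$ is bounded above in \emph{every} direction (including oblique ones) by the horizontal displacement $\ell_h(\tau)$, so the defining inequality $\inner{p}{z}\le\sigma_R(z,t)-r$ together with $\ell_h(\tau)<r$ for $\tau\in[-r,0)$, $r\ge r_h$, gives the containment --- but it is a real step you would need to spell out.

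The paper avoids that step entirely by choosing a different continuation parameter. It fixes the time slice $\Pi^h_{-r}$ once and for all, observes only that the \emph{midsection} $\{x=0\}\cap(\Pi^h_{-r}+p)$ lies inside the midsection of $\M^R_t$ (which is precisely what $p\in P^r_t$ gives, with no oblique directions to worry about), notes that $\Pi^h_{-r}$ exits the slab of width $\pi$ for $r$ large (since $h<1$), and then translates $\Pi^h_{-r}+p$ perpendicular to the slab, in the $-e_1$ direction, until first tangency. Translating along $e_1$ does not move the midsection in $\{0\}\times\R^n$, so the midsection nesting is automatically preserved throughout; the tangency point is then interior with $x>0$, and the rest of the computation is as in your proposal. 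So the paper buys a cleaner containment argument at the cost of a slightly less ``obvious'' choice of sliding direction, while your version is conceptually more natural but requires the additional verification you identified. Both approaches produce the constants $C_h$, $r_h$ depending only on $n$ and $h$.
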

\begin{proof}
Fix $h\in(0,1)$ and consider, for each $t<0$, the hypersurface $\Pi^h_t$ obtained by rotating the scaled Angenent oval $h^{-1}\mathrm{A}_{h^2t}$ about the $x$-axis. Namely (see \S \ref{ssec:rotational pancake}),
\[
\Pi^h_t=\{x(\theta,t)e_1+y(\theta,t)\phi:(\theta,\phi)\in [-\tfrac{\pi}{2},\tfrac{\pi}{2}]\times S^{n-1}\},
\]
where, setting $a_h(t)\doteqdot (e^{-2h^2t}-1)^{-\frac{1}{2}}$,
\[
x(\theta,t)\doteqdot \frac{1}{h}\arctan\left(\frac{\sin\theta}{\sqrt{\cos^2\theta+a_h^2(t)}}\right)
\]
and
\begin{equation}\label{eq:AO y scaled}
y(\theta,t)\doteqdot -ht+\frac{1}{h}\log\left(\frac{\sqrt{\cos^2\theta+a_h^2(t)}+\cos\theta}{\sqrt{1+a_h^2(t)}}\right)
\end{equation}
are the coordinates of $h^{-1}\mathrm{A}_{h^2t}$ with respect to its clockwise oriented turning angle $\theta$ as in  \S \ref{ssec:rotational pancake}.

Since $h<1$, \eqref{eq:AO y scaled} implies that $\{x=0\}\cap (\Pi^h_{-r}-p)$ lies to the inside of $\{x=0\}\cap \M^R_{t}$ when $p\in P^r_t$ and $r\ge r_h$, say. Moreover, since $h<1$, the hypersurface $\Pi^h_{-r}$ must eventually leave the slab of width $\pi$ as $r$ increases. Thus, choosing $r_h$ larger if necessary, we conclude that the two hypersurfaces must intersect each other for all $r\ge r_h$. Now translate $\Pi^h_{-r}$ along the $x$-axis towards the negative direction until it touches $\M^R_t$ from the inside at a point $p=x(\theta,-r)e_1+y(\theta,-r)\phi$ with $x(\theta,-r)>0$. By \eqref{eq:Angenent oval kappa}, we have (in a limiting sense if $\theta=\frac{\pi}{2}$)
\begin{align*}
\nu_R={}&\nu_{\Pi^h_{-r}}=\sin\theta e_1+\cos\theta \phi\\
 H_R\le{}&H_{\Pi^h_{-r}}=h\sqrt{\cos^2\theta+a_h^2(-r)}+\cos\theta\frac{n-1}{y(\theta,-r)}
\end{align*}
at $p$. Choose $v\in F^0_P$ so that $\inner{v}{\phi}\ge 1$. By \eqref{eq:lower speed bound},
\begin{align*}
\cos\theta\inner{\phi}{v}={}&\inner{\nu_{\Pi^h_{-r}}(p)}{v}\\
={}&\inner{\nu_{R}(p)}{v}\\
\le{}& H_R(p)\\
\le{}& H_{\Pi^h_{-r}}(p)=\cos\theta\left(h\sqrt{1+\frac{a_h^2(-r)}{\cos^2\theta}}+\frac{n-1}{y(\theta,-r)}\right)
\end{align*}
and hence (if $\theta\neq \frac{\pi}{2}$)
\[
1\le h\sqrt{1+\frac{a_h^2(-r)}{\cos^2\theta}}+\frac{n-1}{y(\theta,-r)}\,.
\]
It follows (trivially if $\theta=\frac{\pi}{2}$) that
\begin{equation}\label{eq:cos theta small}
\cos\theta\le C_h\mathrm{e}^{-h^2r}
\end{equation}
for some $C_h<\infty$ for all $r\ge r_{h}$, say. Indeed, if $\cos\theta\ge C\mathrm{e}^{-h^2r}$, then
\begin{align*}
1\le {}& h\sqrt{1+\frac{a_h^2(-r)}{C^2\mathrm{e}^{-2h^2r}}}+\frac{(n-1)h}{\log\left(\mathrm{e}^{h^2r}\frac{\sqrt{C^2\mathrm{e}^{-2h^2r}+a_h^2(-r)}+C\mathrm{e}^{-h^2r}}{\sqrt{1+a_h^2(-r)}}\right)}\\
={}&h\sqrt{1+\frac{a_h^2(-r)}{C^2\mathrm{e}^{-2h^2r}}}+\frac{(n-1)h}{\log\left(\frac{\sqrt{C^2+\mathrm{e}^{2h^2r}a_h^2(-r)}+C}{\sqrt{1+a_h^2(-r)}}\right)}\,.
\end{align*}
Since $\mathrm{e}^{2h^2r}a_h^2(-r)\to 1$ as $r\to\infty$, we can find $C=C_h<\infty$ so that the right hand side is less than 1 for all $r\ge r_h$, which proves the claim.

Since the mean curvature of $\Pi^h_{-r}$ is monotone decreasing in $\theta$ for $\theta\in (0,\frac{\pi}{2})$, we find that
\[
H_R(p)\leq C_he^{-h^2r}
\]
for $r\ge r_h$. Finally, we observe that
\[
y(\theta,-r)\le \frac{1}{h}\log\left(\frac{\sqrt{C_h^2+\mathrm{e}^{2h^2r}a_h^2(-r)}+C_h}{\sqrt{1+a_h^2(-r)}}\right)
\]
is bounded uniformly for $r\ge r_h$.
\end{proof}

%The argument of Lemma \ref{lem:minimum speed estimate} also applies in the context of \cite{BLT1}, where the old-but-not-ancient solutions $\{\M^R_t\}_{t\in(-\infty,0)}$ are obtained by evolving rotated timeslices of the Angenent oval. In that case, the minimum of the mean curvature is always attained on the rotation axis \cite[Lemma 4.1]{BLT1}. Integrating \eqref{eq:MCF support} then yields a uniform lower bound for the ``width'':
%\[
%\max_{p\in \M^R_t}\inner{p}{e_1}\ge \max_{p\in \M^R_{\alpha_R}}\inner{p}{e_1}-\frac{C_h}{h^2}\mathrm{e}^{h^2t}\,.
%\]
%Unfortunately, this argument cannot work in the present setting, since we have no control over the location of the point with controlled curvature (except that it lies in a bounded region).  

An elliptic version of the following estimate was exploited in \cite{SX} (and modified in \cite{BLT2}) in order to construct translators in slabs in $\R^{n+1}$.

\begin{lemma}\label{lem:width_est}
Each old-but-not-ancient solution $\{\M^R_t\}_{t\in (0,T_R)}$ satisfies
\[
\vert x\vert\geq \frac{\pi}{2}\left(1-H_R\right),
\]
where $x(p)\doteqdot \inner{p}{e_1}$.
\end{lemma}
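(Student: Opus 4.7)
The plan is to run a maximum-principle argument on the function $w \doteqdot H_R + \frac{2x}{\pi}$ in the half-region $\{x \geq 0\}$, showing $w \geq 1$ there. First observe that $\M^R_t$ is symmetric under the reflection $x \mapsto -x$: the standard Grim hyperplane is even in $x$, and both the rotations $\rho_f$ and translations by $R z_f$ act within $\{0\}\times \R^n$ and hence preserve the $e_1$-coordinate, so each piece $\Gamma_f + R z_f$ is symmetric; symmetry of $\M^R$ follows and is propagated by the flow. It therefore suffices to prove $w \geq 1$ on $\{x \geq 0\} \cap \M^R_t$.

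Using $\partial_t H_R = \Delta H_R + |\mathrm{II}|^2 H_R$ together with $(\partial_t - \Delta)\langle F, e_1\rangle = 0$ (a direct consequence of $\partial_t F = \Delta F = -H_R \nu_R$ in the Lagrangian parametrization), one computes
\[
(\partial_t - \Delta)w = |\mathrm{II}|^2 H_R,
\]
which is strictly positive for $t > 0$ by the local uniform convexity provided by Lemma \ref{lem:old-but-not-ancient}. Thus $w$ is a strict supersolution of the heat equation on the parabolic region $Q \doteqdot \{(p,t) : t \in (0, T_R),\, p \in \M^R_t,\, x(p) > 0\}$.

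By the strong maximum principle, $w$ must attain its infimum on the parabolic boundary of $Q$. On the initial slice the regular points of $\M^R$ lie on Grim hyperplane pieces $\Gamma_f + R z_f$, on each of which $H_R = \cos x$ (since the construction preserves the $e_1$-coordinate); the concavity of $\cos$ on $[0, \tfrac{\pi}{2}]$ then gives $\cos x \geq 1 - \frac{2x}{\pi}$, i.e., $w \geq 1$. On the side boundary $\{x = 0\}$, reflection symmetry forces the normal $\nu_R$ to satisfy $\langle \nu_R, e_1\rangle = 0$, hence $\nu_R\in \{0\}\times S^{n-1}$; by Lemma \ref{lem:Gauss image} such normals belong to $G(P)$, and Lemma \ref{lem:speed lower bound} then yields $H_R \geq \sigma[P](\nu_R) \geq 1$, the last inequality because $P$ circumscribes $\{0\}\times S^{n-1}$. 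Hence $w = H_R \geq 1$ there.

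The remaining piece of the parabolic boundary --- spatial infinity, relevant only when $P$ is unbounded --- is the main obstacle, and I would handle it by doubling: run the entire argument instead on the compact approximators $\{\M^{R,L}_t\}$ constructed in the proof of Lemma \ref{lem:old-but-not-ancient}, whose parabolic boundary contains no spatial infinity. Because the doubling reflection acts orthogonally to $e_1$, both the initial and side-boundary verifications apply verbatim to the doubled data (in particular the reflection across the doubling plane does not change $x$ or $H_R$). Passing $L \to \infty$ in the locally smooth limit preserves $w \geq 1$ on $\M^R_t$, and combined with the mirror argument on $\{x \leq 0\}$ this gives $|x| \geq \frac{\pi}{2}(1 - H_R)$ globally.
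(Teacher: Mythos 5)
Your proof is correct and takes essentially the same approach as the paper: the paper applies the maximum principle to the quotient $H_R/v$ with $v\doteqdot 1-\tfrac{2}{\pi}x$, whereas you apply it to the sum $w=H_R+\tfrac{2}{\pi}x=H_R-v+1$, but both reduce to the same initial-time estimate $\cos x\ge 1-\tfrac{2}{\pi}x$, the same $\{x=0\}$ estimate $H_R\ge 1$ from Lemma~\ref{lem:speed lower bound}, and the same doubling device to handle the unbounded case.
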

\begin{proof}
Suppose first that $P$ is bounded. Define
\[
v\doteqdot 1-\frac{2}{\pi}x\,.
\]
Then, along $\{\M^R_t\}_{t\in(0,T_R)}$,
\[
(\partial_t-\Delta_R)v=0
\]
and hence, by the maximum principle,
\begin{align*}
\inf_{\M^R_t\cap \{x>0\}}\frac{H_R}{v}\ge{}& \min\left\{\liminf_{t\to 0}\inf_{\M^R_t\cap \{x>0\}}\frac{H_R}{v},\inf_{t\in(0,T_R)}\min_{\M^R_t\cap \{x=0\}}H_R\right\}.
%={}&\min\left\{\frac{\cos x}{1-\frac{2}{\pi}x},\inf_{\M^R_t\cap \{x=0\}}H\right\}\nonumber\\
%\ge{}&\min\left\{1,\inf_{t\in[T_R,0)}\min_{\M^R_t\cap \{x=0\}}H\right\}\,.
\end{align*}
Observe that
\begin{align*}
\liminf_{t\to 0}\inf_{\M^R_t\cap \{x>0\}}\frac{H_R}{v}\ge {}&\inf_{x\in(0,\frac{\pi}{2})}\frac{\cos x}{1-\frac{2}{\pi}x}=1
\end{align*}
for all $R>0$. On the other hand, by \eqref{eq:lower speed bound},
\[
\min_{\M^R_t\cap \{x=0\}}H_R\ge\min_{z\in \{0\}\times S^{n-1}}\max_{v\in F_P^0}\left\langle z,v\right\rangle =1
\]
for all $t>0$. This proves the claim.
\begin{comment}
We claim that
\[
\liminf_{R\to\infty}\inf_{t\in(\alpha_R,\omega_R-1]}\min_{\M^R_t\cap \{x=0\}}H>0\,.
\]
If this were not the case, then we could find sequences of scales $R_j\to\infty$, times $t_j\in(\alpha_{R_j},\omega_{R_j}-1]$, and points $p_j\in \M^{R_j}_{t_j}$ such that $H(p_j,t_j)\to 0$. Since $\liminf_{t\to \alpha_{R}}\min_{\M^R_{t}\cap\{x=0\}}H=1$ for all $R$, \emph{a priori} estimates for solutions to parabolic equations imply that $\liminf_{j\to\infty}(t_j-\alpha_{R_j})>0$. Consider the sequence of translated flows $\{\M^{R_j}_{t+t_j}-p_j\}_{t\in(\alpha_{R_j}-t_j,\omega_{R_j}-1-t_j]}$. By the differential Harnack inequality, they converge, after passing to a subsequence, locally uniformly on compact subsets of spacetime to a convex limit flow $\{\M^\infty_t\}_{t\in(\alpha,\,\omega)}$ which %lies in a slab orthogonal to $e_1$ and 
satisfies $\alpha<0<\omega$, $0\in \M^\infty_0$ and $H(0,0)=0$% and $\nu(0,0)\neq e_1$
. By the strong maximum principle, the limit must be a stationary plane. But this is impossible since, by construction, it lies in a slab orthogonal to $e_1$ but satisfies $\nu(0,0)\perp e_1$ (due to the inherited reflection symmetry). This completes the proof.
%We conclude that	
%\begin{align*}
%\liminf_{R\to\infty}\inf_{\M^R_t\cap \{x>0\}}\frac{H}{v}>0\,.
%\end{align*}
%In particular, $\{\M_t\}_{t\in(-\infty,0)}$ sweeps out $(-\frac{\pi}{2},\frac{\pi}{2})\times\R^2$ and hence, by Theorem \ref{thm:circumscribed}, its squash-down $\Omega_\ast$ satisfies $\min_{z\in\{0\}\times S^{1}}\sigma_\ast(z)=1$. Since $\Omega_\ast$ is convex, \eqref{eq:squash-down vertices} implies that $\Omega_\ast=P$.
\end{comment}

If $P$ is unbounded, we apply the same argument to the ``doubled'' approximating solutions, $\{\M^{R,L}\}_{t\in[0,T_{R,L})}$. This yields
\begin{align*}
\inf_{\M^{R,L}_t\cap \{x>0\}}\!\!\!\frac{H_{R,L}}{v}\ge{}& \min\!\left\{\liminf_{t\to 0}\!\!\!\inf_{\M^{R,L}_t\cap \{x>0\}}\!\!\!\frac{H_{R,L}}{v},\!\inf_{t\in(0,T_{R,L})}\!\min_{\M^{R,L}_t\cap \{x=0\}}\!\!H_{R,L}\right\}\!.
\end{align*}
Again,
\begin{align*}
\liminf_{t\to 0}\inf_{\M^{R,L}_t\cap \{x>0\}}\frac{H_{R,L}}{v}\ge {}&\inf_{x\in(0,\frac{\pi}{2})}\frac{\cos x}{1-\frac{2}{\pi}x}=1
\end{align*}
by construction of the initial datum. Since, by the argument leading to \eqref{eq:lower speed bound},
\[
H_{R,L}(z,t)\ge \max_{v\in F_P^0}\inner{z}{v}
\]
for all $(z,t)\in S^n\times(0,T_{R,L})$ and all $L>0$, %Suppose that
%\[
%\inf_{R>0}\inf_{L>0}\inf_{t\in(0,T_{R,L})}\!\min_{\M^{R,L}_t\cap \{x=0\}}\!\!H_{R,L}=0\,.
%\]
%Then there are sequences of heights $L_j$, scales $R_j$, times $t_j\in(0,t_{R_j,L_j})$ and points $p_j\in \M^{R_j,L_j}_{t_j}$ such that $H_{R_j,L_j}(p_j,t_j)\to 0$.
we conclude that
\[
\frac{H_{R}}{v}(z,t)\ge \min\left\{1,\max_{v\in F_P^0}\inner{z}{v}\right\}\ge 1\,.\qedhere
\]
\begin{comment}

\texttt{Noncompact case: Since the solutions we construct satisfy the Harnack inequality, we can apply the argument to the (time-translated) `doubled' approximating solutions $\{\M^{R,h}_t\}_{t\in[\alpha_{R,h},\,\omega_{R,h})}$.}

In the noncompact case, we need to show that
\[
\inf_{h>0}\inf_{R>0}\inf_{t\in[\alpha_{R,h},-1]}\min_{\M^{R,h}_t\cap \{x=0\}}H>0\,.
\]
So suppose, to the contrary, that there exist sequences of scales $R_j$, heights $h_j$, times $t_j\in[\alpha_{R_j,h_j},-1]$, and points $p_j\in \M^{R_j,h_j}_{t_j}$ such that $H(p_j,t_j)\to 0$. Consider the sequence of translated flows $\{\M^{R_j,h_j}_{t+t_j}-p_j\}_{t\in[\alpha_{R_j,h_j}-t_j,-t_j)}$. Since, by the Harnack inequality,
\[
H(x,t)\le\sqrt{\frac{t_j-\alpha_{R_j,h_j}}{t-\alpha_{R_j,h_j}}}\mathrm{e}^{\frac{d^2(x,x_j,t)}{t_j-t}}H(x_j,t_j)\,,
\]
the flows converge, after passing to a subsequence, locally uniformly on compact subsets of spacetime to a convex limit flow $\{\M^\infty_t\}_{t\in(\alpha,\,\omega)}$ which satisfies $\alpha<0<\omega$, $0\in \M^\infty_0$ and $H(0,0)=0$, which yields a contradiction as above.
\end{comment}
\end{proof}

Since the solution is convex, Lemmas \ref{lem:minimum speed estimate} and \ref{lem:width_est} yield the uniform lower width bound
\begin{equation}\label{eq:width estimate}
\vert x(p)\vert\ge \frac{\pi}{2}-C_h\mathrm{e}^{-h^2r}
\end{equation}
for all
\[
p\in \bigcap_{z\in G(\M^R_t)}\{q\in\{0\}\times\R^n:\inner{q}{z}\le \sigma(z,t)-r-C_h\}
\]
and $r\ge r_h$. In particular, this implies a uniform lower bound for the inradius.

Next, we study the asymptotic behaviour of the unbounded examples when $t\to+\infty$. Given a circumscribed convex body
\[
P= \bigcap_{z\in F}\{p\in \R^n:\inner{p}{z}\le 1\},
\]
where $F$ is a subset of $S^{n-1}$, define the \textsc{exscribed body} $P_{-1}$ by
\[
P_{-1}\doteqdot \bigcap_{z\in F}\{p\in\R^n:\inner{p}{z}\le -1\}.
\]

\begin{lemma}\label{lem:squash-up}
If $P\in {\mathrm{P}}{}_\ast^n$ is unbounded, but nondegenerate, then
\[
%\Omega^\ast\doteqdot 
\lim_{t\to\infty}\frac{1}{t}\Omega^R_t=\bigcap_{f\in F^{n-1}_P}\big\{p\in\R^n:\inner{p}{z_f}\le -1\big\}.%\doteqdot P_{-1}.
\]
\end{lemma}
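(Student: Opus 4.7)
The plan is to prove $\tfrac{1}{t}\Omega^R_t \to P_{-1}$ (Hausdorff convergence of closed convex subsets of $\{0\}\times\R^n\cong\R^n$) by establishing both the outer and the inner containments.

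\emph{Outer bound.} For each facet $f\in F^{n-1}_P$, the Grim hyperplane $\Gamma_f+Rz_f$ is a translator with bulk velocity $-z_f$ and so evolves under the flow as $\Gamma_f+(R-t)z_f$, bounding the convex region $\Omega_f+(R-t)z_f$. Since $\Omega^R\subseteq\Omega_f+Rz_f$ at $t=0$ by construction, the avoidance principle (applied to the ``doubled'' compact approximators of Lemma \ref{lem:old-but-not-ancient} and passing to the limit in $L$) gives $\Omega^R_t\subseteq\Omega_f+(R-t)z_f$ for every $t\geq 0$. The apex of $\Gamma_f$ is at the origin with outward normal $z_f$, so $\Omega_f\subseteq\{p:\inner{p}{z_f}\leq 0\}$, and hence $\sigma_R(z_f,t)\leq R-t$. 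Rescaling by $\tfrac{1}{t}$ and intersecting over all facets yields
\[
\tfrac{1}{t}\Omega^R_t\subseteq\bigcap_{f\in F^{n-1}_P}\bigl\{p:\inner{p}{z_f}\leq R/t-1\bigr\},
\]
which converges to $P_{-1}$ as $t\to\infty$.

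\emph{Inner bound.} Fix $q\in\operatorname{int}(P_{-1})$, so that $m_f\doteqdot-\inner{q}{z_f}>1$ for every facet $f$. The strategy is to use the rotationally symmetric ancient pancake (\S\ref{ssec:rotational pancake}) as an inner barrier. For each sufficiently large $T>0$, consider the translated time-$(-T-1)$ slice $\Pi_{-T-1}+Tq$. Using the radius asymptotics \eqref{eq:ancient pancake radius}, the edge asymptotics \eqref{eq:pancake edge asymptotics}, and the explicit description $\Omega_f+Rz_f=\{p:\inner{p}{z_f}\leq R+\log\cos p_1\}$, one verifies that $\Pi_{-T-1}+Tq\subseteq\Omega_f+Rz_f$ for every facet $f$; the binding inequality is
\[
\ell(-T-1)-Tm_f\leq R,
\]
i.e., $-(m_f-1)T+(n-1)\log T+O(1)\leq R$, which holds for all $T$ sufficiently large since $m_f>1$. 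Intersecting over the finitely many facets gives $\Pi_{-T-1}+Tq\subseteq\Omega^R$.

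By avoidance, the evolved pancake satisfies $\Pi_{-T-1+t}+Tq\subseteq\Omega^R_t$ for every $t\in[0,T+1)$; evaluating at $t=T$ gives $\Pi_{-1}+Tq\subseteq\Omega^R_T$, and hence $Tq\in\Omega^R_T$ (since $0\in\Pi_{-1}$). Dividing by $T$, $q\in\tfrac{1}{T}\Omega^R_T$ for all $T$ sufficiently large, and therefore $\operatorname{int}(P_{-1})\subseteq\liminf_{T\to\infty}\tfrac{1}{T}\Omega^R_T$. Combined with the outer bound and the fact that $P_{-1}$ is the closure of its interior, this gives $\lim_{t\to\infty}\tfrac{1}{t}\Omega^R_t=P_{-1}$. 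The main obstacle is the initial fit $\Pi_{-T-1}+Tq\subseteq\Omega_f+Rz_f$: since the Grim region and the pancake both live in the slab $|p_1|<\tfrac{\pi}{2}$, the inclusion must be checked uniformly across the slab coordinate $p_1$, which requires the sharp edge asymptotics \eqref{eq:pancake edge asymptotics} to control the pancake's profile in the plane $\operatorname{span}(e_1,z_f)$ against the Grim Reaper profile bounding $\Omega_f+Rz_f$.
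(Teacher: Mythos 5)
Your proof is correct, and the overall strategy coincides with the paper's: establish the outer containment via a one-sided bound on the support function, and the inner containment by placing appropriately scaled slices of the rotationally symmetric ancient pancake inside $\M^R$ and evolving. For the inner bound, you parametrize pointwise (fix $q\in\operatorname{int}(P_{-1})$, send $T\to\infty$), whereas the paper works with the whole shrinking region $P^R_r$ at once and sends $r\to\infty$; these are equivalent reformulations of the same barrier argument, and both rely on \eqref{eq:ancient pancake radius} and \eqref{eq:pancake edge asymptotics} in the same way. For the outer bound you do take a genuinely different (and arguably more transparently geometric) route: you compare directly against the translating Grim regions $\Omega_f+(R-t)z_f$ using the avoidance principle, which yields $\sigma_R(z_f,t)\le R-t$ and hence $\sigma^\ast(z_f)\le -1$. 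The paper instead integrates the pointwise speed bound \eqref{eq:sigma ratio bound} (Lemma \ref{lem:speed lower bound}), obtaining $\limsup_{t\to\infty}\sigma_R(z,t)/t\le -\max_{v\in F^0_P}\inner{z}{v}$ for \emph{all} $z$, not just facet normals. The paper's estimate is nominally stronger, but since $P_{-1}$ is cut out by the facet-normal halfspaces alone, both bounds suffice; and in any case Lemma \ref{lem:speed lower bound} is itself obtained by maximum-principle comparison with the Grim structure of the initial datum, so the two arguments are close relatives.
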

We shall refer to the limit as the \textsc{forward squash-down} of the immortal solution $\{\M^R_t\}_{t\in[0,\infty)}$.
\begin{proof}[Proof of Lemma \ref{lem:squash-up}]
By \eqref{eq:sigma ratio bound}, 
%\[
%\frac{\sigma_R(z,t)-\sigma_R(z,s)}{t-s}\le -\max_{v\in F^0_P}\inner{z}{v}
%\]
%and hence
\begin{equation}\label{eq:forward squash-down lower bound}
\limsup_{t\to\infty}\frac{\sigma_R(z,t)}{t}\le -\max_{v\in F^0_P}\inner{z}{v}.
\end{equation}

To obtain a lower bound, consider, as in the proof of Lemma \ref{lem:Gauss image}, the set
\[
P_r^R\doteqdot\bigcap_{f\in F^{n-1}_P}\{p\in\{0\}\times \R^{n}:\inner{p}{z_f}<R-r\}\,.
\]
%Since the rotationally symmetric ancient pancake is asymptotic to Grim hyperplanes of width $\pi$, given any point $p\in P^R_r$, the timeslice of the rotationally symmetric ancient pancake centered at $p$ and of radius $r-1$ lies to the inside of the initial hypersurface $\M^R=\M^R_{a_R}$ when $r$ is sufficiently large. But then, by the avoidance principle,
By \eqref{eq:ancient pancake radius} and \eqref{eq:pancake edge asymptotics}, for $r$ sufficiently large we can find $t_r=-r+o(r)$ such that, for each $p\in P_r^R$, the translated time $t_r$ slice of the rotationally symmetric ancient pancake, $\Pi_{t_r}-p$, lies to the inside of the initial hypersurface $\M^R$. But then, by the avoidance principle,
\[
P^R_r\subset \Omega^R_{-t_r}\,,
\]
where $\Omega^R_{-t_r}$ is the convex body bounded by $\M^R_{-t_r}$. Thus,
\[
\frac{1}{-t_r}P^R_r\subset \frac{1}{-t_r}\Omega^R_{-t_r}.
\]
The claim follows since $t_r=-r+o(r)$ as $r\to\infty$.
\end{proof}

The next estimate implies that at least one ``facet'' of $\{\M^R_t\}_{t\in[0,T_R)}$ remains close to the rotationally symmetric example. In case $P$ is regular, this implies that \emph{all} of the facets remain close.

%{\color{blue} The notation (with the times) in the following confused me a bit. I couldn't check it 100\%}

\begin{lemma}\label{lem:horizontal displacement estimate}
For every $h>1$ there exists $t_h>-\infty$ such that, given any $t_0\in (0,T_R)$ and $p_0\in \M^R_{t_0}\cap(\{0\}\times\R^n)$,
\[
\min_{z\in G(\M^R_t)\cap (\{0\}\times\R^n)}\left(\sigma_R(z,t)-\inner{p_0}{z}\right)\le h(t_0-t)
\]
for all $t\in(0,t_0)\cap(-\infty,t_0+t_h)$.
\end{lemma}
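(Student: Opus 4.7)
The strategy is to pick the witness $z$ in the minimum to be the outer unit normal to $\M^R_{t_0}$ at $p_0$. Since $\M^R$, and hence $\M^R_{t_0}$, is reflection-symmetric across the midplane $\{0\}\times\R^n$ (by construction of $\M^R$ from the Grim hyperplane regions $\Omega_f + Rz_f$), and $p_0$ lies in this midplane, this outer normal lies in $\{0\}\times S^{n-1}$. By Lemma~\ref{lem:Gauss image}, $z\in G(\M^R_t)\cap(\{0\}\times\R^n)$ for every $t\in(0,T_R)$. By this choice, $\sigma_R(z, t_0) = \inner{p_0}{z}$, so the assertion reduces to
\[
\sigma_R(z, t) - \sigma_R(z, t_0) \leq h(t_0 - t).
\]

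To estimate $\sigma_R(z,\cdot)$ from above I would use the rotationally symmetric ancient pancake $\{\Pi_s\}_{s\in(-\infty,0)}$ from \S\ref{ssec:rotational pancake} as an outer barrier. For parameters $t^*>0$ and $q\in\{0\}\times\R^n$, set $\widetilde\Pi_s := \Pi_{s-t^*}+q$, whose support function in any lateral direction $w$ is $\sigma_{\widetilde\Pi}(w,s) = \ell(s-t^*)+\inner{q}{w}$. I would choose $(t^*,q)$ so that
(a) $\widetilde\Pi_{t_0}$ is tangent to the supporting hyperplane of $\M^R_{t_0}$ with outer normal $z$ at $p_0$, i.e., $\sigma_{\widetilde\Pi}(z,t_0) = \inner{p_0}{z}$; and
(b) $\widetilde\Pi_0 \supseteq \Omega^R$ (initial containment).
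The tangency (a) determines the component $\inner{q}{z}$ in terms of $(p_0, z, t^*)$, leaving $q_\perp\perp z$ free; (b) can be checked by combining the cross-section identity $\Omega^R\cap\{x_1=x\} = (R+\log\cos x)P$ (derived in the proof of Lemma~\ref{lem:old-but-not-ancient}) with the sharp asymptotics $\ell(s) = -s + (n-1)\log(-s) + c_n + o(1)$ as $s\to-\infty$, taking $t^*$ large and $q_\perp$ appropriate to the facet geometry of $P$.

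Granting (a) and (b), the inclusion principle for mean curvature flow propagates the containment to all $s\in[0,\min(T_R,t^*))$, and subtracting the tangency relation yields
\[
\sigma_R(z,t) - \sigma_R(z,t_0) \leq \sigma_{\widetilde\Pi}(z,t) - \sigma_{\widetilde\Pi}(z,t_0) = \ell(t-t^*) - \ell(t_0-t^*).
\]
The asymptotic expansion of $\ell$ then gives
\[
\ell(t-t^*) - \ell(t_0-t^*) = (t_0-t) + (n-1)\log\!\left(\tfrac{t^*-t}{t^*-t_0}\right) + o(1) \underset{t^*\to\infty}{\longrightarrow} (t_0-t),
\]
which is $\leq h(t_0-t)$ for any $h>1$ once $t_0-t$ exceeds a threshold $-t_h$ depending only on $h$ and $n$ (independent of $R$, $P$, $t_0$, $p_0$), since the logarithmic correction is controlled by $(n-1)(t_0-t)/(t^*-t_0)$ and we may take $t^*$ as large as we wish.

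The hard part will be arranging (a) and (b) simultaneously. The tangency forces the pancake's tip in direction $z$ at time $t_0$ to be exactly $p_0$, and since the pancake must be large (of horizontal radius $\sim\ell(t_0-t^*)$) while centered at $q$ displaced far from $p_0$ in direction $-z$ when $t^*$ is large, initial containment in direction $z$ is tight and requires the delicate balance $\ell(-t^*) - \ell(t_0-t^*) + \inner{p_0}{z} \geq R\sigma_P(z)$, which asymptotically reads $t_0 + \inner{p_0}{z} \geq R\sigma_P(z)$. Combining this with the constraint $\inner{p_0}{z} \leq (R-t_0)\sigma_P(z)$ from \eqref{eq:sigma ratio bound} forces $\sigma_P(z) \leq 1$, i.e.\ $z$ must be a facet normal of $P$; when the outer normal at $p_0$ is not a facet normal, the plan is to instead select $z$ to be the facet normal $z_f\in F^{n-1}_P$ maximizing $\inner{p_0}{z_f}$ (which lies in the Gauss image by Lemma~\ref{lem:Gauss image}), absorb the resulting (uniformly bounded) gap $\sigma_R(z_f,t_0) - \inner{p_0}{z_f}$ into the threshold $-t_h$, and verify containment in the remaining lateral directions by taking $q_\perp$ large in directions where $\sigma_P$ is large.
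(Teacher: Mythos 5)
Your approach is genuinely different from the paper's, and it has a gap that I do not see how to close. You attack the problem by selecting an explicit witness $z$ in the minimum and then bounding $\sigma_R(z,\cdot)$ from above with a large pancake used as an \emph{outer} barrier enclosing $\Omega^R$ at time $0$ and tangent at $(p_0,t_0)$. The paper instead argues by contradiction with an \emph{inner} barrier: it assumes the minimum exceeds $h_\ast(t_0-t)$ at some early $t$ (so there is lots of lateral room around $p_0$), shows via the width estimate \eqref{eq:width estimate} that the $h^{-1}$-scaled pancake $h^{-1}\Pi_{h^2(t-t_0)}+p_0$ (width $\pi/h<\pi$, horizontal radius $\approx h(t_0-t)$, extinguishing at $p_0$ at time $t_0$) then fits strictly inside $\M^R_t$, and derives a contradiction from the avoidance principle at time $t_0$. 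No witness is ever chosen, so the argument is indifferent to which facet $p_0$ sits near and to whether $P$ is bounded.

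Two concrete problems with your route. First, the lemma must hold for unbounded $P$ (it is used in the proof of Theorem \ref{thm:unbounded polygonal pancakes}), and then $\Omega^R$ is unbounded, so no compact translate of $\Pi$ can satisfy your initial containment (b) at all; the outer-barrier set-up breaks at the first step. Second, even for bounded $P$ your witness choice is not robust. You correctly deduce from \eqref{eq:sigma ratio bound} that the outer normal $z_0$ at $p_0$ only works when $\sigma_P(z_0)\le 1$, i.e.\ when $z_0$ is a facet normal (indeed $\sigma_R(z_0,t)-\sigma_R(z_0,t_0)\ge \sigma_P(z_0)(t_0-t)$, which already exceeds $h(t_0-t)$ whenever $\sigma_P(z_0)>h$). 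Your fallback, switching to the facet normal $z_f$ maximizing $\inner{p_0}{z_f}$ and ``absorbing the uniformly bounded gap $\sigma_R(z_f,t_0)-\inner{p_0}{z_f}$ into $t_h$,'' rests on an assertion you have not established and which is false in general: the inner and outer polytope bounds on the midplane cross-section only give $\bigl(R-t_0-o(t_0)\bigr)P\subset \Omega^R_{t_0}\cap(\{0\}\times\R^n)\subset (R-t_0)P$, so for $p_0$ near a ``corner'' the gap can be of order $o(t_0)$ rather than $O(1)$, and an $o(t_0)$ error term cannot be swallowed by a threshold $t_h$ that must be independent of $t_0$ and $R$ (the lemma needs to hold down to $t_0-t\approx -t_h$, a constant). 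The inner-barrier contradiction argument in the paper is precisely what sidesteps both difficulties.
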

\begin{proof}
By the width estimate \eqref{eq:width estimate} and the displacement estimate \eqref{eq:ancient pancake radius}, given $h_\ast>1$ and $h\in(1,h_\ast)$, there exists $t_h>-\infty$ such that if
\[
\min_{z\in G(\M_t)\cap (\{0\}\times\R^n)}\left(\sigma_R(z,t)-\inner{p_0}{z}\right)> h_\ast(t_0-t)
\]
and $t_0-t>-t_h$, then the time $t$ slice $\Pi^h_t=h^{-1}(\Pi_{h^2(t-t_0)}-p_0)$ of the scaled-by-$h$ rotationally symmetric ancient pancake centred at $(p_0,t_0)$ lies to the inside of $\M^R_t$. But this violates the avoidance principle, since both $\{\M^R_t\}_{t\in[0,T_R)}$ and $\{\Pi^h_t\}_{t\in(-\infty,t_0)}$ reach the point $p_0$ at time $t=t_0$.
\end{proof}

\section{Ancient/translating solutions with prescribed squash-downs}\label{sec:existence}

After time-translating the old-but-not-ancient solutions constructed in Lemma \ref{lem:old-but-not-ancient} so that they reach the origin at time zero, Lemma \ref{lem:horizontal displacement estimate} allows us to control the displacement of the faces, and thereby obtain ancient solutions.

\subsection{Regular examples} %We first show that, for \emph{regular} $P\in \mathrm{P}_\ast^n$, $\{\M^R_t\}_{t\in(\alpha_R,\omega_R)}$ converges to an ancient solution whose squash-down is the polytope $P$.

The construction is particularly straightforward when $P$ is a \emph{regular} polytope. We first consider the case that $P$ is bounded.

\begin{theorem}\label{thm:bounded polygonal pancakes}
Let $P\in \mathrm{P}{}^n_\ast$ be a bounded regular polytope. There exists a compact, convex, locally uniformly convex ancient solution $\{\M_{t}\}_{t\in(-\infty,0)}$ to mean curvature flow which sweeps out $(-\frac{\pi}{2},\frac{\pi}{2})\times \R^n$, contracts to the origin as $t\to 0$, and whose squash-down is $P$. It is reflection symmetric across the hyperplane $\{0\}\times\R^n$ and inherits the symmetries of $P$.
\end{theorem}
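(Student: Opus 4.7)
The plan is to take a limit, as $R\to\infty$, of time-translates of the old-but-not-ancient solutions $\{\M^R_t\}_{t\in[0,T_R)}$ produced by Lemma \ref{lem:old-but-not-ancient} applied to $P$. Since $P$ is a bounded regular polytope, both reflection across $\{0\}\times\R^n$ and the action of $\operatorname{Sym}(P)$ on $\{0\}\times\R^n$ are inherited by $\M^R$ and preserved by the flow, so by symmetry the solution contracts to the origin at $T_R=T^0_R$, with $T^0_R/R\to 1$. Time-translate: set $\widetilde\M^R_t\doteqdot \M^R_{t+T^0_R}$ for $t\in[-T^0_R,0)$, with support function $\widetilde\sigma_R$. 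Note that $\widetilde\sigma_R(z,0)=0$ for all $z$, so \eqref{eq:lower sigma bound} reads $\widetilde\sigma_R(z,t)\ge (-t)\sigma[P](z)$, i.e.\ $\widetilde\Omega^R_t\supseteq (-t)P$.

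For uniform (in $R$) upper bounds on the size of $\widetilde\M^R_t$, the avoidance principle applied to the Grim hyperplane upper barriers $\Omega_f+Rz_f$ --- each of which, as a translating soliton, evolves inward in direction $-z_f$ and thus encloses $\M^R_t$ throughout the flow --- yields $\widetilde\Omega^R_t\subseteq \bigcap_f(\Omega_f+(R-t-T^0_R)z_f)$, which stabilises to the bounded set $\bigcap_f(\Omega_f-tz_f)$ as $R\to\infty$. Combined with slab enclosure (Lemma \ref{lem:width_est}) and the lower bound above, standard interior estimates (convexity, Harnack \eqref{eq:Harnack nonancient}, and the Ecker--Huisken estimates) give uniform $C^\infty_{\mathrm{loc}}$ bounds on $\{\widetilde\M^R_t\}$ over compact subsets of $\R^{n+1}\times(-\infty,0)$. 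Passing to a subsequence yields a smooth convex ancient limit $\{\M_t\}_{t\in(-\infty,0)}$, which inherits the $\operatorname{Sym}(P)$- and reflection-symmetry of the approximators. Local uniform convexity follows from the splitting theorem for $\mathrm{II}$ and compactness of the timeslices, which forbids splitting off a Euclidean factor.

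It remains to identify $\Omega_\ast=P$. Sending $R\to\infty$ and $t\to-\infty$ in $\widetilde\sigma_R\ge (-t)\sigma[P]$ gives $\sigma_\ast\ge\sigma[P]$, so $P\subseteq\Omega_\ast$. For the reverse, apply Lemma \ref{lem:horizontal displacement estimate} with $t_0=T^0_R$ and $p_0=0\in\M^R_{T^0_R}$: for each $h>1$ there is $t_h>-\infty$ with
\[
\min_{z\in\{0\}\times S^{n-1}}\widetilde\sigma_R(z,t)\le -ht\quad\text{for}\ t\in(-T^0_R,t_h).
\]
Passing to the limit in $R$ gives the same inequality for $\{\M_t\}$. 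Since $\sigma$ is concave in $t$ with $\sigma(z,0)=0$ (by Harnack \eqref{eq:Harnack} applied to the ancient limit), the ratio $\sigma(z,t)/(-t)$ is non-increasing as $t\to-\infty$, so $\sigma(z,t)/(-t)\ge\sigma_\ast(z)$; taking a minimising direction $z_t$ at each $t$, passing to a subsequential limit $z_t\to z_\infty$, and using continuity of $\sigma_\ast$ yields $\sigma_\ast(z_\infty)\le h$. Letting $h\downarrow 1$, combined with $\sigma_\ast\ge\sigma[P]\ge 1$, produces some $z_\ast\in\{0\}\times S^{n-1}$ with $\sigma_\ast(z_\ast)=\sigma[P](z_\ast)=1$, so $z_\ast=z_f$ for some facet $f$ (as $\{\sigma[P]=1\}=\{z_f\}_{f\in F^{n-1}_P}$). $\operatorname{Sym}(P)$-invariance of $\sigma_\ast$ and the transitive action of $\operatorname{Sym}(P)$ on facets then give $\sigma_\ast(z_f)=1$ for every facet $f$, so $\Omega_\ast\subseteq\bigcap_f\{p:\inner{p}{z_f}\le 1\}=P$, and hence $\Omega_\ast=P$.

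Finally, that $\{\M_t\}$ sweeps out the full slab follows from $\Omega_\ast=P$ (which forces $\bigcup_t\Omega_t\supseteq\{0\}\times\R^n$) together with Lemmas \ref{lem:minimum speed estimate} and \ref{lem:width_est}, which show $\sup_{\M_t}|x|\to\tfrac{\pi}{2}$ as $t\to-\infty$. The main obstacle is the squash-down identification --- specifically, upgrading the displacement estimate at a single extremal direction to the equality $\sigma_\ast(z_f)=1$ at every facet, which relies crucially on the regularity of $P$ and transitivity of $\operatorname{Sym}(P)$ on facets.
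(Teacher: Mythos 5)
Your overall plan reproduces the paper's strategy: time-translate the old-but-not-ancient solutions, extract a subsequential $C^\infty_{\mathrm{loc}}$ limit using the lower inradius control and Ecker--Huisken estimates, carry the $\operatorname{Sym}(P)$- and reflection-symmetry through, establish $P\subseteq\Omega_\ast$ from \eqref{eq:lower sigma bound}, establish $\Omega_\ast\subseteq P$ from Lemma \ref{lem:horizontal displacement estimate} together with transitivity of $\operatorname{Sym}(P)$ on facets (which is exactly what the paper does via \eqref{eq:short axis estimate} and Lemma \ref{lem:Gauss image}), and finish with the splitting theorem. So the skeleton is right and closely follows the paper.

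There is, however, a genuine error in your barrier step. You write that the avoidance principle gives $\widetilde\Omega^R_t\subseteq\bigcap_f\big(\Omega_f+(R-t-T^0_R)z_f\big)$, which \emph{``stabilises to the bounded set $\bigcap_f(\Omega_f-tz_f)$ as $R\to\infty$''}. This requires $R-T^0_R\to 0$, but all that \eqref{eq:initial time R} gives is $T^0_R/R\to 1$, and in fact $R-T^0_R\to\infty$ for $n\ge 2$: the reaching-time lower bound in Lemma \ref{lem:old-but-not-ancient} is driven by the pancake radius asymptotics \eqref{eq:ancient pancake radius}, $\ell(t)=-t+(n-1)\log(-t)+c_n+o(1)$, whose logarithmic correction forces $R-T^0_R\sim(n-1)\log R$. (Equivalently, $R-T^0_R=\int_{-T^0_R}^0\big(H_R(z_f,s)-1\big)\,ds$, and this integral diverges as $R\to\infty$ precisely because of the log term.) So your Grim barrier expands without bound and does not give a uniform circumradius bound on $\widetilde\Omega^R_t$ at a fixed time. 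Fortunately you do not actually need it: locally uniform convergence only requires the inradius lower bound (which, note, needs Lemma \ref{lem:minimum speed estimate} \emph{together with} Lemma \ref{lem:width_est}, via \eqref{eq:width estimate} --- not Lemma \ref{lem:width_est} alone), slab confinement, and the fact that the approximators all enclose the origin. Boundedness of the limit, and extinction at time zero, are then a posteriori consequences of the two-sided bound $-t\le\min_z\widetilde\sigma_R(z,t)\le h(-t)$ from Lemma \ref{lem:horizontal displacement estimate} combined with the bounded circumradius-to-inradius ratio that $\operatorname{Sym}(P)$-invariance forces (John's theorem and irreducibility of the action), which is in effect how the paper concludes $\omega<\infty$, $\M_t$ bounded, and, after a further time-translation, $\omega=0$. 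You should either delete the false stabilisation claim and invoke the local argument, or replace it with a correct outer bound; as written that step is wrong even though the rest of your proof does not rely on it.
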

\begin{proof}
For each $R>0$, let $\{\M^R_t\}_{t\in(\alpha_R,0)}$ be the old-but-not-ancient solution obtained from time translating the solution constructed in Lemma \ref{lem:old-but-not-ancient} to that it reaches the origin at time zero. Since, by Lemmas \ref{lem:minimum speed estimate} and \ref{lem:width_est}, the inradius of each $\M^R_t$ is bounded from below uniformly in $R$, the Blaschke selection theorem and the interior estimates of Ecker and Huisken \cite{EckerHuisken91} yield a sequence of scales $R_j\to\infty$ such that the sequence of flows $\{\M^{R_j}_t\}_{t\in[\alpha_{R_j},\,\omega_{R_j})}$ converges locally uniformly in the smooth topology to a family $\{\M_t\}_{t\in(-\infty,\,\omega)}$ of smooth, convex hypersurfaces $\M_t$ which lie in the slab $(-\frac{\pi}{2},\frac{\pi}{2})\times\R^n$ and evolve by mean curvature flow. Without loss of generality, we may assume that $\omega$ is the maximal time. 

By construction, $\{\M_t\}_{t\in(-\infty,\,\omega)}$ is reflection symmetric across the hyperplane $\{0\}\times\R^n$, inherits the symmetry group of $P$, and  lies in $(-\frac{\pi}{2},\frac{\pi}{2})\times\R^n$. By the width estimate \eqref{eq:width estimate}, it lies in no smaller slab.

Since $P$ is bounded, Lemma \ref{lem:horizontal displacement estimate} implies that $\M_t$ is bounded for each $t$. In particular, $\omega<\infty$ and the solution contracts to a point as $t\to\omega$. By the symmetries inherited by the limit, the final point must be the origin. After translating in time, we may arrange that $\omega=0$.

By \eqref{eq:lower sigma bound} and Lemma \ref{lem:horizontal displacement estimate},
\begin{equation}\label{eq:short axis estimate}
\min_{z\in G(\M_t)\cap(\{0\}\times\R^n)}\frac{\sigma(z,t)}{-t}\to 1\;\;\text{as}\;\; t\to\infty\,.
\end{equation}
Together, \eqref{eq:lower sigma bound}, \eqref{eq:short axis estimate} and Lemma \ref{lem:Gauss image} imply that the squash-down $\Omega_\ast$ of the limit $\{\M_t\}_{t\in(-\infty,0)}$ is $P$. %In particular, if $P$ is bounded, then so are the timeslices $\M_t$, and hence $\omega<\infty$ in this case.

The splitting theorem for the second fundamental form implies that the time-slices $\M_t$ of $\{\M_t\}_{t\in(-\infty,0)}$ are locally uniformly convex.
\end{proof}

Next, we consider unbounded $P$.

\begin{theorem}\label{thm:unbounded polygonal pancakes}
Let $P\in \mathrm{P}{}^n_\ast$ be an unbounded regular polytope which does not split off a line. There exists a convex, locally uniformly convex translating solution $\{\M_{t}\}_{t\in(-\infty,\infty)}$ to mean curvature flow which sweeps out $(-\frac{\pi}{2},\frac{\pi}{2})\times \R^n$, has its tip at the origin at time zero, and whose squash-down is $P$. It is reflection symmetric across the hyperplane $\{0\}\times\R^n$ and inherits the symmetries of $P$.
\end{theorem}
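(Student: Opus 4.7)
The plan is to replicate the strategy of Theorem \ref{thm:bounded polygonal pancakes}: take the old-but-not-ancient approximators from Lemma \ref{lem:old-but-not-ancient} (which are now immortal because $P$ is unbounded and nondegenerate), time-translate them so that a canonical marked point sits at the origin at time zero, extract a subsequential $C^\infty_{\mathrm{loc}}$ limit, and then show that the resulting eternal solution evolves by translation. The last step is the main novelty compared with the compact case.

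First I would set up the time-normalization. Since $P$ is regular, unbounded, and splits off no line, it is a cone with a single vertex $\vec v_0$ lying on the axis of symmetry, and $\sigma[P](z)=\inner{z}{\vec v_0}$ for every $z\in G(P)$. Each $\M^R_t$ inherits the full symmetry group of $P$ (which commutes with mean curvature flow), and therefore meets the axis $\R\vec v_0$ in a distinguished pair of points; call the one in the direction opposite to $\vec v_0$ (the direction in which $P$ opens) the \emph{tip} $q_R(t)$. By \eqref{eq:initial time R} we can choose $t^0_R\to\infty$ with $t^0_R/R\to 1$ such that $q_R(t^0_R)=0$. Replacing $\M^R_t$ by $\M^R_{t+t^0_R}$, the translated solutions are defined on $(-t^0_R,\infty)$ with $-t^0_R\to-\infty$, with tip at the origin at $t=0$.

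Next, Lemmas \ref{lem:minimum speed estimate} and \ref{lem:width_est} provide a uniform lower bound on the inradius of $\M^R_t$, while Lemma \ref{lem:horizontal displacement estimate} controls the displacement of the faces on any compact time interval. The Blaschke selection theorem, combined with the Ecker--Huisken interior estimates, yields a subsequence $R_j\to\infty$ along which the translated approximators converge in $C^\infty_{\mathrm{loc}}$ to a smooth, convex, locally uniformly convex eternal solution $\{\M_t\}_{t\in(-\infty,\infty)}$ in the slab $(-\tfrac{\pi}{2},\tfrac{\pi}{2})\times\R^n$, which inherits the symmetries of $P$, has its axial tip at the origin at $t=0$, and sweeps out the full slab by the width estimate \eqref{eq:width estimate}. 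Combining \eqref{eq:lower sigma bound} with Lemma \ref{lem:horizontal displacement estimate} exactly as in the bounded case identifies the backward squash-down $\Omega_\ast=P$, and local uniform convexity follows from the splitting theorem for the second fundamental form.

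The hard part is upgrading this eternal limit to a translator. Since $\Omega_\ast=P$ is a cone, Corollary \ref{cor:wedge implies translation} applies and immediately yields that $\{\M_t\}_{t\in(-\infty,\infty)}$ evolves by translation; the translator identity $-\inner{\vec v}{z}=H(z)$ and \eqref{H_ast=sigma_ast} then force $\vec v=-\vec v_0$, and we are done. If one prefers to avoid this forward reference, the same conclusion can be reached directly via the forward squash-down: the pancake-barrier argument of Lemma \ref{lem:squash-up} transfers to the limit solution (it only uses convexity, the slab geometry, and the asymptotics \eqref{eq:ancient pancake radius}--\eqref{eq:pancake edge asymptotics}), giving $\Omega^\ast=P_{-1}$. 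Because $P$ is the cone with apex $\vec v_0$, one has $\sigma[P_{-1}](z)=-\inner{z}{\vec v_0}=-\sigma[P](z)$ on $G(P)$. Consequently \eqref{H_ast=sigma_ast} and \eqref{eq:forward sigma limit} give $H^\ast(z)=\inner{z}{\vec v_0}=H_\ast(z)$, so the monotone function $t\mapsto H(z,t)$ from \eqref{eq:Harnack} is constant, and the rigidity case of the differential Harnack inequality concludes that $\{\M_t\}$ moves purely by translation.
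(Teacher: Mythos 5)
Your proposal follows the paper's proof essentially verbatim: same time-normalization, same inradius control via Lemmas \ref{lem:minimum speed estimate} and \ref{lem:width_est}, same Blaschke/Ecker--Huisken compactness, same identification of $\Omega_\ast=P$ via \eqref{eq:lower sigma bound} and Lemma \ref{lem:horizontal displacement estimate}, and the same upgrade to translation via the Harnack rigidity case. One caution, though: your first route — citing Corollary \ref{cor:wedge implies translation} — is not merely a ``forward reference'' but actually circular, since the paper proves that corollary by pointing back to the present proof. Your alternative (applying the pancake-barrier argument of Lemma \ref{lem:squash-up} to conclude $\sigma^\ast(e)=-\sigma_\ast(e)$ for $e=v/\vert v\vert$, hence $H(e,\cdot)$ is constant, then using the rigidity case of \eqref{eq:Harnack}) is precisely what the paper does, so that is the version you need. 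A small slip that does not affect the substance: on an unbounded slice $\M^R_t$ the axis $\R e$ meets the hypersurface in a single tip, not a pair, and at early times this tip lies on the $+e$ side (where the apex is), not on the side in which $P$ opens.
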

\begin{proof}
For each $R>0$, let $\{\M^R_t\}_{t\in(\alpha_R,\infty)}$ be the old-but-not-ancient solution obtained from time translating the solution constructed in Lemma \ref{lem:old-but-not-ancient} to that it reaches the origin at time zero. %Note that $\omega_R=\infty$ since unbounded regular polytopes are nondegenerate.

Since, by Lemmas \ref{lem:minimum speed estimate} and \ref{lem:width_est}, the inradius of each $\M^R_t$ is bounded from below uniformly in $R$, the Blaschke selection theorem and the interior estimates of Ecker and Huisken \cite{EckerHuisken91} yield a sequence of scales $R_j\to\infty$ such that the sequence of flows $\{\M^{R_j}_t\}_{t\in[\alpha_{R_j},\infty)}$ converges locally uniformly in the smooth topology to a family $\{\M_t\}_{t\in(-\infty,\infty)}$ of smooth, convex hypersurfaces $\M_t$ which lie in the slab $(-\frac{\pi}{2},\frac{\pi}{2})\times\R^n$ and evolve by mean curvature flow.

By construction, $\{\M_t\}_{t\in(-\infty,\infty)}$ is reflection symmetric across the hyperplane $\{0\}\times\R^n$, inherits the symmetry group of $P$, and lies in $(-\frac{\pi}{2},\frac{\pi}{2})\times\R^n$. By the width estimate \eqref{eq:width estimate}, it lies in no smaller slab.

By \eqref{eq:lower sigma bound} and Lemma \ref{lem:horizontal displacement estimate},
\begin{equation}\label{eq:short axis estimate nc}
\min_{z\in G(\M_t)\cap(\{0\}\times\R^n)}\frac{\sigma(z,t)}{-t}\to 1\;\;\text{as}\;\; t\to\infty\,.
\end{equation}
Together, \eqref{eq:lower sigma bound}, \eqref{eq:short axis estimate nc} and Lemma \ref{lem:Gauss image} imply that the squash-down $\Omega_\ast$ of the limit $\{\M_t\}_{t\in(-\infty,\infty)}$ is $P$.

Since $P$ does not split off a line, the splitting theorem for the second fundamental form implies that the time-slices $\M_t$ of $\{\M_t\}_{t\in(-\infty,\infty)}$ are locally uniformly convex.

It remains to show that the limit is a translator. Set $e\doteqdot v/\vert v\vert$, where $v$ is the vertex of $P$. By \eqref{eq:forward backward limits} and Lemma~\ref{lem:squash-up},
\[
%H_\ast(z_\ast)\doteqdot
\lim_{t\to-\infty}H(e,t)=\sigma_\ast(e)=-\sigma^\ast(e)=\lim_{t\to\infty}H(e,t)\,.%\doteqdot H^\ast(z_\ast)\,.
\]
Since $\{\M_t\}_{t\in(-\infty,\infty)}$ is a limit of solutions which satisfy the differential Harnack inequality (themselves being limits of compact solutions), it too satisfies the differential Harnack inequality. We conclude that $H(e,t)$ is constant in $t$, at which point the conclusion follows from the rigidity case of the differential Harnack inequality.
\end{proof}

\subsection{Irregular examples}

Next, we obtain examples out of any circumscribed (bounded or unbounded) simplex. (Recall that an unbounded simplex is just a cone whose link is a bounded simplex).

\begin{theorem}\label{thm:irregular polygonal pancakes bounded}
For each circumscribed bounded simplex $P\in \mathrm{P}{}^n_\ast$ there exists a compact, convex ancient solution $\{\M_{t}\}_{t\in(-\infty,0)}$ to mean curvature flow which sweeps out $(-\frac{\pi}{2},\frac{\pi}{2})\times \R^n$, contracts to the origin at time zero and whose squash-down is $P$. It is reflection symmetric across the hyperplane $\{0\}\times\R^n$.
\end{theorem}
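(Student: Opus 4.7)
The plan is to follow the construction of Theorem \ref{thm:bounded polygonal pancakes}, modified to handle the absence of polytope symmetry. For each $R>0$, let $\{\M^R_t\}_{t\in[0,T_R)}$ be the old-but-not-ancient solution of Lemma \ref{lem:old-but-not-ancient}; by Huisken's theorem it contracts to a single point $p_R\in\{0\}\times\R^n$ at time $T_R$, with the inherited reflection symmetry placing $p_R$ on the mid-hyperplane. I would normalize each approximator in both time and space by setting $\hat\M^R_t:=\M^R_{t+T_R}-p_R$ on $t\in[-T_R,0)$, so that each normalized flow contracts to the origin at time zero. The uniform inradius control of Lemmas \ref{lem:minimum speed estimate}--\ref{lem:width_est}, the Blaschke selection theorem, and the Ecker--Huisken interior estimates then extract a subsequence $R_j\to\infty$ along which the $\hat\M^{R_j}_t$ converge locally smoothly to a convex ancient flow $\{\M_t\}_{t\in(-\infty,0)}$.

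By construction and standard arguments, the limit is reflection symmetric across $\{0\}\times\R^n$, locally uniformly convex by the splitting theorem for the second fundamental form, confined to the slab $(-\pi/2,\pi/2)\times\R^n$ and to no smaller slab by the width estimate \eqref{eq:width estimate}, compact at each time by Lemma \ref{lem:horizontal displacement estimate}, and contracts to the origin at $t=0$. The lower inclusion $\Omega_\ast\supseteq P$ is immediate from \eqref{eq:lower sigma bound}: since $\sigma_R(z,T_R)=\inner{p_R}{z}$, the inequality becomes $\hat\sigma_R(z,s)\ge -s\,\sigma[P](z)$, which passes to the limit as $\sigma_\ast(z)\ge\sigma[P](z)$.

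The main step is upgrading this to equality. Since $P$ is a simplex, $P=\bigcap_{i=1}^{n+1}\{x:\inner{x}{z_{f_i}}\le 1\}$, so it suffices to show $\sigma_\ast(z_{f_i})\le 1$ at each of the $n+1$ facet normals. I would achieve this through Lemma \ref{lem:backwards asymptotic translators}: since $z_{f_i}\in G_\ast$, there is an asymptotic translator $\Sigma_i$ of $\{\M_t\}$ in direction $z_{f_i}$, lying in the slab $(-\pi/2,\pi/2)\times\R^n$ with bulk velocity $\vec v_i$ satisfying $-\inner{\vec v_i}{z_{f_i}}=\sigma_\ast(z_{f_i})$. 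Passing the width estimate \eqref{eq:width estimate} through the smooth convergence producing $\Sigma_i$ at the mid-plane base points $X(z_{f_i},s_j)$ provided by Lemma \ref{lem:backwards asymptotic translators} forces $\Sigma_i$ to contain points with $x$-coordinate arbitrarily close to $\pm\pi/2$, so $\Sigma_i$ spans the full slab width $\pi$. Since the only convex translators spanning a slab of width exactly $\pi$ are standard Grim hyperplanes (possibly times a Euclidean factor), $\Sigma_i$ must be oriented with bulk velocity $-z_{f_i}$ of unit speed, whence $\sigma_\ast(z_{f_i})=1$.

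The main obstacle is the width-transfer step: verifying that the base points $X(z_{f_i},s_j)$ are ``deep inside'' the solution in the sense of \eqref{eq:width estimate}, uniformly in $j$, so that $\Sigma_i$ inherits the full slab width in the limit. This is controlled by the squash-down lower bound $\sigma_\ast\ge\sigma[P]$ together with the convergence $X(z_{f_i},s_j)/(-s_j)\to X_\ast(z_{f_i})$ from Lemma \ref{lem:backwards asymptotic translators}, where $X_\ast(z_{f_i})$ lies on a supporting hyperplane of $\Omega_\ast$ with normal $z_{f_i}$: since this supporting hyperplane is the facet of $P$ with outward normal $z_{f_i}$, the base point is uniformly far from every other facet direction, which forces the width estimate to apply at $X(z_{f_i},s_j)$ uniformly in $j$. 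Once this is in place, the $n+1$ identities $\sigma_\ast(z_{f_i})=1$ combine with $\Omega_\ast\supseteq P$ to conclude $\Omega_\ast=P$.
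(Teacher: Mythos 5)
Your overall strategy is sound up to the point where you try to show $\sigma_\ast(z_{f_i})\leq 1$, but the key step there relies on a false claim. You assert that ``the only convex translators spanning a slab of width exactly $\pi$ are standard Grim hyperplanes.'' This is not true: the flying wing translators $W_\theta$ of \cite{BLT2} live in a slab of width $\pi\sec\theta$ with unit speed, so the rescaled translator $\cos\theta\cdot W_\theta$ lives in the slab $(-\frac{\pi}{2},\frac{\pi}{2})\times\R^2$ (and in no smaller slab), translates with speed $\sec\theta>1$, and satisfies the width inequality $|x|\geq\frac{\pi}{2}(1-H)$ of Lemma \ref{lem:width_est}, yet its squash-down is a wedge --- a circumscribed cone with vertex at distance $\sec\theta>1$ from the origin --- rather than a half-space. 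So even after you correctly establish (via Lemma \ref{lem:width_est} passed to the limit) that $\Sigma_i$ spans the full slab width, you cannot conclude that $\Sigma_i$ is a Grim hyperplane: it could be a flying-wing-type translator with $z_{f_i}$ being the tip direction, in which case $\sigma_\ast(z_{f_i})=-\inner{\vec v_i}{z_{f_i}}>1$, which is exactly the possibility you need to exclude.

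There is a related circularity in your supporting argument that ``the base point is uniformly far from every other facet direction.'' You justify this by saying $X_\ast(z_{f_i})$ lies on the facet of $P$ with normal $z_{f_i}$, but a priori the supporting hyperplane of $\Omega_\ast$ with normal $z_{f_i}$ sits at distance $\sigma_\ast(z_{f_i})\geq 1$ from the origin, and only coincides with the facet hyperplane of $P$ when $\sigma_\ast(z_{f_i})=1$ --- which is what you are trying to prove. If $\sigma_\ast(z_{f_i})>1$, the contact set $\Omega_\ast\cap\{\inner{p}{z_{f_i}}=\sigma_\ast(z_{f_i})\}$ could be a lower-dimensional face (a corner), in which case $X_\ast(z_{f_i})$ is \emph{not} uniformly far from the other supporting hyperplanes, and the ``deep interior'' reasoning used to run the width estimate at the base point breaks down. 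The question of whether the face at $z_{f_i}$ is $(n-1)$-dimensional is precisely the ``facet-loss'' phenomenon that the construction has to rule out, so it cannot be assumed.

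The paper's proof sidesteps both issues and works directly with the simplex's inradius rigidity: if $\Omega_\ast\supseteq P$ but $\Omega_\ast\neq P$, then since the $n+1$ facet normals of a bounded simplex positively span $\R^n$ and its inscribed sphere touches all facets, $\Omega_\ast$ must contain a sphere $S^{n-1}_{h_\ast}(p_\ast)$ of some radius $h_\ast>1$; using the differential Harnack inequality ($\Omega_\ast\subset\frac{1}{-t}\Omega_t$), the width estimate \eqref{eq:width estimate}, and the displacement estimate \eqref{eq:ancient pancake radius}, one shows that a scaled-by-$h$ rotationally symmetric pancake centered at $-tp_\ast$ fits inside $\M^{R_j}_t$ for $t$ very negative, yet this pancake contracts to $-tp_\ast\neq 0$ after time $-t$, contradicting the avoidance principle. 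This inner-barrier argument does not require any information about the structure of the asymptotic translators, which is what makes it robust. If you want to salvage your approach, you would need Theorem \ref{thm:circumscribed} together with something like Proposition \ref{prop:asymptotics} (Claims 4.1--4.2) to pin down the Gauss images of the asymptotic translators, but these results appear later in the paper and the latter already presupposes $\Omega_\ast=P$, so this route is not available at this stage.
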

\begin{proof}
%Since $P$ is clearly nondegenerate, the only difference with the proof of Theorem \ref{thm:bounded polygonal pancakes} is the application of Lemma \ref{lem:horizontal displacement estimate}. There, we used it to control one ``face'' of the old-but-not-ancient solutions. The symmetry then provided control on all faces. Here, we observe that Lemma \ref{lem:horizontal displacement estimate} controls all faces, since they are minimal in number.
Consider the old-but-not ancient solution $\{\M^R_t\}_{t\in[\alpha_R,0)}$ obtained by spacetime translating the approximating solution constructed in Lemma \ref{lem:old-but-not-ancient} so that it contracts to the origin at time zero. Proceeding as in Theorem \ref{thm:bounded polygonal pancakes}, we obtain a limit ancient solution $\{\M_t\}_{t\in(-\infty,\,\omega)}$ along a sequence of scales $R_j\to\infty$. A priori, in taking the limit, we could ``lose'' facets. However, this can be prevented using Lemma \ref{lem:speed lower bound} and a pancake barrier argument (cf. Lemma \ref{lem:horizontal displacement estimate}).

Indeed, observe first that \eqref{eq:sigma ratio bound} implies that the old-but-not-ancient solutions satisfy
\[
\frac{\sigma_R(z,t)}{-t}\ge \max_{v\in F^0_P}\inner{z}{v}\,.
\]
This implies that the squash-down $\Omega_\ast$ of $\{\M_t\}_{t\in(-\infty,\infty)}$ contains $P$. 

Suppose, then, that $\Omega_\ast\not\subset P$. Then, since $P$ is a simplex, there exist $p_\ast\in \Omega_\ast$ and $h_\ast>1$ such that $S^{n-1}_{h_\ast}(p_\ast)\subset \Omega_\ast$. By the differential Harnack inequality, $\Omega_\ast\subset \frac{1}{-t}\Omega_t$. Thus, by the width estimate \eqref{eq:width estimate} and the displacement estimate \eqref{eq:ancient pancake radius}, for any $h\in (1, h_\ast)$ we can find $t_h<0$ such that, for each $t<t_h$, the translated time $t$ slice $\Pi^h_t=h^{-1}\Pi_{h^2t}+tp_\ast$ of the scaled-by-$h$ rotationally symmetric ancient pancake centred at $-tp_\ast$ lies to the inside of $\M^{R_j}_t$ for $j$ sufficiently large. But this violates the avoidance principle, since the pancake contracts to the point $-t p_\ast$ after time $-t$, whereas the old-but-not ancient solutions contract to the origin after this time.
\end{proof}

\begin{theorem}\label{thm:irregular polygonal pancakes unbounded}
For each circumscribed unbounded simplex $P\in \mathrm{P}{}^n_\ast$ there exists a translating solution $\{\M_{t}\}_{t\in(-\infty,\infty)}$ to mean curvature flow which sweeps out $(-\frac{\pi}{2},\frac{\pi}{2})\times \R^n$ and whose squash-down is $P$. It is reflection symmetric across the hyperplane $\{0\}\times\R^n$.
\end{theorem}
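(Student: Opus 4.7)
My approach is to adapt the construction of Theorem \ref{thm:unbounded polygonal pancakes} to drop the regularity assumption on $P$, and to replace the symmetry-based identification of the squash-down by an argument exploiting the simplex-cone structure of $P$ together with the concavity of the support function in time. First I would time-translate the (immortal, since $P$ is nondegenerate and does not split off a line) old-but-not-ancient solutions $\{\M^R_t\}_{t\in[\alpha_R,\infty)}$ from Lemma \ref{lem:old-but-not-ancient} so that each reaches the origin at time zero. The width estimate \eqref{eq:width estimate}, the inradius lower bound from Lemma \ref{lem:minimum speed estimate}, and the Ecker--Huisken interior estimates, combined with Blaschke selection, then yield a subsequential smooth, convex limit $\{\M_t\}_{t\in(-\infty,\infty)}$ lying in $(-\frac{\pi}{2},\frac{\pi}{2})\times \R^n$ (and in no smaller slab) and reflection symmetric across $\{0\}\times \R^n$.

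To show that the limit is a translator, let $v$ denote the apex of $P$ and set $e\doteqdot v/|v|$. Since $P$ is a circumscribed $n$-dimensional simplex cone, its $n$ facet normals $\{z_f\}$ are linearly independent and $v$ is a strictly positive combination of them, whence $\sigma[P](e)=|v|$ and $\sigma[P_{-1}](e)=-|v|$. The lower bound \eqref{eq:lower sigma bound} gives $\sigma_\ast(e)\geq |v|$, and Lemma \ref{lem:squash-up} applied to each approximator gives the forward squash-down bound $\sigma^\ast_R(e)=-|v|$; the inner pancake barriers underlying that lemma are built from translates of the rotationally symmetric ancient pancake and do not degenerate as $R\to\infty$, so the estimate $\sigma^\ast(e)=-|v|$ passes to the limit. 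Concavity of $\sigma(e,\cdot)$ in $t$ (from Harnack monotonicity of $-H$) yields $\sigma_\ast(e)+\sigma^\ast(e)\leq 0$, which together with the above bounds forces equality, so $\sigma_\ast(e)=|v|=-\sigma^\ast(e)$. By \eqref{H_ast=sigma_ast} and \eqref{eq:forward sigma limit}, $H(e,t)$ is monotone non-decreasing in $t$ with equal backward and forward limits, hence constant, and the rigidity case of the differential Harnack inequality implies that $\{\M_t\}$ translates.

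Finally, I identify the squash-down. The inclusion $\Omega_\ast\supset P$ follows directly from \eqref{eq:lower sigma bound}. For the reverse inclusion, since $\{\M_t\}$ is now a translator, Proposition \ref{prop:translator squash-down} presents $\Omega_\ast$ as the cone $\bigcap_{z\in G(\Sigma)}\{p:\inner{p}{z}\leq \sigma_\ast(z)\}$. Each facet normal $z_f$ lies in $G(\M^R_t)$ by construction of $\M^R$ and Lemma \ref{lem:Gauss image}, and its associated support point remains in a bounded region as $R\to\infty$, so $z_f\in G(\M_t)$ in the limit. Running the concavity argument of the previous paragraph with $z_f$ in place of $e$ (using $\sigma[P](z_f)=1=-\sigma[P_{-1}](z_f)$) yields $\sigma_\ast(z_f)=1$ for every facet normal, whence $\Omega_\ast\subset \bigcap_f\{p:\inner{p}{z_f}\leq 1\}=P$. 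Local uniform convexity follows from the splitting theorem since $P$ does not split off a line. The principal technical obstacle is the transfer of the forward-squash-down computation from the $R$-approximators to the limit, which is addressed by the $R$-independent nature of the pancake barriers in the proof of Lemma \ref{lem:squash-up}.
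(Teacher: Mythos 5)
Your construction and compactness step, and the overall strategy for the translator property (equal backward and forward limits of $H$ at a fixed normal, then Harnack rigidity), coincide with the paper's. Two points, however, are genuinely problematic.

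First, the assertion that the apex $v$ of a circumscribed simplicial cone is a \emph{strictly positive} combination of its facet normals is false for $n\ge 3$. Take $z_1=e_1$, $z_2=e_2$, $z_3=(1,1,\epsilon)/\sqrt{2+\epsilon^2}$ in $\R^3$ with $\epsilon$ small. Solving $\inner{v}{z_i}=1$ gives $v=c_1z_1+c_2z_2+c_3z_3$ with $c_3=\frac{1-2a}{1-2a^2}<0$, where $a=\inner{z_1}{z_3}$; since the $z_i$ are independent this representation is unique, so $v\notin\operatorname{cone}\{z_i\}$. Consequently the recession cone of $P$ contains directions making positive inner product with $v$, so $e=v/\vert v\vert\notin G(P)$ and $\sigma[P](e)=\sigma[P_{-1}](e)=+\infty$: your chain of identities for $H(e,t)$ collapses. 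This is repairable by running the same argument at a facet normal $z_f$, for which $\sigma[P](z_f)=1=-\sigma[P_{-1}](z_f)$ genuinely holds --- but only once you know $z_f$ survives into the Gauss image of the limit.

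That survival is the second and more serious gap: it is exactly the ``losing facets'' problem, and your one-clause claim that the support point in direction $z_f$ ``remains in a bounded region as $R\to\infty$'' is unjustified and is essentially what has to be proved. After normalizing so the solution reaches the origin at time zero, the available estimates give only $\sigma_R(z_f,0)\le R-T^0_R$, which is $o(R)$ but not a priori bounded in $R$; if the $f$-th supporting hyperplane recedes to infinity, the limit's squash-down is the intersection of a proper subset of the facet halfspaces (e.g.\ a Grim hyperplane), and both your concavity computation of $\sigma_\ast(z_f)$ and the translator step fail, since each invokes the Harnack inequality at a point of $\M_t$ with normal $z_f$. The paper closes this without presupposing facet retention: assuming $\Omega_\ast\supsetneq P$, the simplex structure yields a ball of radius $h_\ast>1$ about a point $p_\ast\in\Omega_\ast$ which may be chosen so that $-tp_\ast$ lies outside the time-zero slice; inscribing a scaled rotationally symmetric pancake centred at $-tp_\ast$ inside $\M_t$ then contradicts the avoidance principle. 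Some argument of this kind must be supplied before your support-function computation can get off the ground.
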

\begin{proof}
%Since $P$ is clearly nondegenerate, the only difference with the proof of Theorem \ref{thm:bounded polygonal pancakes} is the application of Lemma \ref{lem:horizontal displacement estimate}. There, we used it to control one ``face'' of the old-but-not-ancient solutions. The symmetry then provided control on all faces. Here, we observe that Lemma \ref{lem:horizontal displacement estimate} controls all faces, since they are minimal in number.

Consider the old-but-not ancient solution $\{\M^R_t\}_{t\in[\alpha_R,\infty)}$ obtained by translating in time so that the final tangent hyperplane to reach the origin does so at time zero, and then translating in space so that this hyperplane supports the solution at the origin. Proceeding as in Theorem \ref{thm:unbounded polygonal pancakes}, we obtain as limit ancient solution $\{\M_t\}_{t\in(-\infty,\omega)}$ along a sequence of scales $R_j\to\infty$. A priori, in taking the limit, we could ``lose'' facets. However, this can be prevented using Lemma \ref{lem:speed lower bound} and a pancake barrier argument (cf. Lemma \ref{lem:horizontal displacement estimate}) similar to the one used in the proof of Theorem \ref{thm:irregular polygonal pancakes bounded}. (Note that in this case the point $p_\ast$ can be chosen so that $-tp_\ast$ lies outside the time $t=0$ slice of the solution.)
\end{proof}

\begin{remark}
By assuming some symmetries, the argument of Theorems \ref{thm:irregular polygonal pancakes bounded} and \ref{thm:irregular polygonal pancakes unbounded} can be used to construct many more examples (consider e.g. circumscribed kites). This strongly suggests the existence of examples with \emph{any} given (irregular) circumscribed squash-down. However, the removal of symmetry assumptions in general would require a refinement of the pancake barrier argument (in order to prevent the squash-downs from degenerating to known examples).
\end{remark}

Our next proposition provides a refinement of the asymptotic behaviour as $t\to-\infty$. In particular, it verifies that the solutions decompose into the ``correct'' translating solutions, at least at the level of the squash-down.

%{\color{red}Call $z\in P$ a \textsc{critical point} of $\sigma_\ast$ if $\sigma_\ast(z)z\in P$. For a regular polytope, the points $\sigma_\ast(z)z$ are the centres of the $k$-faces, $k<n$.}

\begin{proposition}\label{prop:asymptotics}
Given  $P\in {\mathrm{P}}{}_\ast^n$, a regular polytope or a circumscribed simplex (bounded or unbounded), let $\{\M_{t}\}_{t\in(-\infty,\,\omega)}$ be the ancient solution constructed in one of the Theorems~\ref{thm:bounded polygonal pancakes}, or  \ref{thm:irregular polygonal pancakes bounded}  and with squash-down $P$. Given a face $V$ of $P$ and $z\in \relint G(V)$, where $G(V)$ is the set of unit outward normals to hyperplanes that support $P$ and contain $V$, the squash-down of any asymptotic translator corresponding to $z$ is the support cone of $P$ at $V$. 
\begin{comment}For each normal direction $z\in G_\ast\doteqdot \cup_{s<\omega}\cap_{t\le s}G(\M_t)$ and each sequence of times $s_j\to-\infty$, there exist a linear subspace $E\subset \R^{n+1}\cong E^\perp\times E$, and a locally uniformly convex translator $\{\Sigma_t\}_{t\in(-\infty,\infty)}$ in $E^\perp$ such that the flows $\{\M_{s,t}\}_{t\in(-\infty,\,\omega-s)}$ defined by
\[
\M^j_{t}\doteqdot \M_{t+s_j}-X(z,s_j)
\]
converge locally uniformly in the smooth topology, along a subsequence of times $s_j\to-\infty$, to $\{\Sigma_t\times E\}_{t\in(-\infty,\infty)}$. The limit $\{\Sigma_t\times E\}_{t\in(-\infty,\infty)}$ sweeps out a slab of width $\pi$ parallel to $(-\frac{\pi}{2},\frac{\pi}{2})\times \R^n$. Its bulk velocity $\vec v$ satisfies
\[
\inner{\vec v}{z}=-\sigma_\ast(z)\,.
\]
%If $\sigma_\ast(z)=1$, then $\{\Sigma_t\}_{t\in(-\infty,\infty)}$ is the Grim Reaper with bulk velocity $-z$. More generally, %the closure of the Gauss image of $\Sigma_z$ is the normal cone to $\Omega_\ast$ at the point $\displaystyle\lim_{s\to-\infty }\tfrac{X(z,s)}{-s}$ (equivalently, 
The squash-down of $\{\Sigma_t\times E\}_{t\in(-\infty,\infty)}$ is the tangent cone to $\Omega_\ast$ at the limit point $\displaystyle\lim_{s\to-\infty }\tfrac{X(z,s)}{-s}$.
\end{comment}
\end{proposition}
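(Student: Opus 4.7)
The plan is to apply Lemma \ref{lem:backwards asymptotic translators} and Proposition \ref{prop:translator squash-down} to express the squash-down of the asymptotic translator as an intersection of halfspaces determined by its Gauss image, and then to identify this intersection with the support cone of $P$ at $V$.

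First I would invoke Lemma \ref{lem:backwards asymptotic translators} to obtain the asymptotic translator $\{\Sigma_t\times E\}_{t\in\R}$ together with its bulk velocity $\vec v\in E^\perp$, satisfying
\[
-\inner{\vec v}{w}=\inner{X_\ast(z)}{w}=H_\ast(w)=\sigma_P(w),\quad w\in G(\Sigma_t),
\]
where the last equality uses \eqref{H_ast=sigma_ast} and $\Omega_\ast=P$. I would then observe that $X_\ast(z)\in V$: since $X_\ast(z)\in\Omega_\ast=P$ and $\inner{X_\ast(z)}{z}=\sigma_P(z)$, the point $X_\ast(z)$ lies on the supporting hyperplane of $P$ with outward normal $z$; as $z\in\relint G(V)$, this hyperplane meets $P$ exactly in $V$. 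Proposition \ref{prop:translator squash-down} then identifies the squash-down as
\[
\Omega_\ast(\Sigma\times E)=\bigcap_{w\in G(\Sigma_t)}\{p:\inner{p}{w}\le\sigma_P(w)\},
\]
reducing the proposition to the identification $G(\Sigma_t)=G(V)$.

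For the inclusion $G(\Sigma_t)\subset G(V)$, I would first argue $E\supseteq\operatorname{span}(V-V)$: the face $V$ has positive diameter at the $P$-scale, so its associated region on $\M_{s_j}$ has diameter of order $-s_j\to\infty$ in the $V$-directions, which forces entire lines in those directions to appear in the $C^\infty_{\mathrm{loc}}$ limit $\Sigma_t\times E$. For $w\in G(\Sigma_t)\subset E^\perp$, orthogonality to $V-V$ then yields that $\inner{\cdot}{w}$ is constant on $V$, and the common value equals $\inner{X_\ast(z)}{w}=\sigma_P(w)$; hence all of $V$ lies in the $w$-supporting hyperplane of $P$, i.e., $w\in G(V)$.

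The reverse inclusion $G(V)\subset G(\Sigma_t)$ will be the main obstacle. By the spherical convexity of $G(\Sigma_t)$ (as the Gauss image of a convex body) and the representation of $G(V)$ as the spherical convex hull of the facet normals $\{z_f:f\text{ a facet of }P,\,f\supset V\}$, it suffices to verify $z_f\in G(\Sigma_t)$ for each such facet $f$. I would establish this by exploiting the explicit construction of $\{\M_t\}$: the approximator $\M^R$ has a facet modelled on $\Gamma_f+Rz_f$, and the two-sided estimates on $\sigma_R(z_f,\cdot)$ supplied by Lemmas \ref{lem:speed lower bound} and \ref{lem:horizontal displacement estimate} yield uniform control on $\sigma(z_f,t+s_j)-\inner{X(z_j,s_j)}{z_f}$ as $j\to\infty$, forcing $\sigma_\Sigma(z_f,0)<\infty$ and hence $z_f\in G(\Sigma_0)$. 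Combining the two inclusions yields the identification of $\Omega_\ast(\Sigma\times E)$ with the support cone of $P$ at $V$.
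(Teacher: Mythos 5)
Your proposal takes a genuinely different route from the paper. Where you try to identify $G(\Sigma_t)$ with $G(V)$ directly, the paper argues by induction on the dimension of the face $V$, first establishing the inclusion $G(\Sigma_z)\subset G(V)$ via the sub-linear divergence criterion for the squash-down, then invoking the circumscription theorem (Theorem~\ref{thm:circumscribed}) to show the squash-down of $\Sigma_z$ must itself be a circumscribed cone, and finally pinning down $\vec v=-\sigma_\ast(z_V)z_V$ by exploiting the algebraic structure of the equalities $\sigma_\ast(w)=-\inner{w}{\vec v}$ on $G(\Sigma_z)$. Your scheme is more geometric and shorter to state, but it has two genuine gaps.

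First, the step ``$E\supseteq\operatorname{span}(V-V)$ because the $V$-region has diameter of order $-s_j$'' does not follow. Unbounded extent of a convex hypersurface in a direction $v$ does not force it to split off the line $\mathbb{R}v$; the Grim Reaper has unbounded extent in the direction of its asymptotes but contains no such line. What one actually needs is that the limit translator is unbounded in \emph{both} directions $\pm v$, and for that one needs to know that the base point $X(z_j,s_j)$ stays $O(-s_j)$ from \emph{both} ends of the $V$-region, i.e.\ that $X_\ast(z)\in\relint V$. But a priori $X_\ast(z)$ could lie on $\partial V$ (for instance if $z_j\to z$ through normals in $\relint G(V')$ for a proper subface $V'\subsetneq V$), in which case the extent in one direction may be only $o(-s_j)$. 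The paper avoids this issue entirely: it shows $G(\Sigma_z)\subset G(V)$ by the $o(-t)$ criterion and recovers the splitting only \emph{after} the Gauss image is identified, via Corollary~\ref{cor:translators}\eqref{item:translator splitting}.

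Second, the reverse inclusion $G(V)\subset G(\Sigma_t)$ is exactly where the hard analytic content lives, and the control you invoke is not strong enough. Lemmas~\ref{lem:speed lower bound} and~\ref{lem:horizontal displacement estimate} yield
\[
\left\vert\frac{\sigma(z_f,t)}{-t}-1\right\vert=o(1)\quad\text{as}\ t\to-\infty,
\]
which is an estimate at the squash-down scale, not the $O(1)$ estimate on $\sigma(z_f,t+s_j)-\inner{X(z_j,s_j)}{z_f}$ that is required to conclude $z_f\in G(\Sigma_0)$. The remark following the proposition in the paper explicitly flags this distinction: two supporting hyperplanes meet the same face of $\Omega_\ast$ when their touching points diverge at rate $o(-t)$, but they contribute to the same asymptotic translator only when the divergence is $O(1)$. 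Producing the needed $O(1)$ control is the heart of the matter, and the paper achieves it indirectly: it shows that the bulk velocity of $\Sigma_z$ must equal $-\sigma_\ast(z_V)z_V$ (ruling out, via the slab-width constraint and the Grim-plane exclusion, the alternative $w_0=z$), from which $z_V\in G(\Sigma_z)$ and hence the requisite boundedness of $|X(z_V^i,t_i)-X(z^i,t_i)|$ follow. Without an argument of this kind, your bound on $\sigma_\Sigma(z_f,0)$ does not close.
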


%{\color{blue} Notation. Ok if $z\in \partial \Omega_\ast$, else we need to say what is the normal cone?} {\color{red} I introduced the limit points $\lim_{s\to-\infty }\frac{X(z,s)}{-s}$ since $X(z)$ doesn't make sense ($\Omega_\ast$ is not necessarily strictly convex). These limits should be computable due to the symmetries.}

\begin{remark}
Although the proposition is intuitively clear, it is far from immediate: roughly speaking, two hyperplanes with normals $z$ and $w$ will, in general, support the same face of the squash-down only if $\vert X(z,t)-X(w,t)\vert\le o(-t)$ as $t\to-\infty$. However, in general, they will only correspond to the same asymptotic translator if $\vert X(z,t)-X(w,t)\vert\le O(1)$ as $t\to-\infty$.
\end{remark}

\begin{remark}\label{rem:n=2 decomposition}
In Section \ref{sec:squash-down}, we will see that an asymptotic translator splits off a line if and only if its squash-down splits off the same line (Corollary \ref{cor:translators}). This implies that the $(n-1)$-faces correspond to Grim Reapers and the $(n-2)$-faces correspond to flying wings, since (see Corollary \ref{cor:Grim uniqueness} and Remark \ref{rem:wing uniqueness}) both are uniquely determined by their squash-downs (it is not known whether or not this is always the case). In particular, when $n=2$, this gives a canonical decomposition of the ancient solution into asymptotic translators: a finite number of Grim planes (corresponding to edges) soldered together by flying wings (corresponding to vertices).
\end{remark}

\begin{remark}\label{rem:translator asymptotic translators}
In Section \ref{sec:squash-down}, we will obtain a similar asymptotic description in regions corresponding to ``asymptotic normals'' in the noncompact case (see Proposition~\ref{prop:converging sequences} below).
\end{remark}

\begin{proof}[Proof of Proposition \ref{prop:asymptotics}]
Let $V\in F^k_P$, be a $k$-face of $P$, $0\le k\le n$. Let  $z_V\in G(V)$ the maximum of $\s_\ast$ in $G(V)$ and note that
\begin{equation}\label{eq:vertex of V}
\s_\ast(w)=\s_\ast(z_V)\langle w, z_V\rangle\;\;\text{for all}\;\; w\in G(V)\,.
\end{equation}
It suffices to show that 
\begin{equation}\label{transest}
G(\Sigma_V)=\relint G(V)
\end{equation}
for any asymptotic translator corresponding to $z_V$ and that, for any $z\in\relint G(V)$, $z$ is in the Gauss image of some asymptotic translator corresponding to $z_V$. We will achieve this by induction on $k$.

Note first that we may exclude the case $k=n$, since then $z=\pm e_1$ and $\Sigma_z$ is the pair of parallel hyperplanes $\{0\}\times \R^n\cup \{\mp \pi\}\times \R^n$, which implies the proposition. %So we assume that $k<n$, hereafter. %We will prove \eqref{transest} and the fact that $z\in\relint G(V)$ is in the Gauss image of some asymptotic translator corresponding to $z_V$ by induction on $k$.
\begin{claim}\label{k=0}
If $V$ is a $0$-face, then $G(\Sigma_V)= \relint G(V)$ for any translator $\Sigma_V$ corresponding to $z_V$, and any $z\in\relint G(V)$ is in the Gauss image of some asymptotic translator corresponding to $z_V$.
\end{claim}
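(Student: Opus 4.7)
Let $\Sigma_V$ be an asymptotic translator corresponding to $z_V$, arising via Lemma \ref{lem:backwards asymptotic translators} as the locally smooth limit of $\M_{t+s_i}-X(z_V^i,s_i)$ along $s_i\to-\infty$ and $z_V^i\to z_V$. The plan is to verify $G(\Sigma_V)=\relint G(V)$; the existence statement in the claim then follows at once, since Lemma \ref{lem:backwards asymptotic translators} produces at least one such $\Sigma_V$ and every $z\in\relint G(V)=G(\Sigma_V)$ belongs to its Gauss image.

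The inclusion $G(\Sigma_V)\subseteq G(V)$ I would handle by contradiction. If $z_0\in G(\Sigma_V)\setminus G(V)$, then one can find $w_i\in G_\ast$ with $w_i\to z_0$ and $|X(w_i,s_i)-X(z_V^i,s_i)|$ uniformly bounded in $i$. Dividing by $|s_i|$ and using the convergence $(-s_i)^{-1}X(z_V^i,s_i)\to v$ from \eqref{ptwlimit} (identifying the vertex $V$ with the point $v$), I conclude $(-s_i)^{-1}X(w_i,s_i)\to v$. But the rescaled convex bodies converge to $P$ and $w_i\to z_0\notin G(V)=G(v)$, so this limit must lie on a face of $P$ that does not meet $v$ --- a contradiction.

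For the reverse inclusion $\relint G(V)\subseteq G(\Sigma_V)$, Proposition \ref{prop:translator squash-down} together with \eqref{MCoflimittr} and \eqref{eq:vertex of V} identifies the squash-down of $\Sigma_V$ with the cone
\[
C=\bigcap_{w\in G(\Sigma_V)}\{p:\langle p-v,w\rangle\leq 0\},
\]
which contains $T_vP$ by the first inclusion. Since $\Sigma_V$ is itself an ancient solution within the slab of width $\pi$ inherited from $\M_t$, its squash-down $C$ must circumscribe $\{0\}\times S^{n-1}$; hence each facet of $C$ is tangent to the unit sphere, so the outward normal $w$ to any facet of $C$ satisfies $\langle v,w\rangle=1$ and must coincide with one of the facet-normals $\{z_1,\dots,z_m\}$ of $P$ through $v$. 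Consequently, the extreme rays of $\overline{G(\Sigma_V)}$ all lie in $\{z_1,\dots,z_m\}$.

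The identification $\overline{G(\Sigma_V)}=G(v)$ then splits along the two cases. For a simplex ($m=n$), if fewer than $n$ of the $z_i$ serve as extreme rays, then $\overline{G(\Sigma_V)}$ lies in a proper subsphere of $\{0\}\times S^{n-1}$, whence the splitting subspace $E$ from Lemma \ref{lem:backwards asymptotic translators} must contain a nontrivial horizontal direction; but $E\perp G(\Sigma_V)\subseteq G(V)$ with $\spa G(V)=\{0\}\times\R^n$ in the simplex setting, and the slab constraint forbids $e_1\in E$, a contradiction. Thus all $n$ of the $z_i$ appear as extreme rays and $\overline{G(\Sigma_V)}=G(v)$. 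In the regular case, I would select $z_V^i\equiv z_V$ so that the stabilizer of $v$ inside $\mathrm{Sym}(P)$ passes through the limit to $\Sigma_V$; since this stabilizer acts transitively on $\{z_1,\dots,z_m\}$ and $\overline{G(\Sigma_V)}$ is a nonempty convex invariant set whose extreme rays lie in $\{z_i\}$, either all $z_i$ appear (giving $\overline{G(\Sigma_V)}=G(v)$) or none do, the latter being ruled out by the same splitting/dimension argument as in the simplex case.

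The main anticipated obstacle is the regular case: passing the symmetry from $\M_t$ through the sequential limit to $\Sigma_V$ depends on choosing the defining sequence $z_V^i$ symmetrically, and one must combine this with the splitting/dimension argument to preclude the degenerate possibility that $\overline{G(\Sigma_V)}$ contains no $z_i$ as an extreme ray.
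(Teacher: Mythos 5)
Your overall strategy tracks the paper's: establish $G(\Sigma_V)\subset G(V)$ by the squash-down argument, then use circumscription (Theorem \ref{thm:circumscribed}) together with the structure of $P$ to conclude equality. The inclusion argument and the identification of the squash-down of $\Sigma_V$ as a circumscribed cone with tip $v$ are essentially the paper's. However, there are two substantive issues.

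The more serious one is your treatment of part (b) as a freebie. You are right that, \emph{as literally stated}, ``any $z\in\relint G(V)$ lies in the Gauss image of some asymptotic translator corresponding to $z_V$'' is an immediate consequence of part (a) plus the mere existence of one $\Sigma_V$. But the paper's proof of this sentence does real work: starting from an arbitrary $z\in\relint G(V)$ and \emph{any} asymptotic translator $\Sigma_z$ corresponding to $z$, it analyses $w_0\doteqdot\vec v/|\vec v|\in G(\Sigma_z)$, combines $H(w)=-\langle w,\vec v\rangle$ with \eqref{eq:vertex of V} and the inclusion $G(\Sigma_z)\subset G(V)$ to force $\vec v=-\sigma_\ast(z_V)z_V$ (ruling out the Grim alternative $w_0=z$ by the width-$\pi$ constraint), and concludes $z_V\in G(\Sigma_z)$. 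This last fact is exactly what upgrades ``$G(\Sigma_V)=\relint G(V)$ for translators corresponding to $z_V$'' to ``$G(\Sigma_z)=\relint G(V)$ for \emph{every} $z\in\relint G(V)$'' --- which is what the inductive hypothesis of Claim \ref{k>k_0} actually consumes. Your shortcut discards this, so even if it proves the sentence as printed, the induction downstream no longer closes. You would need to reinstate the bulk-velocity computation (or supply an alternative for why an arbitrary $\Sigma_z$, not just those corresponding to $z_V$, must have Gauss image $\relint G(V)$).

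Secondarily, your case analysis for the reverse inclusion is rougher than it looks. In the simplex case, the contradiction should come from the fact that $z_V=\hat v$ lies in $\overline{G(\Sigma_V)}\cap(\{0\}\times S^{n-1})$ by construction, while $\hat v$ sits in the \emph{relative interior} of the spherical simplex $\overline{G(V)}$ and therefore cannot lie in any proper sub-simplex spanned by a strict subset of $\{z_1,\dots,z_n\}$; the ``splitting subspace $E$ contains a horizontal direction'' observation is correct but is not by itself a contradiction (translators are perfectly entitled to split off horizontal lines). In the regular case you rightly flag that your symmetry argument only covers the particular symmetric $\Sigma_V$ obtained from $z_V^i\equiv z_V$, whereas the claim concerns \emph{any} translator corresponding to $z_V$; the paper's terse ``by its symmetries'' gloss leaves the same loose end, but it is partly absorbed by the $w_0$-computation you dropped, since that fixes the tip of every such cone to be $v$.
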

\begin{proof}
Given $z\in \relint G(V)$, let $\Sigma_z= \lim_{i\to\infty}(\M_{t_i}-X(z^i, t_i))$, for a sequence of normals $z^i\to z$ and a sequence of times $t_i\to-\infty$, be an asymptotic translator corresponding to $z\in\relint G(V)$ (obtained as in Lemma \ref{lem:backwards asymptotic translators}).
First note that
\begin{equation}\label{inclusion}
G(\Sigma_z)\subset G(V)\,.
\end{equation}
Indeed, if not, then we can find $z_0\in G(\Sigma_z)\setminus G(V)$. Since $z_0\in G(\Sigma_z)$, there exists a sequence of normals $z_i\in G_\ast$ with $z_i\to z_0$  such that $|X(z_i, t_i)- X(z^i, t_i)|$ is uniformly bounded. In particular (cf. \eqref{eq:limit exists} in Lemma \ref{lem:backwards asymptotic translators}),
\begin{equation}\label{close}
(-t_i)^{-1}|X(z_i, t_i)- X(z^i, t_i)|\to 0\,\,\text{ as } i\to \infty\,.
\end{equation}
Let $V_0\in F^\ell_P$, $0\le \ell\le n$, be the $\ell$-face of $P$ for which $z_0\in \relint G(V_0)$. By definition of the squash-down, \eqref{close} can only happen if $z\in G(V_0)$, and so $\ell\le k-1$, which is impossible. This proves \eqref{inclusion}.

We next show that $G(\Sigma_V)= \relint G(V)$, for any translator $\Sigma_V$ corresponding to $z_V$. Recall from Proposition \ref{prop:translator squash-down} that the squash-down of $\Sigma_V$ is the cone 
\[
(\Sigma_V)_\ast=\bigcap_{z\in G(\Sigma_V)}\{p\in\{0\}\times \R^n:\inner{p}{z}\le -\inner{z}{\vec v}\}\,.
\]
We claim that $(\Sigma_V)_\ast$ is circumscribed. Indeed, if not, then we can find an asymptotic translator for $\Sigma_V$ whose squash-down is not circumscribed. Continuing in this manner, we eventually obtain a Grim hyperplane whose squash-down is not circumscribed. But this violates the width estimate (Lemma \ref{lem:width_est}). Alternatively, we may simply apply Theorem \ref{thm:circumscribed} below.

The structure of $P$ and \eqref{inclusion} now imply that $G(\Sigma_V)=\relint G(V)$. Indeed if $P$ is regular, then, by its symmetries, $\ov{G(\Sigma_V)}$ must contain all $z\in G(V)$ such that $\s_\ast(z)=1$ and thus $G(\Sigma_V)\supseteq \relint G(V)$. If $P$ is a simplex, then the claim follows because if $G(\Sigma_z)\subsetneq \relint G(V)$ then the squash-down cannot be a circumscribed cone.

Finally, we show that any $z$ is in the Gauss image of some asymptotic translator corresponding to $z_V$. To do this, it suffices to show that $z_V\in G(\Sigma_z)$. Indeed,  if that is the case, then there exists a sequence of normals $z_V^i\to z_V$ such that $\limsup_{i\to\infty}|X(z_V^i, t_i)- X(z^i, t_i)|<\infty$, which implies that $z$ is in the Gauss image of an asymptotic translator corresponding to $z_V$. We will show that the bulk velocity of $\Sigma_z$ is given by 
 \begin{equation}\label{eq:tip2}
\vec v=-\s_\ast(z_V)z_V\,.
\end{equation}
Let $w_0\in G(\Sigma_z)$ be such that
\begin{equation}\label{max}
w_0=\frac{\vec v}{|\vec v|}\,.
\end{equation}
(The existence of such $w_0$ is guaranteed by the structure of $P$, or by part \eqref{item:tip attained} of Corollary \ref{cor:translators} below). For any $w\in G(\Sigma_z)$, let $H(w)$ denote the mean curvature of $\Sigma_z$ at the point with normal $w$. Then, by Lemma \ref{lem:backwards asymptotic translators},
\[
\s_\ast(w)=H(w)=-\langle w, \vec v\rangle=\s_\ast(w_0)\langle w_0, w\rangle\,.
\]
Combining this with \eqref{eq:vertex of V} and using \eqref{inclusion} we find that for all $w\in G(\Sigma_z)$
\[
\s_\ast(z_V)\langle w, z_V\rangle=\s_\ast(w_0)\langle w_0, w\rangle=\s_\ast(z_V)\langle z_V, w_0\rangle\langle w_0, w\rangle\,.
\]
This implies either that $w_0= z_V$ or that $w_0=z$. In the latter case we have $G(\Sigma_z)\subset \{az+ b e_1:a, b\in\R\}$ and thus $\Sigma_z$ is a Grim hyperplane. Since the speed of the Grim hyperplane is given by $\s_\ast(z)<1$, this is impossible since $\Sigma_z$ lies in a slab of width $\pi$. Therefore \eqref{eq:tip2} holds which implies that $z_0\in G(\Sigma_z)$.
\end{proof}

\begin{claim}\label{k>k_0}
Let $V$ be a $k_0$-face, $k_0\le n-1$. If $G(\Sigma_z)= \relint G(V)$ for all $z\in \relint G(V)$ and all $k$-faces $V$ with $k\in\{0,\dots,k_0-1\}$, then $G(\Sigma_V)=\relint G(V)$ for any $\Sigma_V$ corresponding to $z_V$  and any $z\in\relint G(V)$ is in the Gauss image of some asymptotic translator corresponding to $z_V$.
\end{claim}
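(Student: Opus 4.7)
The plan is to follow the three-part template of the already-proved Claim \ref{k=0}: (1) inclusion $G(\Sigma_V) \subset G(V)$; (2) equality $G(\Sigma_V) = \relint G(V)$; and (3) existence of a suitable asymptotic translator for every $z \in \relint G(V)$. The inductive hypothesis will replace the dimension-based impossibility that closed the base case.

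The principal step is the inclusion. Fix an asymptotic translator $\Sigma_V = \lim_i (\M_{t_i} - X(z_V^i, t_i))$ corresponding to $z_V$ as in Lemma \ref{lem:backwards asymptotic translators}, and suppose for contradiction that $z_0 \in G(\Sigma_V) \setminus G(V)$. As in Claim \ref{k=0}, one produces a sequence $z_i \to z_0$ with $|X(z_i, t_i) - X(z_V^i, t_i)|$ uniformly bounded, hence with $(-t_i)^{-1}|X(z_i, t_i) - X(z_V^i, t_i)| \to 0$. Let $V_0\in F^\ell_P$ be the unique face with $z_0 \in \relint G(V_0)$. The same ``definition of the squash-down'' reasoning from the base case yields $z_V \in G(V_0)$, i.e.\ $V_0 \subset V$; since $z_0 \notin G(V)$ forces $V_0 \neq V$, one gets $\ell < k_0$. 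Next, the uniform boundedness of the displacement allows us to re-centre the sequence around $X(z_i,t_i)$ (in place of $X(z_V^i,t_i)$) and extract, as in Lemma \ref{lem:backwards asymptotic translators}, an asymptotic translator $\Sigma_0$ corresponding to $z_0$ which differs from $\Sigma_V$ only by a bounded translation; in particular $z_V \in G(\Sigma_V) = G(\Sigma_0)$. Since $\ell < k_0$, the inductive hypothesis applied to $V_0$ gives $G(\Sigma_0) = \relint G(V_0)$, whence $z_V \in \relint G(V_0)$. This contradicts $z_V \in \relint G(V)$, since distinct faces of $P$ have disjoint relative Gauss interiors.

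The remaining two steps transfer from Claim \ref{k=0} essentially verbatim. For the equality $G(\Sigma_V) = \relint G(V)$, the inclusion just established combined with the fact that the squash-down of $\Sigma_V$ is a circumscribed cone (Proposition \ref{prop:translator squash-down} together with the observation, guaranteed by Lemma \ref{lem:width_est} or Theorem \ref{thm:circumscribed}, that $\Sigma_V$ lives in a slab of width $\pi$) forces $G(\Sigma_V) = \relint G(V)$: in the regular case by the symmetries of $P$, and in the simplex case because a proper subset of $\relint G(V)$ cannot produce a circumscribed cone. For the existence half, given $z \in \relint G(V)$ and an asymptotic translator $\Sigma_z$ corresponding to $z$, the same computation as in Claim \ref{k=0} using \eqref{eq:vertex of V} and Lemma \ref{lem:backwards asymptotic translators} identifies the bulk velocity of $\Sigma_z$ as $-\s_\ast(z_V)\, z_V$ (the Grim-hyperplane alternative, of speed $\s_\ast(z) < 1$, being ruled out by the width estimate), placing $z_V$ in $G(\Sigma_z)$ and producing the desired sequence $z_V^i \to z_V$ with $|X(z_V^i, t_i) - X(z^i, t_i)|$ bounded.

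The main obstacle is the inclusion step, and it has two pieces that must fit together: first, the geometric deduction that the vanishing of the normalised displacement forces $V_0 \subset V$ (so that $\ell < k_0$ and the inductive hypothesis is applicable), rather than merely $V \cap V_0 \neq \emptyset$; and second, the analytic verification that the two limits $\Sigma_V$ and $\Sigma_0$, although taken around different recentering points, agree up to a fixed translation, so that $G(\Sigma_V) = G(\Sigma_0)$. Both pieces are in principle immediate from the machinery already in place, but care is needed to keep track of which face the common support point of $V$ and $V_0$ lies in.
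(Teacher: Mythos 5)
Your proof takes essentially the same route as the paper's: you follow the three-step template of Claim \ref{k=0}, use the normalized-displacement estimate to locate a face $V_0$ with $V_0\subsetneq V$, recentre to extract $\Sigma_0$, and invoke the inductive hypothesis to contradict disjointness of the relative Gauss-interiors. This matches the paper's argument point for point. Two minor imprecisions are worth flagging, though.

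First, the Grim-hyperplane case in the existence half. You write that the alternative ``of speed $\sigma_\ast(z)<1$'' is ruled out by the width estimate. The inequality direction is the wrong way around (Proposition \ref{prop:sigma_ast ge 1} gives $\sigma_\ast\ge 1$; the Grim alternative is excluded when $\sigma_\ast(z)>1$, since a Grim hyperplane of speed $\sigma_\ast(z)>1$ sweeps a slab of width $\pi/\sigma_\ast(z)<\pi$, contradicting Lemma \ref{lem:width_est}). More importantly, the exclusion is \emph{not} unconditional: when $k_0=n-1$ the face $V$ is a facet, $\relint G(V)=\{z_V\}$, $\sigma_\ast(z)=\sigma_\ast(z_V)=1$, and the asymptotic translator genuinely can be a Grim hyperplane. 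The paper handles this degenerate case by observing that then $z=z_V$ and the conclusion holds trivially; your argument should do the same rather than assert a blanket exclusion.

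Second, a structural remark: you prove the inclusion step only for the specific translator $\Sigma_V$ corresponding to $z_V$, but the existence half needs $G(\Sigma_z)\subset G(V)$ for an \emph{arbitrary} $z\in\relint G(V)$ (this is what lets you apply \eqref{eq:vertex of V} to every $w\in G(\Sigma_z)$ and identify the bulk velocity). The paper deliberately proves the inclusion for arbitrary $z\in\relint G(V)$ at the outset for exactly this reason. Your inclusion argument carries over to general $z$ verbatim, so this is easy to fix, but as written there is a small gap between what you prove and what you later use.
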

\begin{proof}
Choose $z\in \relint G(V)$ and let $\Sigma_z= \lim_{i\to\infty}(\M_{t_i}-X(z^i, t_i))$, for a sequence of normals $z^i\to z$ and a sequence of times $t_i\to-\infty$, be an asymptotic translator corresponding to $z\in\relint G(V)$ (obtained as in Lemma \ref{lem:backwards asymptotic translators}).
First note that
\begin{equation}\label{inclusion2}
G(\Sigma_z)\subset G(V)\,.
\end{equation}
Indeed, if not, then we can find $z_0\in G(\Sigma_z)\setminus G(V)$. Since $z_0\in G(\Sigma_z)$, there exists a sequence of normals $z_i\in G_\ast$ with $z_i\to z_0$ such that $|X(z_i, t_i)- X(z^i, t_i)|$ is uniformly bounded. In particular,
\begin{equation}\label{close2}
(-t_i)^{-1}|X(z_i, t_i)- X(z^i, t_i)|\to 0\,\,\text{ as } i\to \infty\,.
\end{equation}
Let $V_0\in F^\ell_P$, $0\le \ell\le n$, be the $\ell$-face of $P$ for which $z_0$ is in the relative interior of $G(V_0)$. By definition of the squash-down, \eqref{close2} can only happen if $z\in G(V_0)$, and so $\ell\le k-1$. But then \eqref{close2} implies that $z \in G(\Sigma_0)$, where $\Sigma_0$ is a translator corresponding to $z_0$. By the  inductive hypothesis we thus conclude that $z\in \relint G(V_0)$, which is impossible.

As in the end of the proof of  Claim \ref{k=0}, using the fact that the squash-down of $\Sigma_z$ is a circumscribed cone, $\Sigma_z$ is contained in a slab of width $\pi$, the structure of $P$ and \eqref{inclusion2}, we obtain that $G(\Sigma_V)=\relint G(V)$.

Finally, the fact that  $z$ is in the Gauss image of some asymptotic translator corresponding to $z_V$ follows as in  Claim \ref{k=0}. We note here that $\Sigma_z$ can be a Grim hyperplane if $k_0=n-1$, in which case $z= z_V$ so \eqref{eq:tip2} trivially holds.
\end{proof}
The proposition now follows by induction.
\end{proof}

\section{Backwards asymptotics}\label{sec:squash-down}

%The following theorem provides some evidence for Conjecture \ref{conjecture}.

%In Theorems \ref{thm:bounded polygonal pancakes} and  \ref{thm:unbounded polygonal pancakes} we obtained an ancient solution in $(-\frac{\pi}{2},\frac{\pi}{2})\times\R^n$ with squash-down $P$ for any regular polytope $P$. 

The squash-downs of all of the examples constructed in \S \ref{sec:existence} are circumscribed. We now show that this is necessary.

\begin{theorem}\label{thm:circumscribed}
Let $\{\M_t\}_{t\in(-\infty,\,\omega)}$ be a convex ancient solution to mean curvature flow in $\R^{n+1}$ with $\sup_{\M_t}\vert \mathrm{II}\vert<\infty$ for each $t$. If $\{\M_t\}_{t\in(-\infty,\,\omega)}$ lies in the slab $(-\frac{\pi}{2},\frac{\pi}{2})\times \R^{n}$ and in no smaller slab, then its squash-down $\Omega_*$ circumscribes the unit sphere $\{0\}\times S^{n-1}$.
\end{theorem}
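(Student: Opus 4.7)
My plan is to argue by contradiction, leveraging the asymptotic translator theory developed earlier in Section~\ref{sec:prelims}. By Proposition~\ref{prop:sigma_ast ge 1}, $\sigma_\ast(z)\ge 1$ for every $z\in G_\ast\cap(\{0\}\times S^{n-1})$, so $\Omega_\ast$ contains the closed unit ball in $\{0\}\times\R^n$. To prove that $\Omega_\ast$ circumscribes $\{0\}\times S^{n-1}$, it suffices to establish the matching upper bound $\sigma_\ast(z_0)\le 1$ at every $z_0$ lying in the relative interior of the normal cone of an $(n-1)$-dimensional face of $\Omega_\ast$ --- these are exactly the normals that define ``genuine'' supporting halfspaces (as opposed to supporting hyperplanes at lower-dimensional faces, where $\sigma_\ast$ is allowed to exceed $1$ without violating the circumscribed condition).

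Supposing, for contradiction, that $\sigma_\ast(z_0)>1$ at such a $z_0$, I would extract via Lemma~\ref{lem:backwards asymptotic translators} an asymptotic translator $\Sigma$ at $z_0$: a convex translator lying in a parallel slab of width at most $\pi$, with bulk velocity $\vec v$ satisfying $-\inner{\vec v}{z_0}=H_\ast(z_0)=\sigma_\ast(z_0)>1$. The crux is a structural claim --- the natural analogue of Proposition~\ref{prop:asymptotics} for arbitrary (as opposed to constructed) ancient solutions --- that, because $z_0$ lies in the relative interior of a facet's normal cone, the squash-down of $\Sigma$ is precisely the tangent halfspace of $\Omega_\ast$ along this facet, $\{p\in\{0\}\times\R^n:\inner{p}{z_0}\le \sigma_\ast(z_0)\}$. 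Combined with Proposition~\ref{prop:translator squash-down}, which characterises any translator squash-down as $\bigcap_{z\in G(\Sigma)}\{p:\inner{p}{z}\le -\inner{\vec v}{z}\}$, this forces $\Sigma$ (after splitting off lines) to be a Grim hyperplane lying in the $2$-plane $\mathrm{span}(z_0,e_1)$ with bulk velocity $-\sigma_\ast(z_0)\,z_0$, of natural $e_1$-slab width $\pi/\sigma_\ast(z_0)<\pi$.

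The contradiction would then be obtained by comparing $\M_t$ with the unit-speed Grim hyperplane $\Gamma$ in the same $2$-plane moving in direction $-z_0$, which is admissible in the slab $(-\tfrac{\pi}{2},\tfrac{\pi}{2})\times\R^n$ since it saturates its width. Placing $\Gamma$ so that at some time $t_0$ its tip coincides with the face point $X(z_0,t_0)\in\M_{t_0}$, the two are tangent with common outward normal $z_0$. The Harnack inequality gives $H(z_0,t_0)\ge H_\ast(z_0)=\sigma_\ast(z_0)>1$, strictly greater than $\Gamma$'s mean curvature at the tip; combined with the asymptotic identification of the preceding paragraph --- which forces $\M_t$ to converge smoothly near the face to a Grim hyperplane of speed $\sigma_\ast(z_0)>1$, not of speed $1$ --- this tangency and the strong maximum principle are inconsistent, yielding the required contradiction.

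The principal obstacle is the structural identification of the squash-down of $\Sigma$ with the tangent halfspace of $\Omega_\ast$ along the facet containing $z_0$. Intuitively this just says that ``$\Sigma$ sees only the facet at infinity'', but the precise statement for arbitrary ancient solutions requires careful control over the normals $z_j\to z_0$ and times $t_j\to-\infty$ defining $\Sigma$, together with the convergence of the corresponding face points $X(z_j,t_j)$ after rescaling --- presumably the content of the forthcoming Proposition~\ref{prop:converging sequences} previewed in the introduction. A secondary technical issue is ruling out pathological splittings of $\Sigma$ (or verifying that the translated original slab of width~$\pi$ is genuinely inherited by $\Sigma$) so that the final rigidity step cannot be vitiated by $\Sigma$ shrinking into a narrower sub-slab.
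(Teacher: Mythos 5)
Your proposal takes a genuinely different route from the paper, and unfortunately the route has a gap that is not merely technical --- it is circular with respect to the rest of the paper's development.

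The paper proves Theorem~\ref{thm:circumscribed} via two direct barrier lemmas: Lemma~\ref{lem:width estimate}, which shows by comparison with scaled, rotated Angenent ovals that the inner width $w_A(t)\to\pi$ as $t\to-\infty$ for every $A\in\relint\Omega_\ast$; and Lemma~\ref{lem:hast regular point}, which uses this width estimate to insert a slightly scaled rotationally symmetric pancake centred at $-tA$ inside $\M_t$ and then derives a contradiction from the avoidance principle by noting that if $\sigma_\ast(z_\ast)>1$ then the $z_\ast$-tangent plane of $\M_t$ sweeps inward at speed $\ge\sigma_\ast(z_\ast)$ and overtakes the centre of the pancake before the pancake's extinction time. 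No asymptotic translator is extracted at all. Your approach instead wants to identify the backward asymptotic translator $\Sigma$ at a facet normal $z_0$ and show its squash-down is the halfspace $\{p:\inner{p}{z_0}\le\sigma_\ast(z_0)\}$, whence $\Sigma$ is an over-fast Grim hyperplane. The trouble is that the tool you invoke for this identification, Proposition~\ref{prop:converging sequences}, is proved \emph{after} Theorem~\ref{thm:circumscribed} and its proof relies on Proposition~\ref{constant Gauss}, whose own proof explicitly uses Lemma~\ref{lem:width estimate}. So ``defer to Proposition~\ref{prop:converging sequences}'' is not available without first establishing the width estimate --- which is exactly the technical heart of the argument you are trying to sidestep. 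In effect you have not replaced Lemma~\ref{lem:width estimate}; you have smuggled it back in.

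Even granting the structural identification, the closing contradiction is not watertight as stated. From $\Sigma$ being a Grim hyperplane of speed $\sigma_\ast(z_0)>1$ you deduce that it lies in an $e_1$-slab of width $\pi/\sigma_\ast(z_0)<\pi$; but $\Sigma$ is only a \emph{local} limit of translates $\M_{t_j+s}-X(z_j,s_j)$, and locality means the limit can lie in a proper sub-slab even though each $\M_{t_j}$ spans the full slab (the parts of $\M_{t_j}$ that hug $x=\pm\pi/2$ can escape to spatial infinity in the rescaling). The assertion that the asymptotic translator inherits the full slab width is precisely the content of a width estimate near the boundary of $\Omega_\ast$, which brings you back to Lemma~\ref{lem:width estimate}. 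Your fallback comparison with the unit-speed Grim hyperplane $\Gamma$ tangent at $X(z_0,t_0)$ also does not close directly: $\M_{t_0}$ is a time-slice and $\Gamma$ a translator, so they solve different elliptic equations, and an inequality of mean curvatures at a single tangency point does not by itself trigger the strong maximum principle without controlling the relative position of the two surfaces in a neighbourhood. Finally, a minor but real conceptual slip: reducing to normals in the relative interior of the normal cone of an \emph{$(n-1)$-dimensional face} is wrong for non-polytopal $\Omega_\ast$ --- a smooth strictly convex squash-down has no $(n-1)$-faces whatsoever. The paper's reduction is to normals at \emph{regular} boundary points (points with a unique supporting hyperplane), together with the fact that every convex body is the intersection of its supporting halfspaces at regular points; for a polytope these coincide with your facet normals, but the two notions diverge in general.
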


So let $\{\M_t\}_{t\in(-\infty,\,\omega)}$ be any convex ancient solution to mean curvature flow in $\R^{n+1}$ with $\sup_{\M_t}\vert \mathrm{II}\vert<\infty$ for each $t$ which sweeps-out the slab $(-\frac{\pi}{2},\frac{\pi}{2})\times \R^{n}$.

Given $A\in \Omega_\ast$ and $t\in(-\infty,\omega)$, define the \textsc{width} $w_A(t)$ of $\M_t$ above the point $-t A$ by
\[
w_A(t)\doteqdot \vert \Omega_t\cap (-tA+\R e_1)\vert\,,
\]
where $\Omega_t$ is the convex body bounded by $\M_t$, and denote by 
\begin{align*}
\M_{t}^\pm\doteqdot\big\{p\in\M_{t}:\pm\inner{\nu(p,t)}{e_1}>0\big\}
\end{align*}
the component of $\M_{t}$ on which the normal $\pm\nu$ is ``upward pointing''. 

\begin{lemma}\label{lem:width estimate}
%Let $S_\rho(A)\subset \Omega_\ast$ and let $w_A(-t)= X^+_A(t)-X^-_A(t)$, where 
%\[
%X_A^\pm=\M_t\cap \{-tA\pm s e_1\,,\,\, s\in[0, \infty)\}\,.
%\]
%Then
For every $A\in \relint\Omega_\ast$,
\[
\lim_{t\to-\infty} w_A(t)= \pi\,.
\]
\end{lemma}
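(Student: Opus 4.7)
The upper bound $w_A(t)\le\pi$ is immediate from $\M_t\subset(-\frac{\pi}{2},\frac{\pi}{2})\times\R^n$. For the lower bound I plan to place scaled rotationally symmetric ancient pancakes with rotation axis $-tA+\R e_1$ inside $\Omega_t$ and apply the avoidance principle. Fix $h\in(0,1)$ and let $\Pi^h_s\doteqdot h\,\Pi_{s/h^2}$ denote the scaled rotationally symmetric ancient pancake, which solves mean curvature flow in the strictly narrower slab $(-h\pi/2,h\pi/2)\times\R^n$ and whose $e_1$-cross-sectional width along its rotation axis is $2h\arccos(e^{s/h^2})$, tending to $h\pi$ as $s\to-\infty$. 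For each very negative $t$ and $\lambda>0$, consider the space-time translated flow
\[
\mathcal{P}^{h,\lambda}_s\doteqdot\Pi^h_{s-t-\lambda}+(0,-tA)\,,\qquad s<t+\lambda\,,
\]
whose $e_1$-width along the line $-tA+\R e_1$ at time $s=t$ equals $2h\arccos(e^{-\lambda/h^2})\to h\pi$ as $\lambda\to\infty$.

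I will verify the initial containment $\mathcal{P}^{h,\lambda}_{s_1}\subset\Omega_{s_1}$ at some early time $s_1=\mu t$, $\mu>1$ large; by avoidance this containment propagates to the interval $[s_1,t]$ and yields $w_A(t)\ge 2h\arccos(e^{-\lambda/h^2})$. Sending $\lambda=\log(-t)\to\infty$ and then $h\uparrow 1$ along suitable sequences as $t\to-\infty$ will complete the proof. The slab-projection fit is easy: since $A\in\relint\Omega_\ast$, the Hausdorff convergence $\frac{1}{-s}\pi_1(\Omega_s)\to\Omega_\ast$ (a consequence of the pointwise limit $\sigma(z,s)/(-s)\to\sigma_\ast(z)$ and convexity) supplies $r>0$ and $s_0$ with $B_{-sr}(-sA)\cap(\{0\}\times\R^n)\subset\pi_1(\Omega_s)$ for all $s\le s_0$. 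Since the slab-projection of $\mathcal{P}^{h,\lambda}_{s_1}$ is a ball of radius $\sim((\mu-1)|t|+\lambda)/h$ about $-tA$, taking $\mu$ sufficiently large in terms of $|A|$, $r$, $h$ and $\lambda(t)/(-t)\to 0$ ensures $\pi_1(\mathcal{P}^{h,\lambda}_{s_1})\subset\pi_1(\Omega_{s_1})$.

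The main obstacle will be verifying the $e_1$-direction fit at the initial time $s_1$. The scale reduction $h<1$ is crucial here: since $\mathcal{P}^{h,\lambda}_{s_1}$ lives in the strictly narrower sub-slab $(-h\pi/2,h\pi/2)\times\R^n$, the $e_1$-direction fit reduces to showing that $\Omega_{s_1}$ fills an interval of length at least $h\pi$ above slab-points in a neighbourhood of $-tA$, leaving a uniform margin of size $(1-h)\pi/2$ to absorb any defect. I plan to obtain this by applying the maximum-principle argument underlying Lemma \ref{lem:width_est} directly to $v=1-\frac{2}{\pi}|x|$ on $\{\M_t\}$ itself: on the ``equator'' $\M_t\cap\{x=0\}$ the normal lies in $\{0\}\times S^{n-1}$, so Proposition \ref{prop:sigma_ast ge 1} and the differential Harnack inequality give $H\ge 1=v$ there, while the ``initial-time'' boundary condition becomes vacuous in the ancient setting. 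Combining the resulting estimate $|x|\ge\frac{\pi}{2}(1-H)$ with a Harnack-type decay of $H$ on slab-neighbourhoods of $-tA$ bounded away from the ``edge'' (in the spirit of Lemma \ref{lem:minimum speed estimate}, using a rotationally symmetric ancient pancake barrier placed at $-tA$) then furnishes the required sub-slab fit.
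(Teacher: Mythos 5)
Your general strategy — inner pancake barrier at an early time, avoidance forward to time $t$ — is genuinely different from the paper's. The paper scales the Angenent oval \emph{up} by $h^{-1}>1$, producing a pancake in a slab of width $\pi/h>\pi$; since it necessarily sticks out of the solution's slab, it \emph{automatically} intersects $\M_t$, and one can slide it until tangency, extract $H(p,t)\le H_{\Pi}(p)$ there, and combine with the Harnack lower bound $H\ge\sigma_\ast\ge\cos\theta$ to force $\cos\theta$ exponentially small, which places the tangency point near the boundary of the slab. Crucially, no initial containment ever needs to be verified.

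Your approach has a gap precisely at the ``initial containment'' step, and I do not think the patches you propose close it. The $e_1$-direction fit $\mathcal{P}^{h,\lambda}_{s_1}\subset\Omega_{s_1}$ is essentially the statement that the width of $\Omega_{s_1}$ over the relevant slab-neighbourhood is at least (nearly) $h\pi$, i.e.\ $w_{A/\mu}(s_1)\ge h\pi - o(1)$ — but that is just Lemma~\ref{lem:width estimate} again, evaluated at an earlier time and a different interior point; so the argument is circular unless you supply an independent width estimate. Your proposed independent estimate $\vert x\vert\ge\frac{\pi}{2}(1-H)$ is only proved in the paper (Lemma~\ref{lem:width_est}) for the \emph{old-but-not-ancient} solutions, where the initial condition $\M^R$ satisfies $H/v\ge 1$ pointwise. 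For a general ancient solution there is no such initial datum, and the assertion that ``the initial-time boundary condition becomes vacuous in the ancient setting'' is unjustified: the quantity $w=H/v$ satisfies $(\partial_t-\Delta-\frac{2\nabla v}{v}\cdot\nabla)w=\vert\mathrm{II}\vert^2 w\ge 0$, so the minimum of $w$ on $[T,t_0]\times(\M\cap\{x>0\})$ sits on $\{T\}\times\M$ or on the equator; letting $T\to-\infty$ one still needs $\liminf_{T\to-\infty}\inf_{\M_T\cap\{x>0\}}w\ge 1$, which is exactly what you are trying to prove. (For non-compact timeslices there is a further spatial-infinity issue with the maximum principle that you have not addressed.) Finally, a smaller but real error: the direction of your parameter choice is reversed. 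Scaling by $1/(-s_1)$, the projected pancake becomes a ball of radius $\to\frac{\mu-1}{\mu h}$ about $\frac{A}{\mu}$, which must sit inside $\Omega_\ast$. Since $\Omega_\ast$ circumscribes the unit sphere and $\frac{1}{h}>1$, sending $\mu\to\infty$ pushes this ball toward radius $\frac{1}{h}>1$ about the origin, which need not lie in $\Omega_\ast$ (take $\Omega_\ast$ a circumscribed simplex). You need $\mu$ \emph{bounded} — indeed of the order $1+\frac{h\,\rho_A}{1-h}$ — not ``sufficiently large''.
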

\begin{proof}
Fix $A\in \relint \Omega_\ast$ and any $\rho\in(0,\rho_A)$, where $\rho_A\doteqdot \mathrm{dist}(A,\partial\Omega_\ast)$.
Given $R>0$ and $h<1$, let $\Pi^{h,R}$ be the rotation about the $e_{1}$-axis of the scaled Angenent oval $h^{-1}\mathrm{A}_{-h^2R}$. For each $t<0$, we may choose $R=R(t)$ so that $\Pi^{h, R}\cap \{x_1=0\}= S^{n-1}_{-\r t}$. Since $h<1$ and $\rho<\rho_A$, the hypersurface $\Pi^{h,R}-tA$ intersects both $\M_{t}^+$ and $\M_{t}^-$ for all $t$. 
%\begin{equation*}
%(\Pi^{h,R_t}_{T_{h, R_t}}-t A)\cap\partial \M_{t}^{\pm}=\emptyset\,.
%\end{equation*}

By \eqref{eq:Angenent oval kappa}, the mean curvature of $\Pi^{h,R}$ is given by
\begin{align*}%\label{eq:MCofAO}
H={}&h\sqrt{\cos^2\theta+a_h^2(-R)}+\cos\theta\frac{n-1}{y(\theta,-R)},
\end{align*}
where $a_h^2(t)\doteqdot (e^{2h^2t}-1)^{-1}$, $\theta$ is the turning angle of the cross-section, and
\[
y(\theta,t)\doteqdot -ht+\frac{1}{h}\log\left(\frac{\sqrt{\cos^2\theta+a_h^2(t)}+\cos\theta}{\sqrt{1+a_h^2(t)}} \right).
\]
Translate $\Pi^{h,R}-tA$ downwards until it is tangent to $\M^+_{t}$. At the point $p=p(t)$ of tangency, the normal $z=z(t)$ to $\M_{t}$ is given by
\begin{equation}\label{eq:zj}
z=\sin\theta e_1+\cos\theta\phi
\end{equation}
and the mean curvature is bounded by
\begin{equation}\label{H-upbound width}
H(p,t)\le h\cos\theta\left(\sqrt{1+\frac{a_h^2(-R)}{\cos^2\theta}}+\frac{n-1}{hy(\theta,-R)}\right)
\end{equation}
for some $\theta=\theta(t)\in(0,\frac{\pi}{2}]$ and some $\phi=\phi(t)\in e_1^\perp$. On the other hand, the differential Harnack inequality and \eqref{H_ast=sigma_ast} imply that
\begin{equation}\label{eq:inscribed contra width}
H(p,t)\ge H_\ast(z)=\sigma_\ast(z)\ge \cos\theta\,\sigma_\ast(\phi)\ge \cos\theta\,.
\end{equation}
Combining \eqref{H-upbound width} with \eqref{eq:inscribed contra width} yields
\[
\frac{a_h(-R)}{\cos\theta}+\frac{n-1}{hy(\theta,-R)}\ge \frac{1}{h}-1>0\,.
\]
Since
\[
R=-\frac{\rho}{h}t+O(1)
\]
as $t\to-\infty$, we conclude, as in \eqref{eq:cos theta small}, that
\begin{subequations}
\begin{align}
\cos\th\le{}& C_he^{h^2t}\\
y(\th,-R)\le{}& C_h\,,\label{eq:squash down y est}
\end{align}
\end{subequations}
where $C_h$ is a constant which depends only on $h$. 

Let $q=q(t)$ be the point of intersection of $\M^+_{t}$ and $\R e_1$. Since $q$ lies below the tangent hyperplane to $\M_{t}$ at $p$, we obtain
\[
\inner{p-q}{z}\ge 0\,,
\]
which, recalling \eqref{eq:zj}, yields
\begin{align*}
\inner{p}{e_1}\ge{}& \inner{q}{e_1}-\inner{p}{\phi}\cot\theta\\
\ge{}&\inner{q}{e_1}-(-t|A|+C_h)\frac{C_he^{h^2t}}{\sqrt{1-C_he^{h^2t}}}\underset{t\to-\infty}{\longrightarrow} \frac{\pi}{2}\,.
\end{align*}
%Since $h$ is independent of $t$, the claim now follows from \eqref{eq:squash down y est}.
Since $\langle p, e_1\rangle\le  \langle p_A, e_1\rangle $, where $p_A=\M_t\cap(-tA+\R e_1)$, the result follows.
\end{proof}

\begin{comment}
\begin{corollary}
The sphere $\{0\}\times S^{n-1}$ inscribes $\Omega_\ast$.
\end{corollary}
\begin{proof}
By Proposition \ref{prop:sigma_ast ge 1}, $\{0\}\times S^{n-1}\subset \Omega_\ast$. So it suffices to show that the inscribed radius of $\Omega_\ast$ is 1. Suppose then, contrary to the claim, that there is some $A\in\Omega_\ast$ and some $h_\ast>1$ such that $\{0\}\times S^{n-1}_{h_\ast}(A)\subset \Omega_\ast$. Then, by Lemma \ref{lem:width estimate}, $h^{-1}\Pi_{h^{2}t}+tA$ lies to the inside of $\M_t$ for any $h\in(1,h_\ast)$, so long as $-t$ is sufficiently large, where $\{\Pi_t\}_{t\in(-\infty,0)}$ is the rotationally symmetric ancient pancake. But this violates the avoidance principle: under mean curvature flow, the hypersurfaces $\M_t$ and $h^{-1}\Pi_{h^{2}t}+tA$ contract to different points after the same elapsed time.
\end{proof}
\end{comment}

\begin{lemma}\label{lem:hast regular point}
If the halfspace $\{p\in\R^{n+1}:\inner{p}{z_\ast}\leq h_\ast\}$, where $z_\ast\in \{0\}\times S^{n-1}$, supports $\Omega_\ast$ at a regular point, $p_\ast\in\partial\Omega_\ast$, then $h_\ast=1$.
\end{lemma}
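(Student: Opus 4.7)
The plan is to derive a contradiction from $h_\ast > 1$ (which by Proposition \ref{prop:sigma_ast ge 1} is the only case needing exclusion, since $h_\ast = \sigma_\ast(z_\ast) \geq 1$). My strategy is to extract an asymptotic translator in the sense of Lemma \ref{lem:backwards asymptotic translators} that is ``anchored'' at $p_\ast$, i.e., with $X_\ast(z_\ast) = p_\ast$, and then compute its $e_1$-width in two ways which will disagree. To arrange the anchoring, I would use the Hausdorff convergence $(-s)^{-1}\Omega_s \to \Omega_\ast$: choose times $s_j \to -\infty$ and boundary points $p_j \in \partial\big((-s_j)^{-1}\Omega_{s_j}\big)$ with $p_j \to p_\ast$, and write $p_j = (-s_j)^{-1}X(z_j, s_j)$ for the smooth outward unit normal $z_j$ at $X(z_j, s_j)$. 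The standard convex-analytic fact that normals at boundary points converge to the unique normal at a regular limit point (under Hausdorff convergence of convex bodies) gives $z_j \to z_\ast$ along a subsequence, after which Lemma \ref{lem:backwards asymptotic translators} applied to the sequence $(z_j, s_j)$ yields an asymptotic translator $\{\Sigma_t \times E\}$ with $X_\ast(z_\ast) = p_\ast$.

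For the structural computation, equation \eqref{MCoflimittr} shows that every $w \in G(\Sigma_t \times E)$ satisfies $\inner{p_\ast}{w} = H_\ast(w) = \sigma_\ast(w)$, so $w$ lies in the outer normal cone $N_{p_\ast}\Omega_\ast$ of the degenerate body $\Omega_\ast$ at $p_\ast$ in $\R^{n+1}$. Regularity of $p_\ast$ collapses the intersection of this cone with $\{0\} \times S^{n-1}$ to the single direction $z_\ast$; since $\pm e_1 \in N_{p_\ast}\Omega_\ast$ trivially (as $\Omega_\ast \subset \{0\} \times \R^n$), $G(\Sigma_t \times E)$ is contained in the closed half great circle in $\spa(z_\ast, e_1)$ on which $\inner{\cdot}{z_\ast} \geq 0$. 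Since this set is one-dimensional, the splitting theorem for the second fundamental form forces $\Sigma_t \times E = \Gamma'_t \times \R^{n-1}$, with $\Gamma'_t$ a locally uniformly convex planar translator in $\spa(z_\ast, e_1)$ --- hence a Grim Reaper. The constraints $-\inner{\vec v}{z_\ast} = h_\ast$ and $G(\Gamma') \subset \{\inner{\cdot}{z_\ast} \geq 0\}$ pin $\vec v = -h_\ast z_\ast$, so $\Gamma'$ is the Grim Reaper in $\spa(z_\ast, e_1)$ of speed $h_\ast$ whose $e_1$-slab width is exactly $\pi/h_\ast$, giving $\sigma_\Sigma(e_1) + \sigma_\Sigma(-e_1) = \pi/h_\ast$.

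On the other hand, since $\{\M_t\}$ lies in the slab $(-\frac{\pi}{2},\frac{\pi}{2}) \times \R^n$ and in no smaller slab, and $\sigma(\pm e_1, t)$ is monotone nonincreasing in $t$ (as the convex bodies $\Omega_t$ expand backward under the flow), $\sigma(\pm e_1, t) \to \pi/2$ as $t \to -\infty$. Taking the limit of the translated support functions along our chosen sequence, the $e_1$-shifts $\inner{X(z_j, s_j)}{e_1}$ cancel in the sum, yielding $\sigma_\Sigma(e_1) + \sigma_\Sigma(-e_1) = \pi$. Comparison with the structural value then forces $\pi/h_\ast = \pi$, i.e., $h_\ast = 1$, contradicting $h_\ast > 1$. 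The main obstacle to executing this plan is the convex-analytic convergence $z_j \to z_\ast$, which requires carefully combining the Hausdorff convergence of the rescaled timeslices with the regularity at $p_\ast$; once this is in hand, the remainder is a clean chain of consequences of Lemma \ref{lem:backwards asymptotic translators}, the splitting theorem, and the fact that the only locally uniformly convex translators in $\R^2$ are the Grim Reapers.
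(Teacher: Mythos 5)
Your strategy is genuinely different from the paper's. The paper's proof of this lemma never passes through asymptotic translators; it is a direct barrier argument. Starting from the inscribed sphere $S_{\rho_\ast}(A_\ast)$ touching $\partial\Omega_\ast$ at $p_\ast$ (positive radius $\rho_\ast$ because $p_\ast$ is regular), it inserts a scaled rotationally symmetric pancake centred at $-tA$ for $A$ slightly interior, shows via Lemma~\ref{lem:width estimate} that it sits inside $\M_t$, and then compares \emph{times}: the pancake survives for time $\approx -\rho t/h$, while the Harnack speed bound $H(z_\ast,\cdot)\ge h_\ast$ forces the $z_\ast$-tangent plane past $-tA$ after time $\le -\rho_\ast t/h_\ast$. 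Choosing $\rho>(h/h_\ast)\rho_\ast$ violates avoidance. The decisive inputs are Lemma~\ref{lem:width estimate} (applied at \emph{interior} points of $\Omega_\ast$) and the avoidance principle; no splitting theorem, no classification of planar translators.

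Your approach is appealing, and your structural analysis of the anchored asymptotic translator is correct as far as it goes: the anchoring at $X_\ast(z_\ast)=p_\ast$ plus \eqref{MCoflimittr} really does confine $G(\Sigma_t\times E)$ to the half great circle in $\spa\{z_\ast,e_1\}$ with $\inner{\cdot}{z_\ast}\ge 0$; the splitting theorem then forces $E^\perp=\spa\{z_\ast,e_1\}$, the cross-section is a Grim Reaper, and the Gauss image constraint pins its bulk velocity to $-h_\ast z_\ast$, so its $e_1$-width is exactly $\pi/h_\ast$. But the last step --- ``the $e_1$-shifts cancel in the sum, yielding $\sigma_\Sigma(e_1)+\sigma_\Sigma(-e_1)=\pi$'' --- is a genuine gap. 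What you actually know is that $\sigma(e_1,t+s_j)+\sigma(-e_1,t+s_j)\to\pi$, i.e.\ the widths of the translated bodies $\M^j_t$ tend to $\pi$. For \emph{noncompact} convex bodies, locally uniform smooth convergence of hypersurfaces does \emph{not} imply convergence of support functions: the maximal chord can escape to spatial infinity. That is exactly what happens here --- if $h_\ast>1$, the translated solutions are Grim-Reaper-like of width $\pi/h_\ast$ near the anchoring point, and only ``open up'' to width close to $\pi$ at distance $d_j\to\infty$. Because $p_\ast$ lies on $\partial\Omega_\ast$, Lemma~\ref{lem:width estimate} (which gives width $\to\pi$ only at points $-tA$ with $A\in\relint\Omega_\ast$) does not apply at the anchoring, and there is nothing in your argument to rule out this escape. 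This is precisely the tension the paper's pancake-barrier construction is engineered to avoid: it measures the inscribed pancake at an interior point $A$ where the width estimate holds, and then compares contraction times rather than widths.

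A secondary point: the general (varying-normal) case of Lemma~\ref{lem:backwards asymptotic translators} requires the Gauss image to be constant in time, which in the paper is established only in Proposition~\ref{constant Gauss}, stated after the lemma you are proving. Since Proposition~\ref{constant Gauss} depends on Lemma~\ref{lem:width estimate} but not on Theorem~\ref{thm:circumscribed}, this is a reordering issue rather than a circularity, but it should be flagged explicitly. The width gap, however, is the substantive obstacle.
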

\begin{proof}
By Proposition \ref{prop:sigma_ast ge 1}, it suffices to show that $h_\ast\le 1$. Assume, to the contrary, that $h_\ast>1$. Let $S_{\rho_\ast}(A_\ast)$, where $A_\ast\doteqdot p_\ast-\rho_\ast z_\ast$, be the largest sphere contained in $\Omega_\ast$ which touches the boundary at $p_\ast$ and consider, for any $\rho<\rho_\ast$, the sphere $S_{\rho}(A)$, where $A\doteqdot A_\ast+ \frac{\rho_\ast-\rho}{2}z_\ast$.

\begin{figure}[h]
\centering
\includegraphics[width=0.55\textwidth]{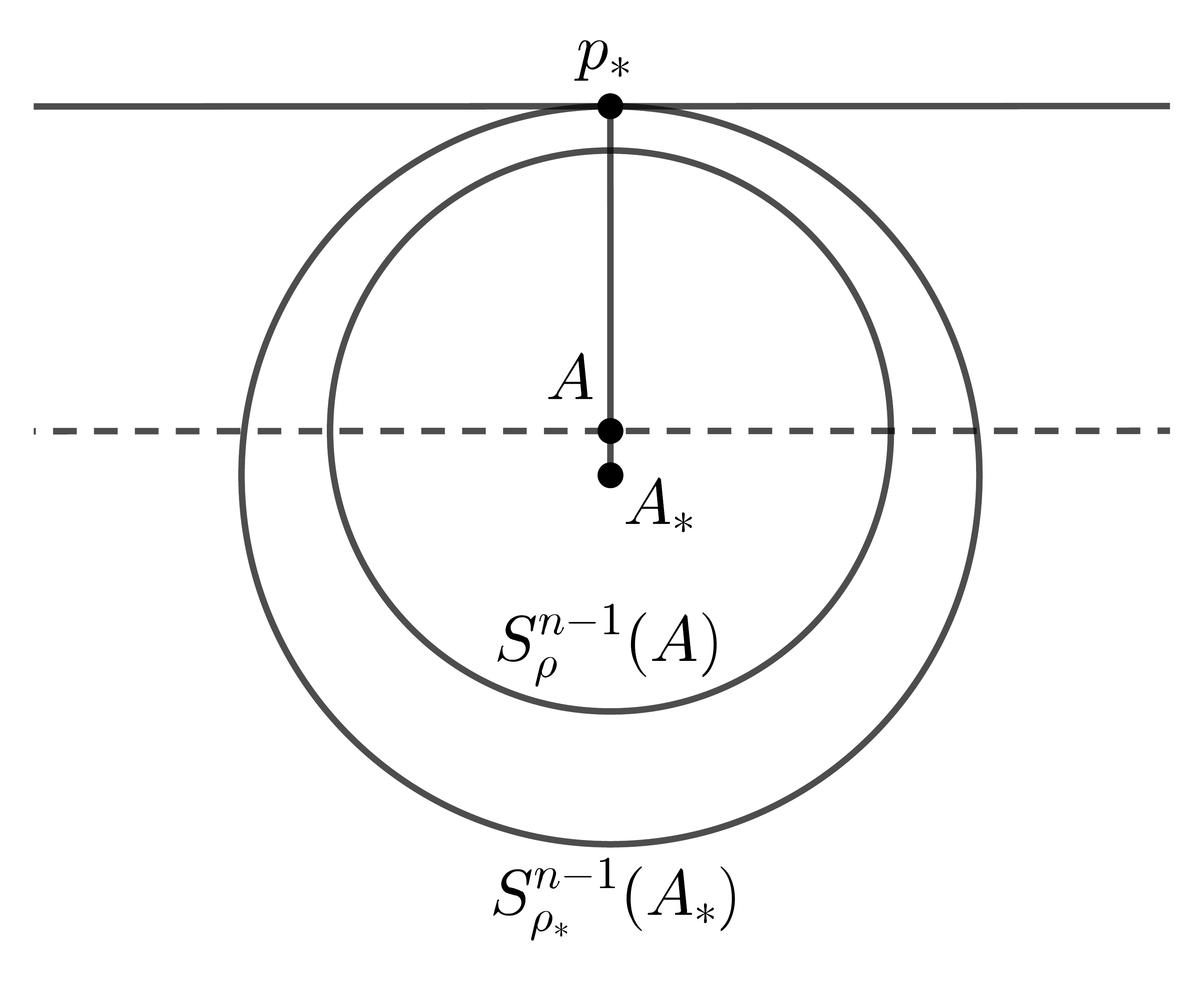}
\caption{\footnotesize If $\sigma_\ast(z_\ast)>1$, then the tangent plane to $\M_t$ with normal $z_\ast$ reaches $-t A$ before the pancake centred at $-t A$ contracts to a point, violating the avoidance principle.}
\end{figure}

Given $h\in (1, h_\ast)$ and $R>0$, denote by $\Pi^{h}_{-R}\doteqdot h^{-1}\Pi_{-h^2R}$ the time $-R$ slice of the scaled rotationally symmetric ancient pancake. Given any $t<0$, we may choose $R_t$ so that $\Pi^{h}_{-R_t}\cap \{x_1=0\}= S^{n-1}_{-\r t}$.
%Given $h\in (1, h_\ast)$ and $R>0$, denote by $\Pi^{h,R}$ the rotation about the $e_{1}$-axis of $h^{-1}\mathrm{A}_{-h^2R}$. Given any $t<0$, we may choose $R_t$ so that $\Pi^{h, R_t}\cap \{x_1=0\}= S^{n-1}_{-\r t}$. %, and hence, for $-t_0$ sufficiently large,
%\begin{equation}\label{eq:no bdy intersection}
%(\Pi^{h,R_0}+t_0A)\cap\partial \M_{t_0}^{\pm}=\emptyset\,.
%\end{equation}
Then, by Lemma \ref{lem:width estimate}, $\Pi^{h}_{-R_t}-tA$ lies to the inside of $\M_{t}$ for $-t$ sufficiently large.

On the other hand, if $t'$ is the time at which
\[
\sigma(z_\ast,t')=\sigma(z_\ast,t)+\rho_\ast t\,,
\]
then
\begin{align*}
-\rho_\ast t%={}&\sigma(z_\ast,t_0)-\sigma(z_\ast,t_1)\\
={}&\int_{t}^{t'}H(z_\ast,s)ds\ge\sigma_\ast(z_\ast)(t'-t)=h_\ast(t'-t)\,.
\end{align*}
That is, the tangent hyperplane to $\M_t$ with normal $z_\ast$ moves a distance at least $-\rho_\ast t$ after time
\[
T\doteqdot-\frac{\rho_\ast}{h_\ast}t\,.
\]
Since, by \eqref{eq:ancient pancake radius}, $\Pi^{h}_{R_t}$ exists for time 
\begin{align*}
-\alpha_{h,R_t}={}&-\frac{\rho}{h}t+o(t)\;\;\text{as}\;\; t\to-\infty
\end{align*}
under mean curvature flow, if we choose
\begin{equation}\label{eta}
\rho>\frac{h}{h_\ast}\rho_\ast, 
\end{equation}
%then the support plane of $\{\M_t\}_{t\in(-\infty,\omega)}$ with normal $z_\ast$ reaches the point $-t_0A$ before $\{\Pi^{h,R_0}_t+t_0A\}_{t\in[T_{h,R_0},0)}$. 
then, for $-t$ sufficiently large, the evolution of the initial hypersurface $\Pi^{h}_{R_t}+tA$ has not yet contracted to a point after elapsed time $T$. Since
%\[
%\sigma(z_\ast,t)=-h_\ast t+o(t)\;\;\text{as}\;\; t\to-\infty\,,
%\]
%we have
\[
\sigma(z_\ast,t')=-(h_\ast-\rho_\ast)t+o(t)\;\;\text{as}\;\; t\to-\infty\,,
\]
this violates the avoidance principle.
\end{proof}

Theorem \ref{thm:circumscribed} follows, since every convex body is the intersection of its tangent half-spaces at regular points.

\begin{corollary}\label{cor:translators}
Let $\Sigma$ be a convex translator which lies in a slab region in $\R^{n+1}$.
\begin{enumerate}
\item If $\Sigma$ sweeps out the slab $(-\frac{\pi}{2},\frac{\pi}{2})\times \R^n$, then
\[
\overline{G(\Sigma)\cap(\{0\}\times \R^n)}\subset \big\{z\in \{0\}\times S^{n-1}:\inner{z}{\vec v}<0\big\}\,.
\]
\item If the squash-down of $\Sigma$ splits off a line, then $\Sigma$ splits off the same line. \label{item:translator splitting}
\item $\inf_{z\in G(\Sigma)}\inner{z}{\vec v}$ is attained.\label{item:tip attained}
\end{enumerate}
\end{corollary}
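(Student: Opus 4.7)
I would address the three parts in order, with (3) being the substantive one.

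For Part (1), the plan is to use that the Gauss image $G(\M_t)=G(\Sigma)$ is constant along the translating solution $\{\Sigma+t\vec v\}_{t\in\R}$, so that $G_\ast=G(\Sigma)$ and \eqref{H_ast=sigma_ast} yields $\sigma_\ast(z)=-\inner{z}{\vec v}$ on $G(\Sigma)$; Proposition \ref{prop:sigma_ast ge 1} then gives $\inner{z}{\vec v}\le -1$ for every $z\in G(\Sigma)\cap(\{0\}\times\R^n)$, and continuity extends the strict inequality $\inner{z}{\vec v}<0$ to the closure. For Part (2), if $\Omega_\ast$ splits off a line in direction $w\in\{0\}\times S^{n-1}$, then, viewed as a convex subset of $\{0\}\times\R^n$, it is invariant under $\R w$-translation, so its support function is $+\infty$ on every vector $z$ with $\inner{z}{w}\ne 0$; but Proposition \ref{prop:translator squash-down} forces this support function to equal the finite quantity $-\inner{z}{\vec v}$ on $G(\Sigma)$, so $G(\Sigma)\subset w^\perp$. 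Hence $w$ is tangent to $\Sigma$ at every point, so $\Sigma$ is invariant under $\R w$-translation and splits off $\R w$.

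For Part (3) I would use a convex-analytic argument on the convex body $K$ bounded by $\Sigma$, studying the linear functional $p\mapsto\inner{p}{\vec v}$. The translator equation $H=-\inner{\nu}{\vec v}$ together with $H\ge 0$ gives $\inner{\vec v}{\nu(p)}\le 0$ for every $p\in\Sigma$, so $\vec v$ lies in the inward tangent cone at every boundary point of $K$, and therefore $\vec v\in\operatorname{rec}(K)$. I would then distinguish two cases. If $-\vec v\in\operatorname{rec}(K)$ as well, then $\R\vec v\subset\operatorname{rec}(K)$, so $\Omega_\ast$ (which coincides with $\operatorname{rec}(K)-\vec v$) splits off $\R\vec v$; Part (2) then forces $\Sigma$ to split off $\R\vec v$, the translator equation forces $H\equiv 0$, and $\Sigma$ is a convex minimal hypersurface, hence a hyperplane parallel to the slab, for which $G(\Sigma)$ is a single unit normal and the infimum equals $0$, trivially attained. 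If $-\vec v\notin\operatorname{rec}(K)$, my plan is to show that the linear functional attains its minimum on $K$ at some $p_0\in\partial K=\Sigma$, at which the outward unit normal must equal $-\vec v/|\vec v|$ (since $p_0$ minimizes a linear functional on $K$), giving $-\vec v/|\vec v|\in G(\Sigma)$ with $\inner{-\vec v/|\vec v|}{\vec v}=-|\vec v|=\inf_{z\in G(\Sigma)}\inner{z}{\vec v}$.

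The main obstacle I foresee is this final attainment step in Part (3): a linear functional bounded below on a closed convex set in $\R^{n+1}$ need not attain its infimum in general, since it can decrease monotonically to a finite limit along a recession direction that lies outside the lineality space of the set. To rule out this ``escape at infinity'' for a convex translator in a slab, my plan is to invoke Lemma \ref{lem:backwards asymptotic translators2}: any minimizing sequence $z_j\in G(\Sigma)$ whose footpoints $X(z_j)$ run to infinity produces an asymptotic translator $\Sigma'\times E$ with $E\supset\R w$ nontrivial (where $w=\lim X(z_j)/|X(z_j)|$) and $\Sigma'$ locally uniformly convex in $E^\perp$; combining the resulting splitting with Part (2) and an induction on the ambient dimension (with the Grim Reaper in $\R^2$ as base case) should put the required tip in $G(\Sigma)$ itself.
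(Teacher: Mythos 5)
Your treatments of parts (1) and (2) are correct and essentially the paper's intended arguments: Proposition \ref{prop:translator squash-down} identifies $\sigma_\ast$ with $-\inner{\cdot}{\vec v}$ on $G(\Sigma)$, the lower bound $\sigma_\ast\ge 1$ gives (1), and finiteness of $\sigma_\ast$ on $G(\Sigma)$ gives (2).

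Part (3), however, has a genuine gap, and it sits exactly where you locate the ``main obstacle.'' First, the dichotomy ``either $-\vec v\in\operatorname{rec}(K)$ or the linear functional $p\mapsto\inner{p}{\vec v}$ attains its minimum on $K$'' is false. Take $\Sigma=\Gamma\times\R\subset\R^3$ with $\Gamma$ a Grim Reaper and choose the (non-unique) bulk velocity $\vec v$ with a nonzero component along the line factor: then $-\vec v\notin\operatorname{rec}(K)$, yet $\inner{\cdot}{\vec v}$ is unbounded below on $K$. Even after splitting off the lineality space and normalizing $\vec v$, the attainment of the minimum on $K$ is equivalent to the assertion that $\operatorname{rec}(K)\setminus\{0\}$ lies in the \emph{open} halfspace $\{\inner{u}{\vec v}>0\}$, and this is not a soft convexity fact --- it is essentially a reformulation of the claim you are trying to prove, and it fails for general line-free cones with vertex $-\vec v$ containing the unit ball. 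The input that rules out the bad cones is Theorem \ref{thm:circumscribed}: since the squash-down is a \emph{circumscribed} cone with vertex $-\vec v$, every sphere-supporting halfspace containing it must actually support it, hence pass through the vertex, which forces $\inner{z}{\vec v}=-1$ for all the defining normals and constrains the recession cone. You never invoke Theorem \ref{thm:circumscribed}, which is the ingredient the paper's (admittedly terse) proof explicitly names. Second, your fallback via Lemma \ref{lem:backwards asymptotic translators2} and induction on dimension does not close: the Gauss image of the asymptotic translator $\Sigma'\times E$ is contained in $\overline{G(\Sigma)}$ but \emph{not} in $G(\Sigma)$, so locating the tip of $\Sigma'$ by induction produces a normal that may lie in $\overline{G(\Sigma)}\setminus G(\Sigma)$; it does not exhibit a minimizer in $G(\Sigma)$ itself, which is precisely the attainment in question. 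As written, part (3) is not proved.
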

\begin{proof}
Recalling Proposition \ref{prop:translator squash-down}, the claims follow readily from Theorem \ref{thm:circumscribed}.
\end{proof}

Next, we study the asymptotic translators. We need to control the Gauss image in the noncompact case.

\begin{proposition}\label{constant Gauss}
Let $\{\M_{t}\}_{t\in(-\infty,\,\omega)}$ be a maximal convex ancient solution to mean curvature flow which lies in $(-\frac{\pi}{2},\frac{\pi}{2})\times \R^{n}$ and in no smaller slab and satisfies $\sup_{\M_t}\vert \mathrm{II}\vert<\infty$ for each $t$. If its squash-down $\Omega_\ast$ is unbounded but nondegenerate, then $\{\M_{t}\}_{t\in(-\infty,\,\omega)}$ is eternal and its Gauss image is constant. If $\Omega_\ast$ is degenerate, then there exists $t_0>-\infty$ such that $G(\M_t)$ is constant for $t<t_0$.
\end{proposition}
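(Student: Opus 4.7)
My plan is to adapt the proof of Lemma~\ref{lem:Gauss image} to the ancient setting, using the rotationally symmetric ancient pancake $\{\Pi_s\}_{s\in(-\infty,0)}$ of \S\ref{ssec:rotational pancake} as an inner barrier. The starting observation is that $G(\M_t)$ is nondecreasing in $t$: since $\partial_t\sigma=-H\le 0$, the support function decreases in $t$, so $\{z\in S^n:\sigma(z,t)<\infty\}$ grows in $t$, and in particular $G_\ast\subset G(\M_t)$ for every $t$. It therefore suffices to establish the reverse inclusion either for $t$ sufficiently negative (degenerate case) or for all $t\in(-\infty,\omega)$ (nondegenerate unbounded case).

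First I would set up the pancake barrier. For each $A\in\relint\Omega_\ast$ with $B^n_\rho(A)\subset\Omega_\ast$ and each $h<1$, I would show that for $-t$ large (depending on $\rho$ and $h$), a suitably scaled and translated pancake slice whose axis passes through $-tA$ lies strictly inside $\Omega_t$. The ingredients are the Harnack lower bound
\[
\sigma(z,t)\ge\sigma(z,0)-t\sigma_\ast(z),\qquad z\in G_\ast,
\]
which follows from the concavity of $\sigma(z,\cdot)$ in $t$ (equivalent to \eqref{eq:forward backward limits} combined with \eqref{H_ast=sigma_ast}), together with the pancake asymptotics \eqref{eq:ancient pancake radius} and the edge analysis \eqref{eq:pancake edge asymptotics}. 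This barrier forces the width of $\Omega_t$ near $-tA$ to match that of the pancake (namely $\pi$), so every $z\in G_\ast\cap(\{0\}\times S^{n-1})$ supports a tangent hyperplane of $\M_t$ for $-t$ large. Combined with monotonicity, this yields $G(\M_t)=G_\ast$ for all $t<t_0$, handling the degenerate case.

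To upgrade to equality for all $t$ in the nondegenerate case, I would use the tangent line bound from concavity directly: if $z\in G(\M_{t_0})$ then
\[
\sigma(z,t)\le\sigma(z,t_0)+(t_0-t)H(z,t_0)
\]
remains finite on the maximal interval where $\sigma(z,\cdot)$ is finite. The assumption $\sup_{\M_{t_0}}|\mathrm{II}|<\infty$ makes $H(z,t_0)$ finite, and bounded curvature on each slice also ensures continuity of the flow in $t$ (ruling out instantaneous jumps from $\infty$ to finite values of $\sigma$), so this maximal interval is the whole of $(-\infty,\omega)$. Hence $z\in G(\M_t)$ for every $t$, and combined with the inclusion $G_\ast\subset G(\M_t)$, we conclude $G(\M_t)=G_\ast$ throughout $(-\infty,\omega)$.

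For eternality in the nondegenerate unbounded case, the unboundedness of $\Omega_\ast$ allows the radius $\rho$ in the pancake barrier to be taken arbitrarily large. A pancake whose horizontal extent is of order $-t\rho$ placed inside $\Omega_t$ at time $t$ survives under mean curvature flow for forward time of order $-t\rho$, which tends to $+\infty$ as $\rho\to\infty$. Since the barrier persists arbitrarily far into the future, $\Omega_t$ cannot collapse, forcing $\omega=\infty$. The principal technical obstacle will be the precise construction of the pancake barrier: in contrast to Lemma~\ref{lem:Gauss image}, where the barrier is pinned to a fixed initial datum, here it must be ``floated'' along the evolving $\Omega_\ast$ and simultaneously scaled with $-t$ so as to match both its horizontal extent (via the Harnack lower bound) and the slab width (via the pancake asymptotics).
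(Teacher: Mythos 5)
Your monotonicity observation and the overall strategy of pinning down $G(\M_t)$ via pancake barriers are on the right track, but the actual mechanism you invoke for the reverse inclusion $G(\M_t)\subset G_\ast$ has a genuine gap in both cases you consider.

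In the nondegenerate case, the concavity bound $\sigma(z,t)\le\sigma(z,t_0)+(t_0-t)H(z,t_0)$ only shows that $\sigma(z,\cdot)$ remains bounded (by a linear function) as $t$ decreases on whatever interval $z$ remains in the Gauss image; it does not show that $z$ actually remains in $G(\M_t)$. The danger is not that $\sigma(z,\cdot)$ jumps from a finite value to $+\infty$ (your continuity remark rules that out), but that the support point $X(z,t)$ escapes to infinity in a direction orthogonal to both $e_1$ and $z$ as $t$ decreases to some $s_0>-\infty$, leaving $\sigma(z,s_0)$ finite while $z\notin G(\M_{s_0})$ (the supporting halfspace no longer touches the boundary). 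For noncompact convex hypersurfaces, $\sigma(z,t)<\infty$ is strictly weaker than $z\in G(\M_t)$, and your argument conflates the two. Relatedly, the evolution law $\partial_t\sigma=-H$ and its concavity are justified precisely on $G(\M_t)$, so applying them on ``the maximal interval where $\sigma(z,\cdot)$ is finite'' is circular.

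In the degenerate case, your pancake barrier only re-derives the inclusion you already have from monotonicity: forcing the width near $-tA$ to approach $\pi$ shows that the normals in $G_\ast$ are attained at time $t$, i.e.\ $G_\ast\subset G(\M_t)$, but it says nothing about whether \emph{extra} normals $z\notin G_\ast$ could appear in $G(\M_t)$. What the paper does instead is run the barrier \emph{forward} from an early time: for $t_j\to-\infty$, the polytope $P^j_r$ (built from the supporting normals of $\Omega_{t_j}$, shifted inward by $r$) is filled with pancakes that survive, via the avoidance principle, until the later time $t_j-t_{r,h}$; this forces $\Omega_t\supset P^j_r$ for that later time, and since $G(P^j_r)=G(\M_{t_j})$ and $\Omega_t\supset P^j_r$ implies $G(\M_t)\subset G(P^j_r)$, one gets the needed containment $G(\M_t)\subset G(\M_{t_j})$. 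This forward containment is the mechanism that rules out new normals appearing (and in particular rules out the escape-to-infinity scenario above), and it is not present in your proposal. Your eternality argument is in the right spirit, but the paper is more careful: it uses the ``doubling'' trick to produce compact approximators $\{\M^{T,L}_t\}$ whose maximal times $\omega_{T,L}\to\infty$, which sidesteps existence issues for the noncompact flow.
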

\begin{proof}
We may assume, without loss of generality, that $\M_t$ is locally uniformly convex for each $t$.

We first show that the Gauss image is constant. Since $G(\M_t)$ is nondecreasing, it suffices to show that $G(\M_t)\subset G(\Omega_\ast)$ for each $t$ (sufficiently early in the degenerate case). Choose a sequence of times $t_j\to-\infty$ and consider, for each $j\in\N$ and $r>0$, the set
\[
P^j_r\doteqdot \bigcap_{z\in F_j}\big\{p\in \{0\}\times \R^n:\inner{p}{z}<-t_j-r\big\}\,,
\]
%{\color{blue} Shouldn't the set be $<-\s(z, t_j) -r$ to obtain $\Omega_\ast$ as the limit?(like in page 18) Think of $\Omega_\ast$ being a wedge. As the definition is we get $\lim_{j\to\infty}\frac{1}{-t_j}P^j_r\subset\Omega_\ast$. But since all we care about is the Gauss image, I think that's enough}
where $F_j$ is the set of outward unit normals to supporting hyperplanes for $\Omega_{t_j}$. Note that %$\lim_{j\to\infty}\frac{1}{-t_j}P^j_r=\Omega_\ast$ and 
$G(P^j_r)=G(\M_{t_j})$ for all $r>0$. By Lemma \ref{lem:width estimate}, for every $h>1$ we can find $j\in\N$ such that, for each $r>-(h-1)t_j$ and $p\in P_r^j$, the translated time $t_{r,h}$ slice of the rescaled rotationally symmetric ancient pancake, $h^{-1}\Pi_{h^2t_{r,h}}-p$, lies to the inside of $\M_{t_j}$, where $t_{r,h}$ is the time at which $h^{-1}\Pi_{h^2t_{r,h}}$ has horizontal displacement $r+(h-1)t_j$. The avoidance principle then implies that
\[
P_r^j\subset \Omega_{t_j-t_{r,h}}\,.
\]
In the non-degenerate case, the claim follows since for each $t>t_j$ we can find $r$ sufficiently large that $t_j-t_{r,h}\ge t$, and hence
\[
G(\M_t)\subset G(P^j_r)=G(\M_{t_j})\,.
\]

If $\Omega_\ast$ is degenerate, then we can certainly not take $r$ as large as we want, since for $r>-t_j$ we have $P^j_r=\emptyset$. In this case, however, we can consider $t_j=-j$, $h\in (1,2)$ and applying the above argument for any $t\in(0,1]$ we obtain that, for $j$ large enough $G(\M_{t_{j-1}})\subset G(\M_{t_j})$, which along with the monotonicity of $G(\M_t)$ yields that the Gauss image of $\M_t$ is constant for all $t$ early enough.
%On the other hand, $S_r(p)\subset \frac{1}{-t}\Omega_{t}$ for all sufficiently large $-t$. But then, by \eqref{eq:ancient pancake radius} and Lemma \ref{lem:width estimate}, we can choose $t_r=-r+o(r)$ as $r\to\infty$ such that the translated time $t_r$ slice of the rotationally symmetric ancient pancake, $\Pi_{t_r}+tp$, lies to the inside of $\M_t$. But this violates the avoidance principle.

We now show that $\omega=\infty$ when $\Omega_\ast$ is non-degenerate. Given any time $T<\omega$ and any ``height'' $L>0$, we may ``double'' $\M_{T}$ at height $L$ as in Lemma \ref{lem:old-but-not-ancient} to obtain a compact solution $\{\M^{T,L}_t\}_{t\in[T,\,\omega_{T,L})}$. Since $\Omega_\ast$ is nondegenerate and the Gauss image of $\M_t$ is constant, we deduce (using arbitrarily large rotationally symmetric pancakes as inner barriers as in Lemma \ref{lem:old-but-not-ancient}) that $\omega_{T,L}\to\infty$ as $L\to\infty$. Taking a limit along an appropriate sequence of heights $L_j\to\infty$, we obtain an eternal solution $\{\M^{T}_t\}_{t\in(-\infty,\infty)}$ such that $\M_t^T=\M_t$ for each $t\le T$. Taking a limit along an appropriate sequence of times $T_j\to \omega$ yields an eternal solution $\{\M^\infty_t\}_{t\in(-\infty,\infty)}$ such that $\M^\infty_t=\M_t$ for each $t\le \omega$. Since $\omega$ is maximal, we conclude that $\omega=\infty$.
\end{proof}

We can now obtain the desired structure result for the backward asymptotic translators.

\begin{proposition}\label{prop:converging sequences}
Let $\{\M_t\}_{t\in(-\infty,\,\omega)}$ be a convex ancient solution to mean curvature flow with $\sup_{\M_t}\vert \mathrm{II}\vert<\infty$ for each $t$ which lies in $(-\frac{\pi}{2},\frac{\pi}{2})\times \R^{n}$ and in no smaller slab. Consider a region $W\subset \R^{n+1}$ such that $\Omega_\ast\cap W$ is bounded, where  $\Omega_\ast\subset \{0\}\times\R^n$ is the squash-down of $\{\M_t\}_{t\in(-\infty,\,\omega)}$. Given a sequence of times $t_i\to -\infty$ and a sequence of points $p_i\in \M_{t_i}\cap  W$,  consider the sequence of flows $\{\M^i_s\}_{s\in (-\infty,\,\omega-t_i)}$ defined by $\M^i_s\doteqdot \M_{t_i+s}-p_i$. After passing to a subsequence, these flows converge locally uniformly in the smooth topology to a translating solution which satisfies
\[
H(z, t)=\s_\ast (z)\,,
\]
%where $z=\lim_{i\to\infty} z_i$ and $z_i$ is the outward unit normal to $\M_{t_i}$ at $p_i$.
where $z=\lim_{i\to\infty}\nu(p_i,t_i)$.
\end{proposition}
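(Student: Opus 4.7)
The proof parallels that of Lemma \ref{lem:backwards asymptotic translators}, modulo the change of reference point from $X(z_j, s_j)$ to $p_i$. I begin by extracting subsequences. The hypothesis that $\Omega_\ast \cap W$ is bounded, combined with Lemma \ref{lem:width_est} (which controls $\inner{p_i}{e_1}$ to within the slab) and the Hausdorff convergence $\Omega_{t_i}/(-t_i) \to \Omega_\ast$, guarantees that $p_i/(-t_i)$ is bounded; pass to subsequences so that $p_i/(-t_i) \to X_\ast \in \overline{\Omega_\ast}$ and $z_i \doteqdot \nu(p_i, t_i) \to z \in S^n$. The differential Harnack inequality together with continuity of $\sigma_\ast = H_\ast$ bounds $H(z_i, \cdot)$, and hence bounds $|\operatorname{II}|$ uniformly by convexity on time intervals $(-\infty, T]$. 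Ecker--Huisken interior estimates, the Blaschke selection theorem, and the splitting theorem for the second fundamental form then yield a further subsequence along which $\M^i_s \to \Sigma_s \times E$ locally smoothly, where $E \subset \R^{n+1}$ is a linear subspace and $\Sigma_s$ is a locally uniformly convex eternal solution in $E^\perp$. By construction, $0 \in \Sigma_0 \times E$ and $z \in G(\Sigma_0) \subset E^\perp$.

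Next, I use the support-function calculation to identify the limit. Since the Gauss image of $\M_\tau$ is nondecreasing in $\tau$, for every fixed $\tau \in (-\infty, \omega)$ we have $z_i \in G(\M_{t_i}) \subseteq G(\M_\tau)$ for $i$ large, and closedness of $G(\M_\tau)$ then gives $z \in G(\M_\tau)$. In particular $z \in G_\ast$ and $H(z, \tau) \to \sigma_\ast(z)$ as $\tau \to -\infty$, by \eqref{eq:H_limit} and \eqref{H_ast=sigma_ast}. The support function of the translated flow at $z$ satisfies
\begin{equation*}
\sigma^i(z, s) = \sigma(z, t_i+s) - \inner{p_i}{z} = \bigl[\sigma(z, t_i) - \inner{p_i}{z}\bigr] - \int_{t_i}^{t_i+s} H(z, \tau) \, d\tau\,,
\end{equation*}
and the integral tends to $s\,\sigma_\ast(z)$ (uniformly for $s$ in bounded sets) as $i \to \infty$. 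By smooth convergence at the origin, $\sigma^i(z, s) \to \sigma_\infty(z, s)$, the support function of $\Sigma_s \times E$; hence the bracketed term converges to some $c_\infty \in \R$ and
\begin{equation*}
\sigma_\infty(z, s) = c_\infty - s\,\sigma_\ast(z)\,.
\end{equation*}

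Differentiating in $s$ yields $H_\infty(z, s) = \sigma_\ast(z)$ for every $s$. The rigidity case of the differential Harnack inequality, applied to the locally uniformly convex cross-section $\Sigma_s$, then forces $\{\Sigma_s\}$ to evolve by translation, so that $\{\Sigma_s \times E\}$ is a translating solution with mean curvature $\sigma_\ast(z)$ in the direction $z$, as claimed. The principal obstacle in the plan is the very first step: extracting a subsequence along which $p_i/(-t_i)$ converges to a finite limit. If $\Omega_\ast$ is unbounded, a priori $p_i$ could wander in an unbounded direction at a speed comparable to $-t_i$, and one must exploit the hypothesis that $\Omega_\ast \cap W$ is bounded -- together with the fact that the projection of $p_i/(-t_i)$ onto $\{0\} \times \R^n$ must cluster near $\partial\Omega_\ast$ -- to rule this out.
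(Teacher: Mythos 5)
Your overall plan is the right one --- extract a subsequence, use the support-function evolution and the differential Harnack inequality, and invoke Harnack rigidity --- and it mirrors the structure of Lemma~\ref{lem:backwards asymptotic translators} as you say. But there is a genuine gap at the very step you breeze past: the claim that ``closedness of $G(\M_\tau)$ gives $z\in G(\M_\tau)$, in particular $z\in G_\ast$.'' For a locally uniformly convex \emph{noncompact} time-slice $\M_\tau$, the Gauss image $G(\M_\tau)$ (the image of the smooth Gauss map, which is what the paper uses to define $G_\ast$) is an \emph{open} subset of $S^n$, not a closed one --- think of the Grim Reaper, whose Gauss image is an open half-circle. So $z$ may well lie in $\overline{G_\ast}\setminus G_\ast$, in which case $H(z,\tau)$ is not defined, the identity $\partial_t\sigma(z,\cdot)=-H(z,\cdot)$ is not available at $z$, and the central computation
\begin{equation*}
\sigma^i(z,s) = \bigl[\sigma(z,t_i)-\inner{p_i}{z}\bigr] - \int_{t_i}^{t_i+s}H(z,\tau)\,d\tau
\end{equation*}
loses its meaning. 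The case $z=\pm e_1$ causes a related difficulty, since $\cos\phi_\infty = 0$ there and one cannot speak of ``a horizontal direction at which to evaluate $\sigma_\ast$''.

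The paper sidesteps this by never evaluating the support function or the mean curvature at the limit direction $z$. Instead it works throughout with the genuine normals $z_i=\nu(p_i,t_i)\in G(\M_{t_i})$, decomposing them as $z_i=\cos\phi_i\,w_i+\sin\phi_i\,e_1$ with $w_i\in G(\M_{t_i})\cap(\{0\}\times\R^n)$ and then estimating $\sigma(z_i,t_i)/(-t_i)$ directly via $\inner{p_i}{z_i}$; the cases $\cos\phi_\infty\ne 0$ and $\cos\phi_\infty=0$ are treated separately. It also invokes Proposition~\ref{constant Gauss} to guarantee that $G(\M_t)$ is constant for $t$ early enough (so that $\sigma_\ast(w_i)<\infty$), a point your argument does not address, and it repeats the estimate at nearby times with adapted normals $z_i^s$ so that the translating-solution property is established on an interval of $s$ rather than just at $s=0$. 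Your approach is not wrong in spirit, but to make it rigorous in the noncompact case you would have to replace the ``$z\in G_\ast$'' step with a sequence-level argument of exactly the kind the paper performs.
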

\begin{remark} We remark that $W$ is allowed to have any Hausdorff dimension and also it could be the whole $\R^n$ in case $\Omega_\ast$ is compact.
\end{remark}

\begin{proof}[Proof of Proposition \ref{prop:converging sequences}]
By the definition of the squash-down, for any $\d>0$, there exists $t_\d$ such that for all $t<t_\d$
\begin{equation}\label{genbound}
\M_t\cap W\subset (-\pi/2, \pi/2)\times ((1+\d)(-t) \Omega_\ast\cap W)\,.
\end{equation}
Moreover, by Proposition \ref{constant Gauss}, $G(\M_t)= G_\ast$ for all $t<t_\d$.
Take a sequence $t_i\to-\infty$ and consider points
\[
p_i\in \M_{t_i}\cap W\,.
\]
For these points we have
\[
p_i=-t_i a_i\ell_i+ b_ie_1\,,\,\,\text{ where }\ell_i\in \partial \Omega_\ast\,, a_i\in[0, 1+\d_i)\,, b_i\in(-\pi/2,\pi/2)\,,
\]
with $\d_i\to0$ by \eqref{genbound} 
and denote their normals by
\[
z_i=\cos\phi_iw_i+\sin\phi_i e_1\,,
\]
 where  $ \phi_i\in[-\pi/2,\pi/2]$ and $w_i\in G(\M_{t_i})\cap\{z:z\cdot e_1=0\}$ when $\phi_i\ne \pm\pi/2$. After passing to a subsequence we can arrange that 
%the following convergences as $i\to\infty$
%\[\ell_i\to\ell_\infty\in\partial P^{n-1}_r\,,\,\,a_i\to a_\infty\in[0,1]\,,\,\,b_i\to b_\infty\in[-\pi/2,\pi/2]
%\]
%and 
$z_i$ converges to some normal $z$ with
\[
z=\cos\phi_\infty w_\infty+\sin \phi_\infty e_1\,,
\]
where 
\[ 
\phi_{i}\to \phi_\infty \in[-\pi/2,\pi/2]\,.
\]
In case $\phi_\infty\ne \pm\frac\pi2$, $w_\infty$ is a unit normal to $\partial \Omega_\ast$ at a point in $W$ and
\[ 
w_i\to w_\infty\,.
\]

For the support function, we have
\begin{equation}\label{sigmaest}
\begin{split}
\s(z_i,t_i)=\langle p_i, z_i\rangle={}& -t_i a_i\langle \ell_i, w_i\rangle \cos\phi_i+ b_i\sin\phi_i\\
\le{}& -t_ia_i\s_\ast(w_i)\cos\phi_i+ b_i\sin\phi_i\,.
\end{split}
\end{equation}
Note also that, since the Gauss image of $\M_t$ is constant for $t$ small enough, $\s_\ast(w_i)<\infty$.
For $t_j>t_i$, the evolution equation \eqref{eq:MCF support} and the Harnack inequality \eqref{eq:Harnack} yield
\[
\s(z_i,t_j)-\s(z_i,t_i)%=-\int_{t_i}^{t_j} H(z_i, t)dt
<-H(z_i, t_i)(t_j-t_i)
\]
and hence
\[
\lim_{i\to\infty}H(z_i, t_i)\le \lim_{i\to\infty}\frac{\s(z_i,t_i)}{-t_i}\,.
\]
Using \eqref{sigmaest}, this implies that if $\cos\phi_\infty=0$, then $\lim_i H(z_i, t_i)=0$ and if $\cos\phi_\infty\ne 0$ then 
\[
\begin{split}
\lim_{i\to\infty} H(z_i, t_i)&\le \lim_{i\to\infty} a_i\s_\ast(w_i)\cos\phi_i
\le \cos\phi_\infty\s_\ast(w_\infty)\,.
\end{split}
\]
On the other hand, for the latter case and for all $i$, by the Harnack inequality, we know that $H(z_i, t_i)\ge\s_\ast (z_i)=\cos\phi_i\s_\ast(w_i)$ and taking limits
\[
\lim_{i\to\infty}H(z_i, t_i)\ge \cos\phi_\infty\s_\ast(w_\infty)\,.
\]
Thus,
\begin{equation}\label{TheHlimit}
\lim_{i\to\infty}H(z_i, t_i)= \begin{cases}
\cos\phi_\infty\s_\ast(w_\infty)&\text{ if }\cos\phi_\infty\ne 0\\
0&\text{ if }\cos\phi_\infty= 0.
\end{cases}
\end{equation}

After passing to a subsequence, we obtain a limit flow $\{\M^\infty_s\}_{s\in(-\infty, \infty)}$ defined by
\[
\M^\infty_s=\lim_{i\to\infty}(\M_{t_i+s}-p_i)\,.
\]
Since $p_i\in \M_{t_i}$ and $z_i\to z$ we find that $z\in G(\M^\infty_0)$ and  so $z \in G(\M^\infty_s)$ for all $|s|$ small enough. There exist thus $p_i^s\in \M_{s+t_i}$ with normals $z_i^s\to z$ such that $X(z_i^s, t_i+s)-p_i$ converges, as $i\to\infty$, to a point in $\M^\infty_s$ whose normal is $z$. 
Arguing as above, replacing $t_i$ with $t_i+s$ and $z_i$ with $z_i^s$ (and since $\sup_i|p_i^s-p_i|<\infty$) we find that \eqref{TheHlimit} holds with these replacements and hence, for all $|s|$ small,
\begin{equation}\label{TheHestimate}
\lim_{i\to\infty}H(z_i^s, t_i+s)_= \begin{cases}
\cos\phi_\infty\s_\ast(w_\infty)&\text{ if }\cos\phi_\infty\ne 0\\
0&\text{ if }\cos\phi_\infty= 0.
\end{cases}
\end{equation} 
We conclude that $\{\M^\infty_s\}_{s\in(-\infty, \infty)}$ is a translating solution that satisfies 
\[
H(z,s)=\s_\ast(z)\,.\qedhere
\]
\end{proof}

\section{Forwards asymptotics}\label{sec:forward asymptotics}

%Recall that the squash-down at $t=+\infty$ of each of the eternal examples constructed in Theorem \ref{thm:polygonal pancakes} is the ``exscribed" body defined by the squash-down at $t=-\infty$ (Proposition \ref{prop:squash-up}). We now show that this holds in general (so, long as the time-slices have bounded curvature).

Recall that the ``exscribed'' body $P_{-1}$ corresponding to a circumscribed convex body $P$ 
%\[
%P= \bigcap_{f\in F^{n-1}_P}\{p\in \{0\}\times\R^n:\inner{p}{z_f}\le 1\}
%\]
in $\R^n$ is defined by
\[
P_{-1}\doteqdot \bigcap_{z\in F}\{p\in \R^n:\inner{p}{z}\le -1\}\,,
\]
where $F$ is the set of unit outward normals to supporting halfspaces of $P$ which support $S^{n-1}$. We will show that the forward squash-down of a general eternal example is the exscribed body of its (backward) squash-down. %First, we need to control the Gauss image.

\begin{figure}[h]
\centering
\includegraphics[width=0.75\textwidth]{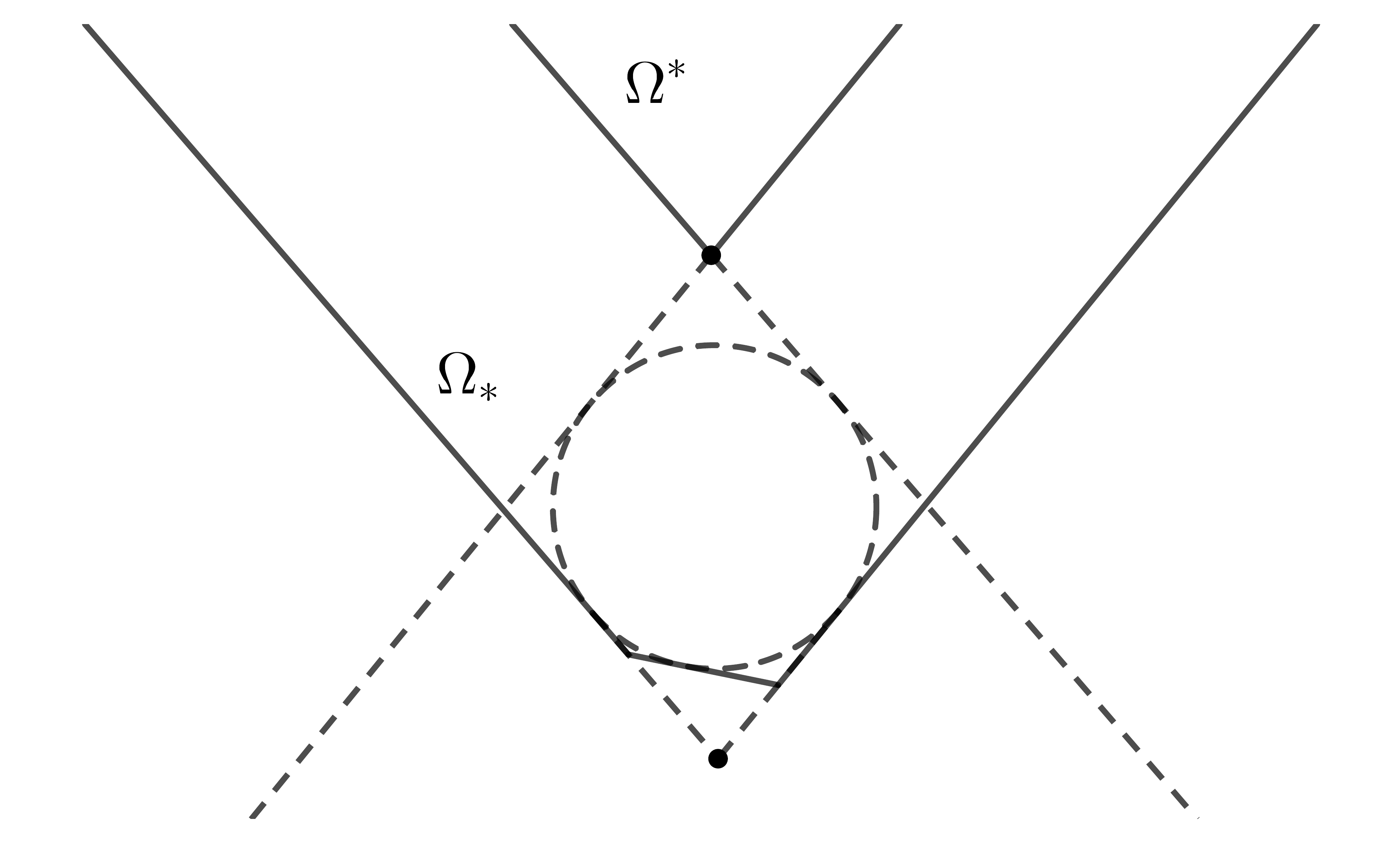}
\caption{The backward and forward squash-downs $\Omega_\ast$ and $\Omega^\ast$, respectively, of an eternal solution $\{\M\}_{t\in(-\infty,\infty)}$.}\label{fig:Gauss image}
\end{figure}

\begin{comment}
\begin{lemma}
Let $\{\M_{t}\}_{t\in(-\infty,\omega)}$ be a convex ancient solution which lies in a stationary slab region and satisfies $\sup_{\M_t}\vert \mathrm{II}\vert<\infty$ for each $t$. If its squash-down is degenerate, then $\omega<\infty$.
\end{lemma}
\begin{proof}
Since $G(\M_t)$ is nondecreasing, the differential Harnack inequality yields
\begin{align}\label{eq:omega finite general}
\sigma(z,T)\le \sigma(z,t)-(T-t)H(z,t)
\end{align}
for all $z\in G(\M_t)$ and all $t<T<\omega$. Thus, if $\omega=\infty$, then $\sigma(z,T)$ becomes negative for sufficiently large $T$ for all $z\in G_\ast$. Now, if $\Omega_\ast$ is degenerate, then *****************
\end{proof}
\end{comment}

\begin{theorem}\label{thm:squash-up}
Let $\{\M_{t}\}_{t\in(-\infty,\infty)}$, $\M_t=\partial\Omega_t$, be a convex eternal solution with $\sup_{\M_t}\vert \mathrm{II}\vert<\infty$ for each $t$. If $\{\M_t\}_{t\in(-\infty,\,\infty)}$ lies in a stationary slab region, then
\[
\Omega^\ast=(\Omega_\ast)_{-1}\,.
\]
\end{theorem}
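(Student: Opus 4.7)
The plan is to establish the two inclusions $\Omega^\ast \subset (\Omega_\ast)_{-1}$ and $(\Omega_\ast)_{-1} \subset \Omega^\ast$ separately; the first is easy, the second requires an inner-barrier argument modelled on Lemma~\ref{lem:squash-up}.

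For the first inclusion, Theorem~\ref{thm:circumscribed} writes $\Omega_\ast = \bigcap_{z \in F}\{p : \inner{p}{z} \le 1\}$, where $F \subset \{0\}\times S^{n-1}$ is the set of unit normals to supporting hyperplanes of $\Omega_\ast$ tangent to the unit sphere (so $\sigma_\ast(z) = 1$ for $z \in F$). For each $z \in F \subset G_\ast \subset G^\ast$, the Harnack inequality \eqref{eq:Harnack} gives $H^\ast(z) \ge H_\ast(z) = \sigma_\ast(z) = 1$, hence $\sigma^\ast(z) = -H^\ast(z) \le -1$ by \eqref{eq:forward sigma limit}; intersecting the resulting halfspaces over $z \in F$ yields $\Omega^\ast \subset (\Omega_\ast)_{-1}$.

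For the reverse inclusion, fix $p$ in the relative interior of $(\Omega_\ast)_{-1}$, so $\inner{p}{z} \le -1 - 2\eta$ for some $\eta > 0$ and all $z \in F$. I will show that $Tp \in \Omega_T$ for every sufficiently large $T$; since the relative interior is dense in $(\Omega_\ast)_{-1}$ and $\Omega^\ast$ is closed, this suffices. Fix $h \in (1, 1+\eta/2]$ and consider the rescaled rotationally symmetric ancient pancake centred at $Tp$ and time-shifted to contract at $t = T$,
\[
\tilde\Pi^T_t \doteqdot Tp + h^{-1}\Pi_{h^2(t-T)}, \qquad t < T\,,
\]
which lies in a slab of width $\pi/h < \pi$. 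If $\tilde\Pi^T_{t_0} \subset \Omega_{t_0}$ at some $t_0 = \Theta(-T)$, then the avoidance principle forces $\tilde\Pi^T_t \subset \Omega_t$ for all $t \in [t_0, T)$, and passing to the limit $t \to T^-$ gives $Tp \in \Omega_T$. To verify the containment I compare support functions: the Harnack inequality yields $\sigma(z, t_0) \ge \sigma(z, 0) + \sigma_\ast(z)(-t_0)$, and \eqref{eq:ancient pancake radius} gives the pancake's equatorial support as $T\inner{p}{z} + h(T-t_0) + O(\log(T-t_0))$; for the axial directions $\pm e_1$ the pancake's support is $\pi/(2h) < \pi/2$ while $\sigma(\pm e_1, t_0) \to \pi/2$ by Lemma~\ref{lem:width estimate}; and rotational symmetry of the pancake combined with convex interpolation handles intermediate directions. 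For $z \in F$, the condition reduces to $T(\inner{p}{z} + h) + (h-1)(-t_0) \le O(\log(T-t_0))$ and holds for $-t_0 \lesssim 3T$ since $\inner{p}{z} + h \le -3\eta/2 < 0$. For $z \in G(\Omega_\ast) \setminus F$ with $\sigma_\ast(z) \ge 1 + \delta_0$ for some $\delta_0 > 0$, the extra term $(\sigma_\ast(z) - h)(-t_0)$ on the right-hand side supplies the necessary positive linear-in-$T$ slack to absorb any positive contribution from $T\inner{p}{z}$.

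The main obstacle is to make a single uniform choice of scaling $-t_0 = \Theta(T)$ so that the support-function inequality holds simultaneously for all $z \in S^n$. Directions $z$ in a small neighborhood of $F$ are the most delicate because the coefficient $(\sigma_\ast(z) - h)$ can transition through zero there; the critical algebraic ingredient is that $\inner{p}{z} + h \le -\eta/2$ persists on such a neighborhood by continuity of $z \mapsto \inner{p}{z}$ and the strict bound $\inner{p}{z_0} \le -1 - 2\eta$ at $z_0 \in F$, and this supplies the required margin. Lower semi-continuity of $\sigma_\ast$ together with compactness of $S^n$ then upgrade the pointwise bounds to a uniform estimate.
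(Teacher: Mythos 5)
Your proof is correct and takes essentially the same approach as the paper. For the easy inclusion $\Omega^\ast\subset(\Omega_\ast)_{-1}$, you invoke the Harnack inequality and Theorem \ref{thm:circumscribed} on the normals in $F$; the paper phrases this via the concavity of $\sigma$ in $t$, which gives $\sigma^\ast(z)\le-\sigma_\ast(z)$ for all $z$, but the two are interchangeable. For the hard inclusion $(\Omega_\ast)_{-1}\subset\Omega^\ast$, your argument (fix $p\in\relint(\Omega_\ast)_{-1}$, insert a scaled pancake $Tp+h^{-1}\Pi_{h^2(t-T)}$ inside $\Omega_{t_0}$ for $t_0=\Theta(-T)$, and invoke avoidance up to time $T$) is the same inner-barrier mechanism as the paper's proof, which instead sweeps with the family $P^R_r$ and sends $r\to\infty$; the two parametrizations cover the same region, and both reduce to Lemma \ref{lem:width estimate} and the displacement asymptotics \eqref{eq:ancient pancake radius}. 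One remark: your uniformity discussion over $z\in S^n$ can be tightened by observing that every $z$ in the Gauss image of $\Omega_\ast$ lies in the closed conical hull of $F$, whence $\inner{p}{z}\le-(1+2\eta)\sigma_\ast(z)$ directly; this replaces the continuity-plus-compactness argument near $F$ with an algebraic identity and makes the resulting linear-in-$T$ slack manifest for every equatorial direction at once.
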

\begin{proof} 
Since $\sigma$ is concave in $t$,
\[
\frac{\sigma(z,t)-\sigma(z,s)}{t-s}\le \partial_t\sigma(z,t)=-H(z,t)\le -H_\ast(z)=-\sigma_\ast(z)
\]
for all $z$ and all $t<s$. Theorem \ref{thm:circumscribed} then implies that
\begin{equation}\label{eq:squash-up general case 1}
\sigma^\ast(z)\le -1
\end{equation}
for each $z$. In particular, if $\Omega_\ast$ is degenerate, then we obtain
\[
\Omega^\ast=\emptyset=(\Omega_\ast)_{-1}\,.
\]
So we may assume that $\Omega_\ast$ is nondegenerate.

Given $R>0$ and $r>0$, set
\[
P^R_r\doteqdot \bigcap_{z\in F_\ast}\{p\in \{0\}\times \R^n:\inner{p}{z}<R-r\}\,,
\]
where $F_\ast$ is the set of outward unit normals to supporting hyperplanes for $\Omega_\ast$. By Lemma \ref{lem:width estimate}, for every $h>1$ we can find $R>0$ such that, for each $r>(h-1)R$ and $p\in P_r^R$, the translated time $t_{r,h}$ slice of the rescaled rotationally symmetric ancient pancake, $h^{-1}\Pi_{h^2t_{r,h}}-p$, lies to the inside of $\M_{-R}$, where $t_{r,h}$ is the time at which $h^{-1}\Pi_{h^2t_{r,h}}$ has horizontal displacement $r-(h-1)R$. The avoidance principle then implies that
\[
P_r^R\subset \Omega_{-R-t_{r,h}}\,.
\]
Since $t_{r,h}=-\frac{r}{h}+o(r)$ as $r\to\infty$, we find that
\[
P_{-1}=\lim_{r\to\infty}\frac{1}{r}P_r^R\subset \lim_{r\to\infty}\frac{1}{r}\Omega_{-R-t_{r,h}}=\frac{1}{h}\Omega^\ast\,.
\]
Since $h>1$ was arbitrary, we conclude that
\begin{equation}\label{eq:squash-up general case 2}
P_{-1}\subset\Omega^\ast\,.
\end{equation}
Together, \eqref{eq:squash-up general case 1} and \eqref{eq:squash-up general case 2} imply the claim.
\begin{comment}
**************
Note first that there exists $t_0<0$ such that
\[
-2t_0\ge \sigma(z_f, t_0)\ge -t_0\,,\,\,\forall f\in F^{n-1}_P\,.
\]
For any $h<1$, let $\ell_h(T)\sim \frac{T}{h}+(n-1)h\log\frac{T}{h^2}$ be the radius of the scaled by $h$ pancake that lives for time $T$. Note that, by convexity, we can take $T_0=T_0(h)$, such that for all $T\ge T_0$ all scaled by $h$ pancakes with radius $\ell_h(T)$ and center
\[
-\sum_{f\in  F^{n-1}_P} (\ell_h(T_f)-1)z_f\,,\,\,T_f\ge T
\]
lie in the interior of of $\Omega_{t_0}$. Therefore
\[
\Omega_{t_0+T}\supset\{-\sum_{f\in  F^{n-1}_P} (\ell_h(T_f)-1)z_f\,,\,\,T_f\ge T\}
\]
and thus
\[
\lim_{t\to\infty}\frac{\Omega_t}{t}\supset \bigcap_{f\in F^{n-1}_P}\{p\in \{0\}\times\R^n:\inner{p}{z_f}\le -\frac{1}{h}\}.
\]
Since $h$ was arbitrary we have $\lim_{t\to\infty}\frac{\Omega_t}{t}\supset P^\ast$.

Now on the other hand, since $-\partial_t\s(z_f, t)\ge 1$ we have
\[
\s(z_f, t_0+T)\le -T-t_0\implies \lim_{t\to \infty}\frac{\s(z_f, t)}{t}\le -1
\]
**************
\end{comment}
\end{proof}

\begin{corollary}\label{cor:wedge implies translation}
Let $\{\M_t\}_{t\in(-\infty,\,\omega)}$ be a convex ancient solution to mean curvature flow with $\sup_{\M_t}\vert \mathrm{II}\vert<\infty$ for each $t$ which lies in a slab. If the squash-down of $\{\M_t\}_{t\in(-\infty,\,\omega)}$ is a cone, then $\{\M_t\}_{t\in(-\infty,\,\omega)}$ is a translator.
\end{corollary}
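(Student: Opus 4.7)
The plan is to use Theorem \ref{thm:squash-up} to identify the forward squash-down of the solution, then compare backward and forward asymptotic mean curvatures along the axis of the cone, and finally invoke the rigidity case of the differential Harnack inequality to conclude that the solution moves by translation.

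After a parabolic rescaling, I may assume $\{\M_t\}_{t\in(-\infty,\omega)}$ lies in $(-\tfrac{\pi}{2},\tfrac{\pi}{2})\times\R^n$ and in no smaller slab. Theorem \ref{thm:circumscribed} gives that the squash-down $\Omega_\ast\subset\{0\}\times\R^n$ circumscribes $\{0\}\times S^{n-1}$. Writing $v$ for the vertex of the cone $\Omega_\ast$ and $\hat v\doteqdot v/\vert v\vert$, the cone opens in the direction $-\hat v$. The first step will be to check that $\Omega_\ast$ is unbounded and nondegenerate so that Proposition \ref{constant Gauss} applies. Unboundedness is immediate. Since $\Omega_\ast$ contains the ray $v+\R_{\ge 0}(-\hat v)$, every $z\in G(\Omega_\ast)$ satisfies $\inner{z}{\hat v}\ge 0$; and since the vertex of a proper circumscribed cone is isolated (the circumscription of the unit sphere forces a positive half-angle at $v$), equality cannot hold for any face normal, so $G(\Omega_\ast)\subset\{z:\inner{z}{\hat v}>0\}$, lying strictly in an open hemisphere. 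Thus $\Omega_\ast$ is nondegenerate, and Proposition \ref{constant Gauss} gives $\omega=\infty$ as well as constancy of the Gauss image: $G(\M_t)\equiv G_\ast=G^\ast$.

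Theorem \ref{thm:squash-up} now yields $\Omega^\ast=(\Omega_\ast)_{-1}$. Since every supporting hyperplane of the cone $\Omega_\ast$ passes through $v$, I have $\inner{v}{z_f}=1$ for every face normal $z_f$, and consequently $(\Omega_\ast)_{-1}$ is the cone with vertex $-v$ opening in the same direction $-\hat v$. Evaluating support functions at $\hat v$, the suprema are realised at the respective vertices:
\[
\sigma_\ast(\hat v)=\inner{\hat v}{v}=\vert v\vert\quad\text{and}\quad\sigma^\ast(\hat v)=\inner{\hat v}{-v}=-\vert v\vert.
\]
Since both suprema are finite, $\hat v\in G_\ast\cap G^\ast$, and combining with \eqref{H_ast=sigma_ast} and \eqref{eq:forward sigma limit} gives
\[
H_\ast(\hat v)=\vert v\vert=H^\ast(\hat v).
\]

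The final step is to invoke the differential Harnack inequality \eqref{eq:Harnack}, according to which $t\mapsto H(\hat v,t)$ is monotone nondecreasing on $\R$. Since its backward and forward limits coincide, it is constant, and the rigidity case of the Harnack inequality then forces $\{\M_t\}_{t\in(-\infty,\infty)}$ to evolve purely by translation. The main obstacle is the first step --- verifying nondegeneracy of $\Omega_\ast$ so that Proposition \ref{constant Gauss} delivers eternality; once the solution is known to be eternal, the cone structure of both $\Omega_\ast$ and $\Omega^\ast$ reduces the critical identity $H_\ast(\hat v)=H^\ast(\hat v)$ to a direct computation with support functions at the axis direction.
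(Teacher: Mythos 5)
Your proof is correct and takes essentially the same route as the paper's. The paper's proof simply defers to ``See the proof of Theorem~\ref{thm:unbounded polygonal pancakes},'' where the translation step is obtained from $\lim_{t\to-\infty}H(e,t)=\sigma_\ast(e)=-\sigma^\ast(e)=\lim_{t\to\infty}H(e,t)$ together with the rigidity case of the Harnack inequality. That argument, however, relies there on Lemma~\ref{lem:squash-up}, which is specific to the constructed old-but-not-ancient approximators; to apply the same reasoning to a general solution one must instead invoke Theorem~\ref{thm:squash-up}, which in turn requires first establishing eternality and constancy of the Gauss image. You correctly identify these as the nontrivial steps that the paper's terse reference leaves implicit, and you close them via Proposition~\ref{constant Gauss} after verifying nondegeneracy of the squash-down. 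Your nondegeneracy check could be streamlined: since $\Omega_\ast$ is a cone with vertex $v$, one has $\sigma_\ast(z)=\inner{v}{z}$ on $G_\ast$, and Proposition~\ref{prop:sigma_ast ge 1} then gives $\inner{z}{\hat v}\ge 1/\vert v\vert>0$ directly, without appealing to the somewhat vague ``isolated vertex'' heuristic --- but the conclusion you draw is correct either way. The remaining computations (identifying $\Omega^\ast$ as the cone with vertex $-v$, evaluating $\sigma_\ast(\hat v)=\vert v\vert$ and $\sigma^\ast(\hat v)=-\vert v\vert$, and applying \eqref{H_ast=sigma_ast}, \eqref{eq:forward sigma limit}, and the Harnack rigidity) match the paper's intended argument exactly.
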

\begin{proof}
See the proof of Theorem \ref{thm:unbounded polygonal pancakes}.
\end{proof}

Recalling Corollary \ref{cor:translators} \eqref{item:translator splitting} we obtain a useful uniqueness statement for the Grim hyperplane.
\begin{corollary}[Grim uniqueness]\label{cor:Grim uniqueness}
Let $\{\M_t\}_{t\in(-\infty,\,\omega)}$ be a convex ancient solution to mean curvature flow with $\sup_{\M_t}\vert \mathrm{II}\vert<\infty$ for each $t$ which lies in a slab. If the squash-down of $\{\M_t\}_{t\in(-\infty,\,\omega)}$ is a half-space, then $\{\M_t\}_{t\in(-\infty,\infty)}$ is a Grim hyperplane.
\end{corollary}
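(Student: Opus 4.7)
The plan is to show that the hypothesis forces the solution to be an eternal translator, and then to use the splitting theorem to reduce to the planar classification. The three key ingredients will be Theorem \ref{thm:circumscribed}, Corollary \ref{cor:wedge implies translation}, and Corollary \ref{cor:translators}\eqref{item:translator splitting}.

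First I would observe that, by Theorem \ref{thm:circumscribed}, the half-space squash-down $\Omega_\ast$ must support the unit sphere $\{0\}\times S^{n-1}$; hence $\Omega_\ast=\{p\in\{0\}\times\R^n:\inner{p}{z_0}\le 1\}$ for some $z_0\in\{0\}\times S^{n-1}$. A half-space of this form is a cone with vertex at \emph{any} point of its bounding hyperplane, since for any $v$ with $\inner{v}{z_0}=1$, any $p\in\Omega_\ast$, and any $\lambda\geq 0$, one has $\inner{v+\lambda(p-v)}{z_0}=1+\lambda(\inner{p}{z_0}-1)\leq 1$. Corollary \ref{cor:wedge implies translation} therefore implies that $\{\M_t\}_{t\in(-\infty,\omega)}$ is a translator, and in particular $\omega=\infty$.

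Next I would exploit the translation symmetry of $\Omega_\ast$. Setting $E\doteqdot z_0^\perp\cap(\{0\}\times\R^n)$, the half-space $\Omega_\ast$ is invariant under translation by every vector in $E$, so it splits off each line of $E$. Applying Corollary \ref{cor:translators}\eqref{item:translator splitting} one line at a time, the translator $\Sigma\doteqdot\M_0$ must split off the entire $(n-1)$-dimensional subspace $E$, so $\Sigma=\Gamma\times E$ for some convex curve $\Gamma$ in the two-dimensional slab $(-\frac{\pi}{2},\frac{\pi}{2})\times\R z_0$, which evolves as a translator in that plane.

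To finish, I would invoke the classification of convex translators in the plane from \cite{BLT3}: $\Gamma$ is either a stationary line or a Grim Reaper. A stationary line contained in the open strip $(-\frac{\pi}{2},\frac{\pi}{2})\times\R z_0$ must be parallel to $z_0$, and its squash-down is then all of $\R z_0$, which would force $\Omega_\ast=\{0\}\times\R^n$, contradicting the half-space hypothesis. Hence $\Gamma$ is a Grim Reaper and $\Sigma$ is a Grim hyperplane, as required. The argument is essentially bookkeeping once the heavy machinery of \S\ref{sec:squash-down}--\S\ref{sec:forward asymptotics} is in place; the only mildly subtle point is confirming that the half-space qualifies as a cone in the sense of Corollary \ref{cor:wedge implies translation}, which is handled by the dilation computation above.
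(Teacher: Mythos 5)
Your proposal is correct and follows essentially the same route the paper intends: the paper states the corollary immediately after Corollary \ref{cor:wedge implies translation} and the remark ``Recalling Corollary \ref{cor:translators}~\eqref{item:translator splitting}\dots'', so the implicit argument is precisely your chain --- a half-space is a cone, hence the solution is a translator by Corollary \ref{cor:wedge implies translation}; the half-space splits off every line in its bounding hyperplane, so the translator splits off the corresponding $(n-1)$-dimensional subspace by Corollary \ref{cor:translators}~\eqref{item:translator splitting}; and the remaining planar cross-section is a Grim Reaper by the one-dimensional classification. Your verification that a half-space is a cone with vertex at any boundary point and your elimination of the stationary-line case (which would give a full hyperplane rather than a half-space as squash-down) are exactly the bookkeeping the paper leaves implicit.
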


\begin{remark}\label{rem:wing uniqueness}
A similar argument shows that the flying wing translators constructed in \cite{BLT2} are uniquely determined (amongst rotationally symmetric examples when $n\ge 3$) by their squash-downs, since they are uniquely determined by their asymptotic translators \cite{HIMW}. It remains an open question in general whether or not convex translators in slabs are uniquely determined by their asymptotic translators (the argument of \cite{HIMW} is inherently two-dimensional).
\end{remark}

\begin{comment}
Combining Theorem \ref{thm:polygonal pancakes} with Corollary \ref{cor:wedge implies translation} yields translators with prescribed squash-downs.

\begin{corollary}\label{cor:existence of translators}
Given any convex cone $C$ in $\{0\}\times \R^n$ which circumscribes $\{0\}\times S^{n-1}$ there exists a convex translator in $\R^{n+1}$ which sweeps out $(-\frac{\pi}{2},\frac{\pi}{2})\times\R^n$ and whose squash-down is $C$.
\end{corollary}
%Compare \cite{BLT2,HIMW}.
\end{comment}

We now obtain a useful structure result for the forward asymptotic translators (cf. Proposition \ref{prop:converging sequences}).

\begin{proposition}\label{prop:converging sequences nc}
Let $\{\M_t\}_{t\in(-\infty,\infty)}$ be a convex eternal solution to mean curvature flow in $\R^{n+1}$ with $\sup_{\M_t}\vert \mathrm{II}\vert<\infty$ for each $t$ and such that it sweeps-out the slab $(-\frac{\pi}{2},\frac{\pi}{2})\times \R^{n}$. Given a sequence $t_i\to +\infty$ and $p_i\in \M_{t_i}$ consider the sequence of flows $\M^i_s=\M_{t_i+s}-p_i$. Then, after passing to a subsequence, these flows converge to a translating solution which satisfies 
\[
H(z, t)=\s^\ast (z)\,,
\]
where $z=\lim z_i$ and $z_i$ is the outward unit normal to $\M_{t_i}$ at $p_i$.
\end{proposition}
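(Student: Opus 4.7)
The plan is to mirror the argument of Proposition \ref{prop:converging sequences}, replacing the backward squash-down $\Omega_\ast$ with the forward squash-down $\Omega^\ast$ throughout, and exploiting Theorem \ref{thm:squash-up} ($\Omega^\ast = (\Omega_\ast)_{-1}$) to relate the two. First I would extract a subsequential limit: since $H$ is non-decreasing in $t$ by \eqref{eq:Harnack}, the mean curvature of the translated flows $\{\M^i_s\}$ is bounded on any time interval $(-\infty, T]$ with $T < \infty$ (being bounded above by $H(z_i, t_i + T) \le H^\ast(z_i)$, which is controlled via \eqref{eq:forward sigma limit} and Theorem \ref{thm:squash-up}). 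Convexity then gives a uniform bound on $|\operatorname{II}|$, and Ecker--Huisken interior estimates, Blaschke selection, and the splitting theorem for the second fundamental form produce a limit of the form $\{\Sigma_s \times E\}_{s\in(-\infty,\infty)}$ with $\Sigma_s$ locally uniformly convex in $E^\perp$.

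Next I would pin down the asymptotic mean curvature in the forward direction. Decompose $z_i = \cos\phi_i \, w_i + \sin\phi_i \, e_1$, with $w_i \in \{0\}\times S^{n-1}$ when $\cos\phi_i\ne 0$, and pass to a subsequence so that $\phi_i\to\phi_\infty\in[-\tfrac{\pi}{2},\tfrac{\pi}{2}]$, $w_i\to w_\infty$, and $z_i\to z=\cos\phi_\infty w_\infty + \sin\phi_\infty e_1$. Since $\Omega_t/t \to \Omega^\ast$ and $p_i\in \M_{t_i}$, we may write $p_i = t_i a_i \ell_i + b_i e_1$ with $\ell_i\in\partial\Omega^\ast$, $a_i\to 1$, and $b_i\in(-\tfrac{\pi}{2},\tfrac{\pi}{2})$ (when $\Omega^\ast$ is nonempty; the degenerate case reduces to the slab bound on $b_i$ alone). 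Hence
\[
\sigma(z_i,t_i) = \langle p_i, z_i\rangle \le t_i a_i \sigma^\ast(w_i)\cos\phi_i + b_i \sin\phi_i.
\]

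The concavity of $t\mapsto\sigma(z,t)$ encoded in \eqref{eq:forward backward limits} yields, for fixed $t_0<0$,
\[
H(z_i, t_i) \ge \frac{\sigma(z_i,t_0)-\sigma(z_i,t_i)}{t_i - t_0},
\]
while Harnack monotonicity and \eqref{eq:forward sigma limit} give $H(z_i,t_i)\le H^\ast(z_i) = -\sigma^\ast(z_i)$. Dividing the first display by $t_i$, letting $i\to\infty$, and using continuity of $\sigma^\ast$ on $G^\ast$, both bounds collapse to
\[
\lim_{i\to\infty} H(z_i,t_i) = \begin{cases} -\cos\phi_\infty\,\sigma^\ast(w_\infty) & \text{if } \cos\phi_\infty\ne 0,\\ 0 & \text{if }\cos\phi_\infty=0,\end{cases}
\]
which equals $-\sigma^\ast(z)$. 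Applying the same argument to a sequence of normals $z_i^s \to z$ on $\M_{t_i+s}$ with $X(z_i^s, t_i+s) - p_i$ bounded (which exists by openness of $G(\M^\infty_s)$ about $z$ for small $|s|$) shows that the mean curvature at the point of $\M^\infty_s$ with normal $z$ is constant in $s$. The rigidity case of the differential Harnack inequality then forces $\{\Sigma_s\}$ to be a translator, and the identity $H(z,s) = -\sigma^\ast(z)$ propagates to every $w\in G(\Sigma_s)$ as in Lemma \ref{lem:backwards asymptotic translators}.

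The main obstacle, as in Proposition \ref{prop:converging sequences}, is ensuring that the two-sided pinching on $H(z_i, t_i)$ really closes up; the slight additional subtlety here is that the forward Gauss image need not be strictly nested (unlike the backward case where Proposition \ref{constant Gauss} applies readily), so some care is required when $z_i \notin G^\ast$ or when $\sigma^\ast$ takes value $-\infty$. These issues are handled by restricting to times large enough that $z_i \in G(\M_{t_i})$ (guaranteed by convexity) and invoking Theorem \ref{thm:squash-up} to replace $\sigma^\ast(w_\infty)$ with $-\sigma_\ast(w_\infty)$, which is finite precisely on the relevant region.
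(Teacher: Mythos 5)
Your overall strategy — mirror the backward Proposition \ref{prop:converging sequences}, replacing $\Omega_\ast$ with $\Omega^\ast$ and $\sigma_\ast$ with $-\sigma^\ast$, and pinch $H(z_i,t_i)$ between the forward Harnack upper bound $H^\ast(z_i)=-\sigma^\ast(z_i)$ and a lower bound extracted from the concavity of $t\mapsto\sigma(z_i,t)$ — is exactly the paper's, and your bookkeeping of the $\cos\phi_\infty$ cases and the final passage to a translating limit via Harnack rigidity is sound.

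The genuine gap is the unqualified assertion that $p_i = t_i a_i \ell_i + b_i e_1$ with $\ell_i \in \partial\Omega^\ast$ and $a_i \to 1$. This would require that $p_i/t_i$ stays in a bounded region and converges to $\partial\Omega^\ast$, which is \emph{false} whenever $p_i$ escapes along an asymptotic Grim-plane arm of the solution faster than $t_i$. In that situation $p_i/t_i\to\infty$, so $a_i\to\infty$, the bound $\sigma(z_i,t_i)/t_i \le a_i\cos\phi_i\,\sigma^\ast(w_i)+\text{o}(1)$ degenerates (it would force $H$ to blow up), and the pinching does not close. This is precisely the regime where the limiting normal $z$ lies on $\partial G^\ast$ rather than in $G^\ast$. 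The paper splits the argument into two cases: when $z\in G^\ast$, a \emph{convexity comparison} with a reference sequence $q_i\in\M_{t_i}$ satisfying $q_i/t_i\to q\in\partial\Omega^\ast$ with normal $w_\infty$ (using that $\langle p_i,z_i\rangle\ge\langle q_i,z_i\rangle$ and that $w_\infty$ lies in the relative \emph{interior} of $G(\partial\Omega^\ast)$, so that the slab $\{\langle\cdot,w_\infty\rangle\ge c\}\cap\Omega^\ast$ is bounded) shows that $p_i/t_i$ is bounded and converges; when $z\notin G^\ast$, the lower bound on $H$ is obtained \emph{directly} from the backward Harnack inequality $H(z_i,t_i)\ge H_\ast(z_i)=\sigma_\ast(z_i)\to\sigma_\ast(z)$, together with the identity $\sigma_\ast(z)=-\sigma^\ast(z)$ for such $z$ (which follows from Theorem \ref{thm:circumscribed} and the definition of the exscribed body), bypassing any claim about $p_i/t_i$. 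Your closing paragraph acknowledges a subtlety at $\partial G^\ast$, but the two remedies you offer do not actually plug the hole: requiring $z_i\in G(\M_{t_i})$ is automatic since $z_i$ is by definition a normal of $\M_{t_i}$, and substituting $-\sigma_\ast(w_\infty)$ for $\sigma^\ast(w_\infty)$ inside your estimate still leaves the spurious factor $a_\infty$. What is needed is to drop the polar parametrization altogether in the boundary-normal case and use the bound $H\ge\sigma_\ast$ as the paper does. (One further small point in your favour: the paper's displayed limits \eqref{TheHlimitnc} and the final identity $H(z,s)=\sigma^\ast(z)$ carry a sign error — $\sigma^\ast\le 0$ while $H\ge 0$ — and your version, $H=-\sigma^\ast$, has the correct sign.)
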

\begin{proof}
%Let $\Omega_\ast\in \{0\}\times\R^n$ be the squash-down of the solution, $\Omega^\ast$ the forward squash-down of the solution and  $\s_\ast, \s^\ast$ their respective support functions. 
Recall that for the squash-down and squash-forward, by Theorems \ref{thm:squash-up} and \ref{constant Gauss}, we have that $\Omega^\ast=(\Omega_\ast)_{-1}$ and $G(\Omega_\ast)=G(\Omega^\ast)= \ov {G(\M_t)}$ fof all $t\in (-\infty, \infty)$. The proof proceeds similarly to that of Proposition \ref{prop:converging sequences}.

Consider a sequence of times $t_i\to +\infty$, points $p_i\in \M_{t_i}$
%For large enough $i$, for these points we have
%\[
%p_i=t_i a_i\ell_i+ b_ie_1\,,\,\,\text{ where }\ell_i\in \partial \Omega^\ast\,, a_i\ge 0 \,, b_i\in(-\pi/2,\pi/2)\,,
%\]
 and denote their normals by 
\[
z_i=\cos\phi_iw_i+\sin\phi_i e_1\,.
\]
where $ \phi_i\in(-\pi/2,\pi/2)$ and  $w_i\in G(\M_{t_i})\cap\{z:z\cdot e_1=0\}$.
After passing to a subsequence we can assume that 
\[
z_i\to z=\cos\phi_\infty w_\infty+\sin\phi_\infty e_1\,,
\]
where $\phi_i\to\phi_\infty\in[-\pi/2,\pi/2]$, and $w_i\to w_\infty\in G(\Omega^\ast)$ when $\phi_\infty\ne \pm \pi/2$. Recall that by \eqref{eq:forward backward limits}, for all $z\in G^\ast$, $\frac{\s(z,t)}{-t}$ is increasing and 
\[
\lim_{t\to \infty}H(z,t)=H^\ast(z)=-\s^\ast(z)=\lim_{t\to\infty}\frac{-\s(z,t)}{t}\,.
\]
Along with the Harnack inequality we thus have
\[
\lim_{i\to\infty}H(z_i, t_i)\le \lim_{i\to\infty} H^\ast(z_i)=-\lim_{i\to\infty}\s^\ast(z_i)=-\s^\ast(z)\,.
\]
This implies that if $\cos\phi_\infty=0$, then $\lim_{i\to\infty} H(z_i, t_i)=0$, so we consider the case $\cos\phi_\infty\ne 0$.
For $t_i>t_j$, the evolution equation \eqref{eq:MCF support} and the Harnack inequality \eqref{eq:Harnack} yield
\[
\s(z_i,t_i)-\s(z_i,t_j)%=-\int_{t_j}^{t_i} H(z_i, t)dt
\ge-H(z_i, t_i)(t_i-t_j)
\]
and hence
\[
H(z_i, t_i)(t_i-t_j)\ge \s(z_i,t_j)-\s(z_i,t_i)
\]
which yields
\[
\lim_{i\to\infty} H(z_i, t_i)\ge \lim_{i\to\infty}\frac{-\s(z_i, t_i)}{t_i}
\]
Now if $z\in G^\ast$, then  $w_\infty\in G(\partial\Omega^\ast)\setminus\partial G(\partial\Omega^\ast)$ and, by convexity, we have that $\frac{p_i}{t_i}$ converges to a point on $\partial \Omega^\ast$ whose normal is $w_\infty$. Indeed, let $q_i\in \M_{t_i}$ be such that $\frac{q_i}{t_i}$ converges to a point on $\partial \Omega^\ast$ whose normal is $w_\infty$. By convexity, $\langle q_i, -z_i\rangle\ge \langle p_i, -z_i\rangle$  and thus $\langle \frac{p_i}{t_i}, -z_i\rangle$ is uniformly bounded. By definition of the squash-down we obtain that $\frac{p_i}{t_i}$ converges. That yields
\[
\lim_{i\to\infty}\frac{\s(z_i, t_i)}{t_i}= \lim_{i\to\infty}\cos\phi_i\left\langle \frac{p_i}{t_i}, w_i\right\rangle=\cos\phi_\infty\s^\ast(w_\infty)
\]
If $z\notin G^\ast$, then $w_\infty\in \partial G(\partial\Omega^\ast)$ and 
\[
 \lim_{i\to\infty} H(z_i, t_i)\ge \lim_{i\to\infty}\s_\ast(z_i)=\s_\ast(z)=-\s^\ast(z)=-\cos\phi_\infty\s^\ast(w_\infty)\,.
\]
We have proven that
\begin{equation}\label{TheHlimitnc}
\lim_{i\to\infty}H(z_i, t_i)= \begin{cases}
\cos\phi_\infty\s^\ast(w_\infty)&\text{ if }\cos\phi_\infty\ne 0\\
0&\text{ if }\cos\phi_\infty= 0.
\end{cases}
\end{equation}
After passing to a subsequence, we obtain a limit flow $\{\M^\infty_s\}_{s\in(-\infty,\infty)}$ defined by
\[
\M^\infty_s=\lim_{i\to\infty}(\M_{t_i+s}-p_i)\,.
\]
Since $p_i\in \M_{t_i}$ and $z_i\to z$ we find that $z\in G(\M^\infty_0)$ and so $z \in G(\M^\infty_s)$ for all $|s|$ sufficiently small. There exist thus $p_i^s\in \M_{s+t_i}$ with normals $z_i^s\to z$ such that $X(z_i^s, t_i+s)-p_i$ converges, as $i\to\infty$, to a point in $\M^\infty_s$ whose normal is $z$. 
Arguing as above, replacing $t_i$ with $t_i+s$ and $z_i$ with $z_i^s$ (and since $\sup_i|p_i^s-p_i|<\infty$) we find that \eqref{TheHlimitnc} holds with these replacements and hence
\begin{equation}\label{TheHestimatenc} 
\lim_{i\to\infty}H(z_i^s, t_i+s)= \begin{cases}
\cos\phi_\infty\s^\ast(w_\infty)&\text{ if }\cos\phi_\infty\ne 0\\
0&\text{ if }\cos\phi_\infty= 0.
\end{cases}
\end{equation} 
Therefore $\{\M^\infty_s\}_{s\in(-\infty, \infty)}$ is a translating solution that satisfies
\[
H(z,s)=\s^\ast(z)\,.\qedhere
\]
\end{proof}

\section{Nontranslating eternal solutions}\label{sec:eternal}

Given a circumscribed cone $C$ with vertex $v$, consider the circumscribed polytope $P$ obtained by truncating the vertex:
\[
P\doteqdot C\cap\{p\in \{0\}\times\R^n:\inner{p}{\hat v}\le 1\},
\]
where $\hat v\doteqdot v/\vert v\vert$. We will construct an eternal solution to mean curvature flow with squash-down $P$. %Without loss of generality, $\vert v\vert>1$.
By Theorem \ref{thm:squash-up}, its forward squash-down is $P_{-1}=C$. Thus, by Propositions \ref{prop:converging sequences} and \ref{prop:converging sequences nc}, these solutions look like a family of translators which emerge at $t=-\infty$ and coalesce into a single translator at $t=+\infty$, preserving their total exterior dihedral angles.

\begin{theorem}\label{thm:polygonal translators}
For each regular circumscribed cone $C\in \mathrm{P}{}^n_\ast$ there exists an eternal solution $\{\M_t\}_{t\in(-\infty,\infty)}$ to mean curvature flow which sweeps out $(-\frac{\pi}{2},\frac{\pi}{2})\times \R^n$, is reflection symmetric across the hyperplane $\{0\}\times\R^n$, and whose squash-down is
\[
P\doteqdot C\cap\{p\in \{0\}\times\R^n:\inner{p}{\hat v_C}\le 1\},
\]
where $\hat v_C\doteqdot v_C/\vert v_C\vert$, $v_C$ denoting the vertex of $C$.
\end{theorem}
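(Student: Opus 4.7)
The strategy is to realize $\{\M_t\}$ as a subsequential limit of suitably time-translated old-but-not-ancient flows, following the general approach of \S \ref{sec:existence} but with the translation tuned so that the limit is eternal rather than collapsing. Since $P$ is unbounded and nondegenerate --- its facet normals (those of $C$ together with $\hat v_C$) all lie strictly inside the open hemisphere $\{\langle\cdot,\hat v_C\rangle>0\}$ --- Lemma \ref{lem:old-but-not-ancient} supplies, for each $R>0$, an immortal approximating flow $\{\M^R_t\}_{t\in[0,\infty)}$ inheriting the reflection and rotational symmetries of $C$. A direct calculation shows $P_{-1}=C_{-1}$ (the exscribed truncating constraint is redundant because $C_{-1}\subseteq\{\langle\cdot,\hat v_C\rangle\le-|v_C|\}$ with $|v_C|>1$), so Lemma \ref{lem:squash-up} combined with \eqref{eq:forward sigma limit} yields $\lim_{t\to\infty}H_R(\hat v_C,t)=|v_C|$, while Lemma \ref{lem:speed lower bound} gives $H_R(\hat v_C,\cdot)\ge 1$.

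Fix $\varepsilon\in(0,|v_C|-1)$ and define the time of the desired shift by
\[
s_{R,\varepsilon}\doteqdot\inf\{t\ge 0:H_R(\hat v_C,\tau)\ge 1+\varepsilon\text{ for all }\tau\ge t\},
\]
which is finite for $R$ sufficiently large. The central step is the divergence $\limsup_{R\to\infty}s_{R,\varepsilon}=\infty$, to be established by a two-sided comparison of $\sigma_R(\hat v_C,\cdot)$. Integrating $\partial_t\sigma_R=-H_R$ with $H_R(\hat v_C,\cdot)\ge 1$ on $[0,s_{R,\varepsilon}]$ and $H_R(\hat v_C,\cdot)\ge 1+\varepsilon$ on $[s_{R,\varepsilon},\infty)$ yields the upper bound $\sigma_R(\hat v_C,t)\le R+\varepsilon\, s_{R,\varepsilon}-(1+\varepsilon)t$ for $t\ge s_{R,\varepsilon}$, while a rotationally symmetric ancient pancake of width $\pi$, fit inside $\M^R_0$ via \eqref{eq:ancient pancake radius} and propagated forward by the avoidance principle, supplies the matching lower bound $\sigma_R(\hat v_C,t)\ge R-t-\log 2-o(1)$ as $R\to\infty$. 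These combine to force $\varepsilon\, s_{R,\varepsilon}\ge\varepsilon t-\log 2-o(1)$ for every fixed $t$, proving the divergence.

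After time-translating by $s_{R,\varepsilon}$, Lemmas \ref{lem:minimum speed estimate} and \ref{lem:width_est} give a uniform lower bound on the inradius, so Ecker--Huisken interior estimates and Blaschke selection produce, along a subsequence $R_j\to\infty$, a smooth, convex, locally uniformly convex eternal limit $\{\M_t\}_{t\in(-\infty,\infty)}$ sweeping out $(-\pi/2,\pi/2)\times\R^n$ (by \eqref{eq:width estimate}) and inheriting the symmetries of $C$, with $H(\hat v_C,0)=1+\varepsilon$.

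It remains to identify the squash-down. Lemma \ref{lem:speed lower bound} passed to the limit gives $\Omega_*\supseteq P$. By Theorem \ref{thm:circumscribed}, $\Omega_*$ circumscribes $\{0\}\times S^{n-1}$, and by Lemma \ref{lem:hast regular point} every smooth boundary point of $\Omega_*$ with normal in $\{0\}\times S^{n-1}$ has distance exactly $1$ from the origin. Combined with the inherited rotational symmetry, a direct analysis of the profile --- a smooth cap would have to be a portion of the unit circle lying below $P$'s truncating facet, contradicting $\Omega_*\supseteq P$; a flat top facet must sit at height $1$, giving $\Omega_*=P$; a vertex at the top forces the profile to match that of the unique circumscribed cone with this side structure, namely $C$ --- leaves only $\Omega_*\in\{P,C\}$. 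But $\Omega_*=C$ would make the solution a translator (Corollary \ref{cor:wedge implies translation}) with $H(\hat v_C,\cdot)\equiv|v_C|>1+\varepsilon$, contradicting the pinning. Hence $\Omega_*=P$. The principal obstacle is the divergence $s_{R,\varepsilon}\to\infty$, for which the precise pancake radius asymptotic \eqref{eq:ancient pancake radius} is essential.
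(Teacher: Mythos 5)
Your proposal follows the paper's own strategy almost step for step: the definition of $s_{R,\varepsilon}$ (the paper's $t_{\varepsilon,R}$), the divergence lemma proved by the two-sided comparison of $\sigma_R(\hat v_C,\cdot)$ with the pancake inner barrier, and the extraction of the eternal limit after time-translating so that $H(\hat v_C,0)=1+\varepsilon$. Where you diverge is the final identification of the squash-down: the paper uses $H(\hat v_C,0)>1$ plus Corollary \ref{cor:Grim uniqueness} to produce a direction $z_0\neq\hat v_C$ with $\sigma_\ast(z_0)=1$, then the inherited speed bound $\inner{z_0}{v}\le 1$ shows $z_0$ is a facet normal of $C$, and flag-transitivity plus $\sigma_\ast(\hat v_C)\le 1+\varepsilon<|v_C|$ pin down $\Omega_\ast=P$; you instead enumerate the rotationally (flag-)symmetric circumscribed possibilities containing $P$ and rule out $\Omega_\ast=C$ via Corollary \ref{cor:wedge implies translation} and Proposition \ref{prop:translator squash-down}, a clean and equally valid route.

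Two cautionary remarks. First, ``inherited rotational symmetry'' should read flag-transitive (regular-polytope) symmetry; the profile enumeration still works because circumscription and $\Omega_\ast\supseteq P$ force every facet normal of $\Omega_\ast$ to be a facet normal of $P$, and flag-transitivity then makes the set of $C$-facets all-or-nothing, but you should also explicitly exclude the halfspace $\{\inner{\cdot}{\hat v_C}\le 1\}$ (ruled out because $H(\hat v_C,0)=1+\varepsilon>1$ is impossible for a Grim hyperplane, via Corollary \ref{cor:Grim uniqueness}) before reducing to $\{P,C\}$. Second, your claim $\lim_{t\to\infty}H_R(\hat v_C,t)=|v_C|$ for the old-but-not-ancient flows is slightly overstated --- the forward Harnack inequality for non-ancient solutions gives only $\sqrt{t}\,H_R$ monotone, not $H_R$ monotone --- but this is harmless since the divergence argument uses only $H_R\ge 1$, $H_R\ge 1+\varepsilon$ beyond $s_{R,\varepsilon}$, and the averaged convergence from Lemma \ref{lem:squash-up}, none of which requires pointwise monotonicity of $H_R$ in $t$.
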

Before proving Theorem \ref{thm:polygonal translators}, we record an important corollary.
\begin{corollary}
\cite[Conjecture 1]{Wh03} is false: there exist convex eternal solutions to mean curvature flow that do not evolve by translation.
\end{corollary}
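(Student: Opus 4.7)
The corollary is immediate from Theorem \ref{thm:polygonal translators}: by Proposition \ref{prop:translator squash-down} the squash-down of any convex translator is a cone, whereas the squash-down $P = C \cap \{p : \langle p, \hat v_C\rangle \le 1\}$ of the eternal solution produced by that theorem is a truncated cone, and so not a cone. This solution therefore cannot evolve by translation. I now sketch the proof of the theorem.

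My plan is to run the construction of Section \ref{sec:existence} on the (irregular) circumscribed polytope $P$, and to extract an eternal limit via a suitable time translation. Since $P$ is unbounded but nondegenerate (its facet normals, the cone facet normals $z_f$ together with $\hat v_C$, all lie in the open hemisphere $\{z : \langle z, \hat v_C\rangle > 0\}$), Lemma \ref{lem:old-but-not-ancient} furnishes eternal old-but-not-ancient solutions $\{\M^R_t\}_{t \in [0,\infty)}$, and Lemma \ref{lem:squash-up} identifies their forward squash-downs as the single cone $P_{-1} = \bigcap_{z_f}\{\langle p, z_f\rangle \le -1\}$ of apex $-v_C$ (the constraint $\{\langle p, \hat v_C\rangle \le -1\}$ being redundant since $|v_C| > 1$). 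Fix $\varepsilon \in (0, |v_C|-1)$ and let $s_R$ be the first time at which $H_R(\hat v_C, s_R) = 1 + \varepsilon$; that $s_R \to \infty$ along a subsequence $R_j \to \infty$ follows from an inner-barrier comparison with rescaled rotationally symmetric ancient pancakes centred near the cone apex, in the spirit of Lemma \ref{lem:squash-up}: if $s_R$ stayed bounded then sufficiently large such pancakes would fit inside $\M^R_{s_R}$ and force $H_R(\hat v_C, s_R)$ arbitrarily close to $|v_C|$, contradicting $1 + \varepsilon < |v_C|$. Setting $\hat{\M}^R_t := \M^R_{t+s_R}$, the uniform inradius and width estimates of Section \ref{sec:old-but-not-ancient} (Lemmas \ref{lem:minimum speed estimate} and \ref{lem:width_est}), together with Blaschke selection and the Ecker--Huisken interior estimates, then yield a subsequential smooth limit $\{\M_t\}_{t \in (-\infty,\infty)}$, a locally uniformly convex eternal solution sweeping out $(-\pi/2, \pi/2) \times \R^n$ and inheriting reflection symmetry across $\{0\}\times \R^n$ together with the symmetries of $C$.

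The main technical step is to identify $\Omega_\ast = P$. The inclusion $\Omega_\ast \supseteq P$ follows directly from Lemma \ref{lem:speed lower bound}, which gives $\sigma_\ast \ge \sigma[P]$. For the reverse inclusion I proceed in two parts. First, Theorem \ref{thm:squash-up} automatically identifies $\Omega^\ast = P_{-1}$, and the value $\sigma^\ast(z_f) = -1$ on each cone facet normal gives $H^\ast(z_f) = 1$; the ancient Harnack inequality $H_\ast(z_f) \le H^\ast(z_f)$ combined with the lower bound $\sigma_\ast(z_f) \ge 1$ of Proposition \ref{prop:sigma_ast ge 1} then pins $\sigma_\ast(z_f) = 1$ on every cone facet normal, so $\Omega_\ast \subseteq C$. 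Second, for the truncating direction $\hat v_C$, the outer Grim-hyperplane barrier (the Grim hyperplane of normal $\hat v_C$ translated outward by $R\hat v_C$, preserved by mean curvature flow as a translator) gives $\sigma_R(\hat v_C, t) \le R - t$ along each approximator, and the calibration of $s_R$ via $H_R(\hat v_C, s_R) = 1 + \varepsilon$ (combined with the pancake inner-barrier estimate) keeps $R - s_R$ in a bounded range, so in the limit this survives as $\hat\sigma(\hat v_C, t) \le O(1) - t$, forcing $\sigma_\ast(\hat v_C) \le 1$ and hence equality. Convexity of $\Omega_\ast$ then pins $\Omega_\ast = P$. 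The principal obstacle is the calibration of $s_R$: it must diverge so the limit is genuinely eternal, yet not overshoot into the fully-developed cone-tip regime, in which case the truncating face would vanish in the rescaled limit and $\Omega_\ast$ would collapse to $C$, reducing the construction to the cone translator of Theorem \ref{thm:unbounded polygonal pancakes}.
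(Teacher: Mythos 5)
Your opening paragraph is exactly the paper's proof of the corollary: cite Theorem \ref{thm:polygonal translators} for an eternal solution whose squash-down is the truncated cone $P$, and invoke Proposition \ref{prop:translator squash-down} to observe that a translator's squash-down is a cone, hence $P \neq$ cone rules out translation. That much is correct and complete.

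The supplementary sketch of Theorem \ref{thm:polygonal translators}, however, diverges from the paper in the crucial step of identifying $\Omega_\ast = P$, and the route you take does not close. The paper's argument is: since $H(\hat v_C,0)=1+\varepsilon>1$, the limit is not a Grim hyperplane, so by Corollary \ref{cor:Grim uniqueness} the circumscribed body $\Omega_\ast$ is not a half-space and therefore has a facet normal $z_0 \neq \hat v_C$; the lower bound $\sigma_\ast \ge \sigma[P]$ then forces $z_0$ to be a cone facet normal $z_f$; the \emph{regularity} of $C$ (the flag-transitive symmetry group, inherited by the limit) propagates $\sigma_\ast(z_f)=1$ to \emph{every} cone facet normal simultaneously; and finally $\sigma_\ast(\hat v_C) \le H(\hat v_C,0) = 1+\varepsilon < |v_C|$ rules out $\Omega_\ast = C$. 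The symmetry hypothesis is not decorative — it is precisely how the paper gets from ``one facet normal'' to ``all facet normals.''

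Your sketch instead tries to pin down each $\sigma_\ast(z_f)$ individually by squeezing between backward and forward asymptotics. Two gaps. First, ``Theorem \ref{thm:squash-up} automatically identifies $\Omega^\ast = P_{-1}$'' is circular: that theorem gives $\Omega^\ast = (\Omega_\ast)_{-1}$, which coincides with $P_{-1}$ only once the facet structure of $\Omega_\ast$ — the very thing you are trying to determine — is known. One can set up a sandwich $P_{-1} \subseteq (\Omega_\ast)_{-1} = \Omega^\ast \subseteq \bigcap_{z\in G_\ast}\{\langle p,z\rangle \le -1\} \subseteq P_{-1}$, but you neither formulate nor justify it. Second, and more serious, the chain $H_\ast(z_f) \le H^\ast(z_f) = 1$ requires $z_f \in G_\ast$, since $H_\ast$, $H^\ast$ and the identity $H_\ast = \sigma_\ast$ from \eqref{H_ast=sigma_ast} are only defined and valid on the Gauss image. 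For a limit that is asymptotic to Grim hyperplanes in the $z_f$-directions, these are exactly the normals one expects to be pushed out to the ideal boundary of $G_\ast$ (indeed, if $z_f$ \emph{were} in $G_\ast$ your argument would force $H(z_f,t)\equiv 1$, and the rigidity case of the Harnack inequality would make the solution a translator — defeating the construction). The paper's argument never touches $H(z_f,\cdot)$; it works entirely with the support function $\sigma_\ast$, which is defined everywhere.

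Your Part 2 also relies on an unjustified claim. You assert the calibration keeps $R - s_R$ bounded; nothing in Lemma \ref{lem:limit is eternal} or its proof gives this, and in fact $R-t_{\varepsilon,R}$ should be expected to diverge. The paper sidesteps the issue: it only needs $\sigma_\ast(\hat v_C) < |v_C|$ (which follows from $H(\hat v_C,0)=1+\varepsilon<|v_C|$ and Harnack monotonicity) together with the facet-normal constraints already established, and these alone force $\Omega_\ast = P$.
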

\begin{proof}
By Proposition \ref{prop:translator squash-down}, the solutions constructed in Theorem \ref{thm:polygonal translators} do not evolve by translation (except when $\vert v\vert=1$).
\end{proof}

Given $R>0$, let $\{\M^R_t\}_{t\in[0,\infty)}$ be the old-but-not-ancient solution corresponding to $P$ constructed in Lemma \ref{lem:old-but-not-ancient}. By Lemma \ref{lem:squash-up}, the forward squash-down of $\{\M^R_t\}_{t\in[0,\infty)}$ is $P_{-1}$. So, by \eqref{eq:forward backward limits}, the mean curvature $H(\hat v,t)$ increases from 1 at time $t=0$ to $\vert v\vert$ as $t\to\infty$. Given $\varepsilon\in(0,\vert v\vert-1)$, define
\[
t_{\varepsilon,R}\doteqdot \inf\big\{t\in[0,\infty):H(\hat v,t)\ge 1+\varepsilon\;\;\text{for all}\;\; t\in [t_{\varepsilon,R},\infty)\big\}.
\]

\begin{lemma}\label{lem:limit is eternal}
For every $\varepsilon\in(0,\vert v\vert-1)$,
\[
\limsup_{R\to\infty}t_{\varepsilon,R}=\infty.
\]
\end{lemma}
\begin{proof}
Suppose, to the contrary, that
\[
T_\varepsilon\doteqdot \limsup_{R\to\infty}t_{\varepsilon,R}<\infty\,.
\]
On the one hand, since, by \eqref{eq:lower speed bound}, $H(\hat v,s)\ge 1$ for $s\in(0,t_{\varepsilon,R})$, integrating \eqref{eq:MCF support} between $0$ and $t\in(t_{\varepsilon,R},\infty)$ yields
\begin{align}
\sigma_R(\hat v,t)={}&\sigma_R(\hat v,0)-\int_0^{t}H(z,s)\,ds\nonumber\\
={}&R-\int_0^{t_{\varepsilon,R}}H(z,s)\,ds-\int_{t_{\varepsilon,R}}^{t}H(z,s)\,ds\nonumber\\
\le{}&R-t_{\varepsilon,R}-(1+\varepsilon)(t-t_{\varepsilon,R})\nonumber\\
={}&R+\varepsilon t_{\varepsilon,R}-(1+\varepsilon)t\,.\label{eq:eternal t est 1}
\end{align}
On the other hand, by \eqref{eq:Angenent oval x} and \eqref{eq:Angenent oval y} (see \cite[Lemma 2.2]{BLT1}), the rotated time $\log 2-R$ slice $\mathrm{A}_{\log 2-R}$ of the Angenent oval lies to the inside of $\M^R_0$, and thus provides an upper bound for $\sigma_R(\hat v,t)$. Indeed, denote by $\ell_R(t)$ the horizontal displacement of $\{\Pi^R_t\}_{t\in[0,T_R)}$, the evolution by mean curvature of the rotation of $\mathrm{A}_{\log 2-R}$. By estimates obtained in \cite[pp. 31-32]{BLT1}, there exists $C<\infty$ such that
\[
\ell_R(t)\ge T_R-t+(n-1)\log(T_R-t)-C
\]
for all $t<T_R-1$, and
\[
T_R\ge R-(n-1)\log T_R-C
\]
for $R$ sufficiently large, and hence
\begin{align*}
\sigma_R(\hat v,t)\ge{}&\ell_R(t)\\
\ge {}&T_R-t+(n-1)\log(T_R-t)-C\\
\ge {}&R-t+(n-1)\log\left(\frac{T_R-t}{T_R}\right)-2C
\end{align*}
for all $t\in(t_{\varepsilon,R},T_R-1)$ for $R$ sufficiently large. Recalling \eqref{eq:eternal t est 1}, we conclude that
\[
\varepsilon t_{\varepsilon,R}\ge\varepsilon t+(n-1)\log\left(\frac{T_R-t}{T_R}\right)-2C
\]
for all $t\in(T_\varepsilon,T_R-1)$ for $R$ sufficiently large. Setting $t\doteqdot T_{\varepsilon}+\frac{2C}{\varepsilon}+10$ and taking $R\to\infty$, we arrive at a contradiction.
\end{proof}

%We can now construct an ancient solution with squash-down $P$:
After space-time translating so that $X_R(\hat v_C,0)=0$ and $H_R(\hat v_C,0)=1+\varepsilon$, Lemma \ref{lem:limit is eternal} implies that an eternal solution is obtained in the limit $R\to\infty$. Since $H(\hat v_C,0)=1+\varepsilon>0$ on the limit, the width estimate (Lemma \ref{lem:width_est}) ensures that it cannot be a Grim hyperplane. The nice structure of $P$ will then allow us to show that the squash-down of the limit is $P$.

\begin{proof}[Proof of Theorem \ref{thm:polygonal translators}]
Fix $\varepsilon\in(0,\vert v_C\vert-1)$. Given $R>0$, consider the old-but-not-ancient solution $\{\M^R_t\}_{(-t_{\varepsilon,R},\infty)}$ obtained by translating the old-but-not-ancient solution of Lemma \ref{lem:old-but-not-ancient} corresponding to $P$ in space and time so that $-t_{\varepsilon,R}$ is the initial time and $X(\hat v_C,0)=0$. Since, by Lemma \ref{lem:limit is eternal}, $\limsup_{R\to\infty}t_{\varepsilon,R}=\infty$, the argument of Theorems \ref{thm:bounded polygonal pancakes} and \ref{thm:unbounded polygonal pancakes} yields a sequence of scales $R_j\to\infty$ such that the sequence of flows $\{\M^{R_j}_t\}_{t\in(t_{\varepsilon,R_j},\,\infty)}$ converges locally uniformly in the smooth topology to a family $\{\M_t\}_{t\in(-\infty,\,\infty)}$ of smooth, convex hypersurfaces $\M_t$ which sweep-out the slab $(-\frac{\pi}{2},\frac{\pi}{2})\times\R^n$ and evolve by mean curvature flow. 

By Theorem \ref{thm:circumscribed}, the squash-down $\Omega_\ast$ of $\{\M_t\}_{t\in(-\infty,\,\infty)}$ circumscribes $\{0\}\times S^{n-1}$. Since, by construction,
\begin{equation}\label{eq:speed>1 eternal}
H(\hat v_C,0)=1+\varepsilon>1\,,
\end{equation}
it follows that $\{\M_t\}_{t\in(-\infty,\,\infty)}$ is not a Grim hyperplane. In particular, by Corollary \ref{cor:Grim uniqueness}, there exists $z_0\in G_\ast\setminus\{\hat v_C\}$ for which $\sigma_\ast(z_0)=1$. 

By \eqref{eq:sigma ratio bound}, given any vertex $v$ of $P$, each of the old-but-not ancient solutions $\{\M^R_t\}_{t\in[-t_{\varepsilon,R},\,\infty)}$ satisfies
\[
\inner{z_0}{v}\le \frac{\sigma_R(z_0,t)}{-t}
\]
for all $R$ and all $t\in(-t_{\varepsilon,R},0)$. Thus,
\[
\inner{z_0}{v}\le1
\]
for all vertices $v\in F^0_P$ of $P$. It follows that $z_0=z_f$ is normal to some facet $f\in F_P^{n-1}$. Since $z_0\neq \hat v_C$, $f$ must be a truncated facet of $C$. 

Since $\{\M_t\}_{t\in(-\infty,\infty)}$ inherits the symmetries of $P$, we conclude that every facet of $C$ lies in a supporting halfspace for $\Omega_\ast$. Since $\sigma_\ast(\hat v)<\vert v\vert$, $\Omega_\ast$ circumscribes $\{0\}\times S^{n-1}$ and every facet of $\Omega_\ast$ with normal not equal to $\hat v$ comes from a facet of $C$, we conclude that $\Omega_\ast=P$. 
\end{proof}

\section{Reflection symmetry}\label{sec:reflection symmetry}

Finally, we exploit Theorems \ref{thm:circumscribed} and \ref{thm:squash-up} and Propositions \ref{prop:converging sequences} and \ref{prop:converging sequences nc} to prove that convex ancient solutions in slabs in $\R^3$ are symmetric under reflection across the mid-hyperplane.

\begin{theorem}\label{thm:reflection symmetry}
Let $\{\M_t\}_{t\in(-\infty,\,\omega)}$ be a convex ancient solution to mean curvature flow in $\R^{3}$ with $\sup_{\M_t}\vert \mathrm{II}\vert<\infty$ for each $t$ which lies in the slab $(-\frac{\pi}{2},\frac{\pi}{2})\times \R^{2}$ and in no smaller slab. If the squash-down $\Omega_\ast$ of $\{\M_t\}_{t\in(-\infty,\,\omega)}$ is nondegenerate, then $\M_t$ is symmetric under reflection across $\{0\}\times\R^2$ for each $t\in(-\infty,\omega)$.
\end{theorem}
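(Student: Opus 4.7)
The plan is to apply the Alexandrov reflection principle of Lemma~\ref{lem:wedge} across the mid-plane $\pi\doteqdot\{0\}\times\R^2$, using the asymptotic translator analysis of Propositions~\ref{prop:converging sequences} and~\ref{prop:converging sequences nc} to force the reflection function $\alpha$ of~\eqref{afct} to vanish at the parabolic boundary of large spacetime domains.

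The first step is to verify that every asymptotic translator of $\{\M_t\}_{t\in(-\infty,\omega)}$, whether arising from Proposition~\ref{prop:converging sequences} as $t_i\to-\infty$ or (in the eternal case $\omega=\infty$, which is guaranteed by Proposition~\ref{constant Gauss}) from Proposition~\ref{prop:converging sequences nc} as $t_i\to+\infty$, is reflection symmetric across $\pi$. Such a limit is a convex translator in $\R^3$ lying in a slab parallel to $(-\tfrac{\pi}{2},\tfrac{\pi}{2})\times\R^2$ of width at most $\pi$; Corollary~\ref{cor:translators}(2) shows it splits off a line if and only if its squash-down does, in which case Corollary~\ref{cor:Grim uniqueness} identifies it with a Grim hyperplane. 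Otherwise Remark~\ref{rem:wing uniqueness} (together with \cite{HIMW,BLT2}) identifies it with a flying wing uniquely determined by its two Grim Reaper asymptotic translators. Both Grim hyperplanes and flying wings are reflection symmetric across the mid-plane of their slab, and since the translates $p_i$ used to form the limit lie in $\pi\cap\Omega_{t_i}\subset\pi$, the limit's axis of symmetry is precisely $\pi$.

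Next I would show that for every sequence $(p_i,t_i)$ with $p_i\in\pi\cap\Omega_{t_i}$ along which $|p_i|+|t_i|\to\infty$,
\begin{equation}\label{eq:proposal alpha to zero}
\alpha(p_i,t_i)\to 0.
\end{equation}
When $t_i\to-\infty$, this follows from the first step applied to the limit of Proposition~\ref{prop:converging sequences}: the translated hypersurfaces $\M_{t_i}-p_i$ converge subsequentially to a translator symmetric across $\pi$, so the average $\tfrac12(h_+(p_i,t_i)+h_-(p_i,t_i))$ of the top and bottom $e_1$-heights of $\M_{t_i}$ above $p_i$ converges to zero. The remaining cases ($|p_i|\to\infty$ with $t_i$ bounded, or $t_i\to+\infty$) occur only when $\Omega_\ast$ is unbounded --- hence $\omega=\infty$ --- and are handled analogously via Proposition~\ref{prop:converging sequences nc}, extracting a convergent limit by a diagonal argument.

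With \eqref{eq:proposal alpha to zero} in hand, fix $t_0\in(-\infty,\omega)$ and choose an exhaustion $W_k\uparrow\pi\cap\Omega_{t_0}$ by bounded open sets together with a sequence $t_k\to-\infty$. By \eqref{eq:proposal alpha to zero},
\[
\sup_{\partial W_k\times[t_k,t_0]}|\alpha|+\sup_{W_k\times\{t_k\}}|\alpha|\underset{k\to\infty}{\longrightarrow}0.
\]
Lemma~\ref{lem:wedge} applied on $W_k\times[t_k,t_0]$ and the limit $k\to\infty$ then give $\max\alpha\le 0$ on $\pi\cap\Omega_{t_0}$; applying the same lemma with $e_1$ replaced by $-e_1$ (which sends $\alpha\mapsto-\alpha$) gives $\min\alpha\ge 0$. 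Hence $\alpha\equiv 0$ on $\pi\cap\Omega_{t_0}$, so $\M_{t_0}$ is reflection symmetric across $\pi$; since $t_0$ is arbitrary, the theorem follows.

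The main obstacle will be the case $|p_i|\to\infty$ in \eqref{eq:proposal alpha to zero}, where one must identify the limit translator and verify that its slab is centered at $\pi$ rather than at some parallel plane. This depends on the nondegeneracy of $\Omega_\ast$, which via Theorem~\ref{thm:squash-up} pins down the forward asymptotics, together with the uniqueness of flying wings alluded to in Remark~\ref{rem:wing uniqueness} --- the latter being the reason the theorem is restricted to $n=2$.
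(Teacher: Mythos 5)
Your high-level strategy matches the paper's --- use the asymptotic translator analysis to force the reflection function $\alpha$ to vanish on the parabolic boundary, then conclude by Lemma~\ref{lem:wedge} --- but you take a genuinely different route at the crucial technical step, and that route has a gap.

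The paper does \emph{not} attempt to show that all asymptotic translators are reflection symmetric; it only uses the symmetry of the Grim plane limits at three carefully chosen directions $z_i\in\Omega_\ast\cap S^1$ (points where, by Theorem~\ref{thm:circumscribed}, $\sigma_\ast(z_i)=1$). These directions generate the half-planes $\pi_i$ that bound the wedges $W_i$, and Proposition~\ref{prop:converging sequences} at a point $q_j\in\M_{t_j}\cap\pi_i$ with $\nu_j\perp e_1$ produces a translator with $H(z_i)=1$ in a slab of width $\pi$, hence automatically a Grim plane centred on $\pi$ --- no classification of slab translators is needed. The spatial non-compactness of the wedges is then handled by the Korevaar--Kusner--Solomon tilted-hyperplane trick: tilting $\pi$ by $\epsilon$ inward makes $W_i\cap[-\tfrac{\pi}{2},\tfrac{\pi}{2}]\cap P_\epsilon^+$ \emph{compact}, so the initial-time control comes from local smooth convergence to the parallel hyperplanes, and the $\partial W_i$ control comes from the Grim plane asymptotics; letting $\epsilon\to 0$ gives $|\alpha|<\delta$ on the whole (unbounded) wedge for $s<t_\delta$. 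Your approach replaces this with the stronger claim that \emph{every} asymptotic translator is symmetric across $\pi$ (invoking the \cite{HIMW,SX} classification for $n=2$, a heavier input that the paper consciously avoids) and then a direct exhaustion $W_k\uparrow\pi\cap\Omega_{t_0}$.

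The gap in your plan is exactly where you yourself flag it, and it is not resolvable by what you cite. In the estimate
\[
\sup_{\partial W_k\times[t_k,t_0]}|\alpha|\underset{k\to\infty}{\longrightarrow}0
\]
the supremum runs over a \emph{fixed bounded time interval} $[t_k,t_0]$ together with spatial points $p\in\partial W_k$ escaping to infinity, so to control it one must handle sequences $(p_i,t_i)$ with $|p_i|\to\infty$ and $t_i$ \emph{bounded}. Neither Proposition~\ref{prop:converging sequences} (which requires $t_i\to-\infty$) nor Proposition~\ref{prop:converging sequences nc} (which requires $t_i\to+\infty$) applies. Moreover there is no reason that the pointed limit $\lim_i(\M_{t_i+s}-p_i)$ at a bounded time is a \emph{translator} at all --- the Harnack rigidity argument underlying both propositions only kicks in when the time parameter escapes to $\pm\infty$ --- so the symmetry of asymptotic translators (Step~1) cannot be brought to bear. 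This is precisely the obstruction the tilted-hyperplane argument circumvents: by tilting, one never needs to control $\alpha$ at spatial infinity for intermediate times at all. Without that ingredient, Step~3 does not close.
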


The proof proceeds by Alexandrov reflection. It is complicated by the lack of compactness (even when the timeslices are compact, since the time interval of existence is infinite). To deal with this, we exploit Theorem \ref{thm:circumscribed} using a parabolic version of the ``tilted hyperplane'' argument introduced by Korevaar, Kusner and Solomon in the context of constant mean curvature surfaces \cite{KorevaarKusnerSolomon}. 
\begin{proof}[Proof of Theorem \ref{thm:reflection symmetry}]

Let $\Omega_\ast$ be the squash-down of the solution, which we know is a circumscribed convex body. 
Set $Z=\Omega_\ast\cap S^1$ and consider points $z_i\in S^1$, $i=1,2,3$, as follows:
\begin{itemize}
\item[(i)] In the compact case, we take three points $\{z_1, z_2, z_3\}\in Z$ placed clockwise on $S^1$ so that the smaller angle between any two of them is less than $\pi$.
\item[(ii)] In the noncompact case, we take $\{z_1, z_2\}\in Z$ so that $Z$ is contained in the clockwise arc of $S^1$ between $z_1$ and $z_2$. The third point $z_3\in S^1$ is selected (in the clockwise arc between $z_2$ and $z_1$) so that the halfline $\ell=\{\rho z_3: \r\ge 0\}$ is contained in $\Omega_\ast$. Note that the smaller angle between any two of these three points  is less than $\pi$.
\end{itemize}
 Let $\pi_i$ be the halfplane spanned by $z_{i}$ and the $x$-axis, for $i=1, 2, 3$, and $W_i$ be the convex region of $\R^3$ bounded by $\pi_i$ and $\pi_{i+1}$, for $i=1, 2$, and by $\pi_3$ and $\pi_1$ for $i=3$.
We then have the following.

%Let $\Omega_\ast$ be the squash-down of the solution which we know is a circumscribed convex body $\Omega_\ast$. Assume first that $\Omega_\ast$ is circumscribed polygon $P$. Let $\{f_i\}_{\{i=1,\dots, k\}}$ be the edges of $P$ and denote by $z_i$ the corresponding outward unit normals. Let $\pi_i$ be the halfplane spanned by $z_{i}$ and the $x$-axis, for $i=1, \dots, k$. For the non compact case, we also consider $\pi_0$ a halfplane spanned by a halfline $\ell=\{\rho z_0| z_0\in S^1\,,\,\,\r\in \R_+\}$ contained in $P$ and the $x$-axis. Let $W_i$ be the convex region of $\R^3$ bounded by $\pi_i$ and $\pi_{i+1}$, for $i=1, \dots k-1$. In the non- compact case, we also consider $W_0$ and $ W_k$ to be the two convex regions in $\R^3$ bounded by $\pi_1$ and $\pi_0$ and $\pi_k$ and $\pi_0$ respectively.
%We then have the following

\begin{claim}\label{boundary reflection}
For any $\d>0$, there exists $t_\d$ such that
\begin{equation}\label{bdryrefl1}
|\a(p, s)|<\d\;\;\text{for all}\;\; p\in   \partial W_i\,,\,\, s\in(-\infty, t_\d)\,,\,\,\text{and}\,\, i=1,2, 3\,,
\end{equation}
where $\a(p,t)=\frac12(h_++h_-)$ is defined as in \eqref{afct} with respect to the mid-hyperplane $\pi=\{x_1=0\}$. 
\end{claim}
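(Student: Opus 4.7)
The plan is to argue by contradiction. Suppose the claim fails: there exist $\delta_0 > 0$, a sequence $s_j \to -\infty$, and points $p_j \in \partial W_i \cap \pi \cap \Omega_{s_j}$ with $|\alpha(p_j, s_j)| \ge \delta_0$. Since $\partial W_i \cap \pi$ is the union of two closed halflines emanating from the origin, I may assume after relabeling that $p_j = \rho_j z_i$ with $\rho_j \ge 0$. Passing to a subsequence, set $A_j \doteqdot p_j/(-s_j)$ and consider its limit $A_\infty$ in the one-point compactification of $\{0\}\times\R^2$. By the construction of the $z_k$'s, the only possibilities are $A_\infty \in \relint(\Omega_\ast)$ (which includes $A_\infty = \infty$ along the noncompact ray $\ell_3$ in case (ii), since that ray lies uniformly in the interior of $\Omega_\ast$) or $A_\infty = z_i \in \partial\Omega_\ast$ (in which case $\sigma_\ast(z_i) = 1$).

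In the interior case, I would adapt the barrier construction from Lemma \ref{lem:width estimate} to the moving base points $A_j$. The key is that the barrier placement depends only on the distance of $A_j$ to $\partial\Omega_\ast$, and this distance stays bounded below under our hypothesis on $A_\infty$. This yields $w(p_j, s_j) \doteqdot h_+(p_j, s_j) - h_-(p_j, s_j) \to \pi$. Combined with the slab inclusion, which forces $h_+(p_j, s_j) \le \tfrac{\pi}{2}$ and $h_-(p_j, s_j) \ge -\tfrac{\pi}{2}$, we are forced to have $h_\pm(p_j, s_j) \to \pm\tfrac{\pi}{2}$, hence $\alpha(p_j, s_j) \to 0$, contradicting $|\alpha(p_j, s_j)| \ge \delta_0$.

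For the boundary case $A_\infty = z_i$, I would consider the top and bottom points $q_j^\pm \doteqdot p_j + h_\pm(p_j, s_j) e_1 \in \M_{s_j}$. Choosing a neighborhood $W \subset \R^3$ of the compact segment $\ell_i \cap \Omega_\ast = [0, z_i]$ so that $\Omega_\ast \cap W$ is bounded, I would apply Proposition \ref{prop:converging sequences} to each of $\{q_j^\pm\}$ to extract, along a subsequence, translating limits $\Sigma^\pm$ satisfying $H(z^\pm) = \sigma_\ast(z^\pm)$, where $z^\pm \doteqdot \lim \nu(q_j^\pm, s_j)$. Since the $q_j^\pm$ differ from $p_j$ only in the $e_1$ direction, the limits $\Sigma^\pm$ lie in slabs parallel to $(-\tfrac{\pi}{2},\tfrac{\pi}{2})\times\R^2$ shifted vertically by $-h_\pm^\infty e_1$, and the projections of $z^\pm$ to $\{0\}\times S^1$ converge to $z_i$, so $\sigma_\ast(z^\pm) \to \sigma_\ast(z_i) = 1$. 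Using Corollary \ref{cor:translators} to split off the line in $\pi$ orthogonal to $z_i$, followed by Corollary \ref{cor:Grim uniqueness} applied to the two-dimensional cross-section, I would identify each $\Sigma^\pm$ as a Grim hyperplane of width $\pi$. But a Grim hyperplane of width $\pi$ contained in a slab of width $\pi$ fills that slab and is symmetric about its mid-hyperplane; the shifted slabs must therefore coincide with $(-\tfrac{\pi}{2},\tfrac{\pi}{2})\times\R^2$, forcing $h_+^\infty = \tfrac{\pi}{2}$ and $h_-^\infty = -\tfrac{\pi}{2}$, hence $\alpha(p_j, s_j) \to 0$.

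The hardest step will be the rigid identification of $\Sigma^\pm$ as Grim hyperplanes of width exactly $\pi$ filling the original slab in the boundary case. This requires blending the speed information $\sigma_\ast(z^\pm) \to 1$ supplied by Proposition \ref{prop:converging sequences} with the rigidity results of \S\ref{sec:squash-down}, especially Corollaries \ref{cor:translators} and \ref{cor:Grim uniqueness}, to rule out both wider flying-wing-type limits and off-center slab placements. Once the Grim structure and the correct slab placement are pinned down, the desired symmetry about $\{x_1 = 0\}$ follows immediately from the symmetry of the Grim hyperplane across the mid-hyperplane of its filled slab.
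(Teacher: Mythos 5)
Your approach differs structurally from the paper's and catches a case the paper elides, but it has two concrete issues.

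The paper proceeds differently: for bounded rays it applies Proposition \ref{prop:converging sequences} not at the fiber endpoints $q_j^\pm$ over $p_j$, but at the single point $q_j\in\M_{t_j}\cap\pi_i$ where the normal is \emph{horizontal} (the cross-sectional ``tip''). Since $\nu(q_j)\to z_i$ exactly, the proposition immediately yields $H(z_i,\cdot)=\sigma_\ast(z_i)=1$, so the limit is forced to be a Grim plane symmetric across the midplane; the remaining interior points are then treated by convexity of the cross-section together with the width estimate at interior points. For the unbounded ray $\ell_3$ the paper avoids the translator limit entirely, noting that absence of a tip ($\nu\cdot e_1\ne 0$) makes $h_+$ concave increasing and $h_-$ convex decreasing, so the width estimate at the origin propagates outward. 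Your plan, by contrast, localises the translator limit directly at $q_j^\pm$ and aims to pin down the two translated slabs. This is a more ``pointwise'' strategy and arguably more explicit about the case the paper hides in ``by convexity''.

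Two points need repair. First, the assertion $\sigma_\ast(z^\pm)\to\sigma_\ast(z_i)=1$ is false in general. Writing $z^\pm=\cos\phi^\pm w^\pm+\sin\phi^\pm e_1$ one has $\sigma_\ast(z^\pm)=\cos\phi^\pm\,\sigma_\ast(w^\pm)$; when $p_j$ drifts far from the cross-sectional tip (so $\rho_{\max,j}-\rho_j\to\infty$ even though $\rho_j/(-s_j)\to 1$), the normals at the fiber endpoints tend to $\pm e_1$, $\cos\phi^\pm\to 0$, and $\sigma_\ast(z^\pm)\to 0$, not $1$. In that regime the limit of $\M_{s_j+s}-q_j^\pm$ is a pair of parallel hyperplanes (speed zero), and the desired conclusion $h_\pm^\infty=\pm\tfrac{\pi}{2}$ still follows because the hyperplane through the origin with normal $\pm e_1$ must coincide with the nearer slab wall. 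But you must split the boundary case into the two sub-regimes $\phi^\pm_\infty\ne\pm\tfrac{\pi}{2}$ (Grim plane) and $\phi^\pm_\infty=\pm\tfrac{\pi}{2}$ (parallel hyperplanes); the argument as written treats only the first. Second, ``a neighborhood $W$ of the compact segment $[0,z_i]$'' is not admissible in Proposition \ref{prop:converging sequences}: a bounded neighborhood cannot contain $q_j^\pm$, which recede to infinity along $\pi_i$. The region $W$ must be taken to be a scale-invariant (conical) wedge around $\pi_i$, so that $\Omega_\ast\cap W$ is bounded while $q_j^\pm\in\pi_i\subset W$ for all $j$. This is what the proof of that proposition implicitly requires (it rescales $\M_t\cap W$ by $(-t)^{-1}$).
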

\begin{proof}[Proof of Claim \ref{boundary reflection}]
We first prove \eqref{bdryrefl1} on $\pi_i$ in case $\pi_i\cap \Omega_\ast$ is bounded. Suppose to the contrary that  there are $t_j\to-\infty$ and $p_j\in\pi_i$ such that
\begin{equation}\label{eq:refl translators contra}
\a(p_j, t_j)\ge \d\,,
\end{equation}
the  case $\a(p_j, t_j)\le -\d$ being treated similarly. By convexity, there exists a point $q_j\in \M_{t_j}\cap \pi_i$, such that the normal $\nu_j$ to $\M_{t_j}$ at $q_j$ satisfies $\langle \nu_j, e_1\rangle=0$. After passing to a subsequence, and by definition of the squash-down, we have that
\[
\nu_j\to z_i\,.
\]
By Proposition \ref{prop:converging sequences}, after passing to a further subsequence, the sequence of flows $\{\M^j_t\}_{t\in(-\infty,\,\omega-t_j)}$ defined by
\[
\M^j_s=\M_{s+t_j}-q_j\,,
\]
converges to a translator that satisfies $H(z_i, s)=\s_\ast(z_i)=1$ and thus is a Grim plane that lives in a slab of width $\pi$. Since this is reflection symmetric with respect to $\pi$ and the solution is convex, we obtain a contradiction to  \eqref{eq:refl translators contra}.

When $\pi_3\cap \Omega_\ast$ is unbounded,  \eqref{bdryrefl1}  follows by convexity and noncompactness (in particular $\nu\cdot e_1\ne 0$) along with the fact that the width of $\M_t$ at the origin converges to $\pi$ as $t\to -\infty$ (with the width defined as in the beginning of Section \ref{sec:squash-down}).
\end{proof}
Next we show that the estimate of Claim \ref{boundary reflection} can be extended in the interior of the convex regions $W_i$.
\begin{claim}\label{interior claim}
\begin{equation}\label{intrefl1}
|\a(p, s)|<\d\;\;\text{for all}\;\; p\in   W_i\,,\,\, s\in(-\infty, t_\d)\,,\,\,\text{and}\,\, i=1,2,3\,.
\end{equation}
\end{claim}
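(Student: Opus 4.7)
The plan is to argue by contradiction, combining the parabolic maximum principle for $\alpha$ (Lemma \ref{lem:wedge}) with the asymptotic translator analysis of Proposition \ref{prop:converging sequences}. Suppose the conclusion fails, so that there exist $i \in \{1,2,3\}$, $p_0 \in W_i$ and $s_0 \in (-\infty, t_\delta)$ with $\alpha(p_0, s_0) \geq \delta$ (the opposite sign is treated symmetrically). Provided $W_i \cap \pi \cap \Omega_s$ is bounded for every $s \in (-\infty, s_0]$ (which holds for every $W_i$ in the compact case and for those $W_i$ not containing an asymptotic ray of $\Omega_\ast$ in the noncompact case), Lemma \ref{lem:wedge} applied on $W_i \times [t, s_0]$, together with Claim \ref{boundary reflection} ruling out the maximum being attained on $\partial W_i$, yields
\[
\alpha(p_0, s_0) \;\leq\; \sup_{W_i \cap \pi \cap \Omega_t} \alpha(\cdot, t)
\]
for every $t < s_0$. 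Hence one can extract sequences $t_j \to -\infty$ and $p_j \in W_i \cap \pi \cap \Omega_{t_j}$ with $\alpha(p_j, t_j) \geq \delta$; the task becomes showing $\alpha(p_j, t_j) \to 0$.

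Up to a subsequence, $p_j/(-t_j) \to p_\infty \in \overline{W_i \cap \Omega_\ast}$. If $p_\infty \in \relint(\Omega_\ast)$, then a barrier argument using the scaled rotationally symmetric ancient pancakes $h^{-1}\Pi_{h^2 t}$ centred near $p_j$ (as in the proof of Lemma \ref{lem:width estimate}, applied uniformly over a small ball about $p_\infty$) forces the $e_1$-chord of $\Omega_{t_j}$ through $p_j$ to have length tending to $\pi$; since $h_{\pm}(p_j,t_j) \in (-\tfrac{\pi}{2},\tfrac{\pi}{2})$, this gives $h_{\pm}(p_j,t_j) \to \pm\tfrac{\pi}{2}$ and hence $\alpha(p_j,t_j) \to 0$. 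Otherwise, $p_\infty \in \partial\Omega_\ast \cup \partial W_i$, and I would apply Proposition \ref{prop:converging sequences} to the sequence $q_j^+ \doteqdot p_j + h_+(p_j,t_j)e_1 \in \M_{t_j}$ to produce an asymptotic translator $\Sigma^+ = \lim_{j\to\infty}(\M_{t_j+s} - q_j^+)$. Because $n=2$, the combination of Theorem \ref{thm:circumscribed}, Corollary \ref{cor:translators} and Corollary \ref{cor:Grim uniqueness} forces $\Sigma^+$ to be either a Grim hyperplane or a flying wing, each reflection symmetric about the midplane of its ambient slab, namely $\{x_1 = -h_+^\infty\}$ with $h_+^\infty \doteqdot \lim h_+(p_j,t_j)$. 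Passing to a further subsequence so that $h_-(p_j,t_j) \to h_-^\infty$, the translator $\Sigma^+$ contains both the origin (the limit of $q_j^+ - q_j^+$) and the point $(h_-^\infty - h_+^\infty)e_1$ (the limit of $q_j^- - q_j^+$, where $q_j^- \doteqdot p_j + h_-(p_j,t_j)e_1$), and these are the only two intersections of $\Sigma^+$ with the line $\R e_1$. The reflection symmetry pairs them, yielding $h_+^\infty + h_-^\infty = 0$ and hence $\alpha(p_j,t_j) \to 0$, the desired contradiction.

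The main obstacle I foresee is the sub-case in which $W_i \cap \pi \cap \Omega_s$ is unbounded, which can occur for the wedges containing asymptotic directions of $\Omega_\ast$ in setting (ii). There Lemma \ref{lem:wedge} does not apply directly to $W_i$. The plan is to work on bounded truncations $W_i \cap B_R$ for large $R$ and to control the contribution to the parabolic boundary coming from the spatial cap $W_i \cap \partial B_R$. Points on this cap are close to the asymptotic cone of $\Omega_\ast$, so Proposition \ref{prop:converging sequences nc} combined with the forward-squash-down identification $\Omega^\ast = (\Omega_\ast)_{-1}$ from Theorem \ref{thm:squash-up} allows one to extract asymptotic translators there as well, again forced by the two-dimensional rigidity to be Grim hyperplanes of width $\pi$ reflection symmetric about $\pi$. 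Passing $R \to \infty$ reduces the argument to the bounded cases and yields the claim.
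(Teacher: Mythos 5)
Your approach is genuinely different from the paper's. The paper establishes Claim~\ref{interior claim} by the Korevaar--Kusner--Solomon \emph{tilted hyperplane} trick: replace $\pi$ by a slightly tilted $\pi^i_\e$, whose tilt in the bisector direction $v_i$ of $W_i$ makes $W_i\cap[-\tfrac\pi2,\tfrac\pi2]\cap P^+_\e$ compact; then the interior estimate on $\alpha_\e$ reduces, via Lemma~\ref{lem:wedge} and Claim~\ref{boundary reflection} (with $t_\delta$ crucially independent of $\e$), to convergence of $\M_t$ to the two parallel planes on compacta; finally let $\e\to 0$. In particular the paper never analyses asymptotic translators in the interior of a wedge --- all translator analysis is confined to Claim~\ref{boundary reflection}, where the limiting normals are the tangency points $z_i\in Z$ with $\sigma_\ast(z_i)=1$, so the only translators that ever arise are Grim hyperplanes of width exactly $\pi$.

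There is a genuine gap in your case (b), at vertices of $\Omega_\ast$. If $p_\infty$ is a vertex, then $\sigma_\ast>1$ there and (after restricting the normal of $q_j^+$ to its horizontal part) the bulk velocity of the limiting translator $\Sigma^+$ has magnitude $>1$, so $\Sigma^+$ sweeps out a slab of width \emph{strictly less than} $\pi$. The two facts you actually control --- that $\Sigma^+\subset(-\tfrac\pi2-h^\infty_+,\tfrac\pi2-h^\infty_+)\times\R^2$ and that it meets $\R e_1$ at $0$ and $(h^\infty_--h^\infty_+)e_1$ --- do not then pin down its midplane to be $\{x_1=-h^\infty_+\}$. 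Indeed, reflection symmetry of $\Sigma^+$ about the midplane of its own (narrower) slab forces that midplane to be $\{x_1=(h^\infty_--h^\infty_+)/2\}$, which is consistent with \emph{every} value of $\alpha_\infty$, and the desired identity $h^\infty_++h^\infty_-=0$ does not follow. Your argument silently uses ``midplane of the ambient slab $=$ midplane of $\Sigma^+$,'' which holds if and only if $\Sigma^+$ fills the whole slab of width $\pi$, i.e. precisely the Grim-hyperplane case that occurs on $\partial W_i$ but not in its interior. This is exactly the phenomenon the tilted hyperplane is designed to avoid.

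Your proposed treatment of the unbounded wedges is also not sound as stated: Proposition~\ref{prop:converging sequences nc} describes the asymptotics as $t_i\to+\infty$, whereas applying Lemma~\ref{lem:wedge} on a truncation $W_i\cap B_R$ requires control of $\alpha$ on the spatial cap $W_i\cap\partial B_R$ at all times $s\in(-\infty,t_\delta)$, including $s$ bounded away from $-\infty$. Neither Proposition~\ref{prop:converging sequences} (which requires $\Omega_\ast\cap W$ bounded) nor Proposition~\ref{prop:converging sequences nc} addresses the spatial behaviour of $\M_s$ at a fixed negative time $s$. This is a second problem the tilted hyperplane removes entirely, by making the relevant region compact.
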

\begin{proof}
Let $v_i=z_i+ z_{i+1}$ for $i=1, 2$ and $v_3= z_3+z_1$.
Given $\varepsilon>0$, define the ``tilted" hyperplane
\[
\pi^i_\e=\pi+\{ r(v_i+\e e_1):r>0\}\,\,\text{for any}\,\,i=1,2,3\,.
\]
The plane $\pi^i_\e$ separates $\R^{3}$ into two halfspaces, $P_\e^+$ containing the positive $x$-axis, and $P_\e^-$ containing the negative one. Since $W_i\cap[-\frac\pi2, \frac\pi2]\cap  P_\e^+$ is compact and, in any compact region, $\M_t$ converges to the two parallel hyperplanes, $\{x_1=\pm \pi/2\}$, as $t\to -\infty$, we obtain, for any $\d>0$, a time $t_{\e,\d}$ such that
\[
\sup_{p\in W_i, s\in(-\infty, t_{\e, \d})}\a_\e(p, s)<\d\,,
\]
where $\a_\e(p,t)=\frac12(h_++h_-)$ as in \eqref{afct} with respect to $\pi^i_\e$.
Now the maximum principle and Claim \ref{boundary reflection} (which is true with $\a$ replaced by $\a_\e$) imply that
\[
\sup_{p\in W_i, s\in(-\infty, t_{\d})}\a_\e(p, s)<\d\,.
\]
Note that $t_\d$ depends on $\d$ but not on $\e>0$ and thus, letting $\e\to0$,
\[
\sup_{p\in W_i, s\in(-\infty, t_{\d})}\a(p, s)<\d\,.
\]
Repeating the argument with a negative tilt yields the claim.
\end{proof}
Note that Claim \ref{interior claim} implies that
\[
|\a(p, s)|<\d \;\;\text{for all}\;\; s\in (-\infty, t_\d)\,.
\]
If the solution is compact, the maximum principle implies that we can extend this estimate up to the final time $\omega$ and letting $\d\to 0$ we obtain the result.

For noncompact solutions we cannot obtain the result from the maximum principle, as the ``first point of contact" might be occuring ``at infinity". More precisely, we need to exclude the following phenomenon: There exists a sequence $t_j\to \infty$ and points $p_j\in \M_{t_j}$ such that
\begin{equation}\label{final}
|a(p_j, t_j)|>\d \;\;\text{for all}\;\; j\,.
\end{equation}
Let $q_j^\pm\in M_{t_j}$ be such that $q_j^\pm= p_j+ h_\pm(p_j, t_j)$. Let $z_j^\pm$ be the normals to $\M_{t_j}$ at the points $q_j^\pm$. By the maximum principle, we can assume that, after passing to a subsequence 
\[
z_j^\pm\to z^\pm \in G(\Omega^\ast)\setminus G_\ast\,.
\]
We will use Proposition \ref{prop:converging sequences nc} to obtain a contradiction to \eqref{final}.
Let $z_j^+= \cos\phi_jw_j+\sin\phi_j e_1$, for $w_j\in G_\ast$ and $\phi_j\in (-\pi/2, \pi/2)$ and assume first that $z^+\neq e_1$, so that $w_j\to w_\infty\in G(\Omega^\ast)\setminus G_\ast$. Let $q_j\in \M_{t_j}$ be such that $X(w_j, t_j)= q_j$. By Proposition \ref{prop:converging sequences nc}, after passing to a further subsequence, the sequence of flows $\{\M^j_t\}_{t\in(-\infty,\infty)}$ defined by
\[
\M^j_t=\M_{t+t_j}-q_j
\]
converge to a translator that satisfies $H(w_\infty, t)=\s^\ast(w_\infty)=1$ and thus is a Grim plane that lives in a slab of width $\pi$. Since this is reflection symmetric with respect to $\pi$ and by convexity (which in particuar takes care of the case $z^+=e_1$), we obtain a contradiction to  \eqref{final}.
\end{proof}

\bibliographystyle{acm}
\bibliography{../../bibliography}

\end{document}